\documentclass[a4paper,11pt]{article}
\usepackage{inputenc}
\usepackage{authblk}
\usepackage[table]{xcolor}
\usepackage{color}
\usepackage{amsmath}
\usepackage{amssymb}
\usepackage{amsthm}
\usepackage{booktabs}
\usepackage{xcolor,graphicx,float}
\usepackage{hyperref}
\usepackage{soul}
\usepackage{indentfirst}
\usepackage{enumitem}
\setlist[itemize]{itemsep=0pt, topsep=0pt}
\usepackage{tikz}
\usepackage{subcaption}
\usepackage{caption}

\hypersetup{colorlinks=true,
	linkcolor=blue,
	anchorcolor=blue,
	citecolor=blue}
\usepackage[
a4paper,
textwidth=16cm,
textheight=23cm,
centering
]{geometry}

\theoremstyle{plain}
\newtheorem{theorem}{\bf Theorem}[section]
\newtheorem{lemma}[theorem]{\bf Lemma}
\newtheorem{proposition}[theorem]{\bf Proposition}
\newtheorem{corollary}[theorem]{\bf Corollary}
\newtheorem{conjecture}{\bf Conjecture}

\theoremstyle{remark}

\newtheorem{problem}{\bf Problem}
\newtheorem{construction}{\bf Construction}

\newtheorem{assumption}{\bf Assumption}

\numberwithin{equation}{section}

\definecolor{tonglv}{RGB}{43,174,133}

\newcommand{\spn}[2]{\genfrac{\{}{\}}{0pt}{}{#1}{#2}}
\title{\bf A unified approach to cross-intersection problems with applications to Hilton--Milner type theorems and stability}
\author[1]{Jie Wen\thanks{E-mail: \text{jwen@mail.bnu.edu.cn}}}
\author[1]{Benjian Lv\thanks{Corresponding author. E-mail: \text{bjlv@bnu.edu.cn}}}
\affil[1]{\small Laboratory of Mathematics and Complex Systems (Ministry of Education), School of Mathematical Sciences, Beijing Normal University, Beijing 100875, China}
\date{}

\begin{document}
	\renewcommand{\baselinestretch}{1.2}
	\maketitle
	\begin{abstract}
		\medskip
We develop a new approach to cross-intersection problems in extremal set theory. The method builds on the iterative procedure introduced by Kupavskii and Zakharov (2024) and the $t$-cover method. It provides a flexible framework for deriving extremal and stability results for cross $t$-intersecting families. Our approach applies to a variety of combinatorial objects. As an application, we prove a product version of the seminal Erd\H{o}s--Ko--Rado theorem for sufficiently spread set systems, leading to improved bounds for Erd\H{o}s--Ko--Rado type results for signed sets and $k$-partitions.

Two families $\mathcal{F}$ and $\mathcal{G}$ of $k$-subsets of the standard $n$-set $[n]=\{1,2,\ldots,n\}$ are called cross $t$-intersecting if $|F\cap G|\geq t$ for all $F\in\mathcal{F}$ and $G\in\mathcal{G}$. We determine the families maximizing $\min\{|\mathcal{F}|, |\mathcal{G}|\}$ for $n\geq t+6(k-t)\cdot\max\{5(t+2), k\}$ and all $t\ge2$, generalizing results of M\"{o}rs (1985) and F\"{u}redi (1995) for cross $1$-intersecting families. This also yields a new proof of a classical Hilton--Milner type theorem of Frankl (1978). We then establish a product version of the theorem by  determining the families maximizing $|\mathcal{F}||\mathcal{G}|$ under the condition $\max\{|\cap_{F\in\mathcal{F}}F|,|\cap_{G\in\mathcal{G}}G|\}<t$ for $n\geq t+\max\{15(t+2)^2(k-t),10k^2\}$. This improves the bound $n\ge 4(t+2)^2k^2$ obtained by Frankl and Wang (2024), and provides a characterization of extremal configurations. 
		
For a family $\mathcal{F}$ of subsets of $[n]$, we introduce its $t$-diversity $\gamma_t(\mathcal{F})$, defined as the minimum number of sets from $\mathcal{F}$ not containing a fixed $t$-subset. This serves as a natural generalization of the important notion of diversity for $t=1$. We obtain a stability result via $\gamma_t$, and determine the maximum of  $\min\{\gamma_t(\mathcal{F}),\gamma_t(\mathcal{G})\}$ for cross $t$-intersecting families $\mathcal{F}$ and $\mathcal{G}$. These yield new results for $t$-intersecting families, including a stability theorem towards a conjecture of Ellis, Keller and Lifshitz (2019), which may also be regarded as a $t$-intersection version, for large $n$, of an influential theorem of Frankl (1987).
		
		\noindent {\em AMS classification:}\;05D05
		
		\noindent {\em Key words:}\;Erd\H{o}s--Ko--Rado theorem;\;Hilton--Milner theorem;\;Cross $t$-intersecting families;\\Spread approximation;\;Stability;\;$t$-cover;\;$t$-diversity
		
	\end{abstract}
	
	\section{Introduction}
	Extremal set theory is concerned with determining or estimating the size of a family of sets satisfying prescribed restrictions. A cornerstone of the area is the Erd\H{o}s--Ko--Rado theorem \cite{Erdos-Ko-Rado-1961}, which determines, for large $n$ depending on $k$ and $t$, the maximum  size of a family $\mathcal{F}$ of $k$-subsets of an $n$-set with the property that any two of its members have at least $t$ elements in common. Such a family is said to be \emph{$t$-intersecting}. Originating from the celebrated theorem, the study of \emph{intersection problems} has since developed into a longstanding and active direction of research in extremal combinatorics. 
	
	Let us introduce some standard notation. We set $[n]:=\{1,2,\ldots,n\}$, and set $[i,j]:=\{i,i+1,\ldots,j\}$ for $i\leq j$. Denote by $\binom{[n]}{k}$ the family of all $k$-subsets of $[n]$. For a family $\mathcal{F}$ of sets, we use  $\cup\mathcal{F}$ and $\cap\mathcal{F}$ to denote the union and the intersection of its members, respectively. A $t$-intersecting  family  $\mathcal{F}\subseteq\binom{[n]}{k}$ is said to be  \emph{trivial} if $|\cap\mathcal{F}|\geq t$.  Otherwise, it is \emph{non-trivial}. Two families $\mathcal{F},\mathcal{G}$ of subsets of $[n]$ are \emph{isomorphic}, denoted $\mathcal{F}\cong\mathcal{G}$, if they are the same up to a permutation on $[n]$.
	
	Formally, let us present the following \emph{exact Erd\H{o}s--Ko--Rado theorem}.
\begin{theorem}[\cite{Erdos-Ko-Rado-1961,Frankl-1976,Wilson-1984}]\label{EKR}
	Let $k\geq t\geq 1$ and $n\geq(t+1)(k-t+1)$. If $\mathcal{F}\subseteq\binom{[n]}{k}$ is $t$-intersecting, then
	$$|\mathcal{F}|\leq\binom{n-t}{k-t}.$$
	Moreover, if $n>(t+1)(k-t+1)$, then equality holds if and only if $\mathcal{F}\cong\left\{F\in\binom{[n]}{k}:[t]\subseteq F\right\}$.
\end{theorem}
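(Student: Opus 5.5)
The plan is to follow Wilson's algebraic approach via the Delsarte--Hoffman ratio bound in the Johnson scheme $J(n,k)$. Let $A_0,\dots,A_k$ be the adjacency matrices of the scheme, where $(A_i)_{F,G}=1$ iff $|F\cap G|=k-i$, and let $V_0\oplus V_1\oplus\cdots\oplus V_k$ be the common eigenspace decomposition of $\mathbb{R}^{\binom{[n]}{k}}$. The eigenvalues of $A_i$ on $V_j$ are the Eberlein polynomials $E_i(j)$.

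A $t$-intersecting family $\mathcal{F}$ is an independent set in the graph $\Gamma$ with $F\sim G$ iff $|F\cap G|<t$, i.e. $F\sim G$ iff $i>k-t$. So I will look for a real symmetric matrix
$$B=\sum_{i=k-t+1}^{k} c_iA_i$$
supported only on edges of $\Gamma$. For such $B$, write $\lambda_j$ for its eigenvalue on $V_j$. I want the following three properties:
\begin{itemize}
\item $\lambda_0>0$;
\item $\lambda_j\ge \lambda_{\min}$ for all $j\ge 1$;
\item $\dfrac{-\lambda_{\min}}{\lambda_0-\lambda_{\min}}=\dfrac{\binom{n-t}{k-t}}{\binom nk}$.
\end{itemize}
The weighted Hoffman bound, applied to the quadratic form $x^{\top}Bx=0$ with $x=\mathbf 1_{\mathcal F}$ and $x$ split along $V_0$ and $V_0^{\perp}$, then gives $|\mathcal{F}|\le\binom{n-t}{k-t}$.

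Wilson's construction realises $B$ from the inclusion matrices $W_{s}$ (rows indexed by $s$-sets, columns by $k$-sets) through the products $\overline W_{s}^{\top}\overline W_{s}$, with $\overline W_s$ the disjointness version. These products lie in the Bose--Mesner algebra, and their eigenvalues on $V_j$ have closed binomial forms. Taking an alternating combination over $s=0,\dots,t-1$ with suitable binomial coefficients makes the entries vanish whenever $|F\cap G|\ge t$. The resulting spectrum has the following shape:
\begin{itemize}
\item the eigenvalues on $V_1,\dots,V_t$ all equal $\lambda_{\min}$, exactly at the ratio above;
\item the eigenvalues on $V_j$ for $j>t$ are at least $\lambda_{\min}$, provided $n\ge (t+1)(k-t+1)$;
\item they are strictly larger when $n>(t+1)(k-t+1)$.
\end{itemize}
So the key steps, in order, are: (1) write down $B$ and check its support; (2) compute $\lambda_j$ as explicit sums of binomial products; (3) verify $\lambda_0$ and the common value on $V_1,\dots,V_t$; (4) prove $\lambda_j\ge\lambda_{\min}$ for $j>t$ under the stated bound on $n$. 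Step (4) is the main obstacle. I would first try to show that $\lambda_j$ alternates in sign with decreasing absolute value as $j$ grows beyond $t$, by comparing consecutive ratios $|\lambda_{j+1}/\lambda_j|$. The condition $n\ge(t+1)(k-t+1)$ should arise precisely as the condition that $|\lambda_{t+1}|\le|\lambda_{\min}|$.

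For uniqueness when $n>(t+1)(k-t+1)$, equality in the ratio bound forces $\mathbf 1_{\mathcal F}\in V_0\oplus V_1\oplus\cdots\oplus V_t$, since the eigenvalues on $V_j$ for $j>t$ are strictly above $\lambda_{\min}$. This space is the row space of $W_t$, so $\mathbf 1_{\mathcal F}=\sum_{T}a_T\mathbf 1_{\{F\supseteq T\}}$ over $t$-sets $T$. I would then argue combinatorially that a $0/1$ vector of this form which is $t$-intersecting and of size $\binom{n-t}{k-t}$ must be a single star. The approach is to restrict to a $(k+t)$-set, or use the standard lemma that a Boolean function of degree at most $t$ on the slice with this density is a $t$-star when $n$ is large relative to $k$. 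If that lemma is not directly available, I would instead use a Hilton--Milner type count. The argument shows that any non-trivial $t$-intersecting family has size strictly less than $\binom{n-t}{k-t}$ when $n>(t+1)(k-t+1)$, combining shifting with the $t$-cover argument used later in the paper.
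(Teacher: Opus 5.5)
The paper gives no proof of Theorem~\ref{EKR}. It is quoted from \cite{Erdos-Ko-Rado-1961,Frankl-1976,Wilson-1984} as background, so your proposal has to stand on its own as a reconstruction of Wilson's argument. The framework is the right one. Your third bullet is even automatic: a full $t$-star is $t$-intersecting and its indicator lies in $V_0\oplus\cdots\oplus V_t$. Hence, once $B$ is supported on $|F\cap G|<t$ and has a common eigenvalue on $V_1,\dots,V_t$, the ratio is forced.

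However, the concrete steps are wrong or missing.

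\textbf{Construction of $B$.} $B$ cannot be built from the products $\overline{W}_s^{\top}\overline{W}_s$ with $s\le t-1$. Each $\overline{W}_s$ annihilates $V_j$ for $j>s$, so every such combination has eigenvalue $0$ on $V_t$, contradicting $\lambda_t=\lambda_{\min}<0$. (Also, its entries $\sum_s c_s\binom{n-2k+|F\cap G|}{s}$ form a polynomial of degree less than $t$ in $|F\cap G|$ that would have to vanish at $t,\dots,k$.) A correct spanning set is, for instance, $\overline{W}_{k-s,k}^{\top}W_{k-s,k}$ for $0\le s\le t-1$, whose $(F,G)$-entry is $\binom{k-|F\cap G|}{k-s}$.

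\textbf{Step (4).} This is the whole difficulty of Wilson's theorem, and you only state a hope; moreover, the mechanism you predict fails. Take $t=2$, $k=4$, $n=9=(t+1)(k-t+1)$. The admissible $B$ (supported on $|F\cap G|\le 1$, with $\lambda_1=\lambda_2$ and $\lambda_0>0$) is forced, up to a positive scalar, to be $A_3-2A_4$. Its eigenvalues on $V_0,\dots,V_4$ are $30,-6,-6,9,-6$. The ratio bound gives exactly $\binom{7}{2}$, yet $|\lambda_{t+1}|=9>|\lambda_{\min}|$. At $n=10$ one gets $65,-10,-10,11,-5$, again with $|\lambda_{t+1}|>|\lambda_{\min}|$.

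The eigenvalue that meets $\lambda_{\min}$ at the threshold is $\lambda_{t+2}$, and this is forced in general for $k\ge t+2$. At $n=(t+1)(k-t+1)$ the family $\mathcal{A}(n,k,t)$ also has size $\binom{n-t}{k-t}$. Write $\mathbf 1_{\mathcal A}=\sum_{S}\mathbf 1_{\{F\supseteq S\}}-(t+1)\mathbf 1_{\{F\supseteq[t+2]\}}$ with $S$ ranging over $\binom{[t+2]}{t+1}$. The last term gives a nonzero $V_{t+2}$-component, so equality in the ratio bound forces $\lambda_{t+2}=\lambda_{\min}$. What has to be proved is $\lambda_j\ge\lambda_{\min}$ for the negative eigenvalues with $j\ge t+2$, from explicit formulas. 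The comparison $|\lambda_{t+1}|\le|\lambda_{\min}|$ is not where the threshold comes from, and it fails in these examples.

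\textbf{Uniqueness.} This part is not established in the stated range either. It needs the strict inequalities $\lambda_j>\lambda_{\min}$ for all $j>t$ when $n>(t+1)(k-t+1)$, which belong to the unproved step (4). It then needs an argument that a $t$-intersecting family of size $\binom{n-t}{k-t}$ whose indicator lies in the row space of $W_t$ is a full $t$-star, which you do not supply. The lemma you invoke is, by your own description, a statement for $n$ large relative to $k$, and the idea of restricting to a $(k+t)$-set is not developed.

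The fallback does not reach $n$ just above $(t+1)(k-t+1)$:
\begin{itemize}
\item the $t$-cover and fingerprint arguments of this paper need $n\ge(k-t+1)^2+t$ (Lemma~\ref{lemmakey});
\item Theorem~\ref{thmmin-max} needs $n\ge t+(k-t)\max\{30(t+2),2ek\}$;
\item the full-range statement that non-trivial families are strictly smaller is Theorem~\ref{HM} of Ahlswede and Khachatrian \cite{Ahlswede-Khachatrian-1996}, a far deeper result than the one being proved.
\end{itemize}
So the proposal names Wilson's strategy correctly, but it misdescribes the construction of $B$ and leaves both substantive steps open.
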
	
Determining the least possible value $n_0(k,t)$ for which the upper bound holds was a major open problem.  The original paper \cite{Erdos-Ko-Rado-1961} determined $n_0(k,1)=2k$, and established $n_0(k,t)\leq(k-t)\binom{k}{t}^3+t$. In the seminal paper, the authors introduced the \emph{shifting technique} (see also \cite{Frankl-shifting}), which has been a fundamental tool in extremal set theory. Frankl \cite{Frankl-1976} made a breakthrough by determining $n_0(k,t)=(t+1)(k-t+1)$ for $t\geq 15$. Wilson \cite{Wilson-1984} subsequently showed $n_0(k,t)=(t+1)(k-t+1)$ for $2\leq t\leq14$, with an ingenious algebraic proof valid for all $t\geq1$. A natural problem is what happens when $n<n_0(k,t)$. In this range, the family
\begin{equation}\label{familya}
	\mathcal{A}(n,k,t):=\left\{F\in\binom{[n]}{k}:|F\cap [t+2]|\geq t+1\right\}
\end{equation}
has size larger than $\binom{n-t}{k-t}$. In 1997, Ahlswede and Khachatrian \cite{Ahlswede-Khachatrian-1997} established the famous \emph{complete intersection theorem}, which determines maximum-sized $t$-intersecting families in $\binom{[n]}{k}$ for all values of $n, k$, and $t$. This settled a conjecture of Frankl \cite{Frankl-1976}. 

For $n>n_0(k,t)$, an optimal family in Theorem \ref{EKR} consists of all $k$-subsets of $[n]$ containing a fixed $t$-subset. Such a family is trivial, and usually referred to as a \emph{full $t$-star}. In 1967, Hilton and Milner \cite{Hilton-Milner-1967} determined the maximum-sized non-trivial $1$-intersecting families and, in particular, for $(k,t)\neq(3,1)$, the unique extremal family is isomorphic to $\mathcal{H}(n,k,1)$, where for $t\geq1$ we write
\begin{equation}\label{familyh}
\mathcal{H}(n,k,t):=\left\{F\in\binom{[n]}{k}:[t]\subseteq F,\;F\cap[t+1,k+1]\neq\emptyset\right\}\cup\left\{[k+1]\setminus\{i\}:i\in [t]\right\}.
\end{equation}
The Hilton--Milner theorem initiated the problem of characterizing  the structure of large non-trivial $t$-intersecting families. One major contribution is due to Frankl \cite{Frankl-1976}, who determined non-trivial $t$-intersecting families maximizing the size for $t\geq2$ and large $n$ depending on $k$ and $t$. Ahlswede and Khachatrian \cite{Ahlswede-Khachatrian-1996} determined such families for all possible parameters.
\begin{theorem}[\cite{Hilton-Milner-1967,Frankl-1976,Ahlswede-Khachatrian-1996}]\label{HM}
	Let $k\geq t\geq 1$ and $n\geq(t+1)(k-t+1)$. If $\mathcal{F}\subseteq\binom{[n]}{k}$ is a non-trivial $t$-intersecting family, then
	$$|\mathcal{F}|\leq\max\{|\mathcal{A}(n,k,t)|,  |\mathcal{H}(n,k,t)|\}.$$
	Moreover, if equality holds, then $\mathcal{F}\cong\mathcal{A}(n,k,t)$ or $\mathcal{F}\cong\mathcal{H}(n,k,t)$.
\end{theorem}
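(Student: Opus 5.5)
The plan is to use the shifting method together with the structure of generating sets, following the Ahlswede--Khachatrian approach.

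\emph{Step 1: reduction to shifted families.} Let $\mathcal{F}$ be a non-trivial $t$-intersecting family of maximum size. Apply the shifts $S_{ij}$ for $i<j$. Shifting preserves size and the $t$-intersecting property, but it may destroy non-triviality. The standard remedy, as in Frankl's argument, is to apply only those shifts that keep $|\cap\mathcal{F}|<t$. If some shift $S_{ij}$ would make the family trivial, we stop and analyze directly. In that case every member of the current family contains a common $(t-1)$-set $T$, and each member meets $T\cup\{i,j\}$ in at least $t$ elements. One then checks that such a family is contained in a copy of $\mathcal{H}(n,k,t)$ or $\mathcal{A}(n,k,t)$, or else is strictly smaller than one of them. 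Otherwise we reach a family that is shifted (stable under all $S_{ij}$) and still non-trivial.

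\emph{Step 2: generating sets.} For a shifted $t$-intersecting family, consider the minimal generating family $g(\mathcal{F})$ and let $s=\max\{\max G: G\in g(\mathcal{F})\}$. We may assume $\mathcal{F}$ is maximal, that is, equal to the up-set generated within $\binom{[n]}{k}$. The key lemma, adapted from the complete intersection theorem, states the following. If $n\ge(t+1)(k-t+1)$ and $s>t+2$, then replacing the elements of $g(\mathcal{F})$ ending at $s$ by their traces on $[s-1]$ (choosing the appropriate half according to $|G\cap[s-1]|$) yields a $t$-intersecting family that is no smaller. Moreover, the inequality is strict unless $n=(t+1)(k-t+1)$ and boundary cases occur. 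Iterating this "pushing down" while keeping non-triviality reduces to $s\le k+1$, giving the Hilton--Milner-type configuration, or to $s=t+2$. When $s=t+2$, non-triviality forces $\mathcal{F}=\mathcal{A}(n,k,t)$. When the compression would create a trivial family, the extremal generator structure is exactly $\{[t]\cup\{j\}:t<j\le k+1\}\cup\{[k+1]\setminus\{i\}\}$, which is $\mathcal{H}(n,k,t)$.

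\emph{Step 3: comparison and uniqueness.} Compare $|\mathcal{A}(n,k,t)|$ and $|\mathcal{H}(n,k,t)|$ by direct counting. Equality of $|\mathcal{F}|$ with the maximum forces every inequality in Step 2 to be tight. Tracing back through the shifts, using that a shift $S_{ij}$ applied to a family whose image is $\mathcal{A}$ or $\mathcal{H}$ can only arise from an isomorphic copy (the standard "unshifting" lemma), yields the characterization.

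\emph{Main obstacle.} The hardest step is the pushing-down lemma in Step 2 while preserving non-triviality. The AK comparison $|\mathcal{F}'|\ge|\mathcal{F}|$ uses the bijection between sets of $\mathcal{G}_s$ with $|G\cap[s-1]|=a$ and those with $a'$ where $a+a'=s+t-2$, together with $n\ge(t+1)(k-t+1)$. The non-triviality constraint complicates the choice of which half to drop. I would handle this by treating separately the case where the generators containing $s$ are only needed to destroy a common $t$-set; this is exactly the HM case and leads to $\mathcal{H}(n,k,t)$.
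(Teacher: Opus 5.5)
Your outline follows the Ahlswede--Khachatrian strategy. The step that carries all the difficulty is asserted rather than proved, and as you state it, the pushing-down lemma cannot handle the range $t+2<s\le k+1$.

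Take $t\ge2$, $k\ge t+4$ and $\mathcal{L}(n,k,t,u,u)$ with $4\le u\le k-t$. This family is shifted, maximal and non-trivial. Its minimal generators are $[t]\cup\{j\}$ ($t<j\le t+u$) and $[t+u]\setminus\{x\}$ ($x\in[t]$). So $s=t+u$, and the generators through $s$ form exactly the two classes your lemma pairs: $|G\cap[s-1]|=t$ and $t+u-2$, which sum to $s+t-2$. The two choices of ``half'' behave as follows.
\begin{itemize}
\item Pushing $[t]\cup\{t+u\}$ down to $[t]$ and discarding the other class gives the full $t$-star, which is trivial.
\item Pushing the sets $[t+u]\setminus\{x\}$ down and discarding $[t]\cup\{t+u\}$ gives $\mathcal{L}(n,k,t,u-1,u-1)$. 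This is strictly smaller, since $|\mathcal{L}(n,k,t,u,u)|-|\mathcal{L}(n,k,t,u-1,u-1)|=\binom{n-t-u}{k-t-1}-t\binom{n-t-u}{k-t-u+2}>0$ for $u\ge4$ and $n\ge(t+1)(k-t+1)$.
\end{itemize}
So neither choice is admissible. Families in this range have to be moved \emph{up} towards $s=k+1$, or compared directly with $\mathcal{H}(n,k,t)$. The range also contains many other shifted non-trivial families, e.g.\ $\{F:|F\cap[t+2r]|\ge t+r\}$ with $t+2r\le k+1$.

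Hence ``reduces to $s\le k+1$, giving the Hilton--Milner-type configuration'' and ``the extremal generator structure is exactly $\mathcal{H}(n,k,t)$'' do not follow from your lemma. They are the entire content of the Ahlswede--Khachatrian nontrivial-intersection theorem, and your last paragraph names the obstacle without giving a mechanism.

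For comparison, the paper does not prove the statement in this generality either. It quotes it, and recovers it only for $t\ge2$ and $n\ge t+(k-t)\max\{30(t+2),2ek\}$ as the case $\mathcal{F}=\mathcal{G}$ of Theorem \ref{thmmin-max}, with no shifting. There the ``push-up'' you are missing is Lemma \ref{lemmafin-stru2}. The $(t+1)$-element $t$-covers form a sunflower with kernel $X$, and the bound $g(s)$ in terms of their number $s$ increases up to $s=k-t+1$, where it equals $|\mathcal{H}(n,k,t)|$. The $\mathcal{A}(n,k,t)$ alternative comes from Lemma \ref{lemmafin-stru1}.

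The other steps are also thinner than they look.
\begin{itemize}
\item In Step 1, for $t\ge2$ you can finish by passing to the link of the common $(t-1)$-set $T$. There $\mathcal{F}(T)$ is a non-trivial intersecting family of $(k-t+1)$-sets, so the case $t=1$ gives $|\mathcal{F}|\le|\mathcal{H}(n,k,t)|-(t-1)$.
\item For $t=1$, however, ``one then checks'' is the Hilton--Milner theorem itself and needs a real argument. One route is a bound on $|\mathcal{P}|+|\mathcal{Q}|$ for non-empty cross-intersecting $\mathcal{P},\mathcal{Q}\subseteq\binom{[n]\setminus\{i,j\}}{k-1}$.
\item In Step 3 the ``unshifting'' lemma for $\mathcal{A}$ and $\mathcal{H}$ has to be proved, including in the branch where you stopped shifting.
\item The boundary $n=(t+1)(k-t+1)$ cannot be settled by tightness alone. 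For $t=1$, $n=2k$ the characterization is in fact false: $\binom{[5]}{3}\subseteq\binom{[6]}{3}$ is non-trivial intersecting of size $10=|\mathcal{A}(6,3,1)|=|\mathcal{H}(6,3,1)|$ but is isomorphic to neither. So the uniqueness part needs $n>2k$ when $t=1$.
\end{itemize}
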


Theorem \ref{HM} can be regarded as a \emph{stability result} for the Erd\H{o}s--Ko--Rado theorem: if a $t$-intersecting family is nearly extremal in size, in the sense that its size is larger than that of $\mathcal{A}(n,k,t)$ and $\mathcal{H}(n,k,t)$, then it is also close to the optimum in structure, namely, it is contained in a full $t$-star. 

Another classical problem closely related to stability for the Erd\H{o}s--Ko--Rado theorem concerns how a large $t$-intersecting family can differ from a full $t$-star. A natural way to measure this difference is through degree conditions. In 1987, Frankl \cite{Frankl-1987} established a far-reaching sharpening of the Hilton--Milner theorem, characterizing largest $1$-intersecting families under constraints on the maximum degree. Frankl's theorem has motivated several insightful results over the past decade, which we discuss in Section \ref{subsecdiv}. For some related stability results for the Erd\H{o}s--Ko--Rado theorem, we refer the reader to \cite{Keevash,Dinur-Friedgut,Kupavskii-Zakharov-2018,Ellis-2019,Keevash-Long-2020}.

 Intersection problems arise naturally for a variety of mathematical objects. The study of these problems has given rise to a wealth of results and methods in combinatorics. For a systematic introduction to the field, we refer the reader to the  comprehensive monographs \cite{ Frankl-Tokushige-book, Godsil-Meagher-book} and  survey papers \cite{Deza-Frankl-1983,Frankl-Tokushige-2016,Ellis-book}. 
 
 In 2024, Kupavskii and Zakharov  \cite{Kupavskii-Zakharov-2024} introduced the powerful \emph{spread approximation method}. The method builds on the breakthrough paper \cite{sunflower} of Alweiss, Lovett, Wu and Zhang on the Erd\H{o}s--Rado sunflower conjecture, and has been further developed in a series of works. This has led to progress on a variety of major open problems, including the forbidden intersection problem \cite{Kupavskii-Zakharov-2024}, the  Hajnal--Rothschild problem \cite{Frankl-Kupavskii-2025}, the intersection problem for permutations \cite{Kupavskii-Zakharov-2024,Kupavskii-2024,Saengrungkongka,KKLS} and set partitions \cite{Kupavskii-2026}. 
 
 Another powerful and influential method is the \emph{junta method} introduced by Keller and Lifshitz \cite{Keller-Lifshitz}, which was further developed by Ellis, Keller and Lifshitz \cite{Ellis-2024}. This has been used to resolve several longstanding open problems in combinatorics. We further refer the reader to recent breakthrough works based on hypercontractivity for global functions, due to Keevash, Lifshitz, Long and Minzer \cite{Keevash-2021,Keevash-2023,Keevash-2024} and to Keller, Lifshitz, Minzer and Sheinfeld \cite{KLMS}.
 
 One of the most natural and extensively studied generalizations of the notion of $t$-intersection is that of \emph{cross $t$-intersection}. Two families $\mathcal{F}\subseteq\binom{[n]}{k}$ and $\mathcal{G}\subseteq\binom{[n]}{\ell}$ are said to be \emph{cross $t$-intersecting} if $|F\cap G|\geq t$ for all $F\in\mathcal{F}$ and $G\in\mathcal{G}$. In particular, a $t$-intersecting family is cross $t$-intersecting with itself. Let $\mathcal{F}\subseteq\binom{[n]}{k}$ and $\mathcal{G}\subseteq\binom{[n]}{\ell}$ be cross $t$-intersecting. The pair $(\mathcal{F},\mathcal{G})$ is \emph{maximal} if $\mathcal{F}=\mathcal{F}'$ and $\mathcal{G}=\mathcal{G}'$ whenever $\mathcal{F}'\subseteq\binom{[n]}{k}$ and $\mathcal{G}'\subseteq\binom{[n]}{\ell}$ are cross $t$-intersecting with $\mathcal{F}\subseteq\mathcal{F}'$ and $\mathcal{G}\subseteq\mathcal{G}'$.
 
 In this paper, we present a unified approach to cross $t$-intersection problems, and use it to derive a range of extremal and stability results. It is inspired by the peeling procedure developed by Kupavskii and Zakharov in \cite{Kupavskii-Zakharov-2024}, which forms part of the spread approximation method, and incorporates ideas from the \emph{$t$-cover method} (cf. e.g., \cite{Blokhuis-etal-2010,Lv-2021,Cao-Lu-Lv-Wang-2024,Wen-Lv-2026}). More precisely, given a pair of cross $t$-intersecting families, we generate, via an iterative procedure, a sequence of pairs of families, called \emph{fingerprints}, which encode structural information on the original pair. This enables us to apply the $t$-cover method more effectively, whose key step is precisely to find a suitable collection of $t$-covers, and thereby obtain improved estimates. Let us note two features of our approach. First, complementing existing methods, the approach is useful for characterizing large cross $t$-intersecting families, and provides a flexible framework for proving Hilton--Milner type theorems and stability results. Second, there are a variety of powerful techniques for dealing with cross $1$-intersecting families, many of which rely crucially on the shifting technique and the Hilton lemma (observed by Hilton in \cite{Hilton-1976} as an application of the seminal Kruskal--Katona theorem; see also e.g. \cite{Frankl-1987,Frankl-Tokushige-2016,Kupavskii-Zakharov-2018}). To the best of the authors' knowledge, such methods are  less readily adaptable to the case $t\geq2$, since no analogue of the Hilton lemma is available in this setting.  This makes the present method particularly relevant to the study of cross $t$-intersecting families for $t\geq2$. In the rest of this section, we present several of its applications.
 \subsection{Erd\H{o}s--Ko--Rado theorems for sufficiently spread structures}
 Let $\Omega$ be a finite set. For a family $\mathcal{A}$ of subsets of $\Omega$ and a subset $S$ of $\Omega$, we write
 \begin{align}
 	\mathcal{A}[S]&:=\left\{A\in\mathcal{A}:S\subseteq A\right\}\label{equlink}
 \end{align}
 the collection of members of $\mathcal{A}$ containing $S$. 
 The \emph{maximum $s$-degree} of $\mathcal{A}$, denoted $\Delta_s(\mathcal{A})$, is the maximum number of members of $\mathcal{A}$ that contain a given $s$-subset. Formally, $$\Delta_s(\mathcal{A}):=\max\left\{|\mathcal{A}[S]|:S\in\binom{\Omega}{s}\right\}.$$
 The maximum $1$-degree is simply called the maximum degree, and we use the notation $\Delta(\mathcal{A})$ for that of $\mathcal{A}$. A family $\mathcal{A}$ of subsets of $\Omega$ is said to be \emph{$r$-spread} if $\Delta_s(\mathcal{A})\leq r^{-s}|\mathcal{A}|$ for all non-negative integer $s$. For example, the family $\binom{[n]}{k}\subseteq2^{[n]}$ is $r$-spread whenever $r\leq r_0:=n/k$, due to the fact that $\binom{n-s}{k-s}\leq r_0^{-s}\binom{n}{k}$ for all  $s$. 
 
 Spreadness  plays an important role in the paper \cite{sunflower} of Alweiss, Lovett, Wu and Zhang, as well as in a series of recent major advances in combinatorics. In that breakthrough paper, a key observation concerning the intersection problem from \cite[Theorem 4.2]{sunflower} is that a $(C\log k)$-spread $k$-uniform family, for all sufficiently large constants $C$, indeed contains two disjoint sets, and consequently cannot be $1$-intersecting. As pointed out in \cite{Frankl-Kupavskii-2025,Kupavskii-2026}, a similar observation is also crucial for the spread approximation method.
 
 In a highly spread family, intersecting subfamilies without a large common intersection are unlikely to be large. In this sense, many natural intersection settings involve sufficiently spread ambient families, in which large intersecting families are therefore expected to be star-like. This naturally leads to the problem of establishing an Erd\H{o}s--Ko--Rado type theorem for sufficiently spread settings. A family $\mathcal{A}$ of subsets of $\Omega$ is 
 \emph{weakly $(r,t)$-spread} if $$\Delta_{t+s}(\mathcal{A})\leq r^{-s}\Delta_{t}(\mathcal{A})\;\;\mbox{for}\;\;s=0,1,2,\ldots\;.$$
 The notion of weak  spreadness was introduced by Kupavskii  \cite{Kupavskii-2026} to settle intersection problems for some relatively `irregular' structures, including several classes of set partitions with restricted sizes of blocks. Our first main result gives such a theorem for sufficiently spread families.
 \begin{theorem}\label{thmcrossekr}
 	Let $k\geq\ell\geq t+1$ be positive integers and  $r\geq1$. Let $\Omega$ be a finite set and let $\mathcal{A}\subseteq\binom{\Omega}{k}$ and $ \mathcal{B}\subseteq\binom{\Omega}{\ell}$ be two weakly $(r,t)$-spread families. The following hold.
 	\begin{itemize}
 		\item[\rm(i)]If $r\geq2ek$ and  $\mathcal{F}\subseteq\mathcal{A}$ is $t$-intersecting, then $|\mathcal{F}|\leq\Delta_{t}(\mathcal{A})$, with equality if and only if  $\mathcal{F}=\mathcal{A}[X]$ for some $X\in\binom{\Omega}{t}$ with $|\mathcal{A}[X]|=\Delta_{t}(\mathcal{A})$.
 		\item[\rm(ii)]If $r\geq2e\cdot\max\{(t+2)^2,k/(1-t/k)\}$ and $\mathcal{F}\subseteq\mathcal{A}, \mathcal{G}\subseteq\mathcal{B}$ are cross $t$-intersecting, then
 		$|\mathcal{F}||\mathcal{G}|\leq\Delta_{t}(\mathcal{A})\Delta_{t}(\mathcal{B})$, 
 		with equality if and only if  $\mathcal{F}=\mathcal{A}[X]$ and $\mathcal{G}=\mathcal{B}[X]$ for some $X\in\binom{\Omega}{t}$ with $|\mathcal{A}[X]|=\Delta_{t}(\mathcal{A})$ and $|\mathcal{B}[X]|=\Delta_{t}(\mathcal{B})$.
 	\end{itemize}
 \end{theorem}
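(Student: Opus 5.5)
The plan is to use the peeling (spread approximation) procedure of Kupavskii and Zakharov together with a $t$-cover count. Write $\Delta:=\Delta_t(\mathcal{A})$. First I reduce to the trivial case. If $\mathcal{F}\subseteq\mathcal{A}[X]$ for some $X\in\binom{\Omega}{t}$, then $|\mathcal{F}|\le|\mathcal{A}[X]|\le\Delta$, and equality forces $\mathcal{F}=\mathcal{A}[X]$ with $|\mathcal{A}[X]|=\Delta$. So it suffices to show that a non-trivial $t$-intersecting $\mathcal{F}\subseteq\mathcal{A}$ satisfies $|\mathcal{F}|<\Delta$.

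\textbf{Peeling.} Fix $r_0=r/2$ or a similar threshold. Starting from $\mathcal{F}_0=\mathcal{F}$, I repeatedly take a maximal set $S$ with $|S|\ge t$ such that $|\mathcal{F}_i[S]|>r_0^{-(|S|-t)}\Delta$. I record $S$ in the fingerprint $\mathcal{S}$ and remove $\mathcal{F}_i[S]$.
\begin{itemize}
\item When no such $S$ exists, the remainder $\mathcal{F}'$ has every $|\mathcal{F}'[T]|$, for $|T|\ge t$, small compared with $r_0^{-(|T|-t)}\Delta$.
\item Applying the key ALWZ-type observation to the links shows that the part of $\mathcal{F}'$ not captured by $\mathcal{S}$ must be small. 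This observation says that two sufficiently spread families, with spreadness $\ge 2ek$ measured relative to $k$-uniformity, cannot be cross $t$-intersecting. In the simplest form: $\mathcal{F}'[T]$ minus $T$ is $r_0$-spread and cross $1$-intersecting with itself after restriction, which is impossible.
\end{itemize}
Each removed piece has size at most $\Delta_{|S|}(\mathcal{A})\le r^{-(|S|-t)}\Delta$ by weak spreadness. The maximality of $S$, combined with the spread lemma, also shows that the fingerprint sets are pairwise $t$-intersecting. This gives the approximation
\[
|\mathcal{F}|\le\sum_{S\in\mathcal{S}}r^{-(|S|-t)}\Delta+(\text{small}).
\]

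\textbf{Counting the fingerprint via $t$-covers.} If $\mathcal{S}$ contains a $t$-set $X$, then every other $S\in\mathcal{S}$ and every remaining member of $\mathcal{F}$ outside $\mathcal{A}[X]$ must meet $X$ in fewer than $t$ points. By non-triviality there is a $G\in\mathcal{F}$ with $X\not\subseteq G$. Every member of $\mathcal{F}[X]$ must $t$-intersect $G$, so it contains $X\cup\{y\}$ for some $y\in G\setminus X$. Hence
\[
|\mathcal{F}[X]|\le|\mathcal{A}[X]|-|\{A\in\mathcal{A}[X]:A\cap (G\setminus X)=\emptyset\}|,
\]
and weak spreadness shows the deficit is at least $(1-k/r)\Delta$. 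What remains is to prove that the members of $\mathcal{F}$ outside $X$ contribute less than this deficit. Those members are covered by sets $S\supsetneq$ of size $\ge t+1$, built by branching on elements of a fixed member. At each level there are at most $k$ branches, each costing a factor $r^{-1}$, so there are at most $k^s$ sets at level $t+s$. The resulting bound is $\sum_{s\ge1}(k/r)^s\Delta\le\Delta/(2e-1)$, which sits comfortably under the deficit when $r\ge 2ek$. If $\mathcal{S}$ contains no $t$-set, the same branching gives $|\mathcal{F}|\le\sum_s (\text{count})\,r^{-s}\Delta<\Delta$ directly.

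\textbf{Part (ii) and the main obstacle.} For (ii) I would run the procedure simultaneously on $\mathcal{F}$ and $\mathcal{G}$, obtaining fingerprints $\mathcal{S}_{\mathcal{F}},\mathcal{S}_{\mathcal{G}}$ that are cross $t$-intersecting, and compare $|\mathcal{F}||\mathcal{G}|$ with $\Delta_t(\mathcal{A})\Delta_t(\mathcal{B})$. The delicate case, which I expect to be the main obstacle, is when $\mathcal{F}$ is star-like on a $t$-set $X$ but $\mathcal{G}$ is not, or when they are star-like on different $t$-sets. Then one family can be larger than its star while the other is much smaller, so the cross condition must be used to trade off the two sizes. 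Here I would use a $t$-cover argument. If $\mathcal{G}$ is not contained in $\mathcal{B}[X]$, then $\mathcal{F}$ is covered by at most $\binom{t+2}{t}$-type choices of $(t+1)$-sets; I expect the $(t+2)^2$ in the hypothesis to come from this count. The condition $r\ge 2ek/(1-t/k)$ is what keeps the deficit of the non-star family large enough to beat the gain of the other, via an AM--GM style comparison of the two ratios. Equality then forces both families to be full stars on a common $X$ attaining both maximum degrees.
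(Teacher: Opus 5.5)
There is a genuine gap, and it sits where the difficulty of the theorem lies. First, your peeling never removes anything. Since $\mathcal{F}_i\subseteq\mathcal{A}$, weak $(r,t)$-spreadness already gives $|\mathcal{F}_i[S]|\le\Delta_{|S|}(\mathcal{A})\le r^{-(|S|-t)}\Delta\le r_0^{-(|S|-t)}\Delta$ for every $S$ with $|S|\ge t$ whenever $r_0\le r$. So $\mathcal{S}=\emptyset$ and $\mathcal{F}'=\mathcal{F}$, and the case ``$\mathcal{S}$ contains a $t$-set'' never occurs. The whole theorem is then pushed into the claim that the remainder is small, and the spread observation does not give this: $\mathcal{F}'[T]$ is $t$-intersecting through $T$ itself, so spreadness of $\mathcal{F}'(T)$ yields no contradiction.

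Second, and more fundamentally, your branching count loses a factor that cannot be absorbed when $r=\Theta(k)$. Members of $\mathcal{F}\setminus\mathcal{F}[X]$ do not contain $X$, so the branching cannot start at $X$. Starting from a fixed $G\in\mathcal{F}$, the first step is a choice among the $\binom{k}{t}$ $t$-subsets of $G$. Nothing in your procedure justifies ``at most $k^s$ sets at level $t+s$''; bounding how many fingerprint sets contain a given $t$-set requires a minimality property. Done correctly, this branching gives at best $|\mathcal{F}|\le\binom{k}{t}\,k\,\Delta/r$, which at $r=2ek$ exceeds $\Delta$ as soon as $\binom{k}{t}>2e$.

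The missing idea is the paper's two-sided, level-by-level procedure. One starts from a fingerprint $(\mathcal{S}_0,\mathcal{T}_0)$ in which every set on each side is a minimal $t$-cover of the other side. At round $i$ one splits off only the sets of the current top size $q-i$, then re-minimizes. At that stage all sets on both sides have size at most $q-i$, so minimality forbids non-trivial links that are more than $(q-t-i+1)$-spread (Lemma~\ref{lemmafingerprintproperty}~(iv)--(v)). This yields $|\mathcal{X}_i|\le\binom{q-i}{t}(q-t-i+1)^{q-t-i}$. At level $t+j$ the choice of a $t$-subset thus costs $\binom{t+j}{t}$ rather than $\binom{k}{t}$, and the branching factor is $j+1$ rather than $k$. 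The weights $T_j=\binom{j+t}{t}(j+1)^j/r^j$ then decay geometrically once $r\ge 2ek$, with $T_1=2(t+1)/r\le 1/e$.

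For (ii) you only have a plan, and your guess about where $(t+2)^2$ comes from is not the actual mechanism. In the paper (with $\varepsilon=1/2$), suppose the procedure stops because the $\mathcal{F}$-side consists only of sets of size $t+j$. Then $|\mathcal{F}|<T_j\Delta_t(\mathcal{A})/(1-\varepsilon)$. Meanwhile $\mathcal{G}$ can still have size about $\binom{t+j}{t}\Delta_t(\mathcal{B})$, since its surviving members need only contain a $t$-subset of one fixed $(t+j)$-set. One must show $T_j\bigl(T_{j+1}+(1-\varepsilon)\binom{j+t}{t}\bigr)<(1-\varepsilon)^2$. This expression decreases in $j$ provided $r>e(j+t+1)^2/(j+1)$, which is where $k^2/(k-t)$ enters (at $j=k-t-1$). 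Its value at $j=1$ is roughly $(t+1)^2/r$, which is where $(t+2)^2$ enters. If the procedure instead runs to the end, both sides collapse to a common $t$-set $X$. Then a member of $\mathcal{F}$ missing $X$ forces $|\mathcal{G}[X]|\le\frac{k-t+1}{r}\Delta_t(\mathcal{B})$, and the product falls below $\Delta_t(\mathcal{A})\Delta_t(\mathcal{B})$. Neither mechanism appears in your sketch.
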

 A feature of this result is that an Erd\H{o}s--Ko--Rado type theorem holds whenever the families under consideration are sufficiently spread, regardless of their precise structure. We note that Theorem \ref{thmcrossekr} (i) was proved by Kupavskii  \cite{Kupavskii-2026} for $r\geq24k$, and our contribution here is limited to improving the constant. We discuss in Section \ref{secrmk}  some applications of the result on $t$-intersecting families for signed sets and $k$-partitions.
\subsection{Structure of large cross $t$-intersecting families}
A natural and longstanding parameter for a pair $(\mathcal{F},\mathcal{G})$ of cross $t$-intersecting families is
$$\min\{|\mathcal{F}|, |\mathcal{G}|\}.$$
In 1967, Hilton and Milner conjectured in the classical paper \cite{Hilton-Milner-1967} that if $n\geq2k$ and $\mathcal{F},\mathcal{G}\subseteq\binom{[n]}{k}$ are cross $1$-intersecting, then $\min\{|\mathcal{F}|, |\mathcal{G}|\}\leq\binom{n-1}{k-1}$. In 1968, Kleitman settled the conjecture in a stronger form that if $n\geq k+\ell$, and $\mathcal{F}\subseteq\binom{[n]}{k},\mathcal{G}\subseteq\binom{[n]}{\ell}$ are cross $1$-intersecting, then either $|\mathcal{F}|\leq\binom{n-1}{k-1}$, or $|\mathcal{G}|\leq\binom{n-1}{\ell-1}-\binom{n-k-1}{\ell-1}$. In particular, when $k=\ell$, this yields $
\min\{|\mathcal F|,|\mathcal G|\}\le \binom{n-1}{k-1}$, and the bound is sharp, as witnessed by two full $1$-stars with the same center.  This motivates the natural problem of determining the maximum of $\min\{|\mathcal{F}|, |\mathcal{G}|\}$ under the condition that $\cap_{F\in\mathcal{F}\cup\mathcal{G}}F=\emptyset$. The problem was settled by M\"{o}rs \cite{Mors} in 1985 and by F\"{u}redi \cite{Furedi-1995} in 1994.
\begin{theorem}[\cite{Mors,Furedi-1995}]
	Let $n>k+\ell$. If  $\mathcal{F}\subseteq\binom{[n]}{k}$ and $\mathcal{G}\subseteq\binom{[n]}{\ell}$ are  cross $1$-intersecting families with $\cap_{F\in\mathcal{F}\cup\mathcal{G}}F=\emptyset$, then either $$|\mathcal{F}|\leq\binom{n-1}{k-1}-\binom{n-\ell-1}{k-1}+1\;\;\mbox{or}\;\;|\mathcal{G}|\leq\binom{n-1}{\ell-1}-\binom{n-k-1}{\ell-1}+1.$$ 
	Moreover, for $k,\ell\geq4$, if $|\mathcal{F}|=|\mathcal{H}(n,k,1)|$ and $|\mathcal{G}|=|\mathcal{H}(n,\ell,1)|$, then $\mathcal{F}=\mathcal{H}(\{x\},K,L)$ and  $\mathcal{G}=\mathcal{H}(\{x\},L,K)$ for some $x\in[n]$, $K\in\binom{[n]}{k+1}$ and  $L\in\binom{[n]}{\ell+1}$ with $x\in K\cap L$ and $|K\cap L|\geq2$.
\end{theorem}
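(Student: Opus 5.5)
Both the M\"{o}rs--F\"{u}redi bound and the extremal structure will be derived from Kleitman-type cross-intersection bounds together with a case analysis on a vertex of high degree. We may assume $(\mathcal{F},\mathcal{G})$ is maximal, since enlarging the families keeps them cross $1$-intersecting. We may also assume $\cap_{F\in\mathcal{F}\cup\mathcal{G}}F=\emptyset$ is preserved under enlargement; if it were not, the enlarged pair would be trivial and could be handled directly. Suppose, for contradiction, that $|\mathcal{F}|>\binom{n-1}{k-1}-\binom{n-\ell-1}{k-1}+1$ and $|\mathcal{G}|>\binom{n-1}{\ell-1}-\binom{n-k-1}{\ell-1}+1$.

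First, pick a vertex $x$ maximizing $|\mathcal{F}[\{x\}]|+|\mathcal{G}[\{x\}]|$ and split each family into the sets containing $x$ and the sets avoiding $x$. Write $\mathcal{F}_0,\mathcal{G}_0$ for the parts avoiding $x$. By hypothesis at least one of them is nonempty; by symmetry take $G_0\in\mathcal{G}_0$. Every member of $\mathcal{F}$ meets $G_0$, so the sets of $\mathcal{F}$ containing $x$ number at most $\binom{n-1}{k-1}-\binom{n-\ell-1}{k-1}$. Hence the lower bound on $|\mathcal{F}|$ forces $\mathcal{F}_0\neq\emptyset$ as well, and pick $F_0\in\mathcal{F}_0$. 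With $F_0,G_0$ fixed, the families $\mathcal{F}[\{x\}]$ and $\mathcal{G}[\{x\}]$ are squeezed: each member must meet the fixed set from the opposite family.

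The second step is the counting trade-off, which I expect to be the main obstacle. Each extra set in $\mathcal{F}_0$ or $\mathcal{G}_0$ beyond a single one removes many sets from the stars at $x$, roughly $\binom{n-\ell-2}{k-2}$ per additional set, since two distinct sets of $\mathcal{G}_0$ exclude more link sets than one does. Set $f=|\mathcal{F}_0|$ and $g=|\mathcal{G}_0|$. The plan is to show
\[
|\mathcal{F}|\le \binom{n-1}{k-1}-\binom{n-\ell-1}{k-1}+f-(\text{loss from } g-1\ge 1),
\]
and symmetrically for $\mathcal{G}$. To compare the gain $f$ against the loss, I would apply Kleitman's theorem, or a Hilton-type bound (the Hilton lemma via Kruskal--Katona), to the cross-intersecting pair $(\mathcal{F}_0,\mathcal{G}_0)$ on $[n]\setminus\{x\}$. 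When both $f$ and $g$ are at least $2$, the loss exceeds the gain because $n>k+\ell$. When $f=g=1$, both bounds are attained with equality, contradicting the strict assumption.

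Finally, for the equality characterization with $k,\ell\ge4$, equality forces $f=g=1$ with $F_0,G_0$ uniquely determined. Set $K=F_0\cup\{x\}$ and $L=G_0\cup\{x\}$. The families are then exactly $\mathcal{F}=\mathcal{H}(\{x\},K,L)$, meaning the $k$-sets through $x$ meeting $L\setminus\{x\}$ together with $K\setminus\{x\}$, and symmetrically $\mathcal{G}=\mathcal{H}(\{x\},L,K)$. The requirement $|F_0\cap G_0|\ge1$, that is $|K\cap L|\ge2$, comes from cross-intersection of the two special sets. The condition $k,\ell\ge4$ is used to rule out small-case alternatives akin to the triangle-type families that appear when $k=3$.
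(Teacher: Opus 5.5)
The paper does not prove this theorem. It is quoted from M\"{o}rs and F\"{u}redi, and the paper's fingerprint/$t$-cover machinery is carried out only for $t\ge2$ and much larger $n$. So your sketch has to stand on its own, and it has a genuine gap exactly at the step you flag.

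Your first step already yields more than you use. Since $|\mathcal{F}[\{x\}]|\le\binom{n-1}{k-1}-\binom{n-\ell-1}{k-1}$, the contradiction hypothesis forces $f\ge2$, and then symmetrically $g\ge2$. So the case $f=g=1$ never arises, and everything rests on the assertion that for $f,g\ge2$ ``the loss exceeds the gain because $n>k+\ell$''. You give no argument for it. Moreover, with the ingredients you list it is false: cross-intersection of $\mathcal{F}_0,\mathcal{G}_0$ on $[n]\setminus\{x\}$, $f,g\ge2$, maximality, and $n>k+\ell$. Take $\mathcal{F},\mathcal{G}$ to be the full stars at some $y\ne x$, with $k,\ell\ge2$. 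This pair is maximal and cross-intersecting, with $f=\binom{n-2}{k-1}\ge2$ and $g=\binom{n-2}{\ell-1}\ge2$. Yet $|\mathcal{F}|=\binom{n-1}{k-1}$ and $|\mathcal{G}|=\binom{n-1}{\ell-1}$ exceed both bounds.

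Only two things exclude this example: the hypothesis $\cap_{F\in\mathcal{F}\cup\mathcal{G}}F=\emptyset$, or your maximal-degree choice of $x$, which would select $y$. But you use non-triviality only to produce $F_0,G_0$, and the degree condition is never used at all. For the same reason, Kleitman's theorem or Hilton's lemma applied to $(\mathcal{F}_0,\mathcal{G}_0)$ alone cannot close the argument. In this example the local pair is a pair of full stars on $[n]\setminus\{x\}$, which attains Kleitman's bound and consists of lexicographic initial segments.

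The per-set heuristic also misrepresents what the missing inequality must look like. The loss in $|\mathcal{F}[\{x\}]|$ is a Kruskal--Katona shadow quantity and is far from additive in $g$. Suppose every member of $\mathcal{G}_0$ contains a fixed $(\ell-1)$-set $C\subseteq[n]\setminus\{x\}$ and $g\ge k$. Then the $(k-1)$-sets excluded from the link of $x$ are exactly the $(k-1)$-subsets of $[n]\setminus(C\cup\{x\})$. So the loss beyond the one-set case is only $\binom{n-\ell-1}{k-2}$, however large $g$ is. Meanwhile $\mathcal{F}_0$ may contain every $k$-subset of $[n]\setminus\{x\}$ meeting $C$. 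This actually occurs in the non-trivial maximal pair $\mathcal{F}=\{F:F\cap C\neq\emptyset\}$, $\mathcal{G}=\{G:C\subseteq G\}$ with $\ell\ge3$ and $x\notin C$, where $|\mathcal{F}|$ is far above its bound.

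So you cannot expect ``loss $\ge$ gain'' on both sides. What is needed is a coupled inequality that compares $|\mathcal{F}_0|$ with the shadow of $\mathcal{G}_0$ and $|\mathcal{G}_0|$ with the shadow of $\mathcal{F}_0$ simultaneously. Non-triviality, or the degree maximality of $x$, must genuinely enter that inequality. This is the missing idea, and it is where Kruskal--Katona or shifting does the real work in the original proofs.

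The equality part inherits the gap. ``Equality forces $f=g=1$'' is the same unproved trade-off. You also do not locate where $k,\ell\ge4$ is used, although for $k=\ell=3$ there really are other extremal pairs. For instance, $\mathcal{F}=\mathcal{G}=\{F:|F\cap[3]|\ge2\}$ has size $3n-8=|\mathcal{H}(n,3,1)|$.
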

Let us note that in \cite{Furedi-1995}, the author also  characterized the extremal families for $k\leq3$ or $\ell\leq3$. We refer the reader to \cite{Huang-2019,Frankl-Kupavskii-2020} for other results on the topic. In this paper, we establish this max-min theorem for all $t\geq2$. We remark that, to avoid lengthy calculations, we restrict our attention
to the uniform setting $\binom{[n]}k$, which already reveals the essence of
our method. Before presenting it, let us introduce the following families.
\begin{construction}\label{constructionh}
	Let $X\in\binom{[n]}{t}$ and $K,L\in\binom{[n]}{k+1}$ with $X\subseteq K\cap L$. Define
	\begin{equation*}
		\mathcal{H}(X,K,L)=\left\{F\in\binom{[n]}{k}:X\subseteq F,\;F\cap(L\setminus X)\neq\emptyset\right\}\cup\left\{K\setminus\{x\}:x\in X\right\}.
	\end{equation*}
\end{construction}
\begin{construction}
	Let $Z\in\binom{[n]}{t+2}$. Define
	\begin{equation*}
		\mathcal{A}(Z)=\left\{F\in\binom{[n]}{k}:|F\cap Z|\geq t+1\right\}.
	\end{equation*}
\end{construction}
We note that $\mathcal{H}(n,k,t)=\mathcal{H}([t],[k+1],[k+1])$ and $\mathcal{A}(n,k,t)=\mathcal{A}([t+2])$.
\begin{theorem}\label{thmmin-max}
	Let $k\geq t+2\geq4$ and $\mathcal{F},\mathcal{G}\subseteq \binom{[n]}{k}$ be cross $t$-intersecting with $|\cap_{F\in\mathcal{F}\cup\mathcal{G}}F|<t$. If $n\geq t+(k-t)\cdot\max\{30(t+2),2ek\}$, then 
	$$\min\{|\mathcal{F}|,  |\mathcal{G}|\}\leq\max\{|\mathcal{A}(n,k,t)|,  |\mathcal{H}(n,k,t)|\}.$$
	Moreover, if equality holds, then either 
	\begin{itemize}
		\item[\rm(i)]there exist $X\in\binom{[n]}{t}$ and $K,L\in\binom{[n]}{k+1}$ with $X\subseteq K\cap L$ and $|K\cap L|\geq t+2$ such that $\mathcal{F}=\mathcal{H}(X,K,L)$ and $\mathcal{G}=\mathcal{H}(X,L,K)$, or
		\item[\rm(ii)]there exists $Z\in\binom{[n]}{t+2}$ such that  $\mathcal{F}=\mathcal{G}=\mathcal{A}(Z)$.
	\end{itemize}
\end{theorem}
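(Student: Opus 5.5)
We argue via $t$-covers and a case analysis on the common intersections of $\mathcal{F}$ and $\mathcal{G}$. Throughout we assume $\min\{|\mathcal{F}|,|\mathcal{G}|\}\ge\max\{|\mathcal{A}(n,k,t)|,|\mathcal{H}(n,k,t)|\}$; in the range $n\geq t+(k-t)\max\{30(t+2),2ek\}$ this quantity is roughly $(k-t+1)\binom{n-t-1}{k-t-1}$ or $(t+2)\binom{n-t-1}{k-t-1}$. We may also assume $(\mathcal{F},\mathcal{G})$ is maximal, since enlarging the pair does not decrease the minimum.

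\emph{Step 1: reduction to small $t$-covers.} For a family $\mathcal{G}$, a $t$-cover is a set $T$ with $|T\cap G|\ge t$ for all $G\in\mathcal{G}$. Every $F\in\mathcal{F}$ is a $k$-element $t$-cover of $\mathcal{G}$, and vice versa. We run the iterative peeling (fingerprint) procedure on the pair: repeatedly pick a set $S$ of minimal size such that $\mathcal{F}[S]$ is large relative to $\binom{n-|S|}{k-|S|}$, peel it off, and continue. This produces families $\mathcal{S}_\mathcal{F},\mathcal{S}_\mathcal{G}$ of sets of size at most $k$ with the following properties:
\begin{itemize}
\item they are still cross $t$-intersecting;
\item all but a negligible fraction of $\mathcal{F}$ (resp. $\mathcal{G}$) contains a member of $\mathcal{S}_\mathcal{F}$ (resp. $\mathcal{S}_\mathcal{G}$).
\end{itemize}
The remainder is bounded by roughly $(\text{const}\cdot k/(n-t))^{j}\binom{n-t}{k-t}$. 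Spreadness of $\binom{[n]}{k}$ with $r=(n-t)/(k-t)\ge 30(t+2)$ makes sets of size at least $t+2$ in the fingerprints contribute at most $O(k^2)\binom{n-t-2}{k-t-2}$, which is much smaller than the target. Hence the bulk must come from $t$- and $(t+1)$-sets.

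\emph{Step 2: case analysis on $\tau_t$.} Let $\tau_t$ denote the minimum size of a $t$-cover.
\begin{itemize}
\item \textbf{Both families have a common $t$-set.} If $\mathcal{F}$ and $\mathcal{G}$ both contain $X\in\binom{[n]}{t}$ in all members, this contradicts the hypothesis $|\cap_{F\in\mathcal{F}\cup\mathcal{G}}F|<t$.
\item \textbf{Only $\mathcal{F}$ is a trivial star.} Suppose $X\subseteq\cap\mathcal{F}$ but $\mathcal{G}$ has a member $G_0\not\supseteq X$. Then every $F\in\mathcal{F}$ meets $G_0$ in $t$ elements while containing $X$, so $|\mathcal{F}|\le\binom{n-t}{k-t}-\binom{n-k-1}{k-t}+O(\cdot)$. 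For $\mathcal{G}$ to be large, almost all of $\mathcal{G}$ must contain $X$. We then show that maximality forces $\mathcal{F}=\mathcal{H}(X,K,L)$ and $\mathcal{G}=\mathcal{H}(X,L,K)$, where $K\setminus\{x\}$, $x\in X$, are the sets of $\mathcal{F}$ avoiding one point of $X$. The exact bound comes from the usual Hilton--Milner counting: sets containing $X$ must meet $L\setminus X$. The condition $|K\cap L|\ge t+2$ arises because the non-star sets must $t$-intersect each other.
\item \textbf{Neither family is trivial.} Then the $t$-sets in the fingerprints cannot all share a common $t$-set. With $\tau_t(\mathcal{F}),\tau_t(\mathcal{G})\ge t+1$, bound each family by the number of $(t+1)$-covers times $\binom{n-t-1}{k-t-1}$, plus error terms. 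Cross $t$-intersecting $(t+1)$-sets containing no common $t$-set are essentially forced to lie in a $(t+2)$-set $Z$, giving $\mathcal{A}(Z)$. Alternatively there are at most $k-t+1$ of them through a common $t$-set, in which case the $\mathcal{H}$-type count is the competitor.
\end{itemize}

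\emph{Main obstacle.} The hardest part is the exact comparison in the non-trivial case. We must show that any configuration of $t$- and $(t+1)$-covers other than the two extremal ones loses at least $\Omega(\binom{n-t-1}{k-t-1})$. This loss has to dominate the fingerprint error terms, which is exactly where the constant $30(t+2)$ and the term $2ek$ enter. I would first classify pairs of cross $t$-intersecting families of $(t+1)$-sets with no common $t$-subset, using the $t$-cover lemma: such families either lie in some $\binom{Z}{t+1}$, or one of them has size at most $k-t+1$ around a fixed $t$-set. Each case is then compared with $\max\{|\mathcal{A}|,|\mathcal{H}|\}$ using the estimate $\binom{n-t-2}{k-t-2}\le\frac{k-t}{n-t}\binom{n-t-1}{k-t-1}$. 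Equality tracing is done last, using maximality.
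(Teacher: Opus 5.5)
Your framework (maximal pair, fingerprints made of $t$-covers, reduction to $t$- and $(t+1)$-sets) is the right one. However, the case analysis puts the extremal configurations in the wrong place, and the sharp step is missing.

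\textbf{The case analysis.} In your second bullet you assume $X\subseteq\cap\mathcal{F}$ and then conclude $\mathcal{F}=\mathcal{H}(X,K,L)$. But the members $K\setminus\{x\}$, $x\in X$, of $\mathcal{H}(X,K,L)$ do not contain $X$, so this is self-contradictory. Your own count in that case is exact, with no $O(\cdot)$ term: every $F\in\mathcal{F}$ contains $X$ and meets $G_0\setminus X$, which has at least $k-t+1$ elements. Hence $|\mathcal{F}|\le\binom{n-t}{k-t}-\binom{n-k-1}{k-t}=|\mathcal{H}(n,k,t)|-t$, and this case is never extremal.

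Both extremal pairs consist of two non-trivial families, so everything happens in your third case. There, your assertion that the $t$-sets in the fingerprints cannot share a common $t$-set is false. For $(\mathcal{H}(X,K,L),\mathcal{H}(X,L,K))$, once the $k$-sets $K\setminus\{x\}$ and $L\setminus\{x\}$ are peeled off, the remaining minimal covers $X\cup\{y\}$ ($y\in L\setminus X$) and $X\cup\{w\}$ ($w\in K\setminus X$) have fingerprint $(\{X\},\{X\})$. This is exactly the paper's case $N=q-t$ (Lemma~\ref{lemmafin-stru2}).

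Your dichotomy for cross $t$-intersecting $(t+1)$-uniform pairs is also incomplete. Pairs built on a $(t-1)$-set $I$, e.g.\ $\{I\cup\{1,2\},I\cup\{3,4\},I\cup\{1,4\}\}$ against $\{I\cup\{1,3\},I\cup\{2,4\},I\cup\{1,4\}\}$, lie in no $\binom{Z}{t+1}$ and are not sunflowers around a $t$-set (Lemma~\ref{lemmastrue-alike}). They must be bounded separately.

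\textbf{The sharp comparison.} More seriously, your proposed mechanism cannot give the exact bound or the equality cases. That mechanism is: bound each family by the number of $(t+1)$-covers times $\binom{n-t-1}{k-t-1}$, add fingerprint error terms, and let the loss of non-extremal configurations dominate the error. It fails for three reasons.
\begin{itemize}
\item With $k-t+1$ covers through $X$, the union bound already exceeds $|\mathcal{H}(n,k,t)|$ by about $\binom{k-t+1}{2}\binom{n-t-2}{k-t-2}$.
\item The available bound on the contribution of fingerprint sets of size at least $t+2$ is of order $\binom{t+2}{2}\binom{n-t-2}{k-t-2}$. At $r=(n-t)/(k-t)=30(t+2)$ this is about $0.3(t+1)\binom{n-t-1}{k-t-1}$, which can exceed the loss (at most $\binom{n-t-1}{k-t-1}$) of a neighbouring configuration.
\item For a configuration with exactly the extremal skeleton there is no loss at all, so you must prove there are \emph{no} extra sets.
\end{itemize}

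\textbf{The missing idea} is an error-free structural step near the extremum. In the paper it goes as follows.
\begin{itemize}
\item For a maximal pair with initial fingerprint $(\mathcal{M}(\mathcal{G}),\mathcal{M}(\mathcal{F}))$ (Lemma~\ref{lemmamaximal}), the $(t+1)$-element members $\mathcal{S}^*,\mathcal{T}^*$ are exactly the $(t+1)$-element $t$-covers of $\mathcal{G}$ and $\mathcal{F}$. Write $s^*=|\mathcal{S}^*|$, $t^*=|\mathcal{T}^*|$.
\item When the procedure ends at $\{X\}$ and $\tau_t(\mathcal{F})=\tau_t(\mathcal{G})=t+1$, these are sunflowers with kernel $X$. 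By maximality, $\mathcal{F}[\mathcal{S}^*]$ is then exactly the set of $k$-sets containing $X$ and meeting $(\cup\mathcal{S}^*)\setminus X$.
\item One shows every member of $\mathcal{F}\cup\mathcal{G}$ misses at most one point of $X$ (otherwise the other family is too small). Hence every $W\in\mathcal{F}\setminus\mathcal{F}[X]$ contains $(\cup\mathcal{T}^*)\setminus X$.
\item With $b=k-t+1$ this gives $|\mathcal{F}\setminus\mathcal{F}[X]|\le t\binom{n-t-t^*}{k-t-t^*+1}$ and $|\mathcal{F}[X]\setminus\mathcal{F}[\mathcal{S}^*]|\le b(b-s^*)\binom{n-t-2}{k-t-2}$, with no fingerprint error.
\item Consequently $|\mathcal{F}|+|\mathcal{G}|\le g(s^*)+g(t^*)$ for a function $g$ increasing on $[3,b]$ with $g(b)=|\mathcal{H}(n,k,t)|$, and equality forces the $\mathcal{H}(X,K,L)$ structure.
\item In the $\mathcal{A}$-regime, $\mathcal{F},\mathcal{G}\subseteq\mathcal{A}(Z)$ follows directly from $|\mathcal{S}^*|,|\mathcal{T}^*|\geq 3$ inside $\binom{Z}{t+1}$, not from counting.
\end{itemize}

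Your peeling also leaves a remainder, whereas the paper's procedure covers $\mathcal{F}$ exactly by $\bigcup_{i<N}\mathcal{F}[\mathcal{X}_i]\cup\mathcal{F}[\mathcal{S}_N]$. Without these structural steps, your outline gives at best a stability-type estimate, not the stated inequality with its characterization.
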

Let us note that, for $k\leq2t+1$, the families described in (ii) are precisely all those achieving the upper bound (see Lemma \ref{lemmasizeha}), while for $k\geq2t+2$, we do not attempt to determine which of
the two constructions, or whether both, is optimal. By applying the theorem above to $\mathcal{F}=\mathcal{G}$, one obtains Theorem \ref{HM} for large $n$. We also remark that if $k-t=\Theta(t)$, then the method gives a new proof of Theorem \ref{HM} in the range $n=\Omega((t+1)(k-t+1))$. In \cite{Balogh-Mubayi}, Balogh and Mubayi gave a short proof of Theorem \ref{HM} for $k\leq2t+1$ using the shifting technique and Theorem \ref{EKR}. 

In 2023, Frankl and Wang \cite{Frankl-Wang-2023} proved a product version of the Hilton--Milner theorem. The result states that, for $n\geq4k$, a pair $(\mathcal{F},\mathcal{G})$ of cross $1$-intersecting families in $\binom{[n]}{k}$ has product of sizes at most $|\mathcal{H}(n,k,1)|^2$. Very recently, Frankl and Wang \cite{Frankl-Wang-2026+} improved this result to \(n\ge 2k+1\) and \(k\ge8\). Clearly, the bound can be achieved by $\mathcal{F}=\mathcal{G}=\mathcal{H}(n,k,1)$. For general $t$, they obtained the following result.
\begin{theorem}[\cite{Frankl-Wang-2024}]\label{thmFW}
	Suppose $n\geq4(t+2)^2k^2$ and $k\geq5$. If $\mathcal{F},\mathcal{G}\subseteq \binom{[n]}{k}$ are cross $t$-intersecting with $|\cap\mathcal{F}|<t$ and $|\cap\mathcal{G}|<t$, then
	$$|\mathcal{F}||\mathcal{G}|\leq\max\{|\mathcal{A}(n,k,t)|^2,|\mathcal{H}(n,k,t)|^2\}.$$
\end{theorem}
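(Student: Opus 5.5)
We would prove Theorem \ref{thmFW} through a $t$-cover analysis of the pair $(\mathcal{F},\mathcal{G})$, treating the two nontrivial families symmetrically. Write $\tau_t(\mathcal{F})$ for the minimum size of a $T$ with $|T\cap F|\ge t$ for all $F\in\mathcal{F}$; every member of $\mathcal{G}$ is such a $t$-cover of $\mathcal{F}$, and vice versa. We may assume $(\mathcal{F},\mathcal{G})$ is maximal, and, since the product bound is trivial otherwise, that $|\mathcal{F}||\mathcal{G}|\ge |\mathcal{H}(n,k,t)|^2$.

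\emph{Step 1 (reduction to few small covers).} Partition $\mathcal{F}$ according to the minimal $t$-covers of $\mathcal{G}$ it contains. If $\mathcal{G}$ has no $t$-cover of size $t$ or $t+1$, then every $F\in\mathcal{F}$ contains a $t$-cover of $\mathcal{G}$ of size at least $t+2$ and at most $k$. The number of minimal $t$-covers of size $s$ is at most $k^{s-t}\binom{k}{t}$, by the usual branching argument that starts from a fixed $G\in\mathcal{G}$. This gives
$$|\mathcal{F}|\le\sum_{s\ge t+2}\binom{k}{t}k^{s-t}\binom{n-s}{k-s}\le \binom{k}{t}\frac{2k^2}{n}\binom{n-t}{k-t}.$$
This quantity is far below $|\mathcal{H}(n,k,t)|\approx (k-t+1)\binom{n-t-1}{k-t-1}$ only when $n\gtrsim t^2k^2$, which is exactly the source of the $4(t+2)^2k^2$ threshold. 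The same argument applies with the roles of $\mathcal{F}$ and $\mathcal{G}$ exchanged.

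\emph{Step 2 (case split).} By Step 1 both families essentially have $t$-covers of size at most $t+1$.

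Case A: some $t$-set $X$ is a cover of one family, say $X\subseteq G$ for all $G\in\mathcal{G}$. Since $|\cap\mathcal{G}|<t$, $X$ is then the unique common part and $\mathcal{G}\subseteq\mathcal{G}[X]$. Because $\mathcal{F}$ is nontrivial, there is $F_0\in\mathcal{F}$ with $X\not\subseteq F_0$, and every $G$ must meet $F_0\setminus X$ in enough elements. This forces $|\mathcal{G}|\le\binom{n-t}{k-t}-\binom{n-k-1}{k-t}$, the size of $\mathcal{H}(n,k,t)$ up to lower-order terms. Symmetrically, $\mathcal{G}\ni G_0\not\supseteq X$ is needed in order for $\mathcal{G}$ to be nontrivial at $X$, but this is the wrong way round. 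Redo Case A with the sharper assumption: $\mathcal{F}$ contains some $F_0\not\supseteq X$ and $\mathcal{G}$ contains some $G_0\not\supseteq X$. Then $\mathcal{F}\setminus\mathcal{F}[X]$ has to meet every member of $\mathcal{G}[X]$ in $t$ points, which limits it to $O(k\binom{n}{k-t-1}\cdot\ldots)$. A weighted comparison
$$|\mathcal{F}||\mathcal{G}|\le(|\mathcal{F}[X]|+a)(|\mathcal{G}[X]|+b),$$
where each of $a$ and $b$ reduces the other star by a factor of roughly $k\binom{n-t-1}{k-t-1}$, shows that the product is maximized when $a=b=1$. That is the $\mathcal{H}$ configuration.

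Case B: no $t$-set covers either family, so both have $(t+1)$-covers. Using cross-intersection, show that all the $(t+1)$-covers lie inside a common $(t+2)$-set $Z$. This is the step where the fingerprint/peeling structure helps: two $(t+1)$-covers whose union has size at least $t+3$ contain a cover pattern that forces sizes of order $\binom{n}{k-t-2}$. With that in hand, $\mathcal{F},\mathcal{G}\subseteq\mathcal{A}(Z)$, and the bound $|\mathcal{A}(n,k,t)|^2$ follows.

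\emph{Main obstacle.} The delicate part is the product optimization in Case A. Unlike the min-version, one family could be slightly larger than $|\mathcal{H}|$ while the other is much smaller, so we must show that trading elements off the star $X$ costs multiplicatively. We would try a direct inequality of the form $(N-\delta b)(N'+b)\le N N'$ for the relevant ranges, using $n\ge 4(t+2)^2k^2$ to make the loss per off-star set dominate.
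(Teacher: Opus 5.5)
Your Step 1 is where all the quantitative content of the theorem sits, and it does not work. Branching from a fixed $G\in\mathcal G$ bounds the number of minimal $t$-covers of size $s$ by $\binom{k}{t}k^{s-t}$. Your sum is therefore dominated by its first term $\binom{k}{t}k^2\binom{n-t-2}{k-t-2}$; as displayed, your bound $\binom{k}{t}\frac{2k^2}{n}\binom{n-t}{k-t}$ already exceeds $(k-t+1)\binom{n-t-1}{k-t-1}$ for all $n\ge 2t$. This term falls below $|\mathcal H(n,k,t)|\approx(k-t+1)\binom{n-t-1}{k-t-1}$ only when $n\gtrsim\binom{k}{t}k^2$, because the factor $\binom{k}{t}$ never becomes $(t+2)^2$.

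Moreover, a small $|\mathcal F|$ proves nothing about a product: you must also bound the partner. The same branching only gives $|\mathcal G|\lesssim\binom{k}{t}(k-t+1)\binom{n-t-1}{k-t-1}$, so this route needs $n\gtrsim\binom{k}{t}^2k^2$.

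The missing idea is the one the paper uses for its stronger Theorem \ref{thmcrosshm}. That theorem yields this statement for $t\ge2$, $k\ge t+2$, since $4(t+2)^2k^2\ge t+\max\{15(t+2)^2(k-t),10k^2\}$. The paper iterates the fingerprint, peeling off the top layers $\mathcal X_i,\mathcal Y_i$ and re-minimizing. As a result, the $(t+2)$-element sets are counted against an opposite family whose members have size at most $t+2$. That caps their number at $\binom{t+2}{t}3^2$ (Lemma \ref{lemmafingerprintproperty}(v)--(vi)), which is the real source of the $(t+2)^2$. Both families are then bounded simultaneously, via the functions $f_1,f_2$ of Lemma \ref{lemmamono}.

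Step 2 is also not set up correctly. Case A as first stated never occurs: a $t$-set is a $t$-cover of $\mathcal G$ only if it lies in $\cap\mathcal G$, which has fewer than $t$ elements. In the redo, $X$ is never defined. The key claim of Case B, that all $(t+1)$-covers lie in one $(t+2)$-set, fails for the extremal pair itself. $\mathcal H(n,k,t)$ has no $t$-element $t$-cover, so it lands in Case B, yet its $(t+1)$-covers include the $k-t+1$ petals $[t]\cup\{j\}$, $j\in[t+1,k+1]$.

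A workable split must separate three situations:
\begin{itemize}
\item[(a)] the $(t+1)$-covers form sunflowers with a common kernel $X$;
\item[(b)] the covers lie inside a $(t+2)$-set $Z$. Here you need at least three on each side to force $\mathcal F=\mathcal G=\mathcal A(Z)$; otherwise Lemma \ref{lemmaind} makes one family small;
\item[(c)] the exceptional four-point patterns of Lemma \ref{lemmastrue-alike}, which are neither (a) nor (b) and need their own bound.
\end{itemize}

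Finally, the product optimization you call the main obstacle is left open. Your guess $a=b=1$ also misdescribes $\mathcal H$, which has $t$ sets off the kernel. The paper closes this by a coupling. Let $s^*,t^*$ be the numbers of $(t+1)$-covers of $\mathcal G$ and of $\mathcal F$. Then $|\mathcal G\setminus\mathcal G[X]|\le t\binom{n-s^*-t}{k-s^*-t+1}$, while the kernel part of $\mathcal F$ is governed by $s^*$. Hence $|\mathcal F|+|\mathcal G|\le g(s^*)+g(t^*)$, with $g$ increasing up to $g(k-t+1)=|\mathcal H(n,k,t)|$, and AM--GM finishes (Lemma \ref{lemmafin-stru2}(ii)).
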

 Our next main result provides an improved bound on $n$, and gives a characterization of extremal configurations. 
\begin{theorem}\label{thmcrosshm}
	Suppose $k\geq t+2\geq4$ and $n\geq t+\max\{15(t+2)^2(k-t),10k^2\}$. Let  $\mathcal{F},\mathcal{G}\subseteq \binom{[n]}{k}$ be cross $t$-intersecting families with $|\cap\mathcal{F}|<t$ and $|\cap\mathcal{G}|<t$. The following hold.
\begin{itemize}
	\item[\rm(i)]If $k\geq2t+2$, then $|\mathcal{F}||\mathcal{G}|\leq|\mathcal{H}(n,k,t)|^2$, with equality if and only if $\mathcal{F}=\mathcal{H}(X,K,L)$ and $\mathcal{G}=\mathcal{H}(X,L,K)$ for some   $X\in\binom{[n]}{t}$ and $K,L\in\binom{[n]}{k+1}$ with $X\subseteq K\cap L$ and $|K\cap L|\geq t+2$.
	\item[\rm(ii)]If $k\leq2t+1$, then $|\mathcal{F}||\mathcal{G}|\leq|\mathcal{A}(n,k,t)|^2$, with equality if and only if $\mathcal{F}=\mathcal{G}=\mathcal{A}(Z)$ for some $Z\in\binom{[n]}{t+2}$.
\end{itemize}	
\end{theorem}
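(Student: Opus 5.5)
We follow the standard $t$-cover strategy combined with the fingerprint procedure. Since $|\cap\mathcal{F}|<t$ and $|\cap\mathcal{G}|<t$, we may assume the pair $(\mathcal{F},\mathcal{G})$ is maximal subject to these constraints. Maximality gives that every $G\in\mathcal{G}$ is a $t$-cover of $\mathcal{F}$ and vice versa. The key parameter is the minimum size $\tau_t$ of a $t$-cover, i.e. of a set $T$ with $|T\cap F|\ge t$ for all members. Let $\tau_F:=\tau_t(\mathcal{F})$ and $\tau_G:=\tau_t(\mathcal{G})$. Since neither family is trivial, $\tau_F,\tau_G\ge t+1$.

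\textbf{Step 1: both covering numbers are large.} If $\tau_F\ge t+2$, a branching argument over $t$-covers bounds $|\mathcal{G}|$. Each $G$ contains a $t$-subset of some fixed $F_1\in\mathcal{F}$; pick $F_2$ missing that subset; iterate. This gives $|\mathcal{G}|\le \binom{k}{t}\,c\,k^{2}\binom{n-t-2}{k-t-2}$ for a small constant $c$. Combined with $|\mathcal{F}|\le\binom{k}{t}\binom{n-t}{k-t}$, the product is smaller than $\min\{|\mathcal{A}|,|\mathcal{H}|\}^2\approx \big((t+2)(k-t)\binom{n-t-1}{k-t-1}\big)^2$ once $n\ge t+15(t+2)^2(k-t)$. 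Here the ratio $\binom{n-t-2}{k-t-2}/\binom{n-t-1}{k-t-1}\approx (k-t)/(n-t)$ is what produces the $(t+2)^2(k-t)$ threshold. This step is where I would invoke the fingerprint/iterative procedure: it replaces the crude bound $\binom{k}{t}$ by a count of ``essential'' $t$-sets, via spread approximation with $r\approx (n-t)/(k-t)$. It is essentially Theorem~\ref{thmcrossekr}(ii) applied to the links, with $\mathcal{A}=\binom{[n]}{k}$ being weakly $(r,t)$-spread. The condition $n\ge 10k^2$ handles the $k/(1-t/k)$ term.

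\textbf{Step 2: $\tau_F=\tau_G=t+1$.} Let $\mathcal{T}_F$ denote the family of $(t+1)$-sets that are $t$-covers of $\mathcal{F}$, and define $\mathcal{T}_G$ similarly. The dominant part of $\mathcal{G}$ is $\{G: G\supseteq T \text{ for some } T\in\mathcal{T}_F\}$, up to an error $O(k^{2}\binom{n-t-2}{k-t-2})$. Since $\mathcal{F}$ is non-trivial, $\cap\mathcal{T}_F$ has size $<t$ or $\mathcal{T}_F$ is a sunflower-like family. I would split into cases by the structure of $\mathcal{T}_F$:
\begin{itemize}
\item[(a)] All members of $\mathcal{T}_F$ contain a common $t$-set $X$, say $\mathcal{T}_F=\{X\cup\{y\}:y\in Y\}$. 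Then every $F$ either contains $X$ and meets $Y$, or contains $Y$ and misses at most one point of $X$. This forces $|Y|\le k+1-t$ and yields the $\mathcal{H}$-type structure.
\item[(b)] Otherwise, the $(t+1)$-sets pairwise intersect in $\ge t$ points without a common $t$-core. Hence they lie in a common $(t+2)$-set $Z$, which yields the $\mathcal{A}$-type structure.
\end{itemize}
In each case one computes the exact maximum of $|\mathcal{F}||\mathcal{G}|$. In (a) it is $|\mathcal{F}||\mathcal{G}|\le |\mathcal{H}(X,K,L)||\mathcal{H}(X,L,K)|$, where $|\mathcal{H}(X,K,L)|=\binom{n-t}{k-t}-\binom{n-k-1}{k-t}+t$ depends only on sizes. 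The product is maximized exactly when both $Y$'s have size $k+1-t$, with $|K\cap L|\ge t+2$ needed for cross-intersection of the exceptional sets. In (b) the product is at most $|\mathcal{A}(Z)|^2$, with equality iff $\mathcal{F}=\mathcal{G}=\mathcal{A}(Z)$. The mixed cases, where $\mathcal{F}$ is of type (a) and $\mathcal{G}$ of type (b), must be ruled out by showing that the cross-condition forces one family to lose a positive fraction.

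\textbf{Step 3: comparison.} Finally I would use Lemma~\ref{lemmasizeha} (comparing $|\mathcal{A}(n,k,t)|$ and $|\mathcal{H}(n,k,t)|$) to decide which case wins. For $k\ge 2t+2$, $\mathcal{H}$ is larger; for $k\le 2t+1$, $\mathcal{A}$ is larger. This gives (i) and (ii).

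The main obstacle is Step 1 at the stated range of $n$: naive branching loses factors like $k^{t}$. The fingerprint procedure must be used carefully to ensure the number of relevant $(t+2)$-covers is $O((t+2)^2)$ rather than $\binom{k}{t}$-sized. I would try the approach where $\mathcal{F}$ is peeled into spread pieces with cores of size $\le t+1$, and then apply Theorem~\ref{thmcrossekr}(ii)-type bounds to the residual.
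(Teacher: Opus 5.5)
There is a genuine gap, and it sits exactly at what you call the ``main obstacle'': nothing in the proposal supplies the localization that makes the stated range of $n$ work, and without it both Step~1 and Step~2 break. \textbf{Step 1 fails for every $n$.} Your bound $|\mathcal{F}|\le\binom{k}{t}\binom{n-t}{k-t}$ is of order $r\binom{n-t-1}{k-t-1}$, where $r=(n-t)/(k-t)$. This cancels the gain $\binom{n-t-2}{k-t-2}\approx r^{-1}\binom{n-t-1}{k-t-1}$ in your bound on $|\mathcal{G}|$. The product is therefore about $c\binom{k}{t}^2k^2\binom{n-t-1}{k-t-1}^2$, independently of $n$. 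The target, however, is $\max\{|\mathcal{A}(n,k,t)|,|\mathcal{H}(n,k,t)|\}^2\approx\max\{t+2,k-t+1\}^2\binom{n-t-1}{k-t-1}^2$. Even the sharper bound $|\mathcal{F}|\le\binom{k}{t}(k-t+1)\binom{n-t-1}{k-t-1}$ (non-triviality of $\mathcal{G}$ plus Lemma~\ref{lemmaind}) would still need $r$ of order $\binom{k}{t}^2(k-t+1)^3/\max\{t+2,k-t+1\}^2$, far beyond the stated range. Theorem~\ref{thmcrossekr}(ii) cannot fill this in, since it only gives $|\mathcal{F}||\mathcal{G}|\le\binom{n-t}{k-t}^2$.

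The missing idea is a localization. Up to at most $\delta\binom{n-t-1}{k-t-1}$ members, with $\delta=O(t^2/r)<0.42$, one of three things holds: one family consists of error terms only, or both families are carried by a \emph{single} $t$-set, or they are carried by a bounded configuration of $(t+1)$-sets. The paper proves this by running the algorithm on the maximal pair with initial fingerprint $(\mathcal{M}(\mathcal{G}),\mathcal{M}(\mathcal{F}))$ (Lemma~\ref{lemmamaximal}), and it splits on the stopping round $N$ rather than on $\tau_t$. Lemma~\ref{lemmafin-stru0}(i), built on Lemma~\ref{lemmafingerprintproperty}(v),(vi), bounds everything peeled off above level $t+1$ with no $\binom{k}{t}$ loss.
\begin{itemize}
\item If $N=q-t$, there is a $t$-set $X$ with $\mathcal{F}\setminus\mathcal{F}[X]$ and $\mathcal{G}\setminus\mathcal{G}[X]$ inside this error. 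So $|\mathcal{F}|\le(1+(1/r+1/b)\delta)|\mathcal{H}(n,k,t)|$ with $b=k-t+1$, and $\tau_t(\mathcal{F})\ge t+2$ then gives $|\mathcal{G}|\le(b^2/r+\delta)\binom{n-t-1}{k-t-1}$ via Lemma~\ref{lemmakey}.
\item If $N\le q-t-2$, one family has at most $\delta_N\binom{n-t-1}{k-t-1}$ members, the other is bounded through $\tau_t(\mathcal{X}_N)$, and Lemma~\ref{lemmamono} controls the product.
\end{itemize}

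\textbf{Step 2 rests on a false dichotomy and an unproved error term.} The $(t+1)$-element $t$-covers of $\mathcal{F}$ need not pairwise $t$-intersect, so failing to share a $t$-kernel does not place them inside a $(t+2)$-set. For example, with $|I|=t-1$, the pair $\{I\cup\{1,2\},I\cup\{3,4\},I\cup\{1,4\}\}$, $\{I\cup\{1,3\},I\cup\{2,4\},I\cup\{1,4\}\}$ occurs. Proposition~\ref{prop}(iii) realizes it by a maximal non-trivial pair. This is why the paper needs Lemma~\ref{lemmastrue-alike} and the case analysis of Lemma~\ref{lemmafin-stru1}, including the mixed situation $\mathcal{T}_{q-t-1}\neq\mathcal{Y}_{q-t-1}$.

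Your error $O(k^2\binom{n-t-2}{k-t-2})$ for members of $\mathcal{G}$ containing no set of $\mathcal{T}_F$ is only asserted. Branching from the $t$-subsets of a fixed member reintroduces the factor $\binom{k}{t}$, and removing that factor is again the fingerprint estimate. The cases where $\mathcal{T}_F$ has at most two members, so that the ``dominant part'' is not dominant, are also left open.

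Finally, look at case (a). There the sets containing some $X\cup\{y\}$, $y\in Y$, lie in $\mathcal{G}$ (not $\mathcal{F}$), and each member of $\mathcal{F}$ contains $X$, or contains $Y$ and exactly $t-1$ points of $X$. A single family can exceed $|\mathcal{H}(n,k,t)|$, because its exceptional part is governed by the \emph{other} family's kernel size, so bounding each family separately does not work. The paper first shows every member misses at most one point of $X$, since otherwise the other family is tiny. It then bounds $|\mathcal{F}|+|\mathcal{G}|\le g(s^*)+g(t^*)$ with $g$ increasing on $[3,b]$, and concludes by AM--GM, which also yields the equality case.
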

\subsection{Stability via diversity}\label{subsecdiv}
As we have mentioned, the influential Frankl theorem characterizes large $1$-intersecting families with bounded maximum degree.  In order to state the theorem, we define
\begin{equation*}\label{familyl}
	\mathcal{L}(n,k,i):=\left\{F\in\binom{[n]}{k}:1\in F,\;F\cap[2,1+i]\neq\emptyset\right\}\cup\left\{F\in\binom{[n]}{k}:[2,1+i]\subseteq F\right\}
\end{equation*}
for $2\leq i\leq k$. Note that $\mathcal{L}(n,k,2)=\mathcal{A}(n,k,1)$ and $\mathcal{L}(n,k,k)=\mathcal{H}(n,k,1)$. 
\begin{theorem}[\cite{Frankl-1987}]\label{F}
	Suppose that $n>2k$, $2\leq i\leq k$ and $\mathcal{F}\subseteq\binom{[n]}{k}$ is $1$-intersecting with  $\Delta(\mathcal{F})\leq\Delta(\mathcal{L}(n,k,i))$. Then
	$$|\mathcal{F}|\leq|\mathcal{L}(n,k,i)|.$$
	Moreover, if equality holds, then either $\mathcal{F}\cong\mathcal{L}(n,k,i)$,  or $i=3$ and $\mathcal{F}\cong\mathcal{L}(n,k,2)$.
\end{theorem}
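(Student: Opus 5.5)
We would follow the classical route: split on the element of maximum degree, then reduce to a statement about a pair of cross-intersecting families, which the Hilton lemma lets us assume are lexicographic initial segments.

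\emph{Setup.} Relabel so that $1$ has maximum degree, $|\mathcal{F}[\{1\}]|=\Delta(\mathcal{F})$, and define
\[
\mathcal{F}(1):=\{F\setminus\{1\}:1\in F\in\mathcal{F}\}\subseteq\tbinom{[2,n]}{k-1},\qquad
\mathcal{F}(\bar 1):=\{F\in\mathcal{F}:1\notin F\}\subseteq\tbinom{[2,n]}{k}.
\]
Since $\mathcal{F}$ is intersecting, $\mathcal{F}(1)$ and $\mathcal{F}(\bar1)$ are cross-intersecting. Suppose for contradiction that $|\mathcal{F}|>|\mathcal{L}(n,k,i)|$. Then
\[
|\mathcal{F}(\bar 1)|\ \ge\ |\mathcal{F}|-\Delta(\mathcal{L}(n,k,i))\ >\ |\mathcal{L}(n,k,i)|-\Delta(\mathcal{L}(n,k,i))=\tbinom{n-i-1}{k-i}.
\]
Here we use $\Delta(\mathcal{L}(n,k,i))=\binom{n-1}{k-1}-\binom{n-i-1}{k-1}$, attained at the element $1$ for $n>2k$. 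The last quantity is exactly the size of the second part $\{F:[2,1+i]\subseteq F\}$ of $\mathcal{L}(n,k,i)$.

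\emph{Reduction to lex families.} By the Hilton lemma (Kruskal--Katona applied to the complemented shadow), we may replace $\mathcal{F}(1)$ and $\mathcal{F}(\bar1)$ by the initial segments $\mathcal{A},\mathcal{B}$ of the lexicographic order on $\binom{[2,n]}{k-1}$ and $\binom{[2,n]}{k}$ of the same sizes. The pair stays cross-intersecting. The problem then becomes the following lemma.

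\emph{Key lemma.} If $\mathcal{A}\subseteq\binom{[2,n]}{k-1}$ and $\mathcal{B}\subseteq\binom{[2,n]}{k}$ are cross-intersecting with $|\mathcal{B}|\ge\binom{n-i-1}{k-i}$, then
\[
|\mathcal{A}|+|\mathcal{B}|\ \le\ \tbinom{n-1}{k-1}-\tbinom{n-i-1}{k-1}+\tbinom{n-i-1}{k-i}=|\mathcal{L}(n,k,i)|.
\]
To prove it:
\begin{itemize}
\item Take $\mathcal{B}$ to be a lex initial segment and $\mathcal{A}$ to be the largest family cross-intersecting with it.
\item As $\mathcal{B}$ grows past the full star $\{B:[2,1+i]\subseteq B\}$, it passes through the successive ``stars'' $\{B:[2,j]\subseteq B\}$, $j\le 1+i$.
\item Show that between two consecutive such breakpoints $|\mathcal{A}|+|\mathcal{B}|$ is maximized at an endpoint. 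Adding one more lex set to $\mathcal{B}$ kills at least as many sets of $\mathcal{A}$ as it gains once $n>2k$; this is the Frankl--Kupavskii ``moving along the lex order'' argument.
\item Check directly that the breakpoint values are monotone in the relevant range, so the maximum over $|\mathcal{B}|\ge\binom{n-i-1}{k-i}$ is at $\mathcal{B}$ equal to the $[2,1+i]$-star, which gives $|\mathcal{L}(n,k,i)|$.
\end{itemize}
We need the strict version: a $\mathcal{B}$ strictly larger than the $[2,1+i]$-star gives a strictly smaller sum. This contradicts the setup, so $|\mathcal{F}|\le|\mathcal{L}(n,k,i)|$.

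\emph{Equality.} The case $|\mathcal{F}(\bar1)|=\binom{n-i-1}{k-i}$ forces $|\mathcal{F}(1)|=\binom{n-1}{k-1}-\binom{n-i-1}{k-1}$ and equality in the Hilton lemma. The equality cases of Kruskal--Katona/Hilton (Füredi--Griggs / Mörs type) show that $\mathcal{F}(\bar1)$ is a full star of an $i$-set $I$ in $[2,n]$, and $\mathcal{F}(1)$ is everything meeting $I$. Hence $\mathcal{F}\cong\mathcal{L}(n,k,i)$. The one exception is a coincidence between breakpoints: $|\mathcal{L}(n,k,2)|=|\mathcal{L}(n,k,3)|$, and $\Delta(\mathcal{L}(n,k,2))\le\Delta(\mathcal{L}(n,k,3))$. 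This allows $\mathcal{F}\cong\mathcal{L}(n,k,2)$ when $i=3$, and we must verify that this is the only tie.

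\emph{Main obstacle.} The key lemma is the hard part: the endpoint/convexity analysis of $|\mathcal{A}|+|\mathcal{B}|$ along the lex order, with strictness. We would handle it by induction on $k$ through the first element of $[2,n]$, splitting $\mathcal{B}$ into the sets containing $2$ and those not containing $2$.
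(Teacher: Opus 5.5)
The paper does not prove this statement; it is quoted from Frankl's 1987 paper. So I assess your proposal on its own terms. There is a genuine gap: your key lemma is false. The reduction before it discards exactly the part of the maximum-degree hypothesis that makes the theorem true.

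\textbf{A counterexample to the key lemma.} Take $\mathcal{A}=\{A\in\binom{[2,n]}{k-1}:2\in A\}$ and $\mathcal{B}=\{B\in\binom{[2,n]}{k}:2\in B\}$. These have all the properties your lemma assumes, and more:
\begin{itemize}
\item they are lexicographic initial segments and cross-intersecting, and $\mathcal{B}$ is even intersecting;
\item $|\mathcal{B}|=\binom{n-2}{k-1}\geq\binom{n-i-1}{k-i}$;
\item $|\mathcal{A}|=\binom{n-2}{k-2}<\binom{n-2}{k-2}+\binom{n-3}{k-2}\le\Delta(\mathcal{L}(n,k,i))$.
\end{itemize}
Yet $|\mathcal{A}|+|\mathcal{B}|=\binom{n-1}{k-1}$. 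This exceeds $|\mathcal{L}(n,k,i)|$, because $\mathcal{L}(n,k,i)$ is a non-trivial intersecting family and $n>2k$. With $\mathcal{A}=\emptyset$ and $\mathcal{B}=\binom{[2,n]}{k}$ the sum is even $\binom{n-1}{k}$.

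The first pair is $(\mathcal{F}(1),\mathcal{F}(\bar 1))$ for the full star centred at $2$. The hypothesis excludes that star only through the degree of the element $2$. After Hilton's lemma no degree information survives. The two size constraints you keep, $|\mathcal{F}(1)|\le\Delta(\mathcal{L}(n,k,i))$ and $|\mathcal{F}(\bar 1)|>\binom{n-i-1}{k-i}$, cannot rule it out.

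\textbf{Where your bullets fail.} The breakpoint at the $[2,j]$-star has value $|\mathcal{L}(n,k,j-1)|$ for $3\le j\le i+1$. These values are indeed nondecreasing in $j$, with $|\mathcal{L}(n,k,2)|=|\mathcal{L}(n,k,3)|$. But at $j=2$ the same formula gives $\binom{n-1}{k-1}$, so the sequence is not monotone. Passing from the $[2,3]$-star to the $\{2\}$-star, $\mathcal{B}$ gains $\binom{n-3}{k-1}$ sets while the maximal $\mathcal{A}$ loses only $\binom{n-3}{k-2}$. So ``each added lex set kills at least as many as it gains'' is false for $n>2k$.

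\textbf{What is missing.} You need an a priori upper bound on $|\mathcal{F}(\bar 1)|$ that uses the fact that $1$ has maximum degree. It must be obtained before passing to lexicographic families.
\begin{itemize}
\item EKR applied to the intersecting family $\mathcal{F}(\bar 1)$ gives only $|\mathcal{F}(\bar 1)|\le\binom{n-2}{k-1}$, which is exactly the size in the counterexample.
\item If you had $|\mathcal{F}(\bar 1)|\le\binom{n-3}{k-2}$, your breakpoint analysis restricted to $3\le j\le i+1$ would be the right statement to prove. But that is the diversity bound $\gamma(\mathcal{F})\le\binom{n-3}{k-2}$, which, as the paper recalls, fails for $n<(2+\sqrt{3})k$ by Huang's constructions.
\end{itemize}

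Part of the missing step is easy. Suppose $\mathcal{F}(\bar 1)$ lies in the star of some $z$. Then $\deg(z)\le\deg(1)$ gives $|\mathcal{F}(\bar 1)|\le|\{A\in\mathcal{F}(1):z\notin A\}|$. The $(k-1)$-uniform families $\{B\setminus\{z\}:B\in\mathcal{F}(\bar 1)\}$ and $\{A\in\mathcal{F}(1):z\notin A\}$ on $[2,n]\setminus\{z\}$ are cross-intersecting. Hence the smaller one, which is the first, has size at most $\binom{n-3}{k-2}$.

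The hard case remains: $\mathcal{F}(\bar 1)$ is a non-trivial intersecting family of size larger than $\binom{n-3}{k-2}$, which can occur when $n$ is close to $2k$. There you need a separate argument bounding $|\mathcal{F}|$ using the degree condition. Your equality analysis rests on the same reduction and inherits the gap. This regime is where the substance of Frankl's theorem lies, and your proposal does not address it.
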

The theorem sharpens the Hilton--Milner theorem. To see this, note that if $\mathcal{F}\subseteq\binom{[n]}{k}$ is non-trivial  $1$-intersecting, then for every $x\in[n]$ there exists a set $F\in\mathcal{F}$ that does not contain $x$, and hence every member containing $x$ must intersect $F$, yielding $\Delta(\mathcal{F})\leq\binom{n-1}{k-1}-\binom{n-k-1}{k-1}=\Delta(\mathcal{H}(n,k,1))$. In addition, 
as shown by the theorem, a large  $1$-intersecting family admits a `popular' element and is therefore close to a full $1$-star.

 We now turn to another useful parameter revealing the phenomenon. For a family $\mathcal{F}\subseteq2^{[n]}$, its \emph{diversity}, denoted $\gamma(\mathcal{F})$, is defined to be the minimal number of sets from  $\mathcal{F}$ not containing an element with largest degree. Precisely, $$\gamma(\mathcal{F}):=|\mathcal{F}|-\Delta(\mathcal{F}).$$
The diversity \(\gamma(\mathcal F)\) can be regarded as the distance from
\(\mathcal F\) to a trivial family. It is easy to see that a trivial family has diversity zero, and $\mathcal{H}(n,k,1)$ has diversity one as $[2,k+1]$ is the unique member that does not contain $1$. In 2018, Kupavskii and Zakharov \cite{Kupavskii-Zakharov-2018} established a diversity version of Frankl's theorem, with an elegant proof involving matchings in regular  bipartite graphs. In 2021, Frankl and Kupavskii \cite{Frankl-Kupavskii-2021} proved the following  cross-intersection version of the Kupavskii--Zakharov theorem, which is also a sharp stability result for cross-intersecting families in terms of diversity. 
\begin{theorem}[\cite{Frankl-Kupavskii-2021}]\label{thmFK}
	Suppose $n\geq k+\ell$ and  $\mathcal{F}\subseteq\binom{[n]}{k}$ and $\mathcal{G}\subseteq\binom{[n]}{\ell}$ are cross $1$-intersecting. Suppose that 
	\begin{align*}
		|\mathcal{F}|&\geq\binom{n-1}{k-1}-\binom{n-v-1}{k-1}+\binom{n-u-1}{n-k-1}\;\mbox{and}\\
		|\mathcal{G}|&\geq\binom{n-1}{\ell-1}-\binom{n-u-1}{\ell-1}+\binom{n-v-1}{n-\ell-1}
	\end{align*}
	for some real $3\leq u\leq k,\;3\leq v\leq \ell$ and, moreover, that at least one of the displayed 
	inequalities is strict. Then   $$\gamma(\mathcal{F})<\binom{n-u-1}{n-k-1}\;\;\mbox{and}\;\;\;\gamma(\mathcal{G})<\binom{n-v-1}{n-\ell-1}.$$
	Moreover, both families have the same (unique) element of the largest degree.
\end{theorem}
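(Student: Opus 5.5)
The plan is to reduce the statement to a pair of Kruskal--Katona type inequalities, using the Hilton lemma in its real-valued (Lov\'asz) form. Write each family as its link at an element $x$ together with the sets missing $x$:
\[
\mathcal{F}(x)=\{F\setminus\{x\}:x\in F\in\mathcal{F}\},\qquad \mathcal{F}(\bar x)=\{F\in\mathcal{F}:x\notin F\},
\]
and similarly $\mathcal{G}(x)$, $\mathcal{G}(\bar x)$. Cross $1$-intersection of $(\mathcal{F},\mathcal{G})$ gives three cross $1$-intersecting pairs on $[n]\setminus\{x\}$:
\[
\bigl(\mathcal{F}(x),\mathcal{G}(\bar x)\bigr),\qquad \bigl(\mathcal{F}(\bar x),\mathcal{G}(x)\bigr),\qquad \bigl(\mathcal{F}(\bar x),\mathcal{G}(\bar x)\bigr).
\]

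By the Hilton lemma, the extremal configurations for these pairs are lexicographic (initial) families. Concretely, suppose $|\mathcal{G}(\bar x)|\ge\binom{n-v'-1}{\ell-v'}$ for a real $v'$, where $\binom{n-v'-1}{\ell-v'}$ is the size of the family of $\ell$-sets in $[n]\setminus\{x\}$ containing a fixed $v'$-set (for integer $v'$). Then at least $\binom{n-v'-1}{k-1}$ of the $(k-1)$-sets are disjoint from some member of $\mathcal{G}(\bar x)$. Hence
\[
|\mathcal{F}(x)|\le\binom{n-1}{k-1}-\binom{n-v'-1}{k-1},
\]
and symmetrically for $\mathcal{G}(x)$ in terms of $\mathcal{F}(\bar x)$. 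Using the identity $\binom{n-v-1}{n-\ell-1}=\binom{n-v-1}{\ell-v}$, the hypotheses read
\[
|\mathcal{F}|\ge \bigl[\tbinom{n-1}{k-1}-\tbinom{n-v-1}{k-1}\bigr]+\tbinom{n-u-1}{k-u},
\]
and the analogue for $\mathcal{G}$ with $u,v$ swapped.

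Proof outline.
\begin{enumerate}
\item Argue by contradiction: assume $\gamma(\mathcal{F})\ge\binom{n-u-1}{n-k-1}$. Fix $x$ of maximum degree in $\mathcal{F}$, so that $|\mathcal{F}(\bar x)|=\gamma(\mathcal{F})$, and parametrize $|\mathcal{F}(\bar x)|=\binom{n-a-1}{k-a}$ and $|\mathcal{G}(\bar x)|=\binom{n-b-1}{\ell-b}$ with real $a\le u$ and $b$.
\item Combine the two Hilton-lemma bounds to get
\[
|\mathcal{F}|\le\binom{n-1}{k-1}-\binom{n-b-1}{k-1}+\binom{n-a-1}{k-a},\qquad |\mathcal{G}|\le\binom{n-1}{\ell-1}-\binom{n-a-1}{\ell-1}+\binom{n-b-1}{\ell-b}.
\]
\item Use the cross-intersecting pair $(\mathcal{F}(\bar x),\mathcal{G}(\bar x))$ to constrain $(a,b)$. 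The Hilton lemma again forces $a+b$ to be large, morally $a+b\ge$ roughly $n-$ the relevant uniformity. Together with $n\ge k+\ell$ and $a,b\ge 3$, this should confine $(a,b)$ to a small region.
\item Show that the function $a\mapsto \binom{n-a-1}{k-a}-\binom{n-a-1}{\ell-1}$ is monotone on the relevant range, so that decreasing $a$ below $u$, i.e.\ enlarging $\mathcal{F}(\bar x)$, loses more in $\mathcal{G}$ than it gains in $\mathcal{F}$. The analogous monotonicity for $b$ will also be needed.
\item Comparing with the hypotheses, conclude that both displayed inequalities are equalities, contradicting the assumption that at least one is strict. The same argument then gives $\gamma(\mathcal{G})<\binom{n-v-1}{n-\ell-1}$.
\item For the common element of largest degree: if the maximum-degree elements $x$ of $\mathcal{F}$ and $y$ of $\mathcal{G}$ differ, redo the estimate with respect to $x$ for both families. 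Then $\mathcal{G}(\bar x)$ contains all sets of $\mathcal{G}$ through $y$ but not $x$, which is large, and the bound on $|\mathcal{F}(x)|$ becomes too small. Uniqueness of the maximum-degree element follows in the same way.
\end{enumerate}

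The main obstacle is step 4 together with the interaction in step 3: carrying out the one-variable real-valued analysis across the whole range $3\le u\le k$, $3\le v\le\ell$ with only $n\ge k+\ell$. To handle the boundary cases I would first try shifting to make $\mathcal{F}(\bar x)$ and $\mathcal{G}(\bar x)$ initial segments. The catch is that shifting does not obviously preserve diversity, so each shift must be chosen to fix $x$.
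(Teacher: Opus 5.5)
The paper only quotes this result from Frankl--Kupavskii and gives no proof, so your outline has to stand on its own. It has a genuine gap: apart from renaming $|\mathcal F(\bar x)|$ as $\gamma(\mathcal F)$, your Steps~2--5 never use that $x$ has \emph{maximum} degree.

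The bounds in Step~2 and the constraint in Step~3 hold for every $x\in[n]$, and Step~4 is purely numerical. So if your argument worked, it would apply verbatim to any $x$ with $|\mathcal F(\bar x)|\ge\binom{n-u-1}{n-k-1}$ and give a contradiction. But such $x$ exist in pairs satisfying the hypotheses. Take $k=\ell\ge 3$, $n\ge 2k+1$, $u=v=3$, and let $\mathcal F,\mathcal G$ be the full stars centred at some $y$. Both hypotheses hold strictly, because $\binom{n-4}{k-1}>\binom{n-4}{k-3}$. Yet for any $x\ne y$ you have $|\mathcal F(\bar x)|=\binom{n-2}{n-k-1}\ge\binom{n-4}{n-k-1}$ and $|\mathcal G(\bar x)|=\binom{n-2}{n-\ell-1}$, i.e.\ $a=b=1$. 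Both of your Step~2 bounds are then attained with equality, so there is nothing to contradict.

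In particular, Step~4 is false on the range you need. Write $\phi(a)=\binom{n-a-1}{n-k-1}-\binom{n-a-1}{\ell-1}$; this is the correct real-parameter form of your function, since $\binom{n-a-1}{k-a}$ is undefined for non-integral $a$, which is why the theorem is stated with $\binom{n-u-1}{n-k-1}$. In this example $\phi(1)=0>\binom{n-4}{k-3}-\binom{n-4}{k-1}=\phi(3)$. So once $\mathcal F(\bar x)$ is large enough, the Kruskal--Katona gain on the $\mathcal F$ side exceeds the loss on the $\mathcal G$ side.

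Step~3 does not rescue this. After Hilton's lemma, the constraint from $(\mathcal F(\bar x),\mathcal G(\bar x))$ concerns lexicographic segments. The segments $\{F\supseteq[a]\}$ and $\{G\supseteq[b]\}$ always cross-intersect, so the constraint only bites when one family exceeds a full star on $[n]\setminus\{x\}$. There is no lower bound on $a+b$; again $a=b=1$ occurs.

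So the missing idea is the hard part of the theorem. You need an argument that uses $\deg_{\mathcal F}(x)=\Delta(\mathcal F)$ together with the size hypotheses to exclude the regime where $\gamma(\mathcal F)$ is much larger than $\binom{n-4}{n-k-1}$ (small $a$). Only after that can a one-variable comparison as in Step~4 be run, on a range where it is actually true. No a priori diversity bound is available for free here: by Huang's examples quoted in the introduction, an intersecting family can have diversity above $\binom{n-3}{k-2}$ for some $n<(2+\sqrt3)k$. Some genuinely additional ingredient is therefore needed, for instance the matching argument in regular bipartite graphs used by Kupavskii and Zakharov. Your Step~6 (common and unique element of largest degree) depends on the same missing ingredient, and is in any case only sketched.
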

Another fundamental problem concerning the diversity is to determine the maximum possible diversity of a $1$-intersecting family. This problem has attracted considerable attention in the past decade. It has been shown that, for $n\geq f(k)$ every $1$-intersecting family in $\binom{[n]}{k}$ has diversity at most $
\gamma(\mathcal{A}(n,k,1))=\binom{n-3}{k-2}$. 
It was shown by Lemons and Palmer \cite{Lemons-Palmer} that $f(k)\leq6k^3$. This bound was subsequently improved in \cite{Frankl-2017,Kupavskii-2018,Frankl-2020}, and  the previous best bound $f(k)\leq36k$ is due to Frankl and Wang \cite{Frankl-Wang-2024-diversity}. We also note that counterexamples constructed by Huang \cite{Huang-2019} yield $f(k)\geq(2+\sqrt{3})k$.

In this paper, we establish corresponding results for all $t\geq2$. For a family $\mathcal{F}$ of subsets of $[n]$, we define its \emph{$t$-diversity} by
\begin{equation*}
	\gamma_t(\mathcal{F}):=|\mathcal{F}|-\Delta_t(\mathcal{F}).
\end{equation*}
This parameter measures the distance from a family to the full $t$-star centered at its most popular $t$-subset. It is natural to ask which $t$-intersecting family in $\binom{[n]}{k}$ has largest $t$-diversity. A candidate is $\mathcal{A}(n,k,t)$, for which
$$\gamma_t(\mathcal{A}(n,k,t))= t\binom{n-t-2}{k-t-1}.$$
Our next main result is a max-min theorem for the $t$-diversities of cross $t$-intersecting families. As a direct corollary, we settle the problem for large $n$.
\begin{theorem}\label{thmmaxdiv}
	Let $k\geq t+2\geq4$ and $n\geq t+(k-t)\cdot\max\{18(t+2),2ek\}$. If  $\mathcal{F},\mathcal{G}\subseteq \binom{[n]}{k}$ are cross $t$-intersecting, then 
	$$\min\{\gamma_t(\mathcal{F}),\;  \gamma_t(\mathcal{G})\}\leq\gamma_t(\mathcal{A}(n,k,t)).$$
	If equality holds, then  $\mathcal{F}, \mathcal{G}\subseteq\mathcal{A}(Z)$ for some $Z\in\binom{[n]}{t+2}$.
\end{theorem}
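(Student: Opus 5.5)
Assume, without loss of generality, that $\gamma_t(\mathcal{F})\ge\gamma_t(\mathcal{A}(n,k,t))$ and $\gamma_t(\mathcal{G})\ge\gamma_t(\mathcal{A}(n,k,t))$, and that the pair $(\mathcal{F},\mathcal{G})$ is maximal. Enlarging the families cannot hurt, but the diversities must be tracked: I will show directly that the two diversity lower bounds already force the structure. The plan is to pin down the $t$-covers of the pair. Let $\tau_t(\mathcal{F})$ denote the minimum size of a set $T$ with $|T\cap F|\ge t$ for all $F\in\mathcal{F}$. Every $G\in\mathcal{G}$ is such a $t$-cover of $\mathcal{F}$, so $\tau_t(\mathcal{F})\le k$, and symmetrically for $\mathcal{G}$. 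Since $\gamma_t(\mathcal{F})>0$, we have $\tau_t(\mathcal{F})\ge t+1$.

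\textbf{Step 1: the families are supported on a few $(t+1)$-sets.} I will run the peeling/fingerprint procedure (the Kupavskii--Zakharov iteration combined with $t$-covers). Starting from $\mathcal{F}$, repeatedly extract a set $S$ with $|\mathcal{F}[S]|$ large relative to the $(r,t)$-spread condition in $\binom{[n]}{k}$, with $r\approx (n-t)/(k-t)$. This produces fingerprint families $\mathcal{S}_\mathcal{F}$ and $\mathcal{S}_\mathcal{G}$ of sets of size at most $k$ that are cross $t$-intersecting. The remainders have size $O\bigl(((k-t)/(n-t))^{\,\cdot}\binom{n-t}{k-t}\bigr)$, and the hypothesis $n\ge t+(k-t)\max\{18(t+2),2ek\}$ makes these remainders smaller than $t\binom{n-t-2}{k-t-1}$ by a constant factor.

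Counting members through $t$-covers, the number of members of $\mathcal{F}$ containing a given $t$-cover of size $t+j$ is $\binom{n-t-j}{k-t-j}$. Sets of size $\ge t+2$ in the fingerprint contribute $O\bigl(\binom{n-t-2}{k-t-2}\cdot(\text{count})\bigr)$, which is negligible. So the bulk of $\mathcal{F}$ outside its heaviest $t$-star must come from $t$-sets and $(t+1)$-sets in the fingerprint.

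\textbf{Step 2: the fingerprint structure.} Let $\mathcal{T}$ be the $t$-sets in $\mathcal{S}_\mathcal{F}\cup\mathcal{S}_\mathcal{G}$. Two distinct $t$-sets cannot cross-$t$-intersect, so if $X\in\mathcal{S}_\mathcal{F}$ is a $t$-set, then every member of $\mathcal{S}_\mathcal{G}$ contains $X$. That would make $\mathcal{G}$ essentially a $t$-star: $\gamma_t(\mathcal{G})$ would be at most the remainder plus the $(t+1)$-set contributions avoiding $X$, and each such contribution must still contain $X$, a contradiction. Hence the heavy part consists of $(t+1)$-sets. A family $\mathcal{P}$ of $(t+1)$-sets that is cross $t$-intersecting with another such family and is not a $t$-star is contained in $\binom{Z}{t+1}$ for some $(t+2)$-set $Z$, or else all its members lie in the $(t+1)$-sets containing a fixed $t$-set. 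This is the standard classification of $t$-intersecting $(t+1)$-uniform families. In the star case, $\gamma_t$ is again small, so the first case holds. Then $\gamma_t(\mathcal{F})\le(|\mathcal{P}|-1)\binom{n-t-1}{k-t-1}$-type counting, done carefully via inclusion--exclusion, gives
\[
\gamma_t(\mathcal{F})\le t\binom{n-t-2}{k-t-1}+\text{(error)}.
\]

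\textbf{Step 3: exact bound and equality.} The main obstacle is removing the error term, that is, showing that members of $\mathcal{F}$ not in $\mathcal{A}(Z)$ cost more than they gain. I would argue directly. Fix $Z$. Any $F\in\mathcal{F}$ with $|F\cap Z|\le t$ must $t$-intersect every $G\in\mathcal{G}$, and $\mathcal{G}$ contains nearly all of $\mathcal{A}(Z)[Y]$ for many $Y\in\binom{Z}{t+1}$. Choosing $G\supseteq Y$ with $G\setminus Y$ avoiding $F$ (possible since $n$ is large) gives $|F\cap G|\le |F\cap Z|$. Using two different $Y$'s then shows that $F$ must contain $Y\cap Y'$, which already forces $|F\cap Z|\ge t+1$. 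Hence $\mathcal{F},\mathcal{G}\subseteq\mathcal{A}(Z)$.

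I still need to deduce the bound for $\mathcal{A}(Z)$ itself. For a subfamily $\mathcal{F}\subseteq\mathcal{A}(Z)$, take the $t$-subset $X\subseteq Z$ of largest degree. Then $\gamma_t(\mathcal{F})\le|\mathcal{A}(Z)|-|\mathcal{A}(Z)[X]|=\gamma_t(\mathcal{A}(n,k,t))$, with averaging over $X\in\binom{Z}{t}$ used to handle the maximizing choice of $X$. This yields the inequality and the stated equality structure. The delicate quantitative part is Step 1's remainder estimate at the constant $18(t+2)$; I would first try bounding the remainder by $\bigl(\tfrac{e(k-t)}{n-t}\bigr)\binom{n-t}{k-t}$ per step and summing a geometric series.
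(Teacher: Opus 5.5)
Your Step~2 rests on a false dichotomy. It is not true that a $(t+1)$-uniform family which is cross $t$-intersecting with another $(t+1)$-uniform family must be a $t$-star or lie in $\binom{Z}{t+1}$ for some $(t+2)$-set $Z$; that dichotomy is the classification for a \emph{single} $t$-intersecting family, not for a cross pair.

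Here is a counterexample. Take $I\in\binom{[n]}{t-1}$ disjoint from $[4]$. The families $\{I\cup\{1,2\},I\cup\{3,4\}\}$ and $\{I\cup\{1,3\},I\cup\{2,4\}\}$ are cross $t$-intersecting. Neither has a common $t$-set, and neither lies in a $(t+2)$-set.

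The paper's Lemma~\ref{lemmastrue-alike} shows that, apart from the $\binom{Z}{t+1}$ case, the final $(t+1)$-uniform fingerprint pair is, up to isomorphism, contained in one of three such ``$4$-cycle'' configurations. These configurations are genuinely competitive. For $t=2$, consider the maximal cross $2$-intersecting pair (all sets in $\binom{[n]}{k}$)
$\mathcal{F}=\{F:5\in F,\ F\cap\{1,3\}\neq\emptyset,\ F\cap\{2,4\}\neq\emptyset\}\cup\{F:F\cap[5]=[4]\}$ and $\mathcal{G}=\{G:\{1,3,5\}\subseteq G\text{ or }\{2,4,5\}\subseteq G\}\cup\{G:G\cap[5]=[4]\}$. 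It has $\gamma_2(\mathcal{F})=2\binom{n-4}{k-3}=\gamma_2(\mathcal{A}(n,k,2))$, yet $\mathcal{F}$ lies in no $\mathcal{A}(Z)$.

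So these configurations cannot be discarded by estimating one family. You have to show that the \emph{partner} family then has strictly smaller $t$-diversity. That is the content of the paper's Case~2.2: subcases (a) and (b), with a separate argument for $t=2$ in (b). Nothing in your proposal replaces this.

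Step~3 also fails as written, for two reasons.
\begin{itemize}
\item Two distinct $Y,Y'\in\binom{Z}{t+1}$ only force $F\cap Z=Y\cap Y'$, which is a $t$-set, so there is no contradiction. You need three such sets.
\item To find $G\in\mathcal{G}$ with $Y\subseteq G$ and $G\setminus Y$ disjoint from $F$, you need \emph{all} $k$-supersets of $Y$ to lie in $\mathcal{G}$; ``nearly all'' does not suffice. By maximality this holds exactly when $Y$ is a genuine $t$-cover of $\mathcal{F}$, and fingerprint members need not be.
\end{itemize}
The paper handles this through $\mathcal{S}^*,\mathcal{T}^*$, the $(t+1)$-element $t$-covers (Lemma~\ref{lemmafin-stru0}(ii)). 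If both have at least three members in $\binom{Z}{t+1}$, then $\mathcal{F}=\mathcal{G}=\mathcal{A}(Z)$. If one has at most two, the remaining fingerprint sets are non-covers, each of degree at most $(k-t+1)\binom{n-t-2}{k-t-2}$ by Lemma~\ref{lemmaind}, and this gives $\gamma_t<\gamma_t(\mathcal{A}(n,k,t))$ directly.

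Finally, the levels of size $\geq t+2$ are not ``negligible'' at $r\approx 18(t+2)$. The available bound is $\delta\binom{n-t-1}{k-t-1}$ with $\delta$ as large as $0.5(t+1)$, which is the same order as $\gamma_t(\mathcal{A}(n,k,t))$. Any exact argument must therefore control every member through genuine $t$-covers rather than absorb an error term.
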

\begin{corollary}\label{corothmmaxdiv}
	Let $k\geq t+2\geq4$ and $n\geq t+(k-t)\cdot\max\{18(t+2),2ek\}$. If  $\mathcal{F}\subseteq \binom{[n]}{k}$ is  $t$-intersecting, then $$\gamma_t(\mathcal{F})\leq\gamma_t(\mathcal{A}(n,k,t)).$$ If equality holds, then  $\mathcal{F}\subseteq\mathcal{A}(Z)$ for some $Z\in\binom{[n]}{t+2}$.
\end{corollary}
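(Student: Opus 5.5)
Corollary \ref{corothmmaxdiv} follows directly from Theorem \ref{thmmaxdiv} with $\mathcal{G}=\mathcal{F}$. If $\mathcal{F}\subseteq\binom{[n]}{k}$ is $t$-intersecting, then $|F\cap F'|\ge t$ for all $F,F'\in\mathcal{F}$, including $F=F'$ since $k\ge t$. Hence the pair $(\mathcal{F},\mathcal{F})$ is cross $t$-intersecting. The hypotheses on $k,t,n$ in the corollary are exactly those of Theorem \ref{thmmaxdiv}, so the theorem applies with no further checking.

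Theorem \ref{thmmaxdiv} then gives
$$\gamma_t(\mathcal{F})=\min\{\gamma_t(\mathcal{F}),\gamma_t(\mathcal{F})\}\le\gamma_t(\mathcal{A}(n,k,t)),$$
which is the inequality. For the equality case, suppose $\gamma_t(\mathcal{F})=\gamma_t(\mathcal{A}(n,k,t))$. Then the minimum in Theorem \ref{thmmaxdiv} is attained for the pair $(\mathcal{F},\mathcal{F})$. The equality part of the theorem yields some $Z\in\binom{[n]}{t+2}$ with $\mathcal{F}\subseteq\mathcal{A}(Z)$, as claimed.

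There is no real obstacle here: all the difficulty lies in Theorem \ref{thmmaxdiv}, which we assume. The only point worth stating explicitly is that a $t$-intersecting family is cross $t$-intersecting with itself. The paper already notes this when introducing cross $t$-intersection, and the remark above (using $k\ge t$ for the case $F=F'$) covers it.
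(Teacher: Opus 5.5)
Your proposal is correct and is exactly the paper's route: the corollary is obtained directly from Theorem \ref{thmmaxdiv} by taking $\mathcal{G}=\mathcal{F}$, since a $t$-intersecting family is cross $t$-intersecting with itself. The equality case then follows from the theorem's equality statement, as you describe.
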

To present our remaining results, let us introduce the following construction.
\begin{construction}
Let $X\in\binom{[n]}{t}$, $U\in\binom{[n]}{t+u}$ and $V\in\binom{[n]}{t+v}$ with $X\subseteq U\cap V$. Define
\begin{align*}
\mathcal{L}(X,U,V)=&\left\{F\in\binom{[n]}{k}:X\subseteq F,\;F\cap(V\setminus X)\neq\emptyset\right\}\\
&\cup\left\{F\in\binom{[n]}{k}:F\cap U=U\setminus\{x\}
\text{ for some } x\in X\right\}.
\end{align*} 
\end{construction}

Set $\mathcal{L}(n,k,t,u,v)=\mathcal{L}([t],[t+u],[t+v])$. A routine computation gives $$|\mathcal{L}(n,k,t,u,v)|=\binom{n-t}{k-t}-\binom{n-v-t}{k-t}+t\binom{n-u-t}{k-u-t+1}.$$
In particular, $\mathcal{L}(X,U,V)=\mathcal{H}(X,U,V)$ when $U, V\in\binom{[n]}{k+1}$, where \(\mathcal H\) is the family defined in Construction \ref{constructionh}. Note that
$$\gamma_t(\mathcal{L}(n,k,t,u,v))=t\binom{n-u-t}{k-u-t+1}.$$

In terms of $t$-diversity, Theorem \ref{thmmin-max} implies that, if $\mathcal{F},\mathcal{G}\subseteq\binom{[n]}{k}$ are cross $t$-intersecting with $\min\{|\mathcal{F}|, |\mathcal{G}|\}>\max\{|\mathcal{H}(n,k,t)|,|\mathcal{A}(n,k,t)|\}$, then  $\mathcal{F}$ and $\mathcal{G}$ are trivial, that is,  $\gamma_t(\mathcal{F})=\gamma_t(\mathcal{G})=0$. The next main result refines this implication by giving a stability result for large $n$ depending on $k$ and $t$. It may also be viewed as a cross $t$-intersection version of Theorem \ref{thmFK}.
\begin{theorem}\label{thmdiv}
	Let $k\geq t+2\geq4$, $n\geq7(k-t+1)k^2$, and let  $\mathcal{F},\mathcal{G}\subseteq\binom{[n]}{k}$ be cross $t$-intersecting. Suppose that 
	\begin{align*}
		|\mathcal{F}|\geq|\mathcal{L}(n,k,t,u,v)|&\;\;\mbox{and}\;\;\;|\mathcal{G}|\geq|\mathcal{L}(n,k,t,v,u)|\\
		\intertext{for some integers $u,v\in[3,k-t+1]$ with $(u,v)\neq(3,3)$. Then}
		\gamma_t(\mathcal{F})\leq\gamma_t(\mathcal{L}(n,k,t,u,v))&\;\;\mbox{and}\;\;\;\gamma_t(\mathcal{G})\leq\gamma_t(\mathcal{L}(n,k,t,v,u)),
	\end{align*}
 or $u,v\leq t+2$ and $\mathcal{F}, \mathcal{G}\subseteq\mathcal{A}(Z)$ for some $Z\in\binom{[n]}{t+2}$. Moreover, in the former case, if $\gamma_t(\mathcal{F})=\gamma_t(\mathcal{L}(n,k,t,u,v))$ or $\gamma_t(\mathcal{G})=\gamma_t(\mathcal{L}(n,k,t,v,u))$, then $\mathcal{F}=\mathcal{L}(X,U,V)$ and $\mathcal{G}=\mathcal{L}(X,V,U)$ for some  $X\in\binom{[n]}{t}$, $U\in\binom{[n]}{t+u}$ and $V\in\binom{[n]}{t+v}$ with $X\subseteq U\cap V$ and $|U\cap V|\geq t+2$.
\end{theorem}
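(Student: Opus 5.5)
Let $X$ be a $t$-set of maximum degree in $\mathcal{F}$, so that $\gamma_t(\mathcal{F})=|\mathcal{F}\setminus\mathcal{F}[X]|$, and let $Y$ be the analogous $t$-set for $\mathcal{G}$. The plan has four stages. First show that $X=Y$. Then bound the part of each family that misses $X$ by a $t$-cover argument that uses the size lower bounds. Then identify the equality cases. Finally, treat separately the regime in which no common popular $t$-set exists; this regime will produce the $\mathcal{A}(Z)$ alternative.

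\textbf{Step 1 (common centre).} Since $u,v\ge 3$, we have
$$|\mathcal{L}(n,k,t,u,v)|\ge\binom{n-t}{k-t}-\binom{n-v-t}{k-t}\ge v\binom{n-t-1}{k-t-1}(1-o(1)),$$
which is of order $n^{k-t-1}$. I would run the fingerprint and peeling procedure, in the spirit of Kupavskii--Zakharov, on the pair $(\mathcal{F},\mathcal{G})$, or simply invoke the $t$-cover structure behind Theorem~\ref{thmmin-max}. This yields a minimal family $\mathcal{T}$ of $t$-covers of size at most $k$ such that all but $O(k^{2}\binom{n-t-2}{k-t-2})$ members of $\mathcal{F}$ and of $\mathcal{G}$ contain some element of $\mathcal{T}$. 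Because $n\ge 7(k-t+1)k^2$, this error term is far smaller than $\binom{n-t-1}{k-t-1}$.

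Now compare sizes. If $\mathcal{T}$ contains no $t$-set, then every member of $\mathcal{T}$ has size at least $t+1$, and $\mathcal{F}$, $\mathcal{G}$ are essentially contained in unions of $(t+1)$-stars that are forced to cross $t$-intersect. One then shows that either $\mathcal{F},\mathcal{G}\subseteq\mathcal{A}(Z)$, or $\min\{|\mathcal{F}|,|\mathcal{G}|\}$ drops below the hypothesis. The case $\mathcal{A}(Z)$ is only compatible with the size bounds when $|\mathcal{A}(n,k,t)|$ is comparable to $|\mathcal{L}|$, that is, when $(t+2)\binom{n-t-1}{k-t-1}$ dominates, which is where the condition $u,v\le t+2$ comes from. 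Otherwise $\mathcal{T}$ contains a $t$-set $X$. Cross $t$-intersection together with the size bound then forces $X$ to be the unique $t$-set of maximal degree in both families.

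\textbf{Step 2 (bounding the diversity).} Write $\mathcal{F}'=\mathcal{F}\setminus\mathcal{F}[X]$ and $\mathcal{G}'=\mathcal{G}\setminus\mathcal{G}[X]$. Every $G\in\mathcal{G}[X]$ must $t$-intersect each $F\in\mathcal{F}'$. Hence
$$|\mathcal{G}[X]|\le\binom{n-t}{k-t}-\#\{G\supseteq X: |G\cap F|<t\text{ for some }F\in\mathcal{F}'\}.$$
Let $V_0=\bigcap_{F\in\mathcal F'}(F\setminus X)\cup X$, and more generally consider the minimal transversal structure of $\{F\setminus X: F\in\mathcal F'\}$ relative to $X$. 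If $\gamma_t(\mathcal{F})>t\binom{n-u-t}{k-u-t+1}$, then the sets of $\mathcal{F}'$ cannot all contain a common $(t+u-1)$-set of the form $U\setminus\{x\}$. Consequently the forbidden region for $\mathcal{G}[X]$ is at least as large as in the extremal configuration, and this gives
$$|\mathcal{G}|\le\binom{n-t}{k-t}-\binom{n-u-t}{k-t}+|\mathcal{G}'|.$$
Bound $|\mathcal{G}'|$ symmetrically. Combining the two inequalities with the hypotheses gives a contradiction. This step is a Frankl-type degree/diversity tradeoff. I would prove it by a shifting-free counting argument: for each $x\in X$, look at the link $\mathcal{F}'(X\setminus\{x\})$ of sets missing $x$, and apply the cross $(t-1)$-intersection constraint with $\mathcal{G}[X]$ on $[n]\setminus X$, comparing against the extremal $\mathcal{L}(X,U,V)$ where the link is a full star of $(u-1)$-sets.

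\textbf{Step 3 (equality).} In the equality case every inequality above is tight. This forces each $\mathcal{F}'$-link to be exactly $\{F: F\cap U=U\setminus\{x\}\}$ and $\mathcal{G}[X]$ to be exactly the sets meeting $V\setminus X$. Maximality then pins down $\mathcal{F}=\mathcal{L}(X,U,V)$ and $\mathcal{G}=\mathcal{L}(X,V,U)$. The condition $|U\cap V|\ge t+2$ is needed so that the two exceptional parts cross $t$-intersect.

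\textbf{Main obstacle.} I expect the hardest part to be Step 2 with general $u,v$, rather than only $u=v=k-t+1$. One must show that a large diversity necessarily kills many sets of the star at $X$ in the other family. This requires a Kruskal--Katona/Hilton-type inequality for cross $t$-intersection, which is exactly what is unavailable for $t\ge2$. I would try to replace it with the $t$-cover counting: reduce to the case in which $\mathcal{F}'$ has a common core of size at least $t+u-1$, and in that case count directly.
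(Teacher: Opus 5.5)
There is a genuine gap in Step~2, which is the heart of the proof. Assuming $\gamma_t(\mathcal F)>t\binom{n-u-t}{k-u-t+1}$, you reach $|\mathcal G|\le\binom{n-t}{k-t}-\binom{n-u-t}{k-t}+|\mathcal G'|$.

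\begin{itemize}
\item Together with the hypothesis on $|\mathcal G|$, this gives only $|\mathcal G'|\ge t\binom{n-v-t}{k-v-t+1}$.
\item The symmetric argument gives only $|\mathcal F'|\ge t\binom{n-u-t}{k-u-t+1}$.
\item Both are consistent with your assumption. So ``combining'' yields no contradiction unless you already know $|\mathcal G'|<t\binom{n-v-t}{k-v-t+1}$, which is the conclusion for $\mathcal G$.
\item The a priori bound $O(k^2\binom{n-t-2}{k-t-2})$ on $|\mathcal G'|$ from Step~1 does not help. It is of order $n^{k-t-2}$, while $t\binom{n-v-t}{k-v-t+1}$ is of order at most $n^{k-t-2}$, and much smaller for $v\ge4$.
\end{itemize}

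Comparing the ``forbidden region'' with the extremal one at the level of exact binomial expressions discards the one piece of information that matters. That comparison is also itself the Hilton-type inequality you say is unavailable.

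Your closing idea, reducing to a common core, is the right target. The missing ingredient is to use the size hypothesis only at first order, through $(t+1)$-element $t$-covers, and to let the size of $\mathcal F$ control the diversity of $\mathcal G$. The argument goes as follows.

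\begin{itemize}
\item Take $(\mathcal F,\mathcal G)$ maximal with common centre $X$. First show that every member of $\mathcal F'\cup\mathcal G'$ meets $X$ in exactly $t-1$ elements. Otherwise $|\mathcal G[X]|$ or $|\mathcal F[X]|$ is at most $(b^2/r)\binom{n-t-1}{k-t-1}$, with $b=k-t+1$ and $r=(n-t)/(k-t)$, which contradicts the size bounds.
\item Let $\{X\cup\{w\}:w\in W\}$ be the $(t+1)$-element $t$-covers of $\mathcal G$. Then every $G\in\mathcal G'$ must contain $W$.
\item Any other $X\cup\{w\}$ lies in at most $b\binom{n-t-2}{k-t-2}$ members of $\mathcal F$, by Lemma~\ref{lemmaind}. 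Fixing $G_0\in\mathcal G'$, each set of $\mathcal F[X]$ that avoids $W$ meets the $(b-|W|)$-set $G_0\setminus(X\cup W)$.
\item Hence $|\mathcal F|\le(|W|+b^2/r)\binom{n-t-1}{k-t-1}+|\mathcal F'|$, where $|\mathcal F'|$ is small compared with $\binom{n-t-1}{k-t-1}$.
\item So $|\mathcal F|>(v-0.1)\binom{n-t-1}{k-t-1}$ forces $|W|\ge v$. Directly, $\gamma_t(\mathcal G)\le|\mathcal G'|\le t\binom{n-|W|-t}{k-|W|-t+1}\le t\binom{n-v-t}{k-v-t+1}$.
\end{itemize}

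No contradiction argument and no Kruskal--Katona input is needed. The finer terms of $|\mathcal L(n,k,t,u,v)|$ matter only in the equality analysis.

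Separately, Step~1 never uses $(u,v)\ne(3,3)$, yet its dichotomy (``$\mathcal A(Z)$, or the sizes drop'') fails without it. A $(t+1)$-uniform last fingerprint level can be one of the configurations of Lemma~\ref{lemmastrue-alike}. The one with three $(t+1)$-sets on each side yields pairs such as $(\mathcal C_1,\mathcal C_2)$ in Proposition~\ref{prop}(iii). These have size $|\mathcal L(n,k,t,3,3)|$, no common centre, and lie in no $\mathcal A(Z)$. They are excluded only because $\max\{u,v\}\ge4$ forces one family to have more than $3.9\binom{n-t-1}{k-t-1}$ members. This classification therefore has to be part of your argument.
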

Let us note that the remaining case $(u,v)=(3,3)$ is treated separately in Proposition~\ref{prop}. We also remark that Theorem \ref{thmdiv} fails for $n=O((k-t)t^2)$ (see Section \ref{secrmk} for details). So for $k=O(t)$, the order of the present lower bound on $n$ is best possible.

Our argument also yields stability results for $t$-intersecting families. In 2019, Ellis, Keller and Lifshitz \cite{Ellis-2019} established the following  stability version for Theorem \ref{EKR}. 
 \begin{theorem}[\cite{Ellis-2019}]\label{thmEKL}
 For any $t\in\mathbb{N}$ and $\eta>0$, there exists $\delta_0=\delta(t,\eta)$ such that the following holds. Let $n,k\in\mathbb{N}$ with $k\leq(\frac{1}{t+1}-\eta)n$, and let $d\in\mathbb{N}$. If  $\mathcal{F}\subseteq\binom{[n]}{k}$ is  $t$-intersecting with
 \begin{align*}
 |\mathcal{F}|>\max\left\{(1-\delta_0)\binom{n-t}{k-t},\binom{n-t}{k-t}-\binom{n-t-d}{k-t}+(2^t-1)\binom{n-t-d}{k-t-d+1}\right\},
 \end{align*}
 then there exists a $t$-subset $X$ such that
 \begin{equation*}
|\mathcal{F}\setminus\mathcal{F}[X]|\leq(2^t-1)\binom{n-t-d}{k-t-d+1}.
 \end{equation*}
 \end{theorem}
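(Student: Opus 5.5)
The plan is to prove Theorem \ref{thmEKL} in two stages: a coarse stability step that locates the $t$-set $X$, followed by an exact trade-off step that upgrades ``few sets avoid $X$'' to the precise bound $(2^t-1)\binom{n-t-d}{k-t-d+1}$. Throughout, write $\mu=\binom{n-t}{k-t}$.

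\textbf{Stage 1 (finding $X$).} Since $k\le(\frac1{t+1}-\eta)n$, we are in the regime where Friedgut-type stability for $t$-intersecting families holds; one could also use the junta approximation of Ellis, Keller and Lifshitz. Given $\varepsilon>0$, this yields $\delta_0=\delta_0(t,\eta,\varepsilon)$ with the following property: if $|\mathcal{F}|>(1-\delta_0)\mu$, then there is $X\in\binom{[n]}{t}$ with
$$|\mathcal{F}\setminus\mathcal{F}[X]|\le\varepsilon\mu.$$
Fix this $X$. Write $\mathcal{A}=\{F\setminus X:F\in\mathcal{F}[X]\}\subseteq\binom{[n]\setminus X}{k-t}$. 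For each proper subset $Y\subsetneq X$, write
$$\mathcal{B}_Y=\{F\setminus X:F\in\mathcal{F},\ F\cap X=Y\}\subseteq\binom{[n]\setminus X}{k-|Y|}.$$
Then $|\mathcal{A}|\ge(1-\varepsilon)\mu$, and $\mathcal{A}$ and $\mathcal{B}_Y$ are cross $(t-|Y|)$-intersecting. In particular they are cross $1$-intersecting, since $|Y|<t$.

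\textbf{Stage 2 (trade-off).} The goal is a lemma of the following form. Let $n'=n-t$. If $\mathcal{A}\subseteq\binom{[n']}{k-t}$ and $\mathcal{B}\subseteq\binom{[n']}{m}$, with $m\ge k-t$, are cross-intersecting and $|\mathcal{B}|\ge\binom{n-t-d}{k-t-d+1}$, then
$$\binom{n'}{k-t}-|\mathcal{A}|\ \ge\ \binom{n-t-d}{k-t}.$$
This should follow from the Hilton lemma via Kruskal--Katona. Take $\mathcal{B}$ to be initial in squashed order; its members then all contain a fixed $(d-1)$-set, or else $\mathcal{B}$ is larger than that. Every set disjoint from some member of $\mathcal{B}$ is missing from $\mathcal{A}$, and the number of $(k-t)$-sets avoiding a $(d-1)$-set is $\binom{n-t-d+1}{k-t}\ge\binom{n-t-d}{k-t}$. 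The condition $k\le n/(t+1)$ makes the ratio of consecutive binomials small, so the loss in $\mathcal{A}$ dominates the gain $|\mathcal{B}|$ by a large factor.

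\textbf{Combining and the main obstacle.} We need to handle all $2^t-1$ families $\mathcal{B}_Y$ at once. Suppose $|\mathcal{F}\setminus\mathcal{F}[X]|=\sum_Y|\mathcal{B}_Y|$ exceeds $(2^t-1)\binom{n-t-d}{k-t-d+1}$. Then some $\mathcal{B}_Y$ exceeds $\binom{n-t-d}{k-t-d+1}$. The trade-off lemma then forces $|\mathcal{A}|\le\mu-\binom{n-t-d}{k-t}$. This alone is not a contradiction, because the other $\mathcal{B}_{Y'}$ might compensate. To rule that out, we need a quantitative, concave-type version of the lemma: the loss in $\mathcal{A}$ caused by $\mathcal{B}$ grows faster than $|\mathcal{B}|$. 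Applying this to the union of the shadows $\bigcup_Y\partial\mathcal{B}_Y$ (the sets that must be excluded from $\mathcal{A}$) should give
$$\mu-|\mathcal{A}|\ \ge\ \binom{n-t-d}{k-t}+\Bigl(\sum_Y|\mathcal{B}_Y|-(2^t-1)\binom{n-t-d}{k-t-d+1}\Bigr).$$
This contradicts the lower bound on $|\mathcal{F}|$. I expect this monotone, superadditive trade-off across the several $Y$ with differing uniformities $k-|Y|$ to be the main obstacle. For $|Y|<t-1$ the cross-intersection is of order at least $2$, which only strengthens the restriction, but the Kruskal--Katona estimates must still be uniform in the linear range $k\le(\frac1{t+1}-\eta)n$. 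That uniformity is exactly where $\eta$, and hence the choice of $\varepsilon$ (and with it $\delta_0$), enters.
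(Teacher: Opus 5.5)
The paper does not prove this statement. It is quoted from \cite{Ellis-2019}, and the paper only remarks that their argument rests on isoperimetric inequalities in the hypercube. So your proposal must stand on its own, and it has a genuine gap. Stage 1 is acceptable as a cited black box, but the step you call combining is the whole theorem.

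Under the premise $\sum_Y|\mathcal{B}_Y|>(2^t-1)\binom{n-t-d}{k-t-d+1}$, your displayed inequality is merely a rearrangement of $|\mathcal{F}|=|\mathcal{A}|+\sum_Y|\mathcal{B}_Y|\le\binom{n-t}{k-t}-\binom{n-t-d}{k-t}+(2^t-1)\binom{n-t-d}{k-t-d+1}$. That is, it is the contrapositive of the statement for this $X$, so saying it ``should give'' this proves nothing.

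Moreover, it cannot follow from the cross-intersection of $\mathcal{A}$ with the $\mathcal{B}_Y$ alone, which is all your trade-off mechanism uses. Take $t=1$, $d=2$ and $\mathcal{F}$ the full star at a point $y\neq x$. Then $\mathcal{A}$ and $\mathcal{B}_\emptyset$ are cross-intersecting with $|\mathcal{B}_\emptyset|=\binom{n-2}{k-1}>\binom{n-3}{k-2}$. Yet your inequality would require $\binom{n-3}{k-2}\ge\binom{n-3}{k-1}$, which is false for $n>2k$. So the closeness $|\mathcal{A}|>(1-\varepsilon-\delta_0)\binom{n-t}{k-t}$ from Stage 1 must be used quantitatively, and you never say how.

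The superadditivity over $Y$ you hope for is also not available. The $(k-t)$-sets excluded from $\mathcal{A}$ by different $\mathcal{B}_Y$ can coincide: all $t$ families with $|Y|=t-1$ may be the same star, as in $\mathcal{L}(X,U,V)$. So the loss in $\mathcal{A}$ is controlled only through $\max_Y|\mathcal{B}_Y|\ge\sum_Y|\mathcal{B}_Y|/(2^t-1)$, which is where the constant $2^t-1$ naturally comes from.

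To conclude, one then needs the Hilton--Kruskal--Katona loss to grow, beyond the $d$-star size $\binom{n-t-d}{k-t-d+1}$, at rate at least $2^t-1$. For $t\ge2$ this fails for small $d$. Between the $3$-star and the $2$-star in $\binom{[n]\setminus X}{k-t+1}$, the forced loss in $\mathcal{A}$ grows on average by exactly one set per added set.

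So the rate must come from Stage 1. A $\mathcal{B}_Y$ above the threshold forces a loss of $\binom{n-t-d}{k-t}$, and this loss must be below $(\varepsilon+\delta_0)\binom{n-t}{k-t}$. Together with $k\le(\frac1{t+1}-\eta)n$, this makes $d$ large in terms of $t$. Only then do the local LYM rates $\binom{n-t-d'}{k-t-1}/\binom{n-t-d'}{k-t-d'+2}$ exceed $2^t-1$. None of this appears in the proposal.

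There are also two smaller errors in Stage 2.
\begin{enumerate}
\item The lemma as stated for all $m\ge k-t$ is false for $m\ge k-t+2$ under mere cross-intersection; lexicographic initial segments give counterexamples. The reason is that $\binom{n-t-d}{k-t-d+1}$ is the size of a $d$-star only in uniformity $k-t+1$. For $|Y|\le t-2$ you must use the cross-$(t-|Y|)$-intersection, for example by replacing $\mathcal{B}_Y$ with its $(k-t+1)$-shadow, every member of which meets every member of $\mathcal{A}$.
\item The justification is reversed. Hilton's lemma uses lexicographic, not squashed, initial segments. The point is that a segment of size at least that of the $d$-star \emph{contains} all sets through $[d]$, which forces every member of $\mathcal{A}$ to meet $[d]$. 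The property ``all members contain a fixed $(d-1)$-set'' constrains nothing, and the count $\binom{n-t-d+1}{k-t}$ overshoots the true extremal loss $\binom{n-t-d}{k-t}$.
\end{enumerate}
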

 The approach presented in \cite{Ellis-2019}, which employs isoperimetric inequalities for subsets of the hypercube, is deep and has proved to be a fruitful  framework for studying stability results in extremal set theory. In that paper, they proposed the following conjecture as a potential strengthening of Theorem \ref{thmEKL}.
 \begin{conjecture}[\cite{Ellis-2019}]
	Let $n\geq(t+1)(k-t+1)$ and let $d\in\mathbb{N}$.  If $\mathcal{F}\subseteq\binom{[n]}{k}$ is $t$-intersecting with
\begin{align*}
	|\mathcal{F}|\geq
	\max\Bigg\{(t+2)\binom{n-t-2}{k-t-1}&-(t+1)\binom{n-t-2}{k-t-2},\\&\binom{n-t}{k-t}-\binom{n-t-d}{k-t}+t\binom{n-t-d}{k-t-d+1}\Bigg\},
\end{align*}
 then there exists a $t$-subset $X$ such that $|\mathcal{F}\setminus\mathcal{F}[X]|\leq t\binom{n-t-d}{k-t-d+1}$.
 \end{conjecture}
There seems to be a minor slip in the printed formulation, as the expected strengthening of Theorem \ref{thmEKL} should concern the structure of large $t$-intersecting families whose size is larger than $|\mathcal{A}(n,k,t)|=(t+2)\binom{n-t-1}{k-t-1}-(t+1)\binom{n-t-2}{k-t-2}$ and close to $|\mathcal{H}(n,k,t)|$. The former restriction is natural, since the family $\mathcal{A}(n,k,t)$ has both large $t$-diversity and large size. In fact, for $3\leq d\leq t+2$, this family gives a counterexample to the conjecture in its  printed form.

Our next result proves the expected form of the conjecture for large $n$, in a stronger form. 
\begin{theorem}\label{thmstability-t-div}
	Let $k\geq t+2\geq4$, $n\geq7(k-t+1)k^2$ and $d\geq2$. If $\mathcal{F}\subseteq\binom{[n]}{k}$ is a $t$-intersecting family with
	\begin{align*}
		|\mathcal{F}|&\geq\binom{n-t}{k-t}-\binom{n-t-d}{k-t}+t\binom{n-t-d}{k-t-d+1},
	\end{align*}
	then either $\gamma_t(\mathcal{F})\leq t\binom{n-t-d}{k-t-d+1}$, that is, there exists a $t$-subset $X$ such that $|\mathcal{F}\setminus\mathcal{F}[X]|\leq t\binom{n-t-d}{k-t-d+1}$, or $d\leq t+2$ and $\mathcal{F}\subseteq\mathcal{A}(Z)$ for some $Z\in\binom{[n]}{t+2}$.  Moreover, if $2\leq d\leq k-t+1$, then in the former case, equality holds if and only if $\mathcal{F}\cong\mathcal{L}(n,k,t,d,d)$.
\end{theorem}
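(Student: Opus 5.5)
Theorem \ref{thmstability-t-div} should follow from Theorem \ref{thmdiv} with $\mathcal{G}=\mathcal{F}$ and $u=v=d$, plus separate handling of the boundary values of $d$. A $t$-intersecting family is cross $t$-intersecting with itself. The size hypothesis is exactly $|\mathcal{F}|\geq|\mathcal{L}(n,k,t,d,d)|$, because $|\mathcal{L}(n,k,t,d,d)|=\binom{n-t}{k-t}-\binom{n-t-d}{k-t}+t\binom{n-t-d}{k-t-d+1}$.

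\emph{Main range $4\le d\le k-t+1$.} Here $(d,d)\neq(3,3)$, so Theorem \ref{thmdiv} applies. It gives either $\gamma_t(\mathcal{F})\leq t\binom{n-t-d}{k-t-d+1}$, or $d\leq t+2$ and $\mathcal{F}\subseteq\mathcal{A}(Z)$. In the first case, let $X$ be a $t$-set of maximum $t$-degree; then $|\mathcal{F}\setminus\mathcal{F}[X]|=\gamma_t(\mathcal{F})$, which gives the reformulation in the statement. For equality, the ``moreover'' part of Theorem \ref{thmdiv} yields $\mathcal{F}=\mathcal{L}(X,U,V)$ and $\mathcal{F}=\mathcal{G}=\mathcal{L}(X,V,U)$ with $|U|=|V|=t+d$, so $\mathcal{F}\cong\mathcal{L}(n,k,t,d,d)$. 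For the converse, $\mathcal{L}(n,k,t,d,d)$ is $t$-intersecting: any two members either both contain $[t]$ and meet $[t+1,t+d]$, or they pairwise share enough of $[t+d]$. Its $t$-diversity is exactly $t\binom{n-t-d}{k-t-d+1}$, because for large $n$ the $t$-set $[t]$ has the largest degree.

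\emph{Remaining values $d=2$, $d=3$, and $d>k-t+1$.}
\begin{itemize}
\item For $d>k-t+1$ we have $k-t-d+1<0$, so the bound reads $|\mathcal{F}|\geq\binom{n-t}{k-t}-\binom{n-t-d}{k-t}$. This exceeds $|\mathcal{H}(n,k,t)|$ and $|\mathcal{A}(n,k,t)|$ for large $n$. Theorem \ref{thmmin-max} with $\mathcal{G}=\mathcal{F}$ then forces $\mathcal{F}$ to be trivial, so $\gamma_t(\mathcal{F})=0$.
\item For $d=3$ I would invoke Proposition~\ref{prop}, which the paper cites for the case $(u,v)=(3,3)$. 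It should give either $\gamma_t(\mathcal{F})\leq t\binom{n-t-3}{k-t-2}$ or $\mathcal{F}\subseteq\mathcal{A}(Z)$, with equality only for $\mathcal{L}(n,k,t,3,3)$.
\item For $d=2$ the threshold $\binom{n-t}{k-t}-\binom{n-t-2}{k-t}+t\binom{n-t-2}{k-t-1}$ is comparable to $|\mathcal{A}(n,k,t)|$. Since $d\leq t+2$, the alternative $\mathcal{F}\subseteq\mathcal{A}(Z)$ is permitted. It therefore suffices to show that every $\mathcal{F}$ not contained in some $\mathcal{A}(Z)$ has $\gamma_t(\mathcal{F})\leq t\binom{n-t-2}{k-t-1}=\gamma_t(\mathcal{A}(n,k,t))$. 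That is exactly Corollary \ref{corothmmaxdiv}; its range $n\ge t+(k-t)\max\{18(t+2),2ek\}$ is implied by $n\geq7(k-t+1)k^2$.
\end{itemize}

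For equality at $d=2$: Corollary \ref{corothmmaxdiv} says $\gamma_t(\mathcal{F})=\gamma_t(\mathcal{A}(n,k,t))$ forces $\mathcal{F}\subseteq\mathcal{A}(Z)$, so in the non-$\mathcal{A}$ case the inequality is strict. I need to reconcile this with the claim that equality holds iff $\mathcal{F}\cong\mathcal{L}(n,k,t,2,2)$. Unpacking the definition, $\mathcal{L}([t],[t+2],[t+2])$ consists of all $F$ with $[t]\subseteq F$ meeting $\{t+1,t+2\}$, together with all $F$ with $F\cap[t+2]=[t+2]\setminus\{x\}$ for some $x\in[t]$. This is precisely $\mathcal{A}([t+2])$. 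So the $d=2$ equality case is consistent: equality forces $\mathcal{F}=\mathcal{A}(Z)\cong\mathcal{L}(n,k,t,2,2)$, and then both alternatives of the theorem hold simultaneously.

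\emph{Main obstacle.} The delicate part is $d=3$, which depends on the content of Proposition~\ref{prop}. I also need to check that the hypothesis $n\geq7(k-t+1)k^2$ covers the ranges required by Theorems \ref{thmdiv} and \ref{thmmin-max} and by Corollary \ref{corothmmaxdiv}. For Corollary \ref{corothmmaxdiv} this is a routine comparison. If Proposition~\ref{prop} does not directly give the needed bound, I would rerun the argument of Theorem \ref{thmdiv} with $u=v=3$ and $\mathcal{F}=\mathcal{G}$, using the symmetry to exclude the configurations that made $(3,3)$ exceptional in the cross setting.
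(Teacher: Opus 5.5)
Your handling of $d=2$ (Corollary~\ref{corothmmaxdiv} together with $\mathcal{L}(n,k,t,2,2)=\mathcal{A}(n,k,t)$) and of $3\le d\le k-t+1$ (Theorem~\ref{thmdiv}, and Proposition~\ref{prop} with alternative (iii) excluded because $\mathcal{C}_1\neq\mathcal{C}_2$) matches the paper. There is a genuine gap in the case $d\ge k-t+2$.

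There the threshold is $\binom{n-t}{k-t}-\binom{n-t-d}{k-t}$. It does exceed $|\mathcal{H}(n,k,t)|$, but it exceeds $|\mathcal{A}(n,k,t)|$ only when $d\ge t+3$, and this has nothing to do with $n$ being large. The threshold is increasing in $d$, and at $d=t+2$ it falls short of $|\mathcal{A}(n,k,t)|$ by $t\binom{n-t-2}{k-t-1}-\bigl(\binom{n-t-2}{k-t}-\binom{n-2t-2}{k-t}\bigr)>0$. So for every $d\le t+2$ it lies below $|\mathcal{A}(n,k,t)|$.

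The window $k-t+2\le d\le t+2$ is non-empty whenever $k\le 2t$. For example, take $t=2$, $k=4$, $d=4$: the threshold is $4n-18$, while $|\mathcal{A}(n,4,2)|=4n-15$. In this window, $\mathcal{A}(n,k,t)$ itself, and subfamilies of it obtained by deleting a few sets, satisfy the hypothesis and are non-trivial. So your conclusion ``$\mathcal{F}$ is trivial'' is false. Theorem~\ref{thmmin-max} also cannot supply the needed alternative $\mathcal{F}\subseteq\mathcal{A}(Z)$: it only bounds the size of non-trivial families and says nothing about the structure of those with $|\mathcal{H}(n,k,t)|<|\mathcal{F}|\le|\mathcal{A}(n,k,t)|$.

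The missing step, which is how the paper treats all $d\ge k-t+2$, is to invoke the stability result at the Hilton--Milner level.
\begin{itemize}
\item Since $|\mathcal{F}|>|\mathcal{H}(n,k,t)|=|\mathcal{L}(n,k,t,k-t+1,k-t+1)|$, apply Theorem~\ref{thmdiv} with $u=v=k-t+1$ (for $k\ge t+3$), or Proposition~\ref{prop} (for $k=t+2$), to $(\mathcal{F},\mathcal{F})$. This gives either $\gamma_t(\mathcal{F})\le t$, or $\mathcal{F}\subseteq\mathcal{A}(Z)$.
\item The first alternative forces triviality. If $\mathcal{F}$ is non-trivial and $|\mathcal{F}\setminus\mathcal{F}[X]|\le t$, Lemma~\ref{lemmaind} gives $|\mathcal{F}[X]|\le\binom{n-t}{k-t}-\binom{n-k-1}{k-t}$. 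Hence $|\mathcal{F}|\le|\mathcal{H}(n,k,t)|$, a contradiction.
\item In the second alternative you must still show $d\le t+2$. This follows by comparing sizes: for $d\ge t+3$ the threshold exceeds $(t+2)\binom{n-t-1}{k-t-1}>|\mathcal{A}(Z)|$.
\end{itemize}
Your Theorem~\ref{thmmin-max} argument remains a valid shortcut for $d\ge\max\{k-t+2,t+3\}$. It does not cover the window $k-t+2\le d\le t+2$.
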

 Let us note that the following hold for $n\geq(t+1)(k-t)+3$.
 \begin{align*}
 	|\mathcal{L}(n,k,t,3,3)|&<|\mathcal{L}(n,k,t,4,4)|<\cdots<|\mathcal{L}(n,k,t,k-t+1,k-t+1)|.\\
 	\gamma_t(\mathcal{L}(n,k,t,3,3))&>\gamma_t(\mathcal{L}(n,k,t,4,4))>\cdots>\gamma_t(\mathcal{L}(n,k,t,k-t+1,k-t+1)).
 \end{align*}
  The theorem above can be interpreted as a stability result for the Erd\H{o}s--Ko--Rado theorem for large $n$, that is, every $t$-intersecting family of size at least  $|\mathcal{L}(n,k,t,d,d)|$ can be made a subfamily of a full $t$-star by removing at most  $t\binom{n-t-d}{k-t-d+1}$ sets, unless it is isomorphic to a subfamily of $\mathcal{A}(n,k,t)$. It may also be regarded as a $t$-intersection analogue (for large $n$) of Frankl's theorem (Theorem \ref{F}) as well as its diversity version due to Kupavskii and Zakharov \cite{Kupavskii-Zakharov-2018}. 
\subsection{Organization}
The rest of this paper is organized as follows. In the next section, we present a key ingredient of our approach: an algorithm that produces sequences of $t$-covers. Several important properties of the resulting families are then derived, and as a first application we prove Theorem \ref{thmcrossekr}. Sections \ref{secfinstru1} and \ref{secfinstru2} focus on cross-intersection problems for classical set systems. In Section \ref{secfinstru1}, we first collect several frequently used inequalities, and then prove the key lemma for the $t$-cover method, Lemma \ref{lemmakey}. We also establish a simple but crucial observation (Lemma \ref{lemmamaximal}) concerning the initial fingerprint in this setting, which enables a more effective application of our argument. We then prove two structural lemmas (Lemmas \ref{lemmafin-stru1} and \ref{lemmafin-stru2}), followed by the proofs of Theorems \ref{thmmin-max} and \ref{thmcrosshm}. In Section \ref{secfinstru2}, we prove Theorems \ref{thmmaxdiv}, \ref{thmdiv} and \ref{thmstability-t-div} on the $t$-diversity. Finally, Section \ref{secrmk} presents some applications of Theorem \ref{thmcrossekr} and concludes with several remarks.
\section{Fingerprints and the iteration procedure}
Let us begin with several notations. Let $\Omega$ be a finite set. For simplicity, $\binom{\Omega}{\leq s}$ stands for the set of all subsets of $\Omega$ with size at most $s$. For families $\mathcal{A}$ and $\mathcal{S}$ of subsets of $\Omega$ and a subset $X$ of $\Omega$, we write
\begin{align*}
	\mathcal{A}(X)&:=\{A\setminus X:A\in\mathcal{A},X\subseteq A\}.\\
	\mathcal{A}[\mathcal{S}]&:=\left\{A:A\in\mathcal{A}, S\subseteq A\;\mbox{for some}\;S\in\mathcal{S}\right\}.
\end{align*}
We note that $\mathcal{A}(X)$ is a family of subsets of $\Omega\setminus X$, and has the same size as   $\mathcal{A}[X]$, as defined in (\ref{equlink}). Also note that $\mathcal{A}[\mathcal{S}]=\bigcup_{S\in\mathcal{S}}\mathcal{A}[S]$. We set $\mathcal{A}[\emptyset]=\emptyset$. Here is a simple yet useful estimate. \begin{equation}\label{equltrt}
	\binom{a-c}{b-c}\leq(a-b+1)^{b-c}\;\;\mbox{for}\;\;a\geq b\geq c.
\end{equation}
This can be easily verified as  $\frac{y}{x}<\frac{y-1}{x-1}$ for $y>x>1$. 

Let us recall an inductive property of $r$-spreadness (see e.g. \cite{Kupavskii-2026}). 
We present the proof for completeness.
\begin{lemma}\label{lemmaspreadness-size}
	If $\mathcal{F}\subseteq\binom{\Omega}{k}$ and $|\mathcal{F}|>r^k$ for some $r>1$, then there is a subset $X$ of $\Omega$ with size smaller than $k$ such that $\mathcal{F}(X)$ is $r$-spread.
\end{lemma}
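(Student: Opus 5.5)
Take $X\subseteq\Omega$ to be a maximal set, with respect to the ratio $|\mathcal{F}(X)|\,r^{|X|}$, among all subsets of $\Omega$ of size at most $k$. This is an extremal choice: we maximize $r^{|X|}|\mathcal{F}(X)|$ over all $X$ with $|X|\le k$, and among the maximizers pick one of maximum size, say. Note that $X=\emptyset$ gives the value $|\mathcal{F}|>r^k$.

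First we rule out $|X|=k$. If $|X|=k$, then $\mathcal{F}(X)\subseteq\{\emptyset\}$, so $r^{|X|}|\mathcal{F}(X)|\le r^k<|\mathcal{F}|$, which contradicts maximality against $X=\emptyset$. Hence $|X|<k$, and in particular $\mathcal{F}(X)$ is nonempty.

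Next we check that $\mathcal{F}(X)$ is $r$-spread. Take any $s\ge0$ and any $S\subseteq\Omega\setminus X$ with $|S|=s$. Then $\mathcal{F}(X)(S)=\mathcal{F}(X\cup S)$, and $|X\cup S|=|X|+s$. If $|X|+s\le k$, maximality gives
$$r^{|X|+s}|\mathcal{F}(X\cup S)|\le r^{|X|}|\mathcal{F}(X)|,$$
that is, $|\mathcal{F}(X)[S]|\le r^{-s}|\mathcal{F}(X)|$. If $|X|+s>k$, then no member of $\mathcal{F}$ (all of size $k$) contains $X\cup S$, so the degree is $0$ and the inequality holds trivially. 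Subsets $S$ meeting $X$ are irrelevant, since $\mathcal{F}(X)$ lives on $\Omega\setminus X$. Taking the maximum over $S$ gives $\Delta_s(\mathcal{F}(X))\le r^{-s}|\mathcal{F}(X)|$ for all $s$, so $\mathcal{F}(X)$ is $r$-spread.

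There is no real obstacle here. The only points needing care are that the maximum is over a finite set, so it exists, and that the strict inequality $|\mathcal{F}|>r^k$ is what excludes $|X|=k$.
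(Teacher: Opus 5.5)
Your proof is correct and is essentially the paper's argument: choose $X$ extremal for the weighted degree $r^{|X|}|\mathcal{F}(X)|$, use $|\mathcal{F}|>r^k$ to exclude $|X|=k$, and read off $r$-spreadness of $\mathcal{F}(X)$ from the extremality. The only difference is in how $X$ is chosen. The paper takes an inclusion-maximal $X$ with $|\mathcal{F}(X)|>r^{-|X|}|\mathcal{F}|$, after treating separately the case where $\mathcal{F}$ is already $r$-spread. You take a global maximizer, which makes that case split unnecessary.
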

\begin{proof}
	If $\mathcal{F}$ itself is $r$-spread, then pick $X=\emptyset$. Suppose it is not $r$-spread, then we choose $X$ to be an  inclusion-maximal subset $X$ with $|\mathcal{F}(X)|>r^{-|X|}|\mathcal{F}|$. Since $|\mathcal{F}|>r^k$, we have $|\mathcal{F}(K)|\leq1<r^{-k}|\mathcal{F}|$ for each $k$-subset $K$, and hence such a subset $X$ has size smaller than $k$. Now for all $\emptyset\neq Y\subseteq\Omega\setminus X$, it follows that 
	$$|(\mathcal{F}(X))(Y)|=|\mathcal{F}(X\cup Y)|\leq r^{-(|X\cup Y|)}|\mathcal{F}|<r^{-|Y|}|\mathcal{F}(X)|.$$ Hence $\mathcal{F}(X)$ is $r$-spread.
\end{proof}

Let $\mathcal{F}$ be a family of subsets of $\Omega$.  A \emph{$t$-cover} of $\mathcal{F}$ is a set that intersects every member of $\mathcal{F}$ in at least $t$ elements. If such a set exists, then the \emph{$t$-covering number} of $\mathcal{F}$, denoted $\tau_t(\mathcal{F})$, is defined to be the minimum size of a $t$-cover. In particular, if $\mathcal{S}$ and $\mathcal{T}$ are cross $t$-intersecting, then $\mathcal{S}$ forms a set of $t$-covers of $\mathcal{T}$, and vice versa. For a family $\mathcal{F}$, we say that a $t$-cover $T$ is \emph{minimal} if any of its proper subset is not a $t$-cover, that is, for all $T_0\subsetneqq T$, there exists $F\in\mathcal{F}$ such that $|F\cap T_0|<t$. Let us introduce the notion of a fingerprint for cross $t$-intersecting families.
\begin{lemma}\label{lemmafingerprintdef}
	Let $\Omega$ be a finite set and $(\mathcal{S},\mathcal{T})$ be a pair of cross $t$-intersecting families of subsets of $\Omega$. Then there is a pair of cross $t$-intersecting families $(\mathcal{S}^*,\mathcal{T}^*)$, called a \emph{fingerprint} of $(\mathcal{S},\mathcal{T})$, such that
	\begin{itemize}
		\item[\rm(i)]Every member of $\mathcal{S}^*$ {\rm(}of $\mathcal{T}^*${\rm)} is contained in some of $\mathcal{S}$ {\rm(}of $\mathcal{T}${\rm)};
		\item[\rm(ii)]Every member of $\mathcal{S}$ {\rm(}of $\mathcal{T}${\rm)} contains some of $\mathcal{S}^*$ {\rm(}of $\mathcal{T}^*${\rm)};
		\item[\rm(iii)]Every member of $\mathcal{S}^*$ {\rm(}of $\mathcal{T}^*${\rm)} is a minimal $t$-cover of $\mathcal{T}^*$ {\rm(}of $\mathcal{S}^*${\rm)}.
	\end{itemize}
\end{lemma}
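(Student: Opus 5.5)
We will construct the fingerprint by an alternating shrinking procedure, using the finiteness of $\Omega$ to guarantee termination. Conditions (i) and (ii) say that $\mathcal{S}^*$ is obtained from $\mathcal{S}$ by replacing each member with subsets of it, in such a way that every original member still contains some new member and every new member lies inside some old member. So we only ever replace sets by subsets, and we do so only when cross $t$-intersection with the current partner family is preserved. A naive simultaneous shrinking fails, because shrinking $S$ relative to $\mathcal{T}$ and then shrinking members of $\mathcal{T}$ could break the cross $t$-intersection. Instead we shrink one set at a time, always checking against the current partner family.

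\textbf{The step.} Given a cross $t$-intersecting pair $(\mathcal{S}_i,\mathcal{T}_i)$, suppose some $S\in\mathcal{S}_i$ is not a minimal $t$-cover of $\mathcal{T}_i$. Then there is $S_0\subsetneqq S$ that is a $t$-cover of $\mathcal{T}_i$; take $S_0$ to be a minimal $t$-cover of $\mathcal{T}_i$ contained in $S$ (one exists by finiteness). Set $\mathcal{S}_{i+1}=(\mathcal{S}_i\setminus\{S\})\cup\{S_0\}$ and $\mathcal{T}_{i+1}=\mathcal{T}_i$. Symmetrically, if some member of $\mathcal{T}_i$ is not a minimal $t$-cover of $\mathcal{S}_i$, replace it by a minimal $t$-cover contained in it.

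\textbf{Invariants.} Cross $t$-intersection is preserved by construction, since $S_0$ is a $t$-cover of $\mathcal{T}_{i+1}=\mathcal{T}_i$. Property (i) relative to the original pair is preserved by transitivity of inclusion, since every new member is a subset of a previous member. Property (ii) is also preserved: any original member that contained $S$ now contains $S_0\subseteq S$, and all other witnesses remain. Note that duplicates may merge, since families are sets; this is harmless for all three properties.

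\textbf{Termination.} The potential $\sum_{A\in\mathcal{S}_i}|A|+\sum_{B\in\mathcal{T}_i}|B|$ is a nonnegative integer and strictly decreases at each step. The decrease is strict because $|S_0|<|S|$, and a possible merge with an existing set only lowers the sum further. Hence the process stops. At the terminal pair no replacement is possible: every member of $\mathcal{S}^*$ is a minimal $t$-cover of $\mathcal{T}^*$, and vice versa, which is (iii).

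The only subtle point, and the main obstacle, is that minimality must hold with respect to the \emph{final} partner family, not an intermediate one. A member of $\mathcal{S}$ that was minimal earlier can become non-minimal after $\mathcal{T}$ shrinks, since shrinking $\mathcal{T}$ makes covering harder, not easier. This does not break the argument, because non-minimal sets are simply processed again. The stopping condition checks minimality against the current partner, so the terminal state satisfies (iii) exactly. The potential argument guarantees that this re-processing cannot go on forever.
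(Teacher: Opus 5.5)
Your proof is correct, but it takes a different route from the paper's.

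\textbf{How the paper does it.} The paper does not iterate with a potential function. It fixes orderings of $\mathcal{S}$ and $\mathcal{T}$ and makes two passes:
\begin{itemize}
\item replace each $S_i$ by an inclusion-minimal subset $S_i^*\subseteq S_i$ that is a $t$-cover of the original $\mathcal{T}$;
\item replace each $T_j$ by an inclusion-minimal subset $T_j^*\subseteq T_j$ that is a $t$-cover of $\{S_1^*,\dots,S_u^*\}$;
\item delete repeated sets.
\end{itemize}
No termination argument is needed.

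\textbf{Why two passes suffice, and where your remark goes wrong.} The reason is the monotonicity fact you state backwards in your last paragraph. Suppose $S$ is a minimal $t$-cover of $\mathcal{T}$, and each member of $\mathcal{T}$ is replaced by a subset that still $t$-intersects $S$. Take any $S_0\subsetneqq S$. It has a witness $T\in\mathcal{T}$ with $|S_0\cap T|<t$. Then its replacement $T'\subseteq T$ gives $|S_0\cap T'|\le|S_0\cap T|<t$, so $S$ remains a \emph{minimal} $t$-cover of the shrunken family.

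Shrinking the partner makes covering harder. That can only destroy covers, never create covers among proper subsets, so minimality, once achieved, is never lost. Your claim that a previously minimal member ``can become non-minimal after $\mathcal{T}$ shrinks'' is therefore false. It is harmless: your potential-function argument terminates whether or not re-processing occurs. In fact, by the observation above, each set in your procedure is replaced at most once.

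\textbf{Comparison.} Your one-at-a-time scheme is robust: it does not need this observation and works for any order of replacements. Your remark that simultaneous shrinking can break cross $t$-intersection is correct. The paper's version shows that sequential shrinking (all of $\mathcal{S}$ against $\mathcal{T}$, then all of $\mathcal{T}$ against the new $\mathcal{S}^*$) already suffices. It gives an explicit two-step recipe, which is what the algorithm later invokes when it says ``find a fingerprint.''
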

\begin{proof}
	Write $u=|\mathcal{S}|$ and $v=|\mathcal{T}|$ for short. Fix orderings $(S_1,S_2,\ldots,S_u)$ and $(T_1,T_2,\ldots,T_v)$ of members of $\mathcal{S}$ and $\mathcal{T}$, respectively. We first replace each $S_i$ ($i=1,2,\ldots,u$) by an inclusion-minimal subset $S_i^*\subseteq S_i$ that $t$-intersects every set in $\mathcal{T}$, that is, $S_i^*$ intersects every set from $\mathcal{T}$ in at least $t$ elements, and the property does not hold for any of its proper subset. Next, we apply the same procedure to $(T_1,T_2,\ldots,T_v)$ and $(S_1^*,S_2^*,\ldots,S_u^*)$, namely, replace each $T_j$ ($j=1,2,\ldots,v$) by an inclusion-minimal subset $T_j^*\subseteq T_j$ that $t$-intersects each $S_i^*$. Finally, one desired pair of families is obtained by deleting repeated sets in the resulting sequences $(S_1^*,S_2^*,\ldots,S_u^*)$ and  $(T_1^*,T_2^*,\ldots,T_v^*)$.
\end{proof}
As the heart of our argument, the following procedure iteratively produces a sequence of families reflecting structural information about a given pair of cross $t$-intersecting families. This may be regarded as a cross-intersection analogue of the peeling procedure  \cite{Kupavskii-Zakharov-2024}, which was used to  estimate the size of a $t$-intersecting family. \vspace{1em}

\noindent{\bf Algorithm.\label{algo}\;}Suppose that a finite set $\Omega$, integers $q$ and $t$ with $q\geq t$, a pair of cross $t$-intersecting families $(\mathcal{S},\mathcal{T})$ with  $\mathcal{S},\mathcal{T}\subseteq\binom{\Omega}{\leq q}$, and a fingerprint $(\mathcal{S}_0,\mathcal{T}_0)$ of $(\mathcal{S},\mathcal{T})$ are given. Set $N=0$. For $i=0,1,\ldots,q-t$, do the following:

\begin{itemize}
	\item[(a)]Set  $\mathcal{X}_{i}=\mathcal{S}_{i}\cap\binom{\Omega}{q-i}$ and $\mathcal{Y}_{i}=\mathcal{T}_{i}\cap\binom{\Omega}{q-i}$. If $\mathcal{S}_i=\mathcal{X}_i$ or $\mathcal{T}_i=\mathcal{Y}_i$,
	terminate the procedure and set $N=i$.
	\item[(b)]Find a fingerprint of the pair  $(\mathcal{S}_{i}\setminus\mathcal{X}_{i},\mathcal{T}_{i}\setminus\mathcal{Y}_{i})$, and denote it by $(\mathcal{S}_{i+1},\mathcal{T}_{i+1})$.
\end{itemize}

\noindent Output $N$ and $(\mathcal{S}_{i},\mathcal{T}_{i},\mathcal{X}_{i},\mathcal{Y}_i)$ for $i\leq N$.\vspace{1em}

As an example, let us consider the pair
\begin{align}
\mathcal{S}&=\left\{F\in\binom{[n]}{k}:[t]\subseteq F,\;F\cap[t+1,k+1]\neq\emptyset\right\}\;\;\mbox{and}\label{equexam1}\\
\mathcal{T}&=\left\{F\in\binom{[n]}{k}:[t]\subseteq F\right\}\cup\{[k+1]\setminus\{i\}:i\in[t]\}.\label{equexam2}
\end{align}
We set $k\geq t+2$, and pick $q:=k$, and start with the fingerprint $(\mathcal{S}_0,\mathcal{T}_0)$, where
\begin{equation}\label{equexam3}
\mathcal{S}_0=\{[t]\cup\{i\}:i\in[t+1,k+1]\}\;\;\mbox{and}\;\;\mathcal{T}_0=\{[t]\}\cup\{[k+1]\setminus\{i\}:i\in[t]\}.
\end{equation}
Then the first round (see Fig. \ref{figure1})  yields  $\mathcal{X}_0=\emptyset,\mathcal{Y}_0=\mathcal{T}_0\setminus\{[t]\}$, and  $\mathcal{S}_1=\mathcal{T}_1=\{[t]\}$. It terminates with outputting these together with  $\mathcal{X}_i=\mathcal{Y}_i=\emptyset$ and $\mathcal{S}_{i+1}=\mathcal{T}_{i+1}=\{[t]\}$ for  $1\leq i\leq q-t-1$, and $N=q-t$.
\begin{figure}[H]	\centering
\begin{tikzpicture}
	\newcommand{\petal}[1][]{
		\draw[line width=0.7pt,#1]
		plot[domain=-120:0,samples=150] ({1+cos(\x)},{sin(\x)})
		plot[domain=0:98,samples=150] ({1.5+0.5*cos(\x)},{0.5*sin(\x)});
	}
				\newcommand{\petaltypetwo}[1][]{
		\draw[line width=0.7pt,#1]
		plot[domain=90:128,samples=150] ({1.5+0.93*cos(\x)},{0.5*sin(\x)});
	}
	\newcommand{\petaltypethree}[1][]{
		\draw[#1, line width=0.7pt] 
		plot[domain=-80:0,samples=150] ({1+cos(\x)},{sin(\x)})
		plot[domain=0:98,samples=150] ({1.5+0.5*cos(\x)},{0.5*sin(\x)});
	}
	\newcommand{\petaltypefour}[1][]{
		\draw[#1, line width=0.7pt]
		plot[domain=-120:0,samples=150] ({1+cos(\x)},{sin(\x)})
		plot[domain=0:98,samples=150] ({1.5+0.5*cos(\x)},{0.5*sin(\x)})
		plot[domain=-120:-81.5,samples=150] ({cos(60)*(1+cos(\x)) - sin(60)*sin(\x)}, 
		{sin(60)*(1+cos(\x)) + cos(60)*sin(\x)})
		plot[domain=-60:0,samples=200] ({cos(\x)}, {sin(\x)});
	}
	\newcommand{\flower}{
		\foreach \a in {0,60,120,180,240,300}{
			\begin{scope}[rotate=\a]
	\petal
			\end{scope}
		}
		\draw[black,line width=0.7pt,dash pattern=on 2pt off 1.3pt] plot[domain=-1:60, samples=200] ({cos(\x)}, {sin(\x)});
		\draw[black,line width=0.7pt,dash pattern=on 2pt off 1.3pt] plot[domain=0:1, samples=200] (\x, 0);
		\draw[black,line width=0.7pt,dash pattern=on 2pt off 1.3pt] (0,0) -- (0.5, {tan(60)*0.5});}
\begin{scope}[shift={(-2.8,2.2)}]
	\foreach \b in {60}{
	\begin{scope}[rotate=\b]
		\petaltypetwo[draw=tonglv,line width=2pt];
	\end{scope}
}
\foreach \c in {120}{
	\begin{scope}[rotate=\c]
		\petaltypethree;
	\end{scope}
}
\foreach \a in {0,60,180,240,300}{
	\begin{scope}[rotate=\a]
		\ifnum\a=60
		\petal[draw=tonglv,line width=2pt]
		\else
		\petal[draw=black]
		\fi
	\end{scope}
}
\draw[tonglv, line width=2pt] plot[domain=81.2:360, samples=200] ({cos(\x)}, {sin(\x)});
\draw[black,line width=0.7pt,dashed] plot[domain=-1:83, samples=200] ({cos(\x)}, {sin(\x)});
\node at (2.1,1.6) {$\color{tonglv}S\in\mathcal{S}_0$};
\node at (0,0) {$[t]$};
\end{scope}
\node at (1.5,0) {$\mathcal{S}_1=\mathcal{T}_1=\{[t]\}$};
\node at (-3,0) {$\mathcal{S}_0$};
\node at (7,0) {$\mathcal{T}_0\setminus\mathcal{Y}_0=\{[t]\}$};
\begin{scope}[scale=0.4,shift={(16,5)}]
	\begin{scope}[shift={(180:6)}, rotate=120]
		\flower
	\end{scope}
	\begin{scope}[shift={(120:4)}, rotate=60]
		\flower
	\end{scope}
	\begin{scope}[shift={(0:6)}, rotate=180]
		\flower
	\end{scope}
\draw[line width=0.7pt] plot[domain=0:360, samples=200] ({1.2*cos(\x)}, {-2+1.2*sin(\x)});
\foreach \x in {50,60,70}{
	\fill ({3.5*1.3*cos(\x)}, {3.5*sin(\x)}) circle (2pt);
}
\node at (1.8,1.6) {$\mathcal{Y}_0$}; 	 
\end{scope}
\begin{scope}[scale=0.7,shift={(2.3,1.3)}]
\draw[line width=0.7pt] plot[domain=0:360, samples=200] ({0.7*cos(\x)}, {0.7*sin(\x)});
\end{scope}
\draw[->, line width=1pt] (5,1) to[bend left=30] (3,0.5);
\draw[->, line width=1pt] (-1.1,1.1) to[bend left=-20] (0.5,0.6);
\end{tikzpicture}
{\caption{The first round of the algorithm for the families in (\ref{equexam1})--(\ref{equexam3}).}\label{figure1}}
\end{figure}
The following lemma records the basic properties of the output of the algorithm. It gives a decomposition of the input families into successive layers and a terminal fingerprint, together with estimates on the sizes of these layers. These properties serve as an important ingredient in our approach.
\begin{lemma}\label{lemmafingerprintproperty}
The following hold for $i=0,1,\ldots,N$.
	\begin{itemize}
		\item[\rm(i)] $\mathcal{S}_i, \mathcal{T}_i\subseteq\binom{\Omega}{\leq(q-i)}$ and $\mathcal{X}_{i}, \mathcal{Y}_i\subseteq\binom{\Omega}{q-i}$, and all of them are antichains, that is, no member is contained in another.
		\item[\rm(ii)]$\mathcal{S}[\mathcal{S}_{i-1}]\subseteq\mathcal{S}[\mathcal{X}_{i-1}]\cup\mathcal{S}[\mathcal{S}_i]$ and $\mathcal{T}[\mathcal{T}_{i-1}]\subseteq\mathcal{T}[\mathcal{Y}_{i-1}]\cup\mathcal{T}[\mathcal{T}_i]$ for $i\geq1$.
		\item[\rm(iii)]$\mathcal{S}\subseteq\left(\bigcup_{j=0}^{i-1}\mathcal{S}[\mathcal{X}_j]\right)\cup\mathcal{S}[\mathcal{S}_i]$ and $\mathcal{T}\subseteq\left(\bigcup_{j=0}^{i-1}\mathcal{T}[\mathcal{Y}_j]\right)\cup\mathcal{T}[\mathcal{T}_i]$ for $i\geq1$.
		\item[\rm(iv)] If $X$ is a subset of $\Omega$ and $\mathcal{L}$ is a subfamily of $\mathcal{S}_i$ or $\mathcal{T}_i$ such that $\mathcal{L}(X)$ is $r$-spread and $|\mathcal{L}(X)|\geq2$, then $r\leq q-t-i+1$.
		\item[\rm(v)]If $\mathcal{H}$ is a $(q-a)$-uniform subfamily of $\mathcal{S}_i$ or $\mathcal{T}_i$, then $|\mathcal{H}[Z]|\leq(q-t-i+1)^{q-t-a}$ for each $t$-subset $Z$ of $\Omega$. 
		\item[\rm(vi)]$\max\{|\mathcal{X}_i|,|\mathcal{Y}_i|\}\leq\binom{q-i}{t}(q-t-i+1)^{q-t-i}$.
	\end{itemize}
\end{lemma}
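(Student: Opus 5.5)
Throughout, I use three facts: a fingerprint consists of antichains of minimal $t$-covers, every fingerprint member is contained in some member of the original pair, and every original member contains some fingerprint member.

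\emph{Items (i)--(iii).} For (i) I would induct on $i$. The base case holds because $\mathcal{S}_0\subseteq\mathcal{S}$ up to subsets, so all members have size at most $q$. By Lemma \ref{lemmafingerprintdef}(i), $\mathcal{S}_{i+1}$ consists of subsets of members of $\mathcal{S}_i\setminus\mathcal{X}_i$. By induction those members have size at most $q-i$ and are not of size exactly $q-i$, so they have size at most $q-i-1$. Antichains: members of a fingerprint are minimal $t$-covers of the partner family. If $A\subsetneq B$ were both in $\mathcal{S}_i$, then $A$ would be a $t$-cover of $\mathcal{T}_i$ properly inside $B$, contradicting the minimality of $B$. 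Subfamilies of antichains are antichains.

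For (ii), take $F\in\mathcal{S}[\mathcal{S}_{i-1}]$, say $S\subseteq F$ with $S\in\mathcal{S}_{i-1}$. If $S\in\mathcal{X}_{i-1}$ we are done. Otherwise $S\in\mathcal{S}_{i-1}\setminus\mathcal{X}_{i-1}$, and by Lemma \ref{lemmafingerprintdef}(ii) $S$ contains some member of $\mathcal{S}_i$, so $F\in\mathcal{S}[\mathcal{S}_i]$. Item (iii) then follows by induction from (ii), starting from $\mathcal{S}\subseteq\mathcal{S}[\mathcal{S}_0]$, which is property (ii) of the initial fingerprint.

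\emph{Item (iv).} This is the core step; I would adapt the Kupavskii--Zakharov argument. Suppose $\mathcal{L}\subseteq\mathcal{S}_i$, $\mathcal{L}(X)$ is $r$-spread and $|\mathcal{L}(X)|\ge2$. Pick $L\in\mathcal{L}$ with $X\subseteq L$. By minimality of $L$ as a $t$-cover of $\mathcal{T}_i$, for any element $x\in L\setminus X$ there is $T\in\mathcal{T}_i$ with $|T\cap(L\setminus\{x\})|=t-1$, hence $|T\cap L|=t$ and $x\in T$. (Such an $x$ exists since $|\mathcal{L}(X)|\ge 2$ and $\mathcal L$ is an antichain, so $L\ne X$.) Thus $|T\cap X|\le t$, and in fact $|T\cap X|\le t-1$ because $x\in T\cap L\setminus X$.

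Every $L'\in\mathcal{L}$ with $X\subseteq L'$ must $t$-intersect $T$. So $L'\setminus X$ meets $T\setminus X$, which has size at most $|T|-|T\cap X|$. I would bound the relevant part more carefully: since $|T\cap X|\ge t-|T\cap (L\setminus X)|$, the union bound over $y\in T\setminus X$ gives
\[
|\mathcal{L}(X)|\le \sum_{y\in T\setminus X}|\mathcal{L}(X\cup\{y\})|\le |T\setminus X|\,r^{-1}|\mathcal{L}(X)|.
\]
This yields $r\le |T\setminus X|$. To reach $q-t-i+1$, I would use $|T|\le q-i$ and show $|T\cap X|\ge t-1$. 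Indeed, $T$ meets $L$ in exactly $t$ elements, one of which is $x\notin X$. Choosing $x$ suitably, or counting instead over pairs from $T\setminus X$ requiring at least $t-|T\cap X|$ elements and using $r^{-s}$ spreadness, gives the bound $\binom{|T\setminus X|}{s}r^{-s}\ge1$ with $s=t-|T\cap X|$. Getting exactly $q-t-i+1$ is the delicate point, and I expect this bookkeeping to be the main obstacle. My first attempt: take $T$ minimizing $|T\setminus X|$ relative to $s$, and use $\binom{m}{s}\le (m-s+1)^s$ from (\ref{equltrt}) with $m\le q-i-(t-s)$, so that $m-s+1\le q-t-i+1$. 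This gives $r\le q-t-i+1$.

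\emph{Items (v) and (vi).} For (v), if $|\mathcal{H}[Z]|>(q-t-i+1)^{q-t-a}$, apply Lemma \ref{lemmaspreadness-size} to the $(q-a-t)$-uniform family $\mathcal{H}(Z)$ with $r$ slightly above $q-t-i+1$. This gives $Y$ with $\mathcal{H}(Z\cup Y)$ being $r$-spread and of size more than $r^{|\cdot|}\ge1$, i.e.\ at least $2$, contradicting (iv) applied with $X=Z\cup Y$. For (vi), each member of $\mathcal{X}_i$ is a $t$-cover of $\mathcal{T}_i$. Fix $T\in\mathcal{T}_i$, which has size at most $q-i$; every member of $\mathcal{X}_i$ contains one of the $\binom{|T|}{t}\le\binom{q-i}{t}$ $t$-subsets $Z$ of $T$. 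Applying (v) with $a=i$ bounds each $|\mathcal{X}_i[Z]|$ by $(q-t-i+1)^{q-t-i}$, and summing over $Z$ gives the bound.
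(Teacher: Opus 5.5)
Your proposal is correct and follows the paper's proof essentially step by step: (i)--(iii) come from the fingerprint properties; (iv) takes $T\in\mathcal{T}_i$ with $|T\cap X|<t$ and union-bounds over the $(t-|T\cap X|)$-subsets of $T\setminus X$, using (\ref{equltrt}); (v) combines Lemma \ref{lemmaspreadness-size} with (iv); and (vi) sums (v) over the $t$-subsets of a fixed member of $\mathcal{T}_i$. In (iv) you can delete the false start (the single-element union bound, the aim $|T\cap X|\ge t-1$, and the minimizing choice of $T$), since your final computation works for any $T\in\mathcal{T}_i$ with $|T\cap X|<t$.
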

\begin{proof}
	In each point we give the proof of the statement for $\mathcal{S}_i$ or $\mathcal{X}_i$; the argument for $\mathcal{T}_i$ or $\mathcal{Y}_i$ is identical and is therefore omitted. 
	
	(i) By the choice of $\mathcal{S}_i$ and $\mathcal{X}_i$, it is clear  that $\mathcal{S}_i\subseteq\binom{\Omega}{\leq(q-i)}$ and $\mathcal{X}_i\in\binom{\Omega}{q-i}$. To see that $\mathcal{S}_i$ is an antichain, just note that each of its members is a minimal $t$-cover of $\mathcal{T}_i$, and so no $S_i\in\mathcal{S}_i$ is contained in another one. The same holds for $\mathcal{X}_i$ as  $\mathcal{X}_i\subseteq\mathcal{S}_i$.
	
	(ii) This follows directly from $\mathcal{S}_{i-1}=\mathcal{X}_{i-1}\cup(\mathcal{S}_{i-1}\setminus\mathcal{X}_{i-1})$ and that every set in $\mathcal{S}_{i}$ is contained in some of $\mathcal{S}_{i-1}\setminus\mathcal{X}_{i-1}$.
	
	(iii) This follows by using (ii) repeatedly.
	
	(iv) Since $|\mathcal{L}(X)|\geq2$, the set $X$ is a proper subset of some member of $\mathcal{S}_i$. Then there exists $T\in\mathcal{T}_i$ with $|X\cap T|=:s<t$, and then  $\mathcal{L}[X]=\bigcup_{Z\in\binom{T\setminus X}{t-s}}\mathcal{L}[X\cup Z]$ as $\mathcal{L}$ and $\mathcal{T}_i$ are cross $t$-intersecting. By picking a $(t-s)$-subset $Z_0$ maximizing $|\mathcal{L}[X\cup Z_0]|$, we obtain that 
	$$|\mathcal{L}[X]|\leq\binom{q-i-s}{t-s}|\mathcal{L}[X\cup Z_0]|\leq(q-t-i+1)^{t-s}\cdot r^{-(t-s)}|\mathcal{L}[X]|,$$
	where in the second step we used (\ref{equltrt}) and the $r$-spreadness of $\mathcal{L}(X)$. Thus $r\leq q-t-i+1$.
	
(v) To the contrary, assume that $|\mathcal{H}(Z)|>(q-t-i+1)^{q-t-a}$ for some $Z\in\binom{\Omega}{t}$,  then there is an $r>q-t-i+1\geq1$ such that  $|\mathcal{H}(Z)|>r^{q-t-a}$. By Lemma \ref{lemmaspreadness-size}, there exists a subset $W$ of $\Omega\setminus Z$ with $|W|<q-t-a$ and $\mathcal{H}(Z\cup W)=(\mathcal{H}(Z))(W)$ is $r$-spread. Note that $|Z\cup W|=|Z|+|W|<q-a$, then $Z\cup W\subsetneqq F$ for some $F\in\mathcal{H}[Z\cup W]$, and so $$1=|(\mathcal{H}(Z\cup W))(F)|\leq r^{-(|F|-|Z\cup W|)}|\mathcal{H}(Z\cup W)|\leq r^{-1}|\mathcal{H}(Z\cup W)|,$$ which gives $|\mathcal{H}(Z\cup W)|\geq\lceil r\rceil\geq2$. This contradicts (iv) by identifying $\mathcal{L}$ with $\mathcal{H}$ and $X$ with $Z\cup W$. 

(vi) Pick an arbitrary set, say $T$ in $\mathcal{T}_i$. Then the cross $t$-intersection between $\mathcal{X}_i$ and $\mathcal{T}_i$ gives $|\mathcal{X}_i|\leq\sum_{Z\in\binom{T}{t}}|\mathcal{X}_i(Z)|$, and then the desired upper bound on $|\mathcal{X}_i|$ follows from (v) and $|T|\leq q-i$.
\end{proof}	
\begin{lemma}\label{lemmarqt}
	Suppose $0<\varepsilon<1$ and $r,q,t\geq1$ with $q\geq t+1$ and  $\varepsilon r\geq eq$. Set $T_{j}=\binom{j+t}{t}(j+1)^j/r^j$ for $1\leq j\leq q-t$, and set 
	$$\varphi(r,q,t,m)=\sum_{i=0}^{m}\binom{q-i}{t}\left(\frac{q-t-i+1}{r}\right)^{q-t-i},\;m=0,1,\ldots,q-t-1.$$
	Then $T_{j+1}\leq\varepsilon T_j$ for $j\leq q-t-1$, and $$\varphi(r,q,t,m)<T_{q-t-m}/(1-\varepsilon)\leq2(t+1)\varepsilon^{q-t-m-1}/((1-\varepsilon)r).$$
\end{lemma}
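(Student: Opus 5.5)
The plan is a direct computation. First I rewrite $\varphi$ in terms of the $T_j$. Then I bound the ratio $T_{j+1}/T_j$ by $\varepsilon$. Finally I sum the resulting geometric series and compare with $T_1$.

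\emph{Step 1 (reindexing).} Substitute $j=q-t-i$ in the $i$-th summand of $\varphi(r,q,t,m)$. Then $q-i=j+t$ and $q-t-i+1=j+1$, so the summand equals $\binom{j+t}{t}(j+1)^j/r^j=T_j$. As $i$ runs over $0,1,\ldots,m$, the index $j$ runs over $q-t-m,\ldots,q-t$. Since $m\le q-t-1$, every such $j$ satisfies $j\ge1$, so each $T_j$ is defined. Hence
$$\varphi(r,q,t,m)=\sum_{j=q-t-m}^{q-t}T_j .$$

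\emph{Step 2 (ratio bound).} This is the only step needing care, and I expect it to be the main obstacle. The naive estimate of $((j+2)/(j+1))^j$ by $e$ leaves a stray factor $(j+2)/(j+1)$, which would ruin the comparison with $q$. To avoid this, I group that factor with the binomial ratio. For $1\le j\le q-t-1$,
$$\frac{T_{j+1}}{T_j}=\frac{\binom{j+1+t}{t}}{\binom{j+t}{t}}\cdot\frac{(j+2)^{j+1}}{(j+1)^j\,r}=\frac{j+1+t}{j+1}\cdot\frac{(j+2)^{j+1}}{(j+1)^{j}\,r}=\frac{j+1+t}{r}\Bigl(1+\frac{1}{j+1}\Bigr)^{j+1}.$$
Now $(1+\frac1{j+1})^{j+1}<e$, and $j+1+t\le q$ because $j\le q-t-1$. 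Therefore
$$\frac{T_{j+1}}{T_j}<\frac{eq}{r}\le\varepsilon ,$$
where the last inequality is the hypothesis $\varepsilon r\ge eq$. This proves $T_{j+1}\le\varepsilon T_j$.

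\emph{Step 3 (summation).} Iterating Step 2 gives $T_{q-t-m+l}\le\varepsilon^{l}T_{q-t-m}$ for every $l\ge0$ in range. Summing the finite geometric series, and using that all $T_j>0$, gives
$$\varphi(r,q,t,m)\le T_{q-t-m}\sum_{l=0}^{m}\varepsilon^l<\frac{T_{q-t-m}}{1-\varepsilon}.$$
The strict inequality holds because the geometric series is truncated.

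\emph{Step 4 (comparison with $T_1$).} Iterating Step 2 downward from $j=q-t-m$ to $j=1$ gives
$$T_{q-t-m}\le\varepsilon^{\,q-t-m-1}T_1,\qquad T_1=\binom{t+1}{t}\frac{2}{r}=\frac{2(t+1)}{r}.$$
Combining this with Step 3 yields the final bound $2(t+1)\varepsilon^{q-t-m-1}/((1-\varepsilon)r)$.
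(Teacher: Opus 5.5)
Your proof is correct and is essentially the paper's argument: the same ratio computation $T_{j+1}/T_j=\frac{j+t+1}{r}\bigl(1+\frac{1}{j+1}\bigr)^{j+1}<eq/r\le\varepsilon$, the same reindexing $\varphi(r,q,t,m)=\sum_{j=q-t-m}^{q-t}T_j$, and the same geometric-series bound followed by $T_{q-t-m}\le\varepsilon^{q-t-m-1}T_1$ with $T_1=2(t+1)/r$. Your placement of the strict inequality, which comes from truncating the geometric series, is slightly more careful than the paper's chain. In the paper, the first inequality becomes an equality when $m=0$, and the last becomes an equality when $m=q-t-1$.
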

\begin{proof}
First, we have  $$\frac{T_{j+1}}{T_j}=\frac{\binom{j+t+1}{t}(j+2)^{j+1}}{r\binom{j+t}{t}(j+1)^j}=\frac{j+t+1}{r}\cdot\left(1+\frac{1}{j+1}\right)^{j+1}<\frac{q}{r}\cdot e\leq\varepsilon$$
for $1\leq j\leq q-t-1$. Note that the expression above can be rewritten as $\varphi(r,q,t,m)=\sum_{j=q-t-m}^{q-t}T_j$. Then
\begin{align*}
\varphi(r,q,t,m)&<T_{q-t-m}(1+\varepsilon+\cdots+\varepsilon^{m})<T_{q-t-m}/(1-\varepsilon)\\
&<T_1\cdot\varepsilon^{q-t-m-1}/(1-\varepsilon)=2(t+1)\varepsilon^{q-t-m-1}/((1-\varepsilon)r),
\end{align*}
as desired.
\end{proof}
\noindent{\bf Proof of Theorem \ref{thmcrossekr}.}\;To begin with, put
\begin{equation*}
	\varepsilon=0.5\;\;\;\mbox{and}\;\;\; \delta=2(t+1)/((1-\varepsilon)r).
\end{equation*}
By our assumption that $r\geq2ek$, we obtain $\delta\leq2/e$. We need also the expression $\varphi(r,q,t,m)$ as defined in Lemma \ref{lemmarqt}. In this setting, 
$$\varphi(r,q,t,m)=\sum_{i=0}^{m}\binom{q-i}{t}\left(\frac{q-t-i+1}{r}\right)^{q-t-i}<\varepsilon^{q-t-m-1}\delta.$$
We first prove (ii), and then prove (i) by considering a pair consists of a $t$-intersecting family with itself. We indicate the minor changes needed at the end.

 Let $\mathcal{F}\subseteq\mathcal{A}$ and $\mathcal{G}\subseteq\mathcal{B}$ be cross $t$-intersecting. Perform the algorithm with input $\Omega,t,q=k$,  and $(\mathcal{F},\mathcal{G})$ together with an arbitrary fingerprint $(\mathcal{S}_0,\mathcal{T}_0)$ of $(\mathcal{F},\mathcal{G})$. Consider the number $N$ of rounds of iterations.\\
{\bf Case 1.}\;$N=q-t$.

Now  $\mathcal{X}_{q-t-1}\subsetneqq\mathcal{S}_{q-t-1}$ and $\mathcal{Y}_{q-t-1}\subsetneqq\mathcal{T}_{q-t-1}$, implying that both $\mathcal{S}_{q-t-1}$ and $\mathcal{T}_{q-t-1}$ contains some $t$-subset of $\Omega$. Since these two families are cross $t$-intersecting antichains, there is a $t$-subset $X$ such that $\mathcal{S}_{q-t-1}=\mathcal{T}_{q-t-1}=\{X\}$. Then of course  $\mathcal{X}_{q-t-1}=\mathcal{Y}_{q-t-1}=\emptyset$. If $X$ is contained in all the members of $\mathcal{F}$ and $\mathcal{G}$, then 
$$|\mathcal{F}||\mathcal{G}|=|\mathcal{F}[X]||\mathcal{G}[X]|\leq|\mathcal{A}[X]||\mathcal{B}[X]|\leq\Delta_{t}(\mathcal{A})\Delta_{t}(\mathcal{B}),$$
with equality precisely if $|\mathcal{A}[X]|=\Delta_{t}(\mathcal{A}),|\mathcal{B}[X]|=\Delta_{t}(\mathcal{B})$, and both $\mathcal{F}$ and $\mathcal{G}$ consist of all possible sets containing $X$, namely, $\mathcal{F}=\left\{F\in\mathcal{A}:X\subseteq F\right\}$ and $\mathcal{G}=\left\{G\in\mathcal{B}:X\subseteq G\right\}$. 

Now suppose $X\nsubseteq\cap(\mathcal{F}\cup\mathcal{G})$. This assumption forces $q\ge t+2$. Indeed, if $q=t+1$, then
$q-t-1=0$ and hence $\mathcal S_0=\mathcal T_0=\{X\}$, so by the property of the 
fingerprint every member of $\mathcal F\cup\mathcal G$
contains $X$, a contradiction. Without loss of generality, suppose that $X\nsubseteq\cap\mathcal{F}$.  Let us estimate first the size of $\mathcal{F}$. By applying Lemma \ref{lemmafingerprintproperty} (iii) to $i=q-t-2$, and noting that $\mathcal{F}[\mathcal{S}_{q-t-1}]=\mathcal{F}[X]$, we obtain
$$\mathcal{F}\subseteq\left(\bigcup_{i=0}^{q-t-2}\mathcal{F}[\mathcal{X}_i]\right)\cup\mathcal{F}[X]\subseteq\left(\bigcup_{i=0}^{q-t-2}\mathcal{A}[\mathcal{X}_i]\right)\cup\mathcal{A}[X].$$
By combining this with Lemmas \ref{lemmafingerprintproperty} (vi) and  \ref{lemmarqt}, and the weakly $(r,t)$-spreadness of $\mathcal{A}$, we obtain
\begin{align*}
	|\mathcal{F}|&\leq\sum_{i=0}^{q-t-2}\sum_{R\in\mathcal{X}_i}|\mathcal{A}[R]|+|\mathcal{A}[X]|\leq\sum_{i=0}^{q-t-2}|\mathcal{X}_i|\Delta_{q-i}(\mathcal{A})+\Delta_{t}(\mathcal{A})\\
	&\leq\left(1+\sum_{i=0}^{q-t-2}\binom{q-i}{t}\left(\frac{q-t-i+1}{r}\right)^{q-t-i}\right)\cdot\Delta_{t}(\mathcal{A})\\
	&=(1+\varphi(r,q,t,q-t-2))\cdot\Delta_{t}(\mathcal{A})<\left(1+\delta\varepsilon\right)\cdot\Delta_{t}(\mathcal{A}).
\end{align*}

Since $X\nsubseteq\cap\mathcal{F}$, we have $|F_0\cap X|=s\leq t-1$ for some $F_0\in\mathcal{F}$. It follows that $|(G\setminus X)\cap F_0|\geq t-s$ whenever $G\in\mathcal{G}[X]$, and thus, with $R$ ranging over $\binom{F_0\setminus X}{t-s}$,
\begin{align*}
	|\mathcal{G}[X]|&\leq\sum_{R}|\mathcal{G}[X\cup R]|\leq\binom{k-s}{t-s}\cdot\Delta_{2t-s}(\mathcal{B})\\
	&\leq \left(\frac{k-t+1}{r}\right)^{t-s}\cdot\Delta_{t}(\mathcal{B})\leq\frac{k-t+1}{r}\cdot\Delta_{t}(\mathcal{B}),
\end{align*}
where the third inequality follows from (\ref{equltrt}) and the weakly $(r,t)$-spreadness of $\mathcal{B}$. By the same argument, we get  $\mathcal{G}\subseteq\left(\bigcup_{i=0}^{q-t-2}\mathcal{B}[\mathcal{Y}_i]\right)\cup\mathcal{B}[X]$, and hence derive that
$$|\mathcal{G}|\leq\left(\frac{k-t+1}{r}+\delta\varepsilon\right)\cdot\Delta_{t}(\mathcal{B}).$$
Then the product of sizes of $\mathcal{F}$ and $\mathcal{G}$ is less than $\Delta_{t}(\mathcal{A})\Delta_{t}(\mathcal{B})$ as 
$$\left(1+\delta\varepsilon\right)\left(\frac{k-t+1}{r}+\delta\varepsilon\right)<\left(1+\frac{1}{e}\right)\left(\frac{1}{2e}+\frac{1}{e}\right)<1,$$
where the first inequality follows from $r\geq2 ek$ and $\delta\leq2/e$.\\
\noindent{\bf Case 2.}\;$N<q-t$.

In this case, at least one of $\mathcal{S}_N$ and $\mathcal{T}_N$ contains no set of size less than $q-N$. Without loss of generality, suppose $\mathcal{S}_N=\mathcal{X}_N$. It follows from Lemma \ref{lemmarqt} that
\begin{align}
	|\mathcal{F}|&\leq\sum_{i=0}^{N-1}|\mathcal{A}[\mathcal{X}_i]|+|\mathcal{A}[\mathcal{S}_N]|=\sum_{i=0}^{N}|\mathcal{A}[\mathcal{X}_i]|\nonumber\\
	&\leq\varphi(r,q,t,N)\cdot\Delta_{t}(\mathcal{A})<\frac{T_{q-t-N}}{1-\varepsilon}\cdot\Delta_{t}(\mathcal{A}),\label{equthmcrossekr1}
\end{align}
where $T_{j}:=\binom{j+t}{t}(j+1)^j/r^j$. Set $T_{q-t+1}=0$ for short.

We proceed by bounding the size of $\mathcal{G}$. Note that $\mathcal{X}_N$ and $\mathcal{T}_N$ are cross $t$-intersecting, then by picking an arbitrary set in $\mathcal{X}_N$, say $F$, it holds that every set from $\mathcal{T}_N$, and thus every one from  $\mathcal{B}[\mathcal{T}_N]$ contains some $t$-subset of $F$. Therefore, we get
\begin{align}
	|\mathcal{G}|&\leq\sum_{i=0}^{N-1}|\mathcal{B}[\mathcal{Y}_i]|+|\mathcal{B}[\mathcal{T}_N]|\leq\left(\frac{T_{q-t-N+1}}{1-\varepsilon}+\binom{q-N}{t}\right)\cdot\Delta_{t}(\mathcal{B}).\nonumber
\end{align}
By setting $j=q-t-N$, it suffices to prove
\begin{equation}\label{equthmcrossekrgoal}
	f(j):=T_{j}\left(T_{j+1}+(1-\varepsilon)\binom{j+t}{t}\right)<(1-\varepsilon)^2\;\;\;\mbox{for}\;\;1\leq j\leq q-t.
\end{equation}
 From the proof of Lemma \ref{lemmarqt},   $T_j/T_{j+1}>r/(e(j+t+1))$. Fix a $j\in[q-t-1]$, then
\begin{align*}
\frac{f(j)-f(j+1)}{(1-\varepsilon)T_{j+1}}&=\frac{T_j}{T_{j+1}}\left(\frac{T_{j+1}}{1-\varepsilon}+\binom{j+t}{t}\right)-\left(\frac{T_{j+2}}{1-\varepsilon}+\binom{j+t+1}{t}\right)\\
&>\frac{r}{e(j+t+1)}\binom{j+t}{t}-\binom{j+t+1}{t}=\frac{\binom{j+t}{t}}{e(j+t+1)}\left(r-\frac{e(j+t+1)^2}{j+1}\right)>0,
\end{align*}
where in the second step we used $T_j>T_{j+2}$. 
To see the last inequality, note that a routine computation gives $$\frac{(j+t+1)^2}{j+1}\leq\max\{(t+2)^2/2, q^2/(q-t)\},$$
then $r>e(j+t+1)^2/(j+1)$ by our assumption on $r$. Hence the function $f(j)$ is decreasing on $j$, and then we deduce from $r\geq 2e(t+2)^2$ and $\varepsilon=0.5$ that 
\begin{align*}
f(j)&\leq f(1)=T_1(T_2+(1-\varepsilon)(t+1))=\frac{2(t+1)}{r}\left(\frac{9(t+2)(t+1)}{2r^2}+(1-\varepsilon)(t+1)\right)\\
&<\frac{1}{e(t+2)}\left(\frac{9}{8e^2(t+2)^2}+(1-\varepsilon)(t+1)\right)<0.01+0.4(1-\varepsilon)<(1-\varepsilon)^2.
\end{align*} 
Therefore (\ref{equthmcrossekrgoal}) follows, and thus $|\mathcal{F}||\mathcal{G}|<\Delta_{t}(\mathcal{A})\Delta_{t}(\mathcal{B})$. 

Finally, let us prove (i) by setting $\mathcal{F}=\mathcal{G}$ and $\mathcal{A}=\mathcal{B}$. If $|\cap\mathcal{F}|\geq t$, then 
$|\mathcal F|\le |\mathcal A[\cap\mathcal{F}]|\le \Delta_t(\mathcal A)$, with equality
only if $X:=\cap\mathcal{F}$ has size $t$,  $\mathcal F=\mathcal A[X]$ and $|\mathcal A[X]|=\Delta_t(\mathcal A)$. Suppose $|\cap\mathcal{F}|<t$. In Case 1, the estimate used improves to
\[
|\mathcal F|\le
\left(\frac{k-t+1}{r}+\varepsilon\delta\right)\Delta_t(\mathcal A)
<\Delta_t(\mathcal A),
\]
since $r\ge 2ek$. In Case 2, the bound
\eqref{equthmcrossekr1} gives
$$|\mathcal F|\le \varphi(r,q,t,N)\Delta_t(\mathcal A)
<\delta\Delta_t(\mathcal A)<\Delta_t(\mathcal A).$$
This completes the proof.{\hfill$\square$}
\section{Hilton--Milner type results via $t$-covers}\label{secfinstru1}
In what follows, we focus on problems for classical set systems. Let us collect several useful facts. First, the family $\binom{[n]}{k}$ is sufficiently spread, in the sense that  
$$\binom{n-t-s-1}{k-t-s-1}\leq r^{-s}\binom{n-t-s}{k-t-s},\;s=0,1,\ldots,k-t,$$
where $r=(n-t)/(k-t)$. Let us prove several inequalities about binomial coefficients.
\begin{lemma}\label{lemmabinom}
Let $m\geq a+b$. The following hold.
\begin{itemize}
\item[\rm(i)]$(1+ab/m)\binom{m-b}{a}\leq\binom{m}{a}\leq(1+m/(ab))\left(\binom{m}{a}-\binom{m-b}{a}\right).$
\item[\rm(ii)]$\binom{m}{a}-\binom{m-b}{a}\leq b\binom{m-1}{a-1}$ and $\binom{m-b}{a}\geq(1-ab/m)\binom{m}{a}.$
\item[\rm(iii)]$\binom{m}{a}-\binom{m-b}{a}\geq b\binom{m-1}{a-1}-\binom{b}{2}\binom{m-2}{a-2}$.
\end{itemize} 
\end{lemma}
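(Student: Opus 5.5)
The plan is to prove all three parts by elementary manipulations of ratios of binomial coefficients, using a telescoping product for the lower-order terms and Pascal's rule for the differences. Throughout, $m\geq a+b$ ensures that every coefficient appearing is a genuine, nonnegative binomial coefficient with top at least bottom.

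\textbf{Part (ii).} First write
\[
\binom{m}{a}-\binom{m-b}{a}=\sum_{j=1}^{b}\binom{m-j}{a-1}
\]
by iterating Pascal's rule. Each summand is at most $\binom{m-1}{a-1}$, which gives the first inequality of (ii).

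For the second inequality of (ii), use the product
\[
\frac{\binom{m-b}{a}}{\binom{m}{a}}=\prod_{i=0}^{a-1}\frac{m-b-i}{m-i}=\prod_{i=0}^{a-1}\Bigl(1-\frac{b}{m-i}\Bigr).
\]
A Bernoulli-type bound $\prod(1-x_i)\geq 1-\sum x_i$ then gives at least $1-\sum_{i} b/(m-i)$. This is not quite $1-ab/m$, so a cleaner route is needed. The idea is to use the first inequality of (ii) together with $\binom{m-1}{a-1}=\frac{a}{m}\binom{m}{a}$:
\[
\binom{m}{a}-\binom{m-b}{a}\leq b\cdot\frac{a}{m}\binom{m}{a},
\]
which is exactly $\binom{m-b}{a}\geq(1-ab/m)\binom{m}{a}$.

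\textbf{Part (i), left inequality.} Consider the reverse ratio $\binom{m}{a}/\binom{m-b}{a}=\prod_{i=0}^{a-1}\bigl(1+\frac{b}{m-b-i}\bigr)$, and use $\prod(1+x_i)\geq 1+\sum x_i\geq 1+ab/m$. This holds because each $x_i=b/(m-b-i)\geq b/m$.

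\textbf{Part (i), right inequality.} This is equivalent to $\binom{m}{a}-\binom{m-b}{a}\geq \frac{ab}{m+ab}\binom{m}{a}$. From the left inequality, $\binom{m-b}{a}\leq\binom{m}{a}/(1+ab/m)$, so
\[
\binom{m}{a}-\binom{m-b}{a}\geq\Bigl(1-\frac{m}{m+ab}\Bigr)\binom{m}{a}=\frac{ab}{m+ab}\binom{m}{a},
\]
and rearranging gives the claim. So (i) follows directly once the left inequality is in hand.

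\textbf{Part (iii).} Return to the sum $\sum_{j=1}^{b}\binom{m-j}{a-1}$ and write each term as
\[
\binom{m-j}{a-1}=\binom{m-1}{a-1}-\sum_{l=1}^{j-1}\binom{m-1-l}{a-2}\geq\binom{m-1}{a-1}-(j-1)\binom{m-2}{a-2},
\]
using Pascal's rule again and monotonicity in the top index. Summing over $j$ gives
\[
b\binom{m-1}{a-1}-\sum_{j=1}^{b}(j-1)\binom{m-2}{a-2}=b\binom{m-1}{a-1}-\binom{b}{2}\binom{m-2}{a-2}.
\]

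The only potential obstacle is the edge cases $a=0$ or $a=1$, where $\binom{m-2}{a-2}$ vanishes and the expressions degenerate. I would check these cases separately; both sides become trivial equalities there. The rest is routine.
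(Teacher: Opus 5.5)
Your proof is correct, and two of its steps coincide with the paper's: the second inequality of (ii) is obtained from the first via $\binom{m-1}{a-1}=\frac{a}{m}\binom{m}{a}$, and the right inequality of (i) is deduced from the left one.

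The differences are elsewhere. For the left inequality of (i), the paper bounds $\binom{m-b}{a}/\binom{m}{a}=\prod_{i}\bigl(1-\frac{b}{m-i}\bigr)$ from above by $(1-b/m)^a\le e^{-ab/m}\le m/(m+ab)$. You instead bound the reciprocal product $\prod_i\bigl(1+\frac{b}{m-b-i}\bigr)$ from below by $1+\sum_i x_i\ge 1+ab/m$, which avoids the exponential detour and is slightly more direct.

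For the first part of (ii) and for (iii), the paper reads $\binom{m}{a}-\binom{m-b}{a}$ as the number of $a$-subsets of $[m]$ meeting $[b]$. It then applies the union bound and the Bonferroni inequality to the events $\{i\in A\}$, $i\in[b]$. You use the exact Pascal telescoping $\sum_{j=1}^{b}\binom{m-j}{a-1}$ instead, which is the disjoint decomposition of the same sets by their least element in $[b]$, and then a second telescoping to bound each term. The paper's route is a one-line appeal to standard inequalities. Yours is fully self-contained and in effect reproves the second-order Bonferroni bound in this special case, with the error terms explicit.

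Two small remarks. The preliminary Bernoulli attempt in (ii) can simply be deleted. The edge cases $a\in\{0,1\}$ need no separate treatment, since your telescoping identities remain valid under the convention $\binom{x}{-1}=\binom{x}{-2}=0$; only the right-hand inequality of (i) implicitly requires $a,b\ge 1$.
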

\begin{proof}
(i)\;It suffices to prove $m\binom{m}{a}\geq(m+ab)\binom{m-b}{a}$. Using  $e^{-x}\leq1/(1+x)$, we obtain 
\begin{align*}
\binom{m-b}{a}\bigg/\binom{m}{a}&=\prod_{i=0}^{a-1}(1-b/(m-i))\\
&\leq(1-b/m)^a\leq e^{-ab/m}\leq m/(m+ab).
\end{align*}

(ii)-(iii)\;Note that $\binom{m}{a}-\binom{m-b}{a}$ counts the number of $a$-subsets of $[m]$ that intersect $[b]$. For $1\leq i\leq b$, write  $\mathcal{A}_i=\left\{A\in\binom{[m]}{a}:i\in A\right\}$. Then $\binom{m}{a}-\binom{m-b}{a}=|\cup_{i}\mathcal{A}_i|$. By the union bound, it is at most $b\binom{m-1}{a-1}=\frac{ab}{m}\binom{m}{a}$. So (ii) holds. For (iii), by the Bonferroni inequality, we have  $$|\cup_{i}\mathcal{A}_i|\geq\sum_i|\mathcal{A}_i|-\sum_{i<j}|\mathcal{A}_i\cap\mathcal{A}_j|=b\binom{m-1}{a-1}-\binom{b}{2}\binom{m-2}{a-2},$$
as desired.
\end{proof}
Applying Lemma \ref{lemmabinom} (i) to $m=n-t,a=k-t$ and $b=k-t+1$ gives
\begin{equation}\label{equbinom}
	\binom{n-t}{k-t}\leq(1+r/b)\left(\binom{n-t}{k-t}-\binom{n-k-1}{k-t}\right),
\end{equation}
and then
\begin{equation}\label{equbinom'}
	\binom{n-t-1}{k-t-1}\leq(1/r+1/b)\left(\binom{n-t}{k-t}-\binom{n-k-1}{k-t}\right)
\end{equation}
from $\binom{n-t}{k-t}=r\binom{n-t-1}{k-t-1}$. 
\begin{lemma}\label{lemmaind}
Let $n\geq(k-t+1)(\ell-t+1)+t$ and $\mathcal{F}\subseteq\binom{[n]}{k}$. Suppose that $G\in\binom{[n]}{\ell}$ and $A\in\binom{[n]}{a}$, where $\ell\ge t$ and $t\le a\le k$. If $G$ is a $t$-cover of $\mathcal{F}$ and $|A\cap G|=s<t$, then 
\begin{equation*}
|\mathcal{F}[A]|\leq\binom{n-a}{k-a}-\binom{n-a-(\ell-t+1)}{k-a}\leq(\ell-t+1)\binom{n-a-1}{k-a-1}.
\end{equation*}
In particular, there is an $(a+t-s)$-subset containing $A$ with  $|\mathcal{F}[A]|\leq\binom{\ell-s}{t-s}|\mathcal{F}[A']|$.
\end{lemma}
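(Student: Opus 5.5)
Since $|A\cap G|=s<t$, every $F\in\mathcal{F}[A]$ satisfies $|F\cap G|\ge t$, hence $|(F\setminus A)\cap(G\setminus A)|\ge t-s\ge1$. The plan is to run both bounds off this one fact and then get the ``in particular'' clause from a union bound over $(t-s)$-subsets.

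\textbf{First bound.} The set $G\setminus A$ has $\ell-s\ge \ell-t+1$ elements. Fix any subset $W\subseteq G\setminus A$ with $|W|=\ell-t+1$. I claim every $F\in\mathcal{F}[A]$ meets $W$. Indeed, $F\setminus A$ contains at least $t-s$ elements of $G\setminus A$. If $F$ missed $W$, all of these would lie in $(G\setminus A)\setminus W$, which has size $\ell-s-(\ell-t+1)=t-s-1$, a contradiction. So $\mathcal{F}(A)$ consists of $(k-a)$-subsets of $[n]\setminus A$ meeting the fixed $(\ell-t+1)$-set $W$. This gives
$$|\mathcal{F}[A]|\le\binom{n-a}{k-a}-\binom{n-a-(\ell-t+1)}{k-a}.$$
The second inequality is Lemma~\ref{lemmabinom}(ii) with $m=n-a$, $b=\ell-t+1$ and $a$ replaced by $k-a$. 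That lemma requires $m\ge (k-a)+b$, i.e.\ $n\ge k+\ell-t+1$, which follows from the hypothesis $n\ge(k-t+1)(\ell-t+1)+t$. If $k=a$, both sides are trivially compatible, since then $|\mathcal{F}[A]|\le1$ and the middle expression equals $1-1=0$ only when $F=A$ is impossible; in fact $A\in\mathcal{F}$ would force $|A\cap G|\ge t$. I would note this degenerate case separately.

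\textbf{``In particular'' clause.} Every $F\in\mathcal{F}[A]$ contains $A\cup R$ for some $R\in\binom{G\setminus A}{t-s}$. Hence
$$\mathcal{F}[A]=\bigcup_R \mathcal{F}[A\cup R],$$
and there are $\binom{\ell-s}{t-s}$ choices of $R$. Taking $R_0$ that maximizes $|\mathcal{F}[A\cup R_0]|$ and setting $A'=A\cup R_0$, an $(a+t-s)$-set containing $A$, gives $|\mathcal{F}[A]|\le\binom{\ell-s}{t-s}|\mathcal{F}[A']|$.

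The main step to get right is the choice of $W$, namely the pigeonhole count showing that an $(\ell-t+1)$-subset of $G\setminus A$ is hit. Everything else is routine.
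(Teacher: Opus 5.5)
Your proof is correct, and for the displayed inequality it takes a different and cleaner route than the paper. The paper splits on $s$. For $s=t-1$, the set $G\setminus A$ itself has exactly $\ell-t+1$ elements and must be met. For $s\le t-2$, it uses the union bound $\binom{\ell-s}{t-s}\binom{n-a-t+s}{k-a-t+s}$ and shows this is increasing in $s$ (using $r=(n-t)/(k-t)>\ell-t+1$), so it is at most $\binom{\ell-t+2}{2}\binom{n-a-2}{k-a-2}$. It then checks that this is below the Bonferroni lower bound from Lemma~\ref{lemmabinom}(iii) for the middle expression, via $\binom{\ell-t+2}{2}+\binom{\ell-t+1}{2}=(\ell-t+1)^2<(\ell-t+1)r$.

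Your pigeonhole observation replaces all of this. Any $(\ell-t+1)$-subset $W\subseteq G\setminus A$ must be hit, because its complement in $G\setminus A$ has only $t-s-1$ elements. This reduces every $s<t$ to the $s=t-1$ situation at once. What this buys you:
\begin{itemize}
\item the argument is uniform in $s$;
\item the first inequality holds with no lower bound on $n$;
\item the hypothesis on $n$ enters only to meet the condition $m\ge a+b$ of Lemma~\ref{lemmabinom}(ii), i.e.\ $n\ge k+\ell-t+1$. This does follow from $n\ge(k-t+1)(\ell-t+1)+t$, since the difference is $(k-t)(\ell-t)\ge0$.
\end{itemize}
The paper's case split yields, in exchange, the sharper second-order estimate $\binom{\ell-t+2}{2}\binom{n-a-2}{k-a-2}$ when $s\le t-2$, but that is not part of the statement. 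Your handling of the ``in particular'' clause is the same as the paper's.

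Your separate remark on $k=a$ is garbled and unnecessary. Your pigeonhole argument already gives $\mathcal{F}[A]=\emptyset$ there, since $F=A$ cannot meet $W\subseteq G\setminus A$. This matches the value $1-1=0$ of the middle expression.
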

	\begin{proof}
 Every member of $\mathcal{F}[A]$ shares at least $t$ elements with $G$, and thus intersects $G\setminus A$ in at least $t-s$ elements. It follows that $\mathcal{F}[A]=\bigcup_{H\in\mathcal{H}}\mathcal{F}[H]$, where $\mathcal{H}$ is the collection of $(a+t-s)$-subsets of $G\cup A$ that contain $A$. Then we may choose $A'$ to be such a set with  $|\mathcal{F}[A']|\geq|\mathcal{F}[H]|$ whenever $H\in\mathcal{H}$. The union bound gives
 $$|\mathcal{F}[A]|\leq\binom{\ell-s}{t-s}\binom{n-(a+t-s)}{k-(a+t-s)}.$$
Since $r:=(n-t)/(k-t)>\ell-t+1$, the expression on the right-hand side is increasing on $s$, and so  $|\mathcal{F}[A]|\leq\binom{\ell-t+2}{2}\binom{n-a-2}{k-a-2}$ for $s\leq t-2$. For $s=t-1$, a better bound is
$$|\mathcal{F}[\mathcal{A}]|\leq\left|\left\{F\in\binom{[n]}{k}:A\subseteq F,\;F\cap(G\setminus A)\neq\emptyset\right\}\right|=\binom{n-a}{k-a}-\binom{n-a-(\ell-t+1)}{k-a}.$$
By Lemma \ref{lemmabinom} (ii)-(iii), the right-hand side is at most $(\ell-t+1)\binom{n-a-1}{k-a-1}$ and at least $$(\ell-t+1)\binom{n-a-1}{k-a-1}-\binom{\ell-t+1}{2}\binom{n-a-2}{k-a-2}.$$
 Moreover, it is larger than  $\binom{\ell-t+2}{2}\binom{n-a-2}{k-a-2}$ as
 $\binom{\ell-t+2}{2}+\binom{\ell-t+1}{2}=(\ell-t+1)^2<(\ell-t+1)r$. So the desired bound on $|\mathcal{F}[A]|$ follows.
	\end{proof}
The next lemma is a key ingredient for our method. The technique has proved useful for intersection problems for a variety of objects, see, for example,  \cite{Lv-2021,Cao-Lu-Lv-Wang-2024,Wen-Lv-2026}. 
\begin{lemma}\label{lemmakey}
	Let $n\geq(k-t+1)(\ell-t+1)+t$ and $\mathcal{F}\subseteq\binom{[n]}{k}$. Suppose that $\mathcal{G}$ is a collection of 
	$t$-covers of $\mathcal{F}$, where each has size at most $\ell$. If $\tau_t(\mathcal{G})\geq m$, then  $$|\mathcal{F}[X]|\leq(\ell-t+1)^{m-t}\binom{n-m}{k-m}$$
	for all $X\in\binom{[n]}{t}$. In particular, $|\mathcal{F}|\leq(\ell-t+1)^{m-t}\binom{\tau_t(\mathcal{F})}{t}\binom{n-m}{k-m}.$
\end{lemma}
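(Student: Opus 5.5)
We argue by the standard $t$-cover branching (peeling) argument: starting from $X$, we repeatedly enlarge the set we condition on, using members of $\mathcal{G}$ that are not yet $t$-intersected, until the conditioning set has size $m$. Concretely, we prove by induction on $j=0,1,\ldots,m-t$ the following claim. There is a collection $\mathcal{A}_j$ of $(t+j)$-subsets of $[n]$, each containing $X$, such that
\[
|\mathcal{A}_j|\leq(\ell-t+1)^{j}
\quad\text{and}\quad
|\mathcal{F}[X]|\leq\sum_{A\in\mathcal{A}_j}|\mathcal{F}[A]|+(\text{nothing else}).
\]
For $j=0$ take $\mathcal{A}_0=\{X\}$.

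For the inductive step, take $A\in\mathcal{A}_j$ with $|A|=t+j<m\leq\tau_t(\mathcal{G})$. Then $A$ is not a $t$-cover of $\mathcal{G}$, so there is some $G\in\mathcal{G}$ with $|A\cap G|=s<t$. Since $G$ is a $t$-cover of $\mathcal{F}$, every $F\in\mathcal{F}[A]$ meets $G\setminus A$ in at least $t-s$ elements.

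If $s=t-1$, every $F\in\mathcal{F}[A]$ contains some element of $G\setminus A$, a set of at most $\ell-t+1$ elements. Hence $\mathcal{F}[A]\subseteq\bigcup_{y\in G\setminus A}\mathcal{F}[A\cup\{y\}]$, and $A$ is replaced by at most $\ell-t+1$ sets of size $t+j+1$.

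If $s\leq t-2$, we still pick one element of $G\setminus A$ lying in $F$. However, $|G\setminus A|=\ell-s$ can exceed $\ell-t+1$, which is the obstacle. The fix is to only ever add elements of $G$ that keep track of the deficit. Since $F$ must contain $t-s$ elements of $G\setminus A$, we fix an arbitrary $(t-s-1)$-subset $D\subseteq G\setminus A$ and split on whether $F$ meets $(G\setminus A)\setminus D$. This still branches into more than $\ell-t+1$ pieces, so it does not directly work.

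Instead I would use a cleaner recursion: prove $|\mathcal{F}[A]|\leq(\ell-t+1)^{m-|A|}\binom{n-m}{k-m}$ for every $A\supseteq X$ with $t\leq|A|\leq m$, by downward induction on $|A|$. When $|A|=m$, the bound is trivial: $|\mathcal{F}[A]|\leq\binom{n-m}{k-m}$. For $|A|<m$, choose $G$ with $|A\cap G|=s<t$ and apply Lemma \ref{lemmaind} in its quantitative form,
\[
|\mathcal{F}[A]|\leq\sum_{H}|\mathcal{F}[H]|,
\]
where $H$ ranges over the $\binom{\ell-s}{t-s}$ sets $A\cup R$ with $R\in\binom{G\setminus A}{t-s}$. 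Each $H$ has size $|A|+t-s$; if that exceeds $m$, we instead take any $m$-subset of $H$ containing $A$. By induction each term is at most $(\ell-t+1)^{m-|A|-(t-s)}\binom{n-m}{k-m}$ (for the truncated sets the bound $\binom{n-m}{k-m}$ is even better).

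So it suffices to check
\[
\binom{\ell-s}{t-s}\leq(\ell-t+1)^{t-s}.
\]
This is exactly inequality (\ref{equltrt}) with $a=\ell-s$, $b=t-s$, $c=0$, giving $\binom{\ell-s}{t-s}\leq(\ell-t+1)^{t-s}$. This step is the crux, and it works precisely because (\ref{equltrt}) handles the $s\leq t-2$ case. The hypothesis $n\geq(k-t+1)(\ell-t+1)+t$ is then not strictly needed for this counting; it is what Lemma \ref{lemmaind} assumes, and it guarantees that the monotonicity of the bounds is consistent.

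Finally, for the second assertion, let $T$ be a $t$-cover of $\mathcal{F}$ of size $\tau_t(\mathcal{F})$. Every member of $\mathcal{F}$ contains some $X\in\binom{T}{t}$. A union bound over these $\binom{\tau_t(\mathcal{F})}{t}$ sets, combined with the first part, gives
\[
|\mathcal{F}|\leq(\ell-t+1)^{m-t}\binom{\tau_t(\mathcal{F})}{t}\binom{n-m}{k-m}.
\]
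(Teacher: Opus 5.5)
\textbf{There is a genuine gap in the inductive step of your final downward induction, in the truncated case $|A|+t-s>m$.} There you bound each of the at most $\binom{\ell-s}{t-s}$ terms by $\binom{n-m}{k-m}$ and call this ``even better.'' It is in fact worse.

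To close the induction you need $\sum_H|\mathcal{F}[H]|\le(\ell-t+1)^{m-|A|}\binom{n-m}{k-m}$. After spending $\binom{\ell-s}{t-s}\le(\ell-t+1)^{t-s}$, each term must therefore be at most $(\ell-t+1)^{m-|A|-(t-s)}\binom{n-m}{k-m}$. When that exponent is negative, this is strictly smaller than $\binom{n-m}{k-m}$.

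The bound you actually get, $\binom{\ell-s}{t-s}\binom{n-m}{k-m}$, does not suffice. Take $t=2$, $m=3$, $A=X$, and let the available $G\in\mathcal{G}$ be disjoint from $X$, so $s=0$. Your bound gives $\binom{\ell}{2}\binom{n-3}{k-3}$, while the target is $(\ell-1)\binom{n-3}{k-3}$, and $\binom{\ell}{2}>\ell-1$ for $\ell\ge 3$.

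So (\ref{equltrt}) is not the crux; the overshoot past size $m$ is. For the same reason, your remark that the hypothesis $n\geq(k-t+1)(\ell-t+1)+t$ is not needed is backwards for your scheme.

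\textbf{How the paper handles the overshoot.} It follows a single chain $X=H_1\subsetneq\cdots\subsetneq H_j$, passing each time to a largest term via Lemma \ref{lemmaind}, until $|H_j|\ge\tau_t(\mathcal{G})\ge m$. This gives $|\mathcal{F}[X]|\le(\ell-t+1)^{|H_j|-t}\binom{n-|H_j|}{k-|H_j|}$. It then uses that $(\ell-t+1)^{u-t}\binom{n-u}{k-u}$ is decreasing in $u\in[t,k]$, which is exactly where the hypothesis on $n$ enters. In your language: do not truncate. Instead, for $|H|>m$ use $|\mathcal{F}[H]|\le\binom{n-|H|}{k-|H|}\le(\ell-t+1)^{m-|H|}\binom{n-m}{k-m}$, which follows from that monotonicity.

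\textbf{Alternatively, the idea you abandoned in your first attempt works and avoids overshoot entirely.} Suppose $|A\cap G|=s<t$ and $D\subseteq G\setminus A$ has size $t-s-1$. Every $F\in\mathcal{F}[A]$ contains at least $t-s$ elements of $G\setminus A$, so it meets $(G\setminus A)\setminus D$. That set has size $|G|-t+1\le\ell-t+1$, not more, so your claim that this ``branches into more than $\ell-t+1$ pieces'' is a miscount. Hence $|\mathcal{F}[A]|\le(\ell-t+1)\max_w|\mathcal{F}[A\cup\{w\}]|$. Adding one element at a time, $m-t$ steps give the bound directly, and in fact without any hypothesis on $n$.

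Your treatment of the second assertion (the union bound over $\binom{T}{t}$ for a minimum $t$-cover $T$ of $\mathcal{F}$) is fine and matches the paper.
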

\begin{proof}
If the displayed inequality holds for all $t$-subset $X$, then the upper bound on the size of $\mathcal{F}$ also holds. Indeed, let $T$ be a $t$-cover of $\mathcal{F}$ of size $\tau_t(\mathcal{F})$. Then clearly $\mathcal{F}=\cup_{S\in\binom{T}{t}}\mathcal{F}[S]$, and the union bound gives $|\mathcal{F}|\leq\binom{\tau_t(\mathcal{F})}{t}(\ell-t+1)^{m-t}\binom{n-m}{k-m}$. 

Fix an $X\in\binom{[n]}{t}$. We may suppose $m\geq t+1$. Since $\tau_t(\mathcal{G})\geq m$, the set $X$ is not a $t$-cover of $\mathcal{G}$. Hence, there exists $G_1\in\mathcal{G}$ with $|X\cap G_1|<t$. Set $H_1=X$. By Lemma \ref{lemmaind}, we can inductively choose $G_1,G_2,\ldots,G_j\in\mathcal{G}$ and sets $H_1\subsetneqq H_2\subsetneqq\cdots\subsetneqq H_j$ such that $|H_i\cap G_i| <t$, $|H_{i+1}|=|H_{i}|+t-|H_i\cap G_i|$ and
\begin{equation*}
	|\mathcal{F}[H_i]|\leq\binom{|G_i|-|H_i\cap G_i|}{t-|H_i\cap G_i|}|\mathcal{F}[H_{i+1}]|
\end{equation*}   
for $1\leq i\leq j-1$, and $|H_{j-1}|<\tau_t(\mathcal{G})\leq|H_j|$.
Note that $|G_i|\leq\ell$ for $1\leq i\leq j$. Therefore,
\begin{align*}
	|\mathcal{F}[H]|&\leq\prod_{i=1}^{j-1}\binom{\ell-|H_i\cap G_i|}{t-|H_i\cap G_i|}|\mathcal{F}[H_j]|\leq\mathcal(\ell-t+1)^{\sum_{i=1}^{j-1}(t-|H_i\cap G_i|)}|\mathcal{F}[H_j]|\\
	&=\mathcal(\ell-t+1)^{\sum_{i=1}^{j-1}(|H_{i+1}|-|H_i|)}|\mathcal{F}[H_j]|=\mathcal(\ell-t+1)^{|H_j|-t}|\mathcal{F}[H_j]|.
\end{align*}
where in the second step we used (\ref{equltrt}) repeatedly. Since $n\geq(k-t+1)(\ell-t+1)+t$, it is easy to check that the expression $(\ell-t+1)^{u-t}\binom{n-u}{k-u}$ is decreasing on $u\in[t,k]$. Then the desired bound follows by combining this with $|\mathcal{F}[H_j]|\leq\binom{n-|H_j|}{k-|H_j|}$ and  $|H_j|\geq\tau_t(\mathcal{G})\geq m$.
\end{proof}
 The key to using Lemma \ref{lemmakey} is to find a suitable set of $t$-covers of the considered family. Given a pair $(\mathcal{F},\mathcal{G})$ of cross $t$-intersecting families, then $\mathcal{G}$ is a natural choice of such a collection for $\mathcal{F}$, and vice versa. In a typical application of the $t$-cover method to characterize the structure of large $t$-intersecting families, a standard and effective step is to show that families with large $t$-covering number have relatively small size. One then finds that many of the remaining families admit a collection of $t$-covers with relatively simple structure, from which the original family can often be recovered, or its size can be estimated more precisely. If we perform the algorithm to the pair, the resulting families encode structural information about the original pair implicitly. To obtain more about them, a key step of the present method is to start with a suitably chosen initial fingerprint. For the pair $(\mathcal{F},\mathcal{G})$ with  $\mathcal{F}\subseteq\binom{[n]}{k}$ and  $\mathcal{G}\subseteq\binom{[n]}{\ell}$, we define
 \begin{align*}
	\mathcal{M}(\mathcal{G})&=\{T\subseteq[n]:T\;\mbox{is a minimal $t$-cover of $\mathcal{G}$},\;|T|\leq k\}\;\;\mbox{and}\\
	\mathcal{M}(\mathcal{F})&=\{T\subseteq[n]:T\;\mbox{is a minimal $t$-cover of $\mathcal{F}$},\;|T|\leq\ell\}.
 \end{align*}
The following lemma suggests that, when the pair is  maximal, we may start with $(\mathcal{M}(\mathcal{G}),\mathcal{M}(\mathcal{F}))$. It turns out to be essential in our approach.
\begin{lemma}\label{lemmamaximal}
Suppose $n\geq k+\ell$, and $\mathcal{F}\subseteq\binom{[n]}{k}$ and  $\mathcal{G}\subseteq\binom{[n]}{\ell}$ are maximal cross $t$-intersecting families, then $(\mathcal{M}(\mathcal{G}),\mathcal{M}(\mathcal{F}))$ is a fingerprint of $(\mathcal{F},\mathcal{G})$. 
\end{lemma}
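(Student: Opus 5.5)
We have to check the three defining properties of a fingerprint (Lemma \ref{lemmafingerprintdef}) for the pair $(\mathcal{M}(\mathcal{G}),\mathcal{M}(\mathcal{F}))$, together with the requirement that this pair is itself cross $t$-intersecting. The key tool is maximality, used in the form of two identities:
\[
\mathcal{F}=\left\{F\in\binom{[n]}{k}:F \text{ is a } t\text{-cover of } \mathcal{G}\right\},\qquad \mathcal{G}=\left\{G\in\binom{[n]}{\ell}:G \text{ is a } t\text{-cover of } \mathcal{F}\right\}.
\]
Indeed, any $k$-set that $t$-covers $\mathcal{G}$ could be added to $\mathcal{F}$ without breaking the cross $t$-intersection, so by maximality it already lies in $\mathcal{F}$; the same reasoning gives the identity for $\mathcal{G}$.

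Properties (i) and (ii) are then routine. For (ii), take any $F\in\mathcal{F}$. It is a $t$-cover of $\mathcal{G}$ of size $k$, so it contains an inclusion-minimal $t$-cover of $\mathcal{G}$, and that cover has size at most $k$; hence it belongs to $\mathcal{M}(\mathcal{G})$. For (i), take $T\in\mathcal{M}(\mathcal{G})$, so $|T|\le k$. Extend $T$ arbitrarily to a $k$-set $F\supseteq T$, which is possible since $n\ge k$. Then $F$ is still a $t$-cover of $\mathcal{G}$, so $F\in\mathcal{F}$ by maximality. The symmetric arguments handle $\mathcal{M}(\mathcal{F})$ and $\mathcal{G}$.

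The main step is showing that $\mathcal{M}(\mathcal{G})$ and $\mathcal{M}(\mathcal{F})$ are cross $t$-intersecting, and this is where $n\ge k+\ell$ is needed. Take $S\in\mathcal{M}(\mathcal{G})$ and $T\in\mathcal{M}(\mathcal{F})$, and suppose for contradiction that $|S\cap T|<t$. Choose $F\supseteq S$ of size $k$ and $G\supseteq T$ of size $\ell$ such that the added elements avoid everything else as far as possible, so that $F\cap G=S\cap T$. This needs $|S\cup T|+(k-|S|)+(\ell-|T|)\le n$. That holds because the left side equals $k+\ell-|S\cap T|\le k+\ell\le n$: add $k-|S|$ new elements to $S$ from outside $S\cup T$, then $\ell-|T|$ further new elements to $T$ from outside everything used so far. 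By the previous paragraph $F\in\mathcal{F}$ and $G\in\mathcal{G}$, yet $|F\cap G|=|S\cap T|<t$, contradicting the cross $t$-intersection of $\mathcal{F}$ and $\mathcal{G}$.

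For (iii) we must show that each $S\in\mathcal{M}(\mathcal{G})$ is a minimal $t$-cover of $\mathcal{M}(\mathcal{F})$. It is a $t$-cover by the previous paragraph. For minimality, let $S_0\subsetneq S$. Since $S$ is a minimal $t$-cover of $\mathcal{G}$, there is some $G\in\mathcal{G}$ with $|G\cap S_0|<t$. By (ii) applied on the $\mathcal{G}$ side, $G$ contains some $T\in\mathcal{M}(\mathcal{F})$, and then $|T\cap S_0|\le|G\cap S_0|<t$. So $S_0$ is not a $t$-cover of $\mathcal{M}(\mathcal{F})$. The argument for members of $\mathcal{M}(\mathcal{F})$ is symmetric, which completes the verification.
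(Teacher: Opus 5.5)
Your proof is correct and follows essentially the same route as the paper. Both use maximality to get (i) and (ii), and both realize $F\cap G=S\cap T$ via $n\ge k+\ell$ to show $|S\cap T|\ge t$. Minimality then transfers through $\mathcal{G}=\mathcal{G}[\mathcal{M}(\mathcal{F})]$. Your explicit count $k+\ell-|S\cap T|\le n$ spells out a step the paper leaves implicit.
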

\begin{proof}
	We verify Lemma \ref{lemmafingerprintdef} (i)-(iii) for $(\mathcal{M}(\mathcal{G}),\mathcal{M}(\mathcal{F}))$. By symmetry, we verify the statements concerning $\mathcal{M}(\mathcal{G})$. First, for all $S\in\mathcal{M}(\mathcal{G})$, since $\mathcal{F}$ and $\mathcal{G}$ are maximal, every $k$-subset containing $S$ lies in $\mathcal{F}$, and certainly $S$ is a subset of some set in $\mathcal{F}$. Every $F\in\mathcal{F}$ is a $t$-cover of $\mathcal{G}$, and so each of them contains a minimal $t$-cover of $\mathcal{G}$, which gives $\mathcal{F}=\mathcal{F}[\mathcal{M}(\mathcal{G})]$. It remains to prove that every set from   $\mathcal{M}(\mathcal{G})$ is a minimal $t$-cover of $\mathcal{M}(\mathcal{F})$. Let $S\in\mathcal{M}(\mathcal{G})$. Since $n\geq k+\ell$, for all $T\in\mathcal{M}(\mathcal{F})$, there exist a $k$-subset $F$ and an $\ell$-subset $G$ with $S\subseteq F$, $T\subseteq G$ and $F\cap G=S\cap T$. By the maximality again, $F\in\mathcal{F}$ and $G\in\mathcal{G}$, and necessarily $|S\cap T|=|F\cap G|\geq t$. Hence $S$ is a $t$-cover of $\mathcal{M}(\mathcal{F})$. To see that $S$ is minimal, note that every proper subset $S_0$ of $S$ is not a $t$-cover of $\mathcal{G}$. Then there exists $G\in\mathcal{G}$ such that $|S_0\cap G|<t$, and so $|S_0\cap W|<t$ whenever  $W\in\mathcal{M}(\mathcal{F})$ is contained in $G$. The existence of such a subset $W$ is guaranteed by  $\mathcal{G}=\mathcal{G}[\mathcal{M}(\mathcal{F})]$.
\end{proof}
The next technical lemma is required in the proof of Lemma \ref{lemmafin-stru1}. 
\begin{lemma}\label{lemmastrue-alike}
	Let $t\geq2$, $n\geq t+3$ and  $\mathcal{S},\mathcal{T}\subseteq\binom{[n]}{t+1}$. Suppose that $(\mathcal{S},\mathcal{T})$ is a fingerprint of a pair of cross $t$-intersecting families, and there does not exist  $Z\in\binom{[n]}{t+2}$ such that $\mathcal{S},\mathcal{T}\subseteq\binom{Z}{t+1}$. Then $|\mathcal{S}|,|\mathcal{T}|\geq2$, and there exists $I\in\binom{[n]}{t-1}$ and two families $\mathcal{P},\mathcal{Q}\subseteq\binom{[n]\setminus I}{2}$ such that 
	$$\mathcal{S}\subseteq\{I\cup P:P\in\mathcal{P}\}\;\;\mbox{and}\;\;\mathcal{T}\subseteq\{I\cup Q:Q\in\mathcal{Q}\},$$
	where $(\mathcal{P},\mathcal{Q})$ is isomorphic to one of the following pairs:	
	\begin{itemize} 
		\item[\rm(i)]$(\{\{1,2\},\{3,4\},\{1,4\}\},\;\{\{1,3\},\{2,4\},\{1,4\}\})$,
		\item[\rm(ii)]$(\{\{1,2\},\{3,4\},\{1,4\},\{2,3\}\},\;\{\{1,3\},\{2,4\}\})$,
		\item[\rm(iii)]$(\{\{1,3\},\{2,4\}\},\;\{\{1,2\},\{3,4\},\{1,4\},\{2,3\}\})$.
	\end{itemize}
\end{lemma}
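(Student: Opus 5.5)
The plan is to reduce the statement to a small problem about graphs (the case $t=1$ with $2$-sets) by first extracting a common core $I$ of size $t-1$, and then to classify.

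\textbf{Step 1: basic facts.} Since $(\mathcal{S},\mathcal{T})$ is a fingerprint, Lemma~\ref{lemmafingerprintdef}(iii) says that $\mathcal{S}$ and $\mathcal{T}$ are cross $t$-intersecting. Every member of either family is a minimal $t$-cover of the other family. For $(t+1)$-sets $S,T$, the condition $|S\cap T|\ge t$ is equivalent to $|S\cup T|\le t+2$.

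Suppose $\mathcal{S}=\{S\}$. Then a minimal $t$-cover of $\{S\}$ is a $t$-subset of $S$. This contradicts $\mathcal{T}\subseteq\binom{[n]}{t+1}$. Hence $|\mathcal{S}|\ge2$, and symmetrically $|\mathcal{T}|\ge2$.

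Minimality gives a second fact. For every $S\in\mathcal{S}$ and $x\in S$, the set $S\setminus\{x\}$ is not a $t$-cover of $\mathcal{T}$. So there is $T\in\mathcal{T}$ with $x\notin T$ and $|S\cap T|=t$, that is, $T=(S\setminus\{x\})\cup\{y\}$ for some $y\notin S$. The symmetric statement holds for members of $\mathcal{T}$.

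\textbf{Step 2: find the core $I$.} This is the step I expect to be the main obstacle. Fix $S_0\in\mathcal{S}$ and $T_0\in\mathcal{T}$ with $|S_0\cap T_0|=t$, and put $Z=S_0\cup T_0$, a $(t+2)$-set.

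By hypothesis some member $W$ of $\mathcal{S}\cup\mathcal{T}$ is not contained in $Z$; say $W\in\mathcal{S}$. Then $W$ must $t$-intersect $T_0$ and every other member of $\mathcal{T}$. Using the ``replace one element'' fact from Step 1 for $S_0$, $T_0$ and $W$, I will show that each member of $\mathcal{S}\cup\mathcal{T}$ contains all but at most two elements of a fixed $(t+1)$-set. Then I will refine this to show that $I:=\bigcap(\mathcal{S}\cup\mathcal{T})$ has size at least $t-1$.

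The size of $I$ is exactly $t-1$. It cannot be $t$: if all sets contain a common $t$-set $X$, write them as $X\cup\{p\}$. Minimality of $X\cup\{p\}$ as a $t$-cover forces some member of the other family to avoid $p$, and the only way is to equal $X\cup\{p'\}$; then $X$ itself would already be a $t$-cover, contradicting minimality. For the case analysis in this step I will split on whether $|W\cap S_0|=t$ or $t-1$, and use repeatedly that any two sets of opposite families differ by a single swap.

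\textbf{Step 3: the graph problem.} Write $\mathcal{S}=\{I\cup P:P\in\mathcal{P}\}$ and $\mathcal{T}=\{I\cup Q:Q\in\mathcal{Q}\}$, where $\mathcal{P},\mathcal{Q}\subseteq\binom{[n]\setminus I}{2}$. The conditions translate as follows.

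\begin{itemize}
\item $\mathcal{P}$ and $\mathcal{Q}$ are cross $1$-intersecting edge sets.
\item Every vertex of an edge of $\mathcal{P}$ is avoided by some edge of $\mathcal{Q}$ meeting that $\mathcal{P}$-edge, and vice versa.
\item The union of all edges has at least $4$ vertices, since otherwise the sets would lie in a $(t+2)$-set.
\end{itemize}

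If neither $\mathcal{P}$ nor $\mathcal{Q}$ contains two disjoint edges, then each is a star or a triangle. Cross-intersection together with the minimality condition then confines everything to $3$ vertices, which is excluded.

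So, by symmetry, suppose $\mathcal{P}\ni\{1,2\},\{3,4\}$. Then every edge of $\mathcal{Q}$ meets both of these, so $\mathcal{Q}\subseteq\{13,14,23,24\}$, and all vertices lie in $[4]$. Now I enumerate. $\mathcal{Q}$ must contain two disjoint edges or a path, by the minimality condition applied to $1,2,3,4$. Then $\mathcal{P}$ must be contained in the edges meeting all edges of $\mathcal{Q}$. Working up to isomorphism, this yields exactly the pairs in (i), (ii) and (iii), possibly as subfamilies, which matches the ``$\subseteq$'' in the statement.
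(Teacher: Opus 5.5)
\textbf{Step 2 is missing, and it is the heart of the lemma.} You never prove that the members of $\mathcal{S}\cup\mathcal{T}$ share a common $(t-1)$-set; you only announce that you will.

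The tool you plan to use for it is also false. Minimality of $S$ means $S\setminus\{x\}$ is not a $t$-cover of $\mathcal{T}$. So some $T\in\mathcal{T}$ has $|T\cap(S\setminus\{x\})|<t\le|T\cap S|$. This forces $x\in T$ and $|S\cap T|=t$, i.e.\ $T=(S\setminus\{z\})\cup\{y\}$ with $z\neq x$, which is the opposite of your ``replace one element'' fact. Your version already fails in configuration (i) for $x\in I$, because every member of $\mathcal{T}$ contains $I$. Your Step 3 translation comes out right only because, for $P=\{x,y\}$, the two versions agree after swapping $x$ and $y$.

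Step 3 itself is sound in substance. The enumeration works because two edges of $K_4$ meet if and only if they lie in different perfect matchings. Note that a $3$-edge path in $\{13,14,23,24\}$ already contains two disjoint edges; this, not the earlier remark, is what confines $\mathcal{P}$ to $[4]$.

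\textbf{How Step 2 can be closed.} Put $X=S_0\cap T_0$, $S_0=X\cup\{u\}$, $T_0=X\cup\{v\}$ and $Z=X\cup\{u,v\}$.
\begin{itemize}
\item Every member $t$-intersects $S_0$ or $T_0$, so it misses at most one element of $X$. This is already stronger than your ``all but two of a $(t+1)$-set''.
\item $X$ is a proper subset of the minimal $t$-covers $T_0$ of $\mathcal{S}$ and $S_0$ of $\mathcal{T}$. Hence some $S'\in\mathcal{S}$ misses some $x_1\in X$, and some $T'\in\mathcal{T}$ misses some $x_2\in X$.
\item If only one element $x$ of $X$ is ever missed, take $I=X\setminus\{x\}$.
\item Otherwise you can choose $x_1\neq x_2$. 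Then $|S'\cap T'|\ge t$, together with $v\in S'$ and $u\in T'$, forces $S'=(X\setminus\{x_1\})\cup\{u,v\}$ and $T'=(X\setminus\{x_2\})\cup\{u,v\}$.
\item Checking against $S',T',S_0,T_0$, the only possible members outside $Z$ are $(X\setminus\{x_2\})\cup\{v,w\}\in\mathcal{S}$ or $(X\setminus\{x_1\})\cup\{u,w\}\in\mathcal{T}$, with $w\notin Z$.
\item A member of the first kind forces $\mathcal{T}\subseteq\{X\cup\{v\},\,(X\setminus\{x_2\})\cup\{u,v\}\}$. The common $t$-set $(X\setminus\{x_2\})\cup\{v\}$ of these two sets lies inside that member, contradicting its minimality. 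The second kind is symmetric.
\item So everything lies in $\binom{Z}{t+1}$, contrary to the hypothesis.
\end{itemize}

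The paper carries out an equivalent case analysis. It starts from $S'=(X\setminus\{x_1\})\cup\{v,w_1\}$ and $T'=(X\setminus\{x_2\})\cup\{u,w_2\}$, then splits on whether $\mathcal{S}[X]$ or $\mathcal{T}[X]$ leaves $\binom{Z}{t+1}$. Some argument of this kind is required; as written, your proposal does not establish the lemma.
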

\begin{proof}
Fix an $S\in\mathcal{S}$. By the minimality of $\mathcal{S}$ and $\mathcal{T}$, there exists $T\in\mathcal{T}$ that is distinct from $S$. Then $X:=S\cap T$ has size $t$. Write  $S=X\cup\{u\}$, $T=X\cup\{v\}$ and $Z=X\cup\{u,v\}$ for short. 

Without loss of generality, suppose $\mathcal{S}\nsubseteq\binom{Z}{t+1}$. Since $X$ is a proper subset of $T=X\cup\{v\}$, by the minimality, $|S'\cap X|<t$ for some $S'\in\mathcal{S}$.  Since $|S'\cap T|\geq t$, it follows that  $S'=(X\setminus\{x_1\})\cup\{v,w_1\}$ for some $x_1\in X$ and $w_1\in[n]\setminus X$. Similarly, there exists $T'=(X\setminus\{x_2\})\cup\{u,w_2\}\in\mathcal{T}$, where $x_2\in X$ and $w_2\in[n]\setminus X$. Now $\{S\}\subseteq\mathcal{S}[X]\subseteq\{S,R\}$, where $R=X\cup\{w_2\}$. 

Assume that $\mathcal{S}[X]\nsubseteq\binom{Z}{t+1}$, then $\mathcal{S}[X]=\{S,R\}$ and certainly $w_2\notin Z$. This yields $\{v,w_1\}\neq\{u,w_2\}$, and then we derive from $|S'\cap T'|\geq t$ that $x_1=x_2$, $w_1\in\{u,w_2\}$ and $\mathcal{T}\setminus\mathcal{T}[X]=\{T'\}$. Let $W\in\mathcal{S}\setminus\mathcal{S}[X]$, then $|W\cap X|=t-1, v\in W$ and $\{u,w_2\}\cap W\neq\emptyset$. Set 
$$x=x_1,\;\;I=X\setminus\{x\},\;\;\mathcal{P}=\{Y\setminus I:Y\in\mathcal{S}\}\;\;\mbox{and}\;\;\mathcal{Q}=\{Y\setminus I:Y\in\mathcal{T}\}.$$
Then it is routine to check that one of the following holds, where we write a $2$-subset $\{f,g\}$ simply as $fg$.
\begin{itemize}
	\item[\rm(a)]$w_1=u$, $\mathcal{P}=\{xu,uv,xw_2\}$ and $\{xv,uw_2\}\subseteq\mathcal{Q}\subseteq\{xv,uw_2,xu\}$.
	\item[\rm(b)]$w_1=u$, $\mathcal{P}=\{xu,uv,xw_2,vw_2\}$ and $\mathcal{Q}=\{xv,uw_2\}$.
	\item[\rm(c)]$w_1=w_2$, $\mathcal{P}=\{xu,vw_1,xw_1\}$ and $\{xv,uw_1\}\subseteq\mathcal{Q}\subseteq\{xv,uw_1,xw_1\}$.
	\item[\rm(d)]$w_1=w_2$, $\mathcal{P}=\{xu,vw_1,xw_1,uv\}$ and $\mathcal{Q}=\{xv,uw_1\}$.
\end{itemize}
We set $w:=w_2$ when $w_1=u$, and set $w:=w_1$ when $w_1=w_2$. If (a) or (c) holds, then (i) is true by identifying $(x,u,v,w)=(1,4,3,2)$ and $(x,u,v,w)=(1,2,3,4)$, respectively. Similarly, if (b) or (d) holds, then (ii) is true by identifying $(x,u,v,w)=(1,4,3,2)$ and $(x,u,v,w)=(1,2,3,4)$,  respectively. By symmetry, if $\mathcal{T}[X]\nsubseteq\binom{Z}{t+1}$, then the lemma also holds, and now $\mathcal{P}$ and $\mathcal{Q}$ conform to the structures in (i) or (iii).

It remains to consider the case that $\mathcal{S}[X],\mathcal{T}[X]\subseteq\binom{Z}{t+1}$. Now $\mathcal{S}\setminus\binom{Z}{t+1}$ is non-empty by our assumption that $\mathcal{S}\nsubseteq\binom{Z}{t+1}$. Fix a set $W$ from there, then $W=(X\setminus\{x\})\cup\{v,w\}$ for some $x\in X$ and $w\in[n]\setminus Z$. Since $W_0:=(X\setminus\{x\})\cup\{v\}$ is not a $t$-cover of $\mathcal{T}$, there exists $Y\in\mathcal{T}$ such that $|Y\cap W_0|<t$. If $X\subseteq Y$, then $Y=X\cup\{w\}$ as $|Y\cap W|\geq t$ and $W_0\nsubseteq Y$, which contradicts that $\mathcal{T}[X]\subseteq\binom{Z}{t+1}$. Hence $X\nsubseteq Y$, and then we derive from $|Y\cap S|\geq t$ and $|Y\cap W|\geq t$ that $Y=(X\setminus\{x\})\cup\{u,w\}$. We also write $Y'=(X\setminus\{x\})\cup\{u,v\}$. Then it is easy to check that $\{S,W\}\subseteq\mathcal{S}\subseteq\{S,W,Y'\}$ and $\{T,Y\}\subseteq\mathcal{T}\subseteq\{T,Y,Y'\}$. 
Hence (i) is true by setting $I=X\setminus\{x\}$ and identifying $(x,u,v,w)=(3,4,1,2)$.
\end{proof}
We are now ready to state the two structural lemmas of the algorithm which will be used throughout the rest of this paper. Lemma
\ref{lemmafin-stru1} deals with the case where the algorithm stops with $N=q-t-1$. We use $(\mathcal{S}_{q-t-1},\mathcal{T}_{q-t-1})$ at the final level to approximate the original pair. Lemma \ref{lemmafin-stru2} treats the case where the algorithm reaches the last
round, that is, $N=q-t$. In this situation, both sides end up with a common $t$-set. The lemma shows that either one of the families is sufficiently small, or
both have covering number exactly $t+1$ and the pair resembles the 
Hilton--Milner type pairs given in Construction \ref{constructionh}. Before stating these lemmas, let us first record the notation used in the following assumption and, as a warm-up, prove a simple yet crucial lemma (Lemma \ref{lemmafin-stru0}), which establishes two basic consequences of the algorithm. The first says that the parts of the families
captured by the early fingerprints (at levels $0,1,\ldots,q-t-2$) are small. The second identifies the
$(t+1)$-element members of the initial fingerprints as the
$(t+1)$-element $t$-covers of the opposite family; this is important to
our argument, especially when characterizing the  structures. 
\begin{assumption}\label{assumption}
Let $k\geq t+2$, $b=k-t+1$,  $r=(n-t)/(k-t)$ and $\varepsilon\in(0,1)$ with $\varepsilon r\geq ek$.  Put $\delta_{k-t-u}=rT_{u}/(1-\varepsilon)$ for $2\leq u\leq k-t$, where $T_{u}=\binom{u+t}{t}(u+1)^u/r^u$. Put $\delta_{-1}=0$ and write $\delta=\delta_{k-t-2}$ and $\delta'=\delta_{k-t-3}$. Suppose  $\mathcal{F},\mathcal{G}\subseteq\binom{[n]}{k}$ are maximal cross $t$-intersecting families with $|\cap\mathcal{F}|<t$ and $|\cap\mathcal{G}|<t$. Perform the algorithm to the pair $(\mathcal{F},\mathcal{G})$, the uniformity $q:=k$ and $(\mathcal{S}_0,\mathcal{T}_0):=(\mathcal{M}(\mathcal{G}),\mathcal{M}(\mathcal{F}))$, and let $N$ be the number of rounds and $(\mathcal{S}_{i},\mathcal{T}_{i},\mathcal{X}_{i},\mathcal{Y}_i)$ $(i\leq N)$ be the output families. Set 
$\mathcal{S}^*=\mathcal{S}_0\cap\binom{[n]}{t+1}\;\mbox{and}\;\mathcal{T}^*=\mathcal{T}_0\cap\binom{[n]}{t+1}$.
\end{assumption}
\begin{lemma}\label{lemmafin-stru0}
	With the notation in Assumption \ref{assumption}, the following hold.
\begin{itemize}
\item[\rm(i)]It holds that $\delta_0<\delta_1<\cdots<\delta_{q-t-2}$, and 
$$\max\left\{\left|\bigcup_{i=0}^{m}\mathcal{\mathcal{F}}[\mathcal{X}_i]\right|, \left|\bigcup_{i=0}^{m}\mathcal{\mathcal{G}}[\mathcal{Y}_i]\right|\right\}\leq\delta_{m}\binom{n-t-1}{k-t-1}\;\;\mbox{for}\;\;m\leq\min\{N, q-t-2\}.$$ 
\item[\rm(ii)]The families $\mathcal{S}^*$ and $\mathcal{T}^*$ are the set of $t$-covers of $\mathcal{G}$ and $\mathcal{F}$ with size $t+1$, respectively.
\end{itemize}
\end{lemma}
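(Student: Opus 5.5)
Both parts come down to bookkeeping with the earlier lemmas. Part (i) combines Lemma \ref{lemmafingerprintproperty} (vi) with the spreadness of $\binom{[n]}{k}$. Part (ii) uses the maximality of $(\mathcal{F},\mathcal{G})$ and the definition of $\mathcal{M}(\cdot)$.

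For (i), I first check monotonicity. By definition $\delta_{k-t-u}=rT_u/(1-\varepsilon)$. Lemma \ref{lemmarqt}, applied with $q=k$ and $\varepsilon r\ge ek$, gives $T_{u+1}\le\varepsilon T_u<T_u$. So $\delta_m$ increases as $u=k-t-m$ decreases, that is, as $m$ increases; this gives $\delta_0<\cdots<\delta_{q-t-2}$.

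For the size bound, I use the union bound $\bigl|\bigcup_{i\le m}\mathcal{F}[\mathcal{X}_i]\bigr|\le\sum_{i\le m}|\mathcal{X}_i|\binom{n-k+i}{i}$, since each $X\in\mathcal{X}_i$ has size $k-i$. Then I write
\[
\binom{n-k+i}{i}=\binom{n-(k-i)}{k-(k-i)}\le r^{-(k-t-i-1)}\binom{n-t-1}{k-t-1},
\]
which follows from the displayed spreadness inequality of $\binom{[n]}{k}$ iterated $k-t-i-1$ times; this is valid because $k-i\ge t+2>t+1$ when $i\le q-t-2$. Lemma \ref{lemmafingerprintproperty} (vi) with $q=k$ bounds $|\mathcal{X}_i|$. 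Putting these together, the sum is at most
\[
r\sum_{i=0}^{m}\binom{k-i}{t}\Bigl(\frac{k-t-i+1}{r}\Bigr)^{k-t-i}\binom{n-t-1}{k-t-1}=r\,\varphi(r,k,t,m)\binom{n-t-1}{k-t-1}.
\]
Lemma \ref{lemmarqt} gives $\varphi<T_{k-t-m}/(1-\varepsilon)$, so the whole expression is at most $\delta_m\binom{n-t-1}{k-t-1}$. The same argument applies verbatim to $\mathcal{G}$ and the $\mathcal{Y}_i$. One detail needs care: the condition $m\le N$ ensures the $\mathcal{X}_i$ are defined, and the condition $m\le q-t-2$ ensures $u=k-t-m\ge2$, so $\delta_m$ is defined.

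For (ii), take $S\in\mathcal{S}^*$. It lies in $\mathcal{M}(\mathcal{G})$, so it is a $t$-cover of $\mathcal{G}$ of size $t+1$. Conversely, let $S$ be a $t$-cover of $\mathcal{G}$ with $|S|=t+1$. I must show that $S$ is minimal, i.e.\ that no $t$-subset $X\subset S$ is a $t$-cover of $\mathcal{G}$. Such an $X$ would lie in every member of $\mathcal{G}$, so $X\subseteq\cap\mathcal{G}$, contradicting $|\cap\mathcal{G}|<t$. Hence $S$ is a minimal $t$-cover of size $t+1\le k$, so $S\in\mathcal{M}(\mathcal{G})=\mathcal{S}_0$ and therefore $S\in\mathcal{S}^*$. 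The argument for $\mathcal{T}^*$ is symmetric and uses $|\cap\mathcal{F}|<t$.

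Lemma \ref{lemmamaximal} is what justifies $(\mathcal{S}_0,\mathcal{T}_0)=(\mathcal{M}(\mathcal{G}),\mathcal{M}(\mathcal{F}))$ being a fingerprint. It needs $n\ge 2k$, which holds since $r\ge ek/\varepsilon$. The only delicate point in the whole proof is the exponent bookkeeping in the spreadness step of (i), which I expect to be routine.
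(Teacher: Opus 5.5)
Your proof is correct and follows the paper's argument essentially step for step. Part (i) combines Lemma \ref{lemmafingerprintproperty} (vi), the bound $\binom{n-(k-i)}{i}\le r^{-(k-t-i-1)}\binom{n-t-1}{k-t-1}$ and Lemma \ref{lemmarqt}; part (ii) uses that $|\cap\mathcal{G}|<t$ (resp.\ $|\cap\mathcal{F}|<t$) forces every $t$-cover of size $t+1$ to be minimal, hence to lie in $\mathcal{M}(\mathcal{G})$ (resp.\ $\mathcal{M}(\mathcal{F})$). The one-step inequality you iterate should carry the factor $r^{-1}$, i.e.\ $\binom{m-1}{a-1}=\frac{a}{m}\binom{m}{a}\le\frac{k-t}{n-t}\binom{m}{a}$ for $a\le k-t$ and $m-a=n-k$ (the $r^{-s}$ in the paper's display appears to be a misprint); the bound you derive from it is correct.
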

\begin{proof}
	(i)\;By Lemma \ref{lemmarqt}, we have $T_{u+1}<\varepsilon T_u$ for $0\leq u\leq q-t-1$, and then $\delta_0<\delta_1<\cdots<\delta_{q-t-2}$. Next, from Lemmas \ref{lemmafingerprintproperty} (vi) and \ref{lemmarqt}, for $m\leq\min\{N, q-t-2\}$, 
\begin{align*}
	\left|\bigcup_{i=0}^{m}\mathcal{\mathcal{F}}[\mathcal{X}_i]\right|\bigg/\binom{n-t-1}{k-t-1}&\leq\sum_{i=0}^{m}|\mathcal{X}_i|\binom{n-(q-i)}{k-(q-i)}\bigg/\binom{n-t-1}{k-t-1}\\
	&\leq r\sum_{i=0}^{m}\binom{q-i}{t}\left(\frac{q-t-i+1}{r}\right)^{q-t-i}\\
	&=r\varphi(r,q,t,m)\leq rT_{q-t-m}/(1-\varepsilon)=\delta_m.
\end{align*}
The same estimation involving $\mathcal{G}$ and the $\mathcal{Y}_i$'s also holds.

(ii)\;First, $\mathcal{S}^*$ is a collection of $t$-covers of $\mathcal{G}$ with size $t+1$. To see that every such a $t$-cover must belong to $\mathcal{S}^*$, just note that $\mathcal{G}$ is non-trivial, then a $t$-cover with size $t+1$ must be minimal, and so lies in $\mathcal{S}^*\subseteq\mathcal{M}(\mathcal{G})$. Similarly, $\mathcal{T}^*$ consists exactly of the $t$-covers of $\mathcal{F}$ with size $t+1$.
\end{proof}
\begin{lemma}\label{lemmafin-stru1}
With the notation in Assumption \ref{assumption}, suppose further that $N=q-t-1$ and $\mathcal{S}_{q-t-1}=\mathcal{X}_{q-t-1}$.  
Then $\mathcal S^*\subseteq\mathcal S_{q-t-1}$, and the following hold.
	\begin{itemize}
		\item[\rm(i)]If  $\mathcal{T}_{q-t-1}=\mathcal{Y}_{q-t-1}$, then one of the following holds.
	\begin{itemize}
	\item[\rm (ia)]$\mathcal{S}_{q-t-1},\mathcal{T}_{q-t-1}\subseteq\binom{Z}{t+1}$ for some $Z\in\binom{[n]}{t+2}$, and either $\mathcal{F}=\mathcal{G}\cong\mathcal{A}(n,k,t)$, or $|\mathcal{F}||\mathcal{G}|\leq(\delta+tb/r+2)(\delta+t+2)\binom{n-t-1}{k-t-1}^2$ and $\min\{|\mathcal{F}|,|\mathcal{G}|\}\leq(\delta+tb/r+2)\binom{n-t-1}{k-t-1}$.
	\item[\rm (ib)]$\mathcal{S}_{q-t-1}\nsubseteq\binom{Z}{t+1}$ or $\mathcal{T}_{q-t-1}\nsubseteq\binom{Z}{t+1}$ whenever $Z\in\binom{[n]}{t+2}$, and $(\mathcal{S}_{q-t-1},\mathcal{T}_{q-t-1})$ is isomorphic to one of the pairs given in Lemma \ref{lemmastrue-alike}. In particular,  $|\mathcal{F}||\mathcal{G}|\leq(3+\delta)^2\binom{n-t-1}{k-t-1}^2$.
	\end{itemize}
	\item[\rm(ii)]If  $\mathcal{T}_{q-t-1}\neq\mathcal{Y}_{q-t-1}$, then
		$$|\mathcal{F}||\mathcal{G}|\leq\max\left\{M_1\binom{n-t-1}{k-t-1}^2,\;M_2\binom{n-t-1}{k-t-1}\left(\binom{n-t}{k-t}-\binom{n-k-1}{k-t}\right)\right\},$$
		where $M_1:=\max\{(\delta+2)(\delta+t+b),(\delta+1)(\delta+3t+3),(\delta+1)(\delta+2t+b)\}$ and $M_2:=(\delta'+1)(t+1+\delta'(1/r+1/b))$. 
		In particular,  $|\mathcal{F}|\leq(2+\delta)\binom{n-t-1}{k-t-1}$.
	\end{itemize}	
\end{lemma}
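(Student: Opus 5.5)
We first show $\mathcal S^*\subseteq\mathcal S_{q-t-1}$, then handle the cases in the order (ii), (ib), (ia).

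\textbf{The inclusion $\mathcal S^*\subseteq\mathcal S_{q-t-1}$.} Since $N=q-t-1$, the sets $\mathcal X_i$ for $i\le q-t-2$ consist of sets of size $q-i\ge t+2$. So a $(t+1)$-set $S\in\mathcal S_0$ is never put into an $\mathcal X_i$ before level $q-t-1$. We check that it survives each fingerprint step, i.e.\ that its inclusion-minimal subset $t$-covering $\mathcal T_{i+1}$ is $S$ itself. If not, $S$ would shrink to a $t$-set $X$. But $\mathcal S_{q-t-1}=\mathcal X_{q-t-1}$ contains no $t$-sets. Moreover, any $t$-set appearing in $\mathcal S_i$ must t-cover $\mathcal T_i$, hence equal every member of $\mathcal T_i$. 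We would then have $\mathcal T_i=\{X\}$, and the procedure would run to $N=q-t$, a contradiction. Thus $\mathcal S^*\subseteq\mathcal S_{q-t-1}\subseteq\binom{[n]}{t+1}$. By Lemma~\ref{lemmafin-stru0}(ii), $\mathcal S_{q-t-1}$ consists of $t$-covers of $\mathcal T_{q-t-1}$, and hence we expect $\mathcal S_{q-t-1}=\mathcal S^*$. That is, every member is a genuine $t$-cover of $\mathcal G$: members of $\mathcal G$ outside $\mathcal G[\mathcal T_{q-t-1}]$ lie in the small parts $\mathcal G[\mathcal Y_i]$, and maximality together with Lemma~\ref{lemmamaximal} lets us transport covering.

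\textbf{Size bounds.} Throughout, Lemma~\ref{lemmafingerprintproperty}(iii) gives
$$|\mathcal F|\le\Bigl|\bigcup_{i\le q-t-2}\mathcal F[\mathcal X_i]\Bigr|+|\mathcal F[\mathcal S_{q-t-1}]|\le\Bigl(\delta+|\mathcal S_{q-t-1}|\Bigr)\binom{n-t-1}{k-t-1},$$
using Lemma~\ref{lemmafin-stru0}(i). The analogous bound holds for $\mathcal G$. Everything then reduces to bounding the $(t+1)$-uniform families at level $q-t-1$.

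\textbf{Case (ii).} Here $\mathcal T_{q-t-1}$ contains sets of size $\le t$. Since it is cross $t$-intersecting with the $(t+1)$-uniform $\mathcal S_{q-t-1}$, a $t$-set $X\in\mathcal T_{q-t-1}$ forces every $S\in\mathcal S_{q-t-1}$ to contain $X$. Then $\mathcal S_{q-t-1}\subseteq\{X\cup\{y\}\}$, and antichain/minimality considerations limit the remaining structure. We split into two subcases.
\begin{itemize}
\item If $|\mathcal S_{q-t-1}|\le 2$, then $|\mathcal F|\le(2+\delta)\binom{n-t-1}{k-t-1}$. To bound $|\mathcal G|$, we use $\mathcal G\subseteq\bigcup\mathcal G[\mathcal Y_i]\cup\mathcal G[\mathcal T_{q-t-1}]$, together with Lemma~\ref{lemmaind} applied to the non-trivial $\mathcal F$ (using $|\cap\mathcal F|<t$). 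This yields the terms $\delta+t+b$, $\delta+2t+b$ and $\delta+3t+3$ according to how many $t$-sets and $(t+1)$-sets lie in $\mathcal T_{q-t-1}$.
\item If $|\mathcal S_{q-t-1}|\ge 3$, then the sets $X\cup\{y\}$ force $\mathcal T_{q-t-1}=\{X\}$ to be essentially a star. We then go one level back and use $\delta'$. The bound $\binom{n-t}{k-t}-\binom{n-k-1}{k-t}$ comes from Lemma~\ref{lemmaind} with a $k$-set $F_0\in\mathcal F$ missing $X$. We then convert via \eqref{equbinom'}.
\end{itemize}
The main obstacle is the careful case bookkeeping to get exactly $M_1$ and $M_2$.

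\textbf{Case (ib).} Both families are $(t+1)$-uniform fingerprints not inside one $\binom{Z}{t+1}$, so Lemma~\ref{lemmastrue-alike} applies directly. Each side has at most $3$ sets, giving $(3+\delta)^2$. For the $4$-set side we pair it with the $2$-set side, so the bound $(4+\delta)(2+\delta)\le(3+\delta)^2$ is already implied.

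\textbf{Case (ia).} Both families lie in $\binom{Z}{t+1}$. If both equal $\binom{Z}{t+1}$, then all members of $\mathcal F$ and $\mathcal G$ contain a $(t+1)$-subset of $Z$ up to the small remainder. By maximality and Lemma~\ref{lemmafin-stru0}(ii), $\mathcal F=\mathcal G=\mathcal A(Z)$, so the small parts must vanish. Otherwise one side, say $\mathcal S_{q-t-1}$, has at most $t+1$ sets. Then the other side's sets containing $Z$-subsets are constrained, since a missing $(t+1)$-subset $Z\setminus\{z\}$ is not a $t$-cover. Lemma~\ref{lemmaind} then gives the counts $tb/r+2$ for the smaller side and $t+2$ for the other.
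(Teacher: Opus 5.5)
Your proposal has a genuine gap. It relies on a false identification in (ia), and the case structure in (ii) misses the step that produces $M_1$ and $M_2$. Your treatment of (ib) is fine.

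\textbf{The identification in (ia).} You assert that $\mathcal S_{q-t-1}=\mathcal S^*$, i.e.\ that every member of the final fingerprint is a genuine $t$-cover of $\mathcal G$, and your treatment of (ia) rests on this. It is false. Members of $\mathcal S_{q-t-1}$ only have to $t$-cover $\mathcal T_{q-t-1}$; the members of $\mathcal G$ lying in $\mathcal G[\mathcal Y_i]$ are few, but they are not absent. Here is a counterexample.
\begin{itemize}
\item Let $t=2$, $k=q=5$, $Z=[4]$, $G_0=\{1,2,5,6,7\}$, $\mathcal G=\mathcal A(Z)\cup\{G_0\}$ and $\mathcal F=\{F\in\mathcal A(Z):|F\cap G_0|\ge 2\}$.
\item This pair is maximal, with $\cap\mathcal F=\cap\mathcal G=\emptyset$.
\item One computes $\mathcal M(\mathcal G)=\{\{1,2,3\},\{1,2,4\}\}\cup\{\{1,3,4,w\},\{2,3,4,w\}:w=5,6,7\}$ and $\mathcal M(\mathcal F)=\binom{Z}{3}\cup\{G_0\}$.
\item The algorithm outputs $\mathcal S_1=\mathcal T_1=\mathcal S_2=\mathcal T_2=\binom{Z}{3}$ with $N=2=q-t-1$.
\end{itemize}
So both final families equal $\binom{Z}{t+1}$, yet $\mathcal S^*=\{\{1,2,3\},\{1,2,4\}\}$ and $\mathcal F\subsetneq\mathcal A(Z)\subsetneq\mathcal G$. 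Your first branch of (ia) would conclude $\mathcal F=\mathcal G=\mathcal A(Z)$, which is wrong here.

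The correct split is on $|\mathcal S^*|$ and $|\mathcal T^*|$, not on $|\mathcal S_{q-t-1}|$ and $|\mathcal T_{q-t-1}|$.
\begin{itemize}
\item If both $|\mathcal S^*|,|\mathcal T^*|\ge 3$: three $(t+1)$-subsets of $Z$ that $t$-cover $\mathcal G$ rule out $|G\cap Z|=t$, since only two $(t+1)$-subsets of $Z$ contain a given $t$-set. Hence $\mathcal F,\mathcal G\subseteq\mathcal A(Z)$, and maximality gives equality.
\item Otherwise, say $|\mathcal S^*|\le 2$. Each of the at most $t+2-|\mathcal S^*|$ remaining members $W$ of $\mathcal S_{q-t-1}$ is not a $t$-cover of $\mathcal G$, by Lemma \ref{lemmafin-stru0}(ii). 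Lemma \ref{lemmaind} then gives $|\mathcal F[W]|\le b\binom{n-t-2}{k-t-2}\le \frac{b}{r}\binom{n-t-1}{k-t-1}$. This is where the term $tb/r$ comes from.
\end{itemize}
Your split by ``$|\mathcal S_{q-t-1}|\le t+1$'' cannot produce $2+tb/r$.

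A smaller point in your inclusion argument: a $t$-set in $\mathcal S_i$ need only be \emph{contained in} every member of $\mathcal T_i$, not equal to it. You do not need that claim. By repeated use of fingerprint property (ii), every $S\in\mathcal S^*$ contains a member of the $(t+1)$-uniform family $\mathcal S_{q-t-1}$, hence lies in it.

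\textbf{The case structure in (ii).} Your branch $|\mathcal S_{q-t-1}|\ge 3$ is empty. If $|\mathcal S_{q-t-1}|\ge 2$, then $X=\cap\mathcal S_{q-t-1}$ is a $t$-set, and by minimality it is not a $t$-cover of $\mathcal T_{q-t-1}$. A witness $Y\in\mathcal Y_{q-t-1}$ has $|Y\cap X|=t-1$ and contains $\cup(\mathcal S_{q-t-1}(X))$, which forces $|\mathcal S_{q-t-1}|=2$; this is the $(\delta+2)(\delta+t+b)$ case.

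The hard case is the singleton $\mathcal S_{q-t-1}=\{T\}$. Then $\mathcal Y_{q-t-1}=\emptyset$ and $\mathcal T_{q-t-1}\subseteq\binom{T}{t}$ may contain up to $t+1$ sets of size $t$. Counting $\mathcal G[\mathcal T_{q-t-1}]$ directly only gives roughly $|\mathcal G|\le(\delta+(t+1)b)\binom{n-t-1}{k-t-1}$, which is too weak for both $M_1$ and $M_2$.

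The missing idea is to go back one level, to $\mathcal T^\uparrow=\mathcal T_{q-t-2}\setminus\mathcal Y_{q-t-2}$. Then $\mathcal G\subseteq\bigcup_{i\le q-t-2}\mathcal G[\mathcal Y_i]\cup\mathcal G[\mathcal T^\uparrow]$, and you split on $|\mathcal T_{q-t-1}\cap\mathcal T^\uparrow|$.
\begin{itemize}
\item If it is $0$: for each $X\in\mathcal T_{q-t-1}$, some $W\in\mathcal X_{q-t-2}$ with $|W\cap X|=t-1$ contains all petals of $\mathcal T^\uparrow[X]$. So $|\mathcal T^\uparrow|\le 3(t+1)$, giving $(\delta+1)(\delta+3t+3)$.
\item If it is a singleton $\{X_1\}$: some $X_1\cup\{w_1,w_2\}\in\mathcal X_{q-t-2}$ confines the other members of $\mathcal T^\uparrow$ to sets $X\cup\{w_i\}$. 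This gives $(\delta+1)(\delta+2t+b)$.
\item If it has at least two members: every set of $\mathcal S_{q-t-2}$ contains $T$, so $\mathcal S_{q-t-2}=\{T\}$ and $\mathcal X_{q-t-2}=\emptyset$. Hence $|\mathcal F|\le(\delta'+1)\binom{n-t-1}{k-t-1}$, which is where $\delta'$ enters. Also $|\mathcal G[\mathcal T^\uparrow]|\le (t+1)\bigl(\binom{n-t}{k-t}-\binom{n-k-1}{k-t}\bigr)$ by Lemma \ref{lemmaind}, using $|\cap\mathcal F|<t$. With (\ref{equbinom'}) this yields the $M_2$ term.
\end{itemize}
Your plan places the ``one level back'' step in the empty branch, so the stated bounds are never actually reached.
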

\begin{proof}
To ease notation, we write  $\mathcal{S}=\mathcal{S}_{q-t-1}$, and write $\mathcal{T},\mathcal{X}$ and $\mathcal{Y}$ in the same way. By Lemma \ref{lemmafingerprintproperty} (iii), we have
\begin{equation*}
\mathcal{F}\subseteq\left(\bigcup_{i=0}^{q-t-2}\mathcal{\mathcal{F}}[\mathcal{X}_i]\right)\cup\mathcal{F}[\mathcal{S}]\;\;\mbox{and}\;\;\mathcal{G}\subseteq\left(\bigcup_{i=0}^{q-t-2}\mathcal{\mathcal{G}}[\mathcal{Y}_i]\right)\cup\mathcal{G}[\mathcal{T}].
\end{equation*}
We will use frequently a bound given in Lemma \ref{lemmafin-stru0} (i), that is, 
\begin{equation}\label{equlemmafin-stru11}
\max\left\{\left|\bigcup_{i=0}^{q-t-2}\mathcal{\mathcal{F}}[\mathcal{X}_i]\right|, \left|\bigcup_{i=0}^{q-t-2}\mathcal{\mathcal{G}}[\mathcal{Y}_i]\right|\right\}\leq\delta\binom{n-t-1}{k-t-1}.
\end{equation}
Since the members of $\mathcal S^*$ have size $t+1$, and since
$q-(N-1)=t+2$, no member of $\mathcal S^*$ is removed before the last step. Then $\mathcal{S}^*\subseteq\mathcal{S}$ as $\mathcal{S}=\mathcal{X}$ contains no subset with size smaller than $t+1$. 	

(i)\;In this case, we find that $\mathcal{S}^*\subseteq\mathcal{S}$ and $\mathcal{T}^*\subseteq\mathcal{T}$. 
	
	Suppose first  $\mathcal{S},\mathcal{T}\subseteq\binom{Z}{t+1}$ for some $Z\in\binom{[n]}{t+2}$. Then necessarily $\mathcal{S}^*,\mathcal{T}^*\subseteq\binom{Z}{t+1}$. If $|\mathcal{S}^*|\geq 3$, then we find that 
	\begin{align*}
		\mathcal{G}\subseteq\left\{G\in\binom{[n]}{k}:|G\cap Z|\geq t+1\right\}=\mathcal{A}(Z).
	\end{align*}
	Indeed, suppose for contradiction that $|G\cap Z|=t$ for some $G\in\mathcal{G}$, then $t\leq|G\cap S|\leq|G\cap Z|=t$ and thus $G\cap S=G\cap Z$ for all $S\in\mathcal{S}^*$. This leads to $|\mathcal{S}^*|\leq2$ as there are only two $(t+1)$-subsets of $Z$ containing $G\cap Z$, which is a contradiction. Similarly, $\mathcal{F}\subseteq\mathcal{A}(Z)$ provided that $|\mathcal{T}^*|\geq 3$. Therefore, if both $\mathcal{S}^*$ and $\mathcal{T}^*$ have at least three members, then $$\mathcal{F}=\mathcal{G}=\mathcal{A}(Z)$$ 
	by the maximality of $\mathcal{F}$ and $\mathcal{G}$. For the other case, suppose by symmetry that $|\mathcal{S}^*|\leq2$. Now every  $W\in\mathcal{S}\setminus\mathcal{S}^*$ is not a $t$-cover of $\mathcal{G}$, and then Lemma \ref{lemmaind} yields $|\mathcal{F}[W]|\leq(k-t+1)\binom{n-t-2}{k-t-2}$. It follows that 
	\begin{align*}
		|\mathcal{F}|&\leq\left|\bigcup_{i=0}^{q-t-2}\mathcal{\mathcal{F}}[\mathcal{X}_i]\right|+|\mathcal{F}[\mathcal{S}]|\leq(\delta+|\mathcal{S}^*|)\binom{n-t-1}{k-t-1}+(|\mathcal{S}|-|\mathcal{S}^*|)b\binom{n-t-2}{k-t-2}\\
		&\leq(\delta+2+tb/r)\binom{n-t-1}{k-t-1},
	\end{align*}
	where the last inequality follows from $|\mathcal{S}|\leq\binom{|Z|}{t+1}=t+2$ and $r\geq b$. For $|\mathcal{G}|$, we simply use the bound $(\delta+t+2)\binom{n-t-1}{k-t-1}$.
	
	Suppose $\mathcal{S}\nsubseteq\binom{Z}{t+1}$ or $\mathcal{T}\nsubseteq\binom{Z}{t+1}$ whenever $Z\in\binom{[n]}{t+2}$. From Lemma \ref{lemmastrue-alike}, either $\max\{|\mathcal S|,|\mathcal T|\}\leq3$ or $\{|\mathcal{S}|,|\mathcal{T}|\}=\{2,4\}$. Hence 
	$|\mathcal{F}||\mathcal{G}|\leq(\delta+3)^2\binom{n-t-1}{k-t-1}^2$ and $\min\{|\mathcal{F}|,|\mathcal{G}|\}\leq(\delta+3)\binom{n-t-1}{k-t-1}$, where the first inequality follows from $(\delta+2)(\delta+4)<(\delta+3)^2$. This finishes the proof of (i). 

(ii)\;In this case, $\mathcal{T}\setminus\mathcal{Y}$ is non-empty, and it consists of some $t$-subsets of $\cap\mathcal{S}$. 
	
	Suppose first $|\mathcal{S}|\geq2$. Now $|\cap\mathcal{S}|=t$, and thus $\mathcal{T}\setminus\mathcal{Y}=\{\cap\mathcal{S}\}$. Put $X=\cap\mathcal{S}$. Then  $\mathcal{Y}$ is non-empty as there exists some set in $\mathcal{T}$ that does not contain $X$. For each $Y\in\mathcal{Y}\setminus\mathcal{Y}[X]$, since $|Y\cap S|\geq t$ for all $S\in\mathcal{S}$, we have $|X\cap Y|=t-1$ and $\cup(\mathcal{S}(X))\subseteq Y$. It follows that $|\mathcal{S}|=2$ and $\mathcal{Y}\setminus\mathcal{Y}[X]\subseteq\{(\cup\mathcal{S})\setminus\{x\}:x\in X\}$ has size at most $t$. Hence
	\begin{align*}
		|\mathcal{F}|&\leq\left|\bigcup_{i=0}^{q-t-2}\mathcal{\mathcal{F}}[\mathcal{X}_i]\right|+|\mathcal{F}[\mathcal{S}]|\leq(\delta+2)\binom{n-t-1}{k-t-1},\;\;\mbox{and}\\
		|\mathcal{G}|&\leq\left|\bigcup_{i=0}^{q-t-2}\mathcal{\mathcal{G}}[\mathcal{Y}_i]\right|+|\mathcal{G}[\mathcal{Y}\setminus\mathcal{Y}[X]]|+|\mathcal{G}[X]|\leq(\delta+t+(k-t+1))\binom{n-t-1}{k-t-1}.
	\end{align*}
To derive the bound on $|\mathcal{G}|$, we used $\mathcal{T}\setminus\mathcal{Y}=\{X\}$ and  $\mathcal{G}[\mathcal{T}]=\mathcal{G}[\mathcal{Y}]\cup\mathcal{G}[\mathcal{T}\setminus\mathcal{Y}]$, and the bound $|\mathcal{G}[X]|\leq(k-t+1)\binom{n-t-1}{k-t-1}$ follows from Lemma \ref{lemmaind}. Thus $|\mathcal{F}||\mathcal{G}|\leq(\delta+2)(\delta+t+b)\binom{n-t-1}{k-t-1}^2$.
	
	Suppose $\mathcal{S}=\{T\}$ is a singleton. Now certainly $\mathcal{T}\setminus\mathcal{Y}$ consists of several $t$-subsets of $T$. We claim that $\mathcal{Y}=\emptyset$. To the contrary, pick a $Y\in\mathcal{Y}$. Then $Y\neq T$ as $\mathcal{T}$ is an antichain, and so $|Y\cap T|=t$. However, $Y\cap T$ is still a $t$-cover of $\mathcal{S}=\{T\}$, which contradicts that $T$ is minimal.  Hence our claim is true and thus  $\mathcal{T}\subseteq\binom{T}{t}$. Further, the minimality gives $|\mathcal{T}|\geq2$. We will focus on the structure of   $$\mathcal{T}^\uparrow:=\mathcal{T}_{q-t-2}\setminus\mathcal{Y}_{q-t-2}.$$
By using Lemma \ref{lemmafingerprintproperty} (ii) repeatedly, and noting that $\mathcal{T}_{q-t-2}=\mathcal{Y}_{q-t-2}\cup\mathcal{T}^\uparrow$, we obtain
\begin{equation}\label{equlemmafin-stru12}
\mathcal{G}\subseteq\left(\bigcup_{i=0}^{q-t-2}\mathcal{\mathcal{G}}[\mathcal{Y}_i]\right)\cup\mathcal{G}[\mathcal{T}^\uparrow].
\end{equation}
Note that $\mathcal{T}^\uparrow\subseteq\binom{[n]}{\leq(t+1)}$, and it is an antichain. Hence, for each $X\in \mathcal{T}$, the family $\mathcal{T}^\uparrow[X]$ is either the singleton $\{X\}$ or a $(t+1)$-uniform sunflower with kernel $X$. So we proceed by considering $\mathcal{T}\cap\mathcal{T}^\uparrow$. 
	
	If $\mathcal{T}\cap\mathcal{T}^\uparrow=\emptyset$, then by the minimality, for all $X\in\mathcal{T}$, there exists $W\in\mathcal{S}_{q-t-2}$ with $X\nsubseteq W$, then clearly $W\in\mathcal{X}_{q-t-2}$. Since $|W\cap R|\geq t$ for each $R\in\mathcal{T}^\uparrow[X]$, we have $|W\cap X|=t-1$ and  $\cup(\mathcal{T}^\uparrow(X))\subseteq W$. It follows from $|W|=t+2$ that $|\mathcal{T}^\uparrow[X]|\leq3$. This yields  $|\mathcal{T}^\uparrow|\leq3|\mathcal{T}|\leq3(t+1)$, and then we obtain from (\ref{equlemmafin-stru11}) and (\ref{equlemmafin-stru12}) that  
	$$|\mathcal{G}|\leq\left|\left(\bigcup_{i=0}^{q-t-2}\mathcal{\mathcal{G}}[\mathcal{Y}_i]\right)\cup\mathcal{G}[\mathcal{T}^\uparrow]\right|\leq\left(\delta+3(t+1)\right)\binom{n-t-1}{k-t-1}.$$
	For $|\mathcal{F}|$, we simply use  $$|\mathcal{F}|\leq\left|\left(\bigcup_{i=0}^{q-t-2}\mathcal{\mathcal{F}}[\mathcal{X}_i]\right)\cup\mathcal{F}[T]\right|\leq\left(\delta+1\right)\binom{n-t-1}{k-t-1}.$$
	Hence  $|\mathcal{F}||\mathcal{G}|\leq(\delta+1)(\delta+3t+3)\binom{n-t-1}{k-t-1}^2$.
	
	Next, suppose $\mathcal{T}\cap\mathcal{T}^\uparrow\neq\emptyset$. Now $T\notin\mathcal{T}^\uparrow$ as $\mathcal{T}^\uparrow$ is an antichain containing some $t$-subset from  $\mathcal{T}\cap\mathcal{T}^\uparrow$. Suppose  $\mathcal{T}\cap\mathcal{T}^\uparrow=\{X_1\}$ is a singleton. By the minimality again, $\mathcal{X}_{q-t-2}\neq\emptyset$. Fix a $W\in\mathcal{X}_{q-t-2}$. Of course $X_1\subseteq W$, and we may write $W=X_1\cup\{w_1,w_2\}$, where neither $w_1$ nor $w_2$ lie in $T$ as $T\nsubseteq W$. Since $T\notin\mathcal{T}^\uparrow$ and $W$ intersects every member of $\mathcal{T}^\uparrow$ in at least $t$ elements, every set in $\mathcal{T}^\uparrow\setminus\{X_1\}$ is of the form $X\cup\{w_i\}$, where $X\in\mathcal{T}\setminus\{X_1\}$ and $i\in\{1,2\}$. Therefore, we get
	\begin{align*}
	|\mathcal{G}|\leq\left|\bigcup_{i=0}^{q-t-2}\mathcal{\mathcal{G}}[\mathcal{Y}_i]\right|+|\mathcal{G}[\mathcal{T}^\uparrow\setminus\{X_1\}]|+|\mathcal{G}[X_1]|\leq(\delta+2t+b)\binom{n-t-1}{k-t-1}, 
	\end{align*}	
and hence $|\mathcal{F}||\mathcal{G}|\leq(\delta+1)(\delta+2t+b)\binom{n-t-1}{k-t-1}^2$.
	
	It remains to consider the case in which $|\mathcal{T}\cap\mathcal{T}^\uparrow|\geq2$. Now every set from $\mathcal{S}_{q-t-2}$ contains $T$, and thus $\mathcal{S}_{q-t-2}$ consists exactly of $T$ as it is an antichain. This also gives $\mathcal{X}_{q-t-2}=\emptyset$. So we apply Lemma \ref{lemmafingerprintproperty} (iii) to $\mathcal{F}$ and $i=q-t-2$, and use Lemma \ref{lemmafin-stru0} (i) to derive  $$|\mathcal{F}|\leq\left|\left(\bigcup_{i=0}^{q-t-3}\mathcal{\mathcal{F}}[\mathcal{X}_i]\right)\cup\mathcal{F}[\mathcal{S}_{q-t-2}]\right|\leq\left(\delta'+1\right)\binom{n-t-1}{k-t-1}.$$ 
	Since $\mathcal{S}_{q-t-2}=\{T\}$ is a singleton and every set from $\mathcal{T}$ is a minimal $t$-cover of $T$, we have $\mathcal{T}_{q-t-2}\subseteq\binom{[n]}{t}$ and further $\mathcal{T}=\mathcal{T}^{\uparrow}\subseteq\binom{[n]}{t}$. Since $T$ is a $t$-cover of $\mathcal{T}^\uparrow$, it is also one of $\mathcal{G}[\mathcal{T}^\uparrow]$, and then with $H$ ranging over $\binom{T}{t}$, 
	$$|\mathcal{G}[\mathcal{T}_{q-t-2}]|=|\mathcal{G}[\mathcal{T}^\uparrow]|\leq\sum_{H}|\mathcal{G}[H]|\leq(t+1)\left(\binom{n-t}{k-t}-\binom{n-k-1}{k-t}\right),$$
	where the second inequality follows from $|\cap\mathcal{F}|<t$ and Lemma \ref{lemmaind}. Combining these with (\ref{equlemmafin-stru12}) and (\ref{equbinom'}) yield $|\mathcal{G}|\leq(\delta'(1/r+1/b)+t+1)\left(\binom{n-t}{k-t}-\binom{n-k-1}{k-t}\right)$, and thus $$|\mathcal{F}||\mathcal{G}|\leq M_2\binom{n-t-1}{k-t-1}\left(\binom{n-t}{k-t}-\binom{n-k-1}{k-t}\right).$$ This finishes the proof.
\end{proof}
\begin{lemma}\label{lemmafin-stru2}
With the notation in Assumption \ref{assumption}, suppose further that $N=q-t$. Set $s^*=|\mathcal{S}^*|$, $t^*=|\mathcal{T}^*|$, and assume that $\alpha r\geq b$ for some $\alpha\in(0,1/3)$. Then there exist $X\in\binom{[n]}{t}$ and $j\in[q-t-1]$ such that $j=\min\{i\in[q-t-1]:\mathcal{S}_{i}=\mathcal{T}_{i}=\{X\}\}$, and the following hold. 
\begin{itemize}
\item[\rm(i)]If $\max\{\tau_t(\mathcal{F}),\tau_t(\mathcal{G})\}\geq t+2$, then $\min\{|\mathcal{F}|,|\mathcal{G}|\}\leq(\alpha b+\delta)\binom{n-t-1}{k-t-1}$.
\item[\rm(ii)]If $\tau_t(\mathcal{F})=\tau_t(\mathcal{G})=t+1$ and, for  $C:=(1+\alpha)\left(\alpha+\frac{2(1-\alpha)+\delta}{b}\right)$,
\begin{equation}\label{equlemmafin-stru22}
\min\{|\mathcal{F}|,|\mathcal{G}|\}>C\left(\binom{n-t}{k-t}-\binom{n-k-1}{k-t}\right),
\end{equation}
then $s^*,t^*\geq3$ and  $|\mathcal{F}||\mathcal{G}|\leq|\mathcal{H}(n,k,t)|^2$, with equality if and only if $\mathcal{F}=\mathcal{H}(X,K,L)$ and $\mathcal{G}=\mathcal{H}(X,L,K)$ for some  $K,L\in\binom{[n]}{k+1}$ with $X\subseteq K\cap L$ and $|K\cap L|\geq t+2$.
\end{itemize}
\end{lemma}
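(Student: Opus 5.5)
We first locate $X$ and $j$. Since $N=q-t$, round $q-t-1$ did not terminate, so $\mathcal{X}_{q-t-1}\subsetneq\mathcal{S}_{q-t-1}$ and $\mathcal{Y}_{q-t-1}\subsetneq\mathcal{T}_{q-t-1}$. Hence both $\mathcal{S}_{q-t-1}$ and $\mathcal{T}_{q-t-1}$ contain $t$-sets. As in Case~1 of the proof of Theorem~\ref{thmcrossekr}, two cross $t$-intersecting antichains containing $t$-sets must coincide with a single $\{X\}$. Once some level equals $\{X\}$ on both sides, every later fingerprint is again $\{X\}$. So the set of indices $i\in[q-t-1]$ with $\mathcal{S}_i=\mathcal{T}_i=\{X\}$ is a nonempty terminal interval, and we let $j$ be its minimum. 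Since $\mathcal{S}_0,\mathcal{T}_0$ consist of minimal $t$-covers and $|\cap\mathcal{F}|<t$, we cannot have $\mathcal{S}_0=\{X\}$, so indeed $j\ge 1$. By Lemma~\ref{lemmafingerprintproperty}(iii) applied at $i=j$,
\[
\mathcal{F}\subseteq\Big(\bigcup_{i<j}\mathcal{F}[\mathcal{X}_i]\Big)\cup\mathcal{F}[X],\qquad
\mathcal{G}\subseteq\Big(\bigcup_{i<j}\mathcal{G}[\mathcal{Y}_i]\Big)\cup\mathcal{G}[X].
\]
Since $j-1\le q-t-2$, Lemma~\ref{lemmafin-stru0}(i) bounds the first unions by $\delta\binom{n-t-1}{k-t-1}$.

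For (i), suppose without loss of generality that $\tau_t(\mathcal{G})\ge t+2$. Since $|\cap\mathcal{F}|<t$, some $F_0\in\mathcal{F}$ has $|F_0\cap X|\le t-1$. Lemma~\ref{lemmaind} then gives $|\mathcal{G}[X]|\le b\binom{n-t-1}{k-t-1}$, which is too weak, so we use $\tau_t(\mathcal{G})\ge t+2$ to gain a factor. There is no $(t+1)$-set $X\cup\{y\}$ that $t$-covers $\mathcal{G}$. Applying Lemma~\ref{lemmakey}-style peeling to $\mathcal{F}[X]$ with $\mathcal{G}$ as the family of covers, $|\mathcal{F}[X]|\le b\sum_{y}|\mathcal{F}[X\cup\{y\}]|$. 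Each $X\cup\{y\}$ is then not a cover, so a further step gives $|\mathcal{F}[X]|\le b^2\binom{n-t-2}{k-t-2}\le (b^2/r)\binom{n-t-1}{k-t-1}\le \alpha b\binom{n-t-1}{k-t-1}$. We arrange the roles so that the smaller family is bounded; the symmetric case $\tau_t(\mathcal{F})\ge t+2$ bounds $\mathcal{G}$ instead. Adding $\delta$ gives the claim.

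For (ii), Lemma~\ref{lemmafin-stru0}(ii) identifies $\mathcal{S}^*,\mathcal{T}^*$ as all $(t+1)$-covers, and both are nonempty. First we show $X\subseteq S$ for every $S\in\mathcal{S}^*$. Otherwise $\mathcal{G}[X]$ is forced through $S$, which gives $|\mathcal{G}[X]|\le t\binom{n-t-2}{k-t-2}$-type bounds contradicting \eqref{equlemmafin-stru22}. So $\mathcal{S}^*=\{X\cup\{y\}:y\in L'\}$ and $\mathcal{T}^*=\{X\cup\{y\}:y\in K'\}$. Every $G\in\mathcal{G}$ not containing $X$ must then contain $X\setminus\{x\}$ together with all of $L'$ (and symmetrically for $\mathcal{F}$). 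If $s^*\le2$, then $|\mathcal{F}|\le\delta+|\mathcal{F}[X]|$ and $\mathcal{F}[X]$ is confined by the sets of $\mathcal{G}$ missing $X$, which gives $|\mathcal{F}|\le C(\binom{n-t}{k-t}-\binom{n-k-1}{k-t})$ and contradicts \eqref{equlemmafin-stru22}; hence $s^*,t^*\ge3$. Write $L=X\cup L'\cup(\text{rest})$. The sets of $\mathcal{F}$ missing $X$ are at most the $t$ sets of the form $K\setminus\{x\}$, and $\mathcal{F}[X]\subseteq\{F\supseteq X: F\cap(L\setminus X)\ne\emptyset\}$. Together these yield $|\mathcal{F}|\le|\mathcal{H}(X,K,L)|$ and likewise for $\mathcal{G}$, with equality forcing $|K|=|L|=k+1$ and $|K\cap L|\ge t+2$; the product bound then follows from maximality.

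The main obstacle is the exact step in (ii). We must show that the non-$X$ parts have at most $t$ members each, and that $\mathcal{F}[X]$ is exactly the set of $k$-sets meeting $L\setminus X$. This requires care when the sets missing $X$ omit different elements $x$, or are fewer than $t$; in that case a product comparison using Lemma~\ref{lemmabinom} is needed to show the product drops below $|\mathcal{H}(n,k,t)|^2$.
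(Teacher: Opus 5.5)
Your setup (locating $X$ and $j\ge1$), part (i), and the derivation of $s^*,t^*\ge3$ follow the paper and are fine. One correction on the way. If some $S\in\mathcal{S}^*$ had $|S\cap X|=t-1$, what you get is $|\mathcal{G}[X]|\le 2\binom{n-t-1}{k-t-1}$, not a $\binom{n-t-2}{k-t-2}$-type bound. This still contradicts \eqref{equlemmafin-stru22}, but only because $(2+\delta)(1/r+1/b)\le C$. The paper avoids the counting altogether: a $(t+1)$-member of $\mathcal{S}_0$ is never removed before level $j$, so it contains the unique member $X$ of $\mathcal{S}_j$.

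\textbf{The gap: individual bounds in (ii).} You assert that $\mathcal{F}\setminus\mathcal{F}[X]$ consists of at most $t$ sets $K\setminus\{x\}$. You then deduce $|\mathcal{F}|\le|\mathcal{H}(X,K,L)|$ and $|\mathcal{G}|\le|\mathcal{H}(X,L,K)|$ separately. The structure only gives the following: every $F\in\mathcal{F}\setminus\mathcal{F}[X]$ satisfies $F\cap U=U\setminus\{x\}$ for some $x\in X$, where $U=\cup\mathcal{T}^*$ has $t+t^*$ elements. That allows up to $t\binom{n-t-t^*}{k-t-t^*+1}$ sets, which equals $t$ only when $t^*=b$.

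The individual bounds are in fact false under the lemma's hypotheses. Take $k\ge t+3$, $|U|=t+3$, $|V|=k+1$, $X\subseteq U\cap V$ and $|U\cap V|\ge t+2$. The maximal pair $(\mathcal{L}(X,U,V),\mathcal{L}(X,V,U))$ has the following properties:
\begin{itemize}
\item the algorithm runs to $N=q-t$, with $j=q-t-1$;
\item $\tau_t=t+1$ on both sides;
\item it satisfies \eqref{equlemmafin-stru22}, e.g.\ with $\alpha=b/r$ and $n$ large;
\item yet $|\mathcal{L}(X,U,V)|=|\mathcal{H}(n,k,t)|-t+t\binom{n-t-3}{k-t-2}>|\mathcal{H}(n,k,t)|$.
\end{itemize}
So the product bound can only come from a trade-off between the two families. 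Your closing remark (non-$X$ sets omitting different $x$, or fewer than $t$ of them) misidentifies the difficulty: having fewer sets only helps; the problem is that there can be far more.

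\textbf{The missing idea: the paper's coupled estimate.} Fix $Y_0\in\mathcal{G}\setminus\mathcal{G}[X]$. Every $F\in\mathcal{F}[X]\setminus\mathcal{F}[\mathcal{S}^*]$ meets the $b-s^*$ elements of $Y_0\setminus\cup\mathcal{S}^*$. For each such $w$, the set $X\cup\{w\}$ is not a $t$-cover of $\mathcal{G}$, so Lemma~\ref{lemmaind} applies and gives
\[
|\mathcal{F}|\le\binom{n-t}{k-t}-\binom{n-t-s^*}{k-t}+b(b-s^*)\binom{n-t-2}{k-t-2}+t\binom{n-t-t^*}{k-t-t^*+1}.
\]
The same bound holds for $\mathcal{G}$ with $s^*$ and $t^*$ interchanged. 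Hence $|\mathcal{F}|+|\mathcal{G}|\le g(s^*)+g(t^*)$, where $g(s)$ is the right-hand side with $s^*=t^*=s$.

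The function $g$ is increasing on $[3,b]$; this is exactly where $s^*,t^*\ge3$ and $r>ek$ are needed. Since $g(b)=|\mathcal{H}(n,k,t)|$, AM--GM gives $|\mathcal{F}||\mathcal{G}|\le|\mathcal{H}(n,k,t)|^2$.

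For the equality case, equality forces $s^*=t^*=b$. Then $W_0\subseteq\cup\mathcal{T}^*$ for $W_0\in\mathcal{F}\setminus\mathcal{F}[X]$, and $Y_0\subseteq\cup\mathcal{S}^*$. Setting $K=X\cup W_0$ and $L=X\cup Y_0$ gives the $\mathcal{H}$-structure. Your claim that equality forces $|K|=|L|=k+1$ cannot be derived without this step.
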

\begin{proof}
Since $N=q-t$, we have $\mathcal{X}_{q-t-1}\subsetneqq\mathcal{S}_{q-t-1}$ and $\mathcal{Y}_{q-t-1}\subsetneqq\mathcal{T}_{q-t-1}$. Then both $\mathcal{S}_{q-t-1}$ and $\mathcal{T}_{q-t-1}$ contain at least one $t$-subset, and then $\mathcal{S}_{q-t-1}=\mathcal{T}_{q-t-1}=\{X\}$ because these two families are cross $t$-intersecting antichains.  So we may consider the minimal index $j$ with this property. To see that $j\geq1$, note that $\mathcal{F}$ and $\mathcal{G}$ are non-trivial, implying that such an index must be non-zero. 

(i)\;Suppose without loss of generality that $\tau_t(\mathcal{F})\geq t+2$, then from Lemma \ref{lemmakey} and $\alpha r\geq b$, we get $$|\mathcal{G}[\mathcal{T}_{j-1}\setminus\mathcal{Y}_{j-1}]|\leq|\mathcal{G}[X]|\leq b^2\binom{n-t-2}{k-t-2}\leq\alpha b\binom{n-t-1}{k-t-1},$$
and so $|\mathcal{G}|\leq(\alpha b+\delta)\binom{n-t-1}{k-t-1}$ from Lemmas \ref{lemmafingerprintproperty} (iii) and \ref{lemmafin-stru0} (i). 

(ii)\; To begin with, we may suppose that every member of $\mathcal{F}\cup\mathcal{G}$ contains all but at most one element of $X$. Indeed, suppose by symmetry that there exists $W\in\mathcal{F}$ with $|W\cap X|=s\leq t-2$. Then every set from  $\mathcal{G}[X]$ shares at least $t-s$ elements with $W\setminus X$,  yielding $\mathcal{G}[X]=\bigcup_{H}\mathcal{G}[X\cup H]$ with $H$ ranging over $\binom{W\setminus X}{t-s}$, and hence from (\ref{equltrt}),
\begin{align}
	|\mathcal{G}[\mathcal{T}_{j-1}\setminus\mathcal{Y}_{j-1}]|&\leq|\mathcal{G}[X]|\leq\binom{k-s}{t-s}\binom{n-2t+s}{k-2t+s}\leq(k-t+1)^{t-s}/r^{t-s}\binom{n-t}{k-t}\nonumber\\
	&= b^{t-s}/r^{t-s}\binom{n-t}{k-t}\leq b^2/r\binom{n-t-1}{k-t-1}\leq\alpha b\binom{n-t-1}{k-t-1}.\label{equthmdivanalog}
\end{align}
Then $|\mathcal{G}|\leq(\alpha b+\delta)\binom{n-t-1}{k-t-1}$, and further (\ref{equbinom'}) gives
 $$|\mathcal{G}|\bigg/\left(\binom{n-t}{k-t}-\binom{n-k-1}{k-t}\right)\leq(\alpha b+\delta)(1/r+1/b)\leq(\alpha+\delta/b)(1+\alpha)<C.$$
 
Since $\tau_t(\mathcal{F})=\tau_t(\mathcal{G})=t+1$, both $\mathcal{S}^*$ and $\mathcal{T}^*$ are non-empty. Recall from Lemma \ref{lemmafin-stru0} (ii) that $\mathcal{S}^*=\mathcal{S}_0\cap\binom{[n]}{t+1}$ and $\mathcal{T}^*=\mathcal{T}_0\cap\binom{[n]}{t+1}$ are the set of $t$-covers of $\mathcal{G}$ and $\mathcal{F}$ with size $t+1$, respectively. Before $(\mathcal S_j,\mathcal T_j)$ is produced, only sets of size at least
$q-(j-1)\geq t+2$ are removed. Hence no member of $\mathcal S^*$ or
$\mathcal T^*$ has been removed by that time.  Moreover, each member of
$\mathcal S^*$ contains some member of
$\mathcal S_{j-1}\setminus\mathcal X_{j-1}$, and hence all of them contain $X$ as $\mathcal S_j=\mathcal T_j=\{X\}$. Therefore, $X\subseteq\cap\mathcal{S}^*$. Similarly, we have $X\subseteq\cap\mathcal{T}^*$. In particular, both $\mathcal{S}^*$ and $\mathcal{T}^*$ are sunflowers with kernel $X$ as they are $(t+1)$-uniform. Since $\mathcal{F}$ and $\mathcal{G}$ are maximal, every $k$-subset containing some member of $\mathcal{S}^*$ (of $\mathcal{T}^*$) belongs to $\mathcal{F}$ (to $\mathcal{G}$). It follows that 
$$\mathcal{G}[\mathcal{T}^*]=\left\{G\in\binom{[n]}{k}:T\subseteq G\;\mbox{for some}\;T\in\mathcal{T}^*\right\}.$$
Fix a $W_0\in\mathcal{F}\setminus\mathcal{F}[X]$. Then $|W_0\cap X|=t-1$ by our assumption above, and hence $(\cup\mathcal{T}^*)\setminus X\subseteq W_0$.  Therefore, we deduce that  $$\mathcal{G}[X]\setminus\mathcal{G}[\mathcal{T}^*]\subseteq\left\{G\in\binom{[n]}{k}:X\subseteq G,\;G\cap (W_0\setminus(\cup\mathcal{T}^*))\neq\emptyset\right\}.$$
Let $G\in\mathcal{G}\setminus\mathcal{G}[X]$, then it does not contain any member of $\mathcal{T}_{j-1}\setminus\mathcal{Y}_{j-1}$, and hence
\begin{equation*}
\mathcal{G}\setminus\mathcal{G}[X]\subseteq\bigcup_{i=0}^{j-1}\mathcal{G}[\mathcal{Y}_{i}].
\end{equation*}
Here is another characterization for the part, that is, by noting that $|G\cap X|=t-1$ and $(\cup\mathcal{S}^*)\setminus X\subseteq G$ as we have argued for $W_0$, 
$$\mathcal{G}\setminus\mathcal{G}[X]\subseteq\left\{G\in\binom{[n]}{k}:G\cap(\cup\mathcal{S}^*)=(\cup\mathcal{S}^*)\setminus\{x\}\;\mbox{for some}\;x\in X\right\}.$$
Note that  $|W_0\setminus(\cup\mathcal{T}^*)|=k-(t-1)-t^*=b-t^*$. 

Since $\mathcal{G}$ is non-trivial, the family  $\mathcal{G}\setminus\mathcal{G}[X]$ above is non-empty, which implies that $s^*\leq b$. Similarly, we get $t^*\leq b$. For each $w\in W_0\setminus(\cup\mathcal{T}^*)$, the set $X\cup\{w\}$ does not lie in $\mathcal{T}^*$, and so it is not a $t$-cover of $\mathcal{F}$. By Lemma \ref{lemmaind}, we obtain that $$|\mathcal{G}[X\cup\{w\}]|\leq(k-t+1)\binom{n-t-2}{k-t-2}=b\binom{n-t-2}{k-t-2}\;\mbox{for all}\;\;w\in W_0\setminus(\cup\mathcal{T}^*).$$ 
When $t^*\in\{1,2\}$, we have
\begin{align*}
	|\mathcal{G}|&\leq|\mathcal{G}[\mathcal{T}^*]|+|\mathcal{G}[X]\setminus\mathcal{G}[\mathcal{T}^*]|+\left|\bigcup_{i=0}^{j-1}\mathcal{G}[\mathcal{Y}_{i}]\right|\leq\left(t^*+\frac{(b-t^*)b}{r}+\varepsilon^{q-t-j-1}\delta\right)\binom{n-t-1}{k-t-1}\\
	&\leq\left(2+\frac{(b-2)b}{r}+\delta\right)\left(\frac{1}{r}+\frac{1}{b}\right)\left(\binom{n-t}{k-t}-\binom{n-k-1}{k-t}\right),
\end{align*}
where the factor $\left(2+\frac{(b-2)b}{r}+\delta\right)\left(\frac{1}{r}+\frac{1}{b}\right)$ is, by $\alpha r\geq b$, at most $C$. The same argument yields $|\mathcal{F}|\leq C\left(\binom{n-t}{k-t}-\binom{n-k-1}{k-t}\right)$ for $s^*\leq2$, and hence (\ref{equlemmafin-stru22}) does not hold when $t^*\leq2$ or $s^*\leq2$. It follows that $s^*,t^*\geq3$. Now we use the bounds
\begin{align*}
	|\mathcal{G}|&\leq|\mathcal{G}[\mathcal{T}^*]|+|\mathcal{G}[X]\setminus\mathcal{G}[\mathcal{T}^*]|+|\mathcal{G}\setminus\mathcal{G}[X]|\\
	&\leq\left(\binom{n-t}{k-t}-\binom{n-t-t^*}{k-t}\right)+b(b-t^*)\binom{n-t-2}{k-t-2}+t\binom{n-s^*-t}{k-s^*-t+1}
\end{align*}
and 
\begin{align*}
	|\mathcal{F}|&\leq|\mathcal{F}[\mathcal{S}^*]|+|\mathcal{F}[X]\setminus\mathcal{F}[\mathcal{S}^*]|+|\mathcal{F}\setminus\mathcal{F}[X]|\\
	&\leq\left(\binom{n-t}{k-t}-\binom{n-t-s^*}{k-t}\right)+b(b-s^*)\binom{n-t-2}{k-t-2}+t\binom{n-t^*-t}{k-t^*-t+1}.
\end{align*}

We proceed by estimating the sum of their sizes. Put 
$$g(s)=\left(\binom{n-t}{k-t}-\binom{n-t-s}{k-t}\right)+b(b-s)\binom{n-t-2}{k-t-2}+t\binom{n-s-t}{k-s-t+1},\;s\in[3,b].$$
For $4\leq s\leq b$, we have 
\begin{align*}
	g(s)-g(s-1)&=\binom{n-t-s}{k-t-1}-b\binom{n-t-2}{k-t-2}-t\binom{n-t-s}{k-t-s+2}\\
	&\geq\left(\binom{n-t-s}{k-t-1}-t\binom{n-t-s}{k-t-2}\right)-b\binom{n-t-2}{k-t-2}\\
	&\geq\left(\left(\frac{n-k-s+2}{k-t-1}-t\right)\left(1-\frac{(k-t-2)(s-2)}{n-t-2}\right)-b\right)\binom{n-t-2}{k-t-2}>0.
\end{align*}
In the third step we used Lemma \ref{lemmabinom} (ii) for $m=n-t-2,a=k-t-2$ and $b=s-2$. To see the last inequality, note that $r>ek$ gives $n>e(k-t)k+t$, then we get  $\frac{n-k-s+2}{k-t-1}>2b+t$ from $4\leq s\leq b$, and so the factor before $\binom{n-t-2}{k-t-2}$ is larger than $2(1-1/e)b-b>0$. Hence 
$$g(s)\leq g(b)=|\mathcal{H}(n,k,t)|,$$
with equality precisely if $s=b$. Therefore, using the AM-GM inequality, we obtain
$$|\mathcal{F}||\mathcal{G}|\leq\left(\frac{|\mathcal{F}|+|\mathcal{G}|}{2}\right)^2\leq\left(\frac{g(s^*)+g(t^*)}{2}\right)^2\leq|\mathcal{H}(n,k,t)|^2.$$

The final step is to determine when $|\mathcal{F}||\mathcal{G}|=|\mathcal{H}(n,k,t)|^2$. Suppose equality holds, then all inequalities used in the estimates of \(|\mathcal F|\) and
\(|\mathcal G|\) must be equalities. Precisely, we have  $s^*=t^*=b$ and $|W_0\setminus(\cup\mathcal{T}^*)|=b-t^*=0$, implying that $W_0\subseteq \cup\mathcal{T}^*$ and $\mathcal{G}[X]\setminus\mathcal{G}[\mathcal{T}^*]=\emptyset$. Meanwhile, this together with $|W_0\cap X|=t-1$ gives $\mathcal{T}^*=\{X\cup\{w\}:w\in W_0\setminus X\}$. Similarly, fix a  $Y_0\in\mathcal{G}\setminus\mathcal{G}[X]$. Then $\mathcal{F}[X]\setminus\mathcal{F}[\mathcal{S}^*]=\emptyset$ and $\mathcal{S}^*=\{X\cup\{y\}:y\in Y_0\setminus X\}$ . It follows that
\begin{equation*}
	\mathcal{G}=\left\{G\in\binom{[n]}{k}:X\subseteq G,\;G\cap(W_0\setminus X)\neq\emptyset\right\}\cup\left\{(Y_0\cup X)\setminus\{x\}:x\in X\right\}
\end{equation*}
and
\begin{equation*}
	\mathcal{F}=\left\{F\in\binom{[n]}{k}:X\subseteq F,\;F\cap(Y_0\setminus X)\neq\emptyset\right\}\cup\left\{(W_0\cup X)\setminus\{x\}:x\in X\right\}.
\end{equation*}
Finally, it is necessary that $|(W_0\cup X)\cap(Y_0\cup X)|\geq t+2$. So we obtain the optimal families by setting $K=W_0\cup X$ and $L=Y_0\cup X$. This completes the proof.
\end{proof}
We are now in a position to prove Theorems   \ref{thmmin-max} and \ref{thmcrosshm}. Before presenting them, let us note that, by a routine counting argument,
\begin{align}
	|\mathcal{A}(n,k,t)|&=(t+2)\binom{n-t-1}{k-t-1}-(t+1)\binom{n-t-2}{k-t-2}\;\;\mbox{and}\label{equsizea}\\
	|\mathcal{H}(n,k,t)|&=\binom{n-t}{k-t}-\binom{n-k-1}{k-t}+t.\label{equsizeh}
\end{align}
\noindent{\bf Proof of Theorem \ref{thmmin-max}.}\;Set $r=(n-t)/(k-t)$, $b=k-t+1$, $\alpha=1/(2e)<0.184$, $\varepsilon=0.5$ and $\delta=9\binom{t+2}{2}/((1-\varepsilon)r)$. Then  $\varepsilon r\geq ek$ and $\alpha r\geq b$. Also note that $r\geq30(t+2)$, and then $\delta\leq0.3(t+1)$ and $|\mathcal{A}(n,k,t)|>(t+2-(t+1)/r)\binom{n-t-1}{k-t-1}>(t+1.95)\binom{n-t-1}{k-t-1}$.

First, we may suppose that $\mathcal{F}$ and $\mathcal{G}$ are maximal. We may also suppose that both $\mathcal{F}$ and $\mathcal{G}$ are non-trivial. Indeed, if one of them, say $\mathcal{F}$, is trivial, then by picking $X\subseteq\cap\mathcal{F}$ with $|X|=t$, we obtain that $X\nsubseteq\cap\mathcal{G}$, and then Lemma \ref{lemmaind} gives  $$|\mathcal{F}|=|\mathcal{F}[X]|\leq\binom{n-t}{k-t}-\binom{n-k-1}{k-t}=|\mathcal{H}(n,k,t)|-t<|\mathcal{H}(n,k,t)|.$$ 

Perform the algorithm to the pair $(\mathcal{F},\mathcal{G})$, the uniformity $q:=k$ and the fingerprint $(\mathcal{S}_0,\mathcal{T}_0):=(\mathcal{M}(\mathcal{G}),\mathcal{M}(\mathcal{F}))$. Suppose that $(\mathcal{F},\mathcal{G})$ does not conform to any of the two structures listed in (i) or (ii), and it suffices to prove that the desired inequality holds strictly, that is,  $$\min\{|\mathcal{F}|,|\mathcal{G}|\}<\max\{|\mathcal{A}(n,k,t)|, |\mathcal{H}(n,k,t)|\}.$$ 

Suppose first $N\leq q-t-1$, and assume without loss of generality that $\mathcal{S}_N=\mathcal{X}_N$. When $N\leq q-t-2$, Lemma \ref{lemmafin-stru0} (i) gives $|\mathcal{F}|\leq\left|\bigcup_{i=0}^{N}\mathcal{\mathcal{F}}[\mathcal{X}_i]\right|\leq\delta_N\binom{n-t-1}{k-t-1}$. When $N=q-t-1$, Lemma \ref{lemmafin-stru1} yields that at least one of the families has size smaller than 
$$\max\left\{(\delta+tb/r+2), (3+\delta)\right\}\binom{n-t-1}{k-t-1}.$$ 
It follows from $\delta\leq0.3(t+1)$ and $r\geq2eb$  that  $$\min\{|\mathcal{F}|,|\mathcal{G}|\}<(t+1.95)\binom{n-t-1}{k-t-1}<|\mathcal{A}(n,k,t)|.$$

Suppose $N=q-t$. If  $(\tau_t(\mathcal{F}),\tau_t(\mathcal{G}))=(t+1,t+1)$ and (\ref{equlemmafin-stru22}) holds, then we are done as Lemma \ref{lemmafin-stru2} (ii) yields $\min\{|\mathcal{F}|,|\mathcal{G}|\}\leq\sqrt{|\mathcal{F}||\mathcal{G}|}\leq|\mathcal{H}(n,k,t)|$, and the inequality holds strictly due to the characterization of extremal families. It remains to consider what happens when  $(\tau_t(\mathcal{F}),\tau_t(\mathcal{G}))\neq(t+1,t+1)$, or $(\tau_t(\mathcal{F}),\tau_t(\mathcal{G}))=(t+1,t+1)$, but (\ref{equlemmafin-stru22}) does not hold. By Lemma \ref{lemmafin-stru2} again, we may suppose by symmetry that
\begin{equation*}
	|\mathcal{F}|\leq\max\left\{(\alpha b+\delta)\binom{n-t-1}{k-t-1},\; C\left(\binom{n-t}{k-t}-\binom{n-k-1}{k-t}\right)\right\},
\end{equation*}
where $C=(\alpha+1)\left(\alpha+\frac{2(1-\alpha)+\delta}{b}\right)$. From $\binom{n-t}{k-t}-\binom{n-k-1}{k-t}\leq b\binom{n-t-1}{k-t-1}$ and $Cb>(\alpha b+\delta)$, we obtain that $|\mathcal{F}|\leq Cb\binom{n-t-1}{k-t-1}$. When $k\leq2t+1$, we have  $b\leq t+2$, and then by $\alpha<0.184$ and $\delta\leq0.3(t+1)$,
\begin{align*}
	|\mathcal{F}|\bigg/\binom{n-t-1}{k-t-1}&\leq Cb=(1+\alpha)(\alpha b+2(1-\alpha)+\delta)\\
	&\leq(1+\alpha)((\alpha+0.3)t+2.3)<0.574t+2.73<t+1.95.
\end{align*}
It follows that  $|\mathcal{F}|<|\mathcal{A}(n,k,t)|$. When $k\geq2t+2$, we have  $b\geq t+3$, and then from 
$(1/r+1/b)(\alpha b+\delta)\leq(\alpha+1)(\alpha+\delta/b)<C$, $\delta\leq0.3(t+1)$ and $\alpha<0.184$, we obtain
\begin{align*}
	|\mathcal{F}|\bigg/\left(\binom{n-t}{k-t}-\binom{n-k-1}{k-t}\right)&\leq C<(1+\alpha)\left(\alpha+\frac{2(1-\alpha)+0.3(t+1)}{t+3}\right).
\end{align*}
When $t=2$, the right-hand side is at most  $(1+\alpha)(\alpha+0.4(1-\alpha)+0.18)\leq0.82<1$. When $t\geq3$, it is less than  $(1+\alpha)(\alpha+(1-\alpha)/3+0.3)\leq0.91<1$. Thus $|\mathcal{F}|<|\mathcal{H}(n,k,t)|$.{\hfill$\square$}\vspace{1em}

To prove Theorem \ref{thmcrosshm}, we need two  technical lemmas. 
\begin{lemma}\label{lemmasizeha}
	If $k\leq2t+1$ and $n>2k$, then $|\mathcal{A}(n,k,t)|>|\mathcal{H}(n,k,t)|$. If $k\geq2t+2$ and $n\geq t+(t+3)(t+2)(k-t)$, then $|\mathcal{H}(n,k,t)|>|\mathcal{A}(n,k,t)|$.
\end{lemma}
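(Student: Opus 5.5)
We compare the two sizes via the explicit formulas \eqref{equsizea} and \eqref{equsizeh}. Write $m=n-t-1$ and $a=k-t-1$, so that $|\mathcal{A}(n,k,t)|=(t+2)\binom{m}{a}-(t+1)\binom{m-1}{a-1}$ and $|\mathcal{H}(n,k,t)|=\binom{n-t}{k-t}-\binom{n-k-1}{k-t}+t$. The difference $\binom{n-t}{k-t}-\binom{n-k-1}{k-t}$ counts the $(k-t)$-subsets of an $(n-t)$-set meeting a fixed $b$-set, where $b=k-t+1$. Inclusion–exclusion therefore gives $\sum_{i\ge1}(-1)^{i+1}\binom{b}{i}\binom{n-t-i}{k-t-i}$. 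Similarly, $|\mathcal{A}(n,k,t)|$ counts $k$-sets $F$ with $|F\cap[t+2]|\ge t+1$. In both regimes we work with the first two terms and control the rest with Lemma \ref{lemmabinom}.

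\textbf{Case $k\le 2t+1$, $n>2k$.} Here $b\le t+2$, and we prove $|\mathcal{A}|>|\mathcal{H}|$ directly by an injection-type or exact-count comparison, valid for all $n>2k$. A clean route is to write both families relative to $[t]$. Any set in $\mathcal{H}(n,k,t)$ other than the $t$ exceptional sets contains $[t]$ and meets the $b$-set $[t+1,k+1]$. Members of $\mathcal{A}(n,k,t)$ either contain $[t]$ and meet $\{t+1,t+2\}$, or miss exactly one element of $[t]$ and contain $\{t+1,t+2\}$. Since $b\le t+2$, we compare term by term: $|\mathcal{H}|-t=\sum_{j=1}^{b}\binom{n-t-j}{k-t-1}$ (decompose by the first element of $[t+1,k+1]$ hit), while $|\mathcal{A}|=\binom{n-t-1}{k-t-1}+\binom{n-t-2}{k-t-1}+t\binom{n-t-2}{k-t-1}$. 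The first two summands match, so it suffices to show
\[
t\binom{n-t-2}{k-t-1}>\sum_{j=3}^{b}\binom{n-t-j}{k-t-1}+t .
\]
There are $b-2\le t$ terms on the right, each at most $\binom{n-t-3}{k-t-1}$. The slack $t\bigl(\binom{n-t-2}{k-t-1}-\binom{n-t-3}{k-t-1}\bigr)=t\binom{n-t-3}{k-t-2}$ is at least $t$ because $n-t-3\ge k-t-2\ge 0$; when $b=2$ the sum is empty and the claim follows from $\binom{n-t-2}{k-t-1}>1$. Strictness needs one of these inequalities to be strict. This is where $n>2k$ (and $k\ge t+2$) enters, and it is the point where some care with the edge cases $b\in\{2,3\}$ is required.

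\textbf{Case $k\ge 2t+2$, $n\ge t+(t+3)(t+2)(k-t)$.} Now $b\ge t+3$, and we compare $\sum_{j=1}^{b}\binom{n-t-j}{k-t-1}$ with $(t+2)\binom{n-t-2}{k-t-1}+\binom{n-t-1}{k-t-1}-\binom{n-t-2}{k-t-1}-(\text{lower order})$; equivalently, we need
\[
\sum_{j=3}^{b}\binom{n-t-j}{k-t-1}+t> t\binom{n-t-2}{k-t-1}.
\]
The left side is at least $(b-2)\binom{n-t-b}{k-t-1}\ge (t+1)\binom{n-k-1}{k-t-1}$. By Lemma \ref{lemmabinom} (ii), $\binom{n-k-1}{k-t-1}\ge\bigl(1-\tfrac{(k-t-1)(b-2)}{n-t-2}\bigr)\binom{n-t-2}{k-t-1}$. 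Hence it suffices that $(t+1)\bigl(1-\tfrac{(k-t-1)(k-t-1)}{n-t-2}\bigr)>t$, i.e.\ $n-t-2>(t+1)(k-t-1)^2$.

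This exposes the main obstacle: for large $k$ that bound is quadratic in $k$, whereas the hypothesis is only linear in $k-t$. So the crude bound replacing every term by $\binom{n-t-b}{k-t-1}$ must be refined. Instead, use only the first $t+1$ terms $j=3,\dots,t+3$, which exist since $b\ge t+3$. Each satisfies $\binom{n-t-j}{k-t-1}\ge\bigl(1-\tfrac{(k-t-1)(t+1)}{n-t-2}\bigr)\binom{n-t-2}{k-t-1}$. Their sum then exceeds $t\binom{n-t-2}{k-t-1}$ as soon as $(t+1)\bigl(1-\tfrac{(t+1)(k-t-1)}{n-t-2}\bigr)>t$, i.e.\ $n-t-2>(t+1)^2(k-t-1)$. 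This follows from $n\ge t+(t+3)(t+2)(k-t)$ with room to spare, and the term $+t$ only helps, giving strict inequality.
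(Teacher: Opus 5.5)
Your proof is correct apart from one unfinished step, and it follows the paper only in the first case.

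For $k\le 2t+1$ your argument is the paper's in different form. Indeed $\sum_{j=3}^{b}\binom{n-t-j}{k-t-1}=\binom{n-t-2}{k-t}-\binom{n-k-1}{k-t}$, and bounding each of the $b-2=k-t-1$ summands by $\binom{n-t-3}{k-t-1}$ is exactly the union bound of Lemma \ref{lemmabinom} (ii). This gives the paper's estimate $|\mathcal{A}(n,k,t)|-|\mathcal{H}(n,k,t)|\ge(2t-k+1)\binom{n-t-3}{k-t-1}+t\bigl(\binom{n-t-3}{k-t-2}-1\bigr)$.

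For $k\ge 2t+2$ you take a genuinely different route. The paper never expands the difference there. It uses $|\mathcal{A}(n,k,t)|<(t+2)\binom{n-t-1}{k-t-1}$ together with (\ref{equbinom'}) to reduce the claim to $(t+2)(1/r+1/b)\le 1$, i.e.\ $r\ge b(t+2)/(b-t-2)$. The worst case $b=t+3$ of that condition is where the constant $(t+3)(t+2)$ in the hypothesis comes from. You instead keep the exact difference and lower-bound only the summands $j=3,\dots,t+3$ with the tail estimate of Lemma \ref{lemmabinom} (ii), which needs just $n-t-2\ge(t+1)^2(k-t-1)$. Your version is more self-contained and asks for less when $b=t+3$. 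The paper's ratio argument, however, needs only $r$ of order $t+2$ once $b$ is large, where yours still needs order $(t+1)^2$. Both are covered by the stated hypothesis, and your final step is correct: the extra $+t$ gives strictness.

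The unfinished step is strictness in the first case, which you defer and attribute to $n>2k$. It actually comes from $t\ge 2$, part of the paper's standing assumption $k\ge t+2\ge 4$.
If $t+2\le k\le 2t$, the term $(2t-k+1)\binom{n-t-3}{k-t-1}$ that you discard when replacing $b-2$ by $t$ is positive.
If $k=2t+1$, then $\binom{n-t-3}{k-t-2}=\binom{n-t-3}{t-1}\ge n-t-3>1$.
The assumption $t\ge 2$ cannot be dropped: for $t=1$, $k=3$ both families have size $3n-8$.

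Your side remark on $b=2$ is also false. In that case $\binom{n-t-2}{k-t-1}=\binom{n-t-2}{0}=1$ and both families have size $t+2$. That case ($k=t+1$) simply lies outside the lemma's range.
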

\begin{proof}
	First, a direct computation gives
	$$|\mathcal A(n,k,t)|-|\mathcal H(n,k,t)|=t\binom{n-t-2}{k-t-1}-\left(\binom{n-t-2}{k-t}-\binom{n-k-1}{k-t}\right)-t.$$
	By Lemma \ref{lemmabinom} (ii), the bracketed term is at most $(k-t-1)\binom{n-t-3}{k-t-1}$. Hence
	\begin{align}
		|\mathcal A(n,k,t)|-|\mathcal H(n,k,t)|
		&\geq
		(2t-k+1)\binom{n-t-3}{k-t-1}
		+t\left(\binom{n-t-3}{k-t-2}-1\right)>0,
	\end{align}
	where the last inequality follows from $k\leq 2t+1$. Suppose $k\geq2t+2$, and write $r=(n-t)/(k-t)$ and $b=k-t+1$ for short. Then $b\geq t+3$, and so  $b(t+2)/(b-t-2)\leq(t+3)(t+2)<r$, that is, $(t+2)(1/r+1/b)<1$. Hence, we derive from (\ref{equsizea}), (\ref{equsizeh}) and (\ref{equbinom'}) that $
	|\mathcal{A}(n,k,t)|/|\mathcal{H}(n,k,t)|<(t+2)\binom{n-t-1}{k-t-1}/(\binom{n-t}{k-t}-\binom{n-k-1}{k-t})<(t+2)(1/r+1/b)<1$.
\end{proof}
\begin{lemma}\label{lemmamono}
	Let $r,q,t>1, \varepsilon\in(0,1)$ be with $q\geq t+2$, $r\geq2e\cdot\max\{(t+2)^2,q^2/(q-t)\}$ and $\varepsilon r\geq eq$, and set  $T_{j}=\binom{j+t}{t}(j+1)^j/r^j$ for $1\leq j\leq q-t$. Then each of the following functions is decreasing on $j\in[2,q-t]$.
	\begin{itemize}
	\item[\rm(i)]$f_1(j):=rT_{j}\left(rT_{j+1}+(1-\varepsilon)(j+1)\binom{j+t}{t}\right)$.
	\item[\rm(ii)]$f_2(j):=\frac{1+\alpha}{b}\cdot\left(\frac{rT_{j+1}}{1-\varepsilon}+a_{j}\right)\left(\frac{(1+\alpha)rT_{j+1}}{(1-\varepsilon)b}+\binom{j+t}{t}\right)$, where $b=q-t+1$, $\alpha=b/r$ and $a_{j}=rT_j/\binom{j+t}{t}$.
	\end{itemize}
\end{lemma}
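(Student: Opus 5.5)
We prove each monotonicity by comparing $f(j)$ with $f(j+1)$ directly, exactly as in the verification of \eqref{equthmcrossekrgoal} in the proof of Theorem~\ref{thmcrossekr}. The basic input is the ratio estimate from the proof of Lemma~\ref{lemmarqt}:
$$\frac{T_j}{T_{j+1}}=\frac{r}{(j+t+1)(1+\frac1{j+1})^{j+1}}>\frac{r}{e(j+t+1)},$$
together with $T_{j+1}<\varepsilon T_j<T_j$ (so in particular $T_{j+2}<T_j$). We also use the routine bound
$$\frac{(j+t+1)^2}{j+1}\leq\max\left\{\frac{(t+2)^2}{2},\frac{q^2}{q-t}\right\},$$
whose maximum over the range $[1,q-t]$ is attained at an endpoint by convexity. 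Under the hypothesis on $r$ this gives $r>2e(j+t+1)^2/(j+1)$.

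For (i), write
$$\frac{f_1(j)-f_1(j+1)}{rT_{j+1}}=\frac{T_j}{T_{j+1}}\left(rT_{j+1}+(1-\varepsilon)(j+1)\binom{j+t}{t}\right)-\left(rT_{j+2}+(1-\varepsilon)(j+2)\binom{j+t+1}{t}\right).$$
Comparing the first terms, $\frac{T_j}{T_{j+1}}\,rT_{j+1}=rT_j>rT_{j+2}$. For the second terms it suffices that
$$\frac{r}{e(j+t+1)}(j+1)\binom{j+t}{t}\geq(j+2)\binom{j+t+1}{t}=(j+2)\frac{j+t+1}{j+1}\binom{j+t}{t},$$
that is, $r\geq e(j+t+1)^2(j+2)/(j+1)^2$. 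Since $(j+2)/(j+1)\leq 2$, this follows from $r>2e(j+t+1)^2/(j+1)$ (the factor $2e$ in the hypothesis is what absorbs the $(j+2)/(j+1)$). Hence $f_1(j)>f_1(j+1)$.

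For (ii), drop the positive constant $(1+\alpha)/b$ and set $c=(1+\alpha)/((1-\varepsilon)b)$, which does not depend on $j$. Then $f_2$ is proportional to $P(j)Q(j)$, where
$$P(j)=\frac{rT_{j+1}}{1-\varepsilon}+a_j,\qquad Q(j)=c\,rT_{j+1}+\binom{j+t}{t},\qquad a_j=\frac{rT_j}{\binom{j+t}{t}}=\frac{(j+1)^j}{r^{j-1}}.$$
The key observation is that $P$ decreases fast while $Q$ increases only slowly:
$$\frac{a_{j+1}}{a_j}=\frac{(j+2)^{j+1}}{r(j+1)^j}\leq\frac{e(j+2)}{r},\qquad\frac{T_{j+2}}{T_{j+1}}<\frac{e(j+t+2)}{r}.$$
So $P(j+1)\leq\frac{e(j+t+2)}{r}P(j)$. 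On the other side,
$$Q(j+1)\leq cr\,T_{j+2}+\frac{j+t+1}{j+1}\binom{j+t}{t}\leq\frac{j+t+1}{j+1}\,Q(j),$$
using $T_{j+2}<T_{j+1}$. Therefore it suffices that $\frac{e(j+t+2)(j+t+1)}{r(j+1)}<1$, which again follows from $r\geq2e(j+t+1)^2/(j+1)$ together with $j+t+2\leq2(j+t+1)$.

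The main obstacle is bookkeeping at the endpoints. The range is $j\in[2,q-t]$, but the comparison at $j=q-t$ involves $T_{q-t+1}$ and $T_{q-t+2}$, which must be interpreted. Following the convention used in the proof of Theorem~\ref{thmcrossekr}, I would set them to $0$; the inequalities above then only become easier, since dropping those terms lowers $f(j+1)$ and leaves $f(j)$ unchanged. I would also check that the ratio estimates used for indices up to $q-t$ are covered by the hypothesis $\varepsilon r\geq eq$ from Lemma~\ref{lemmarqt}.
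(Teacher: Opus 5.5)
Your proof is correct. Part (i) is essentially the paper's argument: divide the difference by $rT_{j+1}$, use $T_j/T_{j+1}>r/(e(j+t+1))$ and $T_{j+2}<T_j$, and reduce to $r>e(j+2)(j+t+1)^2/(j+1)^2$. The paper bounds this threshold by $\max\{4e(t+3)^2/9,\,e(q-t+1)q^2/(q-t)^2\}$; you instead absorb $(j+2)/(j+1)\le 2$ into the factor $2e$, which works equally well.

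In (ii) your route differs slightly. The paper argues additively: it uses $T_{j+1}>T_{j+2}$ and $a_j>a_{j+1}$ to discard the $T$-part of the second factor, and then compares $P(j)\binom{j+t}{t}$ with $P(j+1)\binom{j+t+1}{t}$. You instead bound each factor multiplicatively, $P(j+1)\le \frac{e(j+t+2)}{r}P(j)$ and $Q(j+1)\le\frac{j+t+1}{j+1}Q(j)$. Both rest on the same ratio estimates and on essentially the same condition $r>e(j+t+1)(j+t+2)/(j+1)$. Your factorwise version is shorter and makes the source of that condition transparent.

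One small correction. The bound $(j+t+1)^2/(j+1)\le\max\{(t+2)^2/2,\,q^2/(q-t)\}$ holds for $j\in[1,q-t-1]$, not on all of $[1,q-t]$. At $j=q-t$ the value is $(q+1)^2/(q-t+1)$, which can exceed $q^2/(q-t)$ (e.g.\ $t=2$, $q=10$). This does not hurt you, since monotonicity on $[2,q-t]$ only requires $f(j)>f(j+1)$ for $j\le q-t-1$.

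For the same reason, $T_{q-t+2}$ never enters. The only index outside the defined range is $T_{q-t+1}$, which appears in $f(q-t)$. Your estimates go through under either convention: using the defining formula (then $T_{q-t+1}<T_{q-t}$, since $r\ge 2eq>e(q+1)$), or setting $T_{q-t+1}=0$.
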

\begin{proof}
(i)\;From the proof of Lemma \ref{lemmarqt}, we have $T_j/T_{j+1}>r/(e(j+t+1))>1$ for $1\leq j\leq q-t-1$. Hence
\begin{align*}
	\frac{f_1(j)-f_1(j+1)}{(1-\varepsilon)rT_{j+1}}&>\frac{r}{e(j+t+1)}\left(\frac{rT_{j+1}}{1-\varepsilon}+(j+1)\binom{j+t}{t}\right)-\left(\frac{rT_{j+2}}{1-\varepsilon}+(j+2)\binom{j+t+1}{t}\right)\\
	&>\frac{r(j+1)}{e(j+t+1)}\binom{j+t}{t}-(j+2)\binom{j+t+1}{t}\\
	&=\frac{\binom{j+t}{t}(j+1)}{e(j+t+1)}\left(r-\frac{e(j+2)(j+t+1)^2}{(j+1)^2}\right)>0.
\end{align*}
Here the term involving $T_{j+1}$ and $T_{j+2}$ is positive, since
$T_{j+1}/T_{j+2}>r/(e(j+t+2))$ and $r\geq10k$. 
A routine calculation gives
$$\frac{e(j+2)(j+t+1)^2}{(j+1)^2}\leq\max\{4e(t+3)^2/9,e(q-t+1)q^2/(q-t)^2\},$$
and then it is smaller than $r$ as $r\geq2e\cdot\max\{(t+2)^2,q^2/(q-t)\}$. So $f_1(j)$ is decreasing.

(ii)\;Note that
\begin{align*}
	\frac{a_{j}}{a_{j+1}}&=\frac{r(j+1)^{j}}{(j+2)^{j+1}}=\frac{r}{j+1}\cdot\left(1+\frac{1}{j+1}\right)^{-(j+1)}>\frac{r}{e(j+1)}>\varepsilon^{-1}.
\end{align*}
It follows from $T_{j+1}>T_{j+2}$ that
\begin{align*}
	\frac{(f_2(j)-f_2(j+1))b}{1+\alpha}&>\left(\frac{rT_{j+1}}{1-\varepsilon}+a_{j}\right)\binom{j+t}{t}-\left(\frac{rT_{j+2}}{1-\varepsilon}+a_{j+1}\right)\binom{j+t+1}{t}\\
	&=\frac{rT_{j+1}}{1-\varepsilon}\binom{j+t}{t}-\frac{rT_{j+2}}{1-\varepsilon}\binom{j+t+1}{t}+r(T_j-T_{j+1})\\
	&>\frac{T_{j+2}\binom{j+t+1}{t}(j+2)}{e(j+t+2)}\left(r-\frac{e(j+2)(j+t+1)^2}{(j+1)^2}\right)>0,
\end{align*}
and hence $f_2(j)$ is decreasing on $j\in[2,q-t]$ as well. 
\end{proof}
\noindent{\bf Proof of Theorem \ref{thmcrosshm}.}\;Set $r=(n-t)/(k-t)$, $b=k-t+1$, $\alpha=b/r$, $\varepsilon=e/10$, $\delta=9\binom{t+2}{2}/((1-\varepsilon)r)$ and $\delta'=\delta_{k-t-3}=64\binom{t+3}{3}/((1-\varepsilon)r^2)$. By a routine computation, $\varepsilon r\geq ek$, $r\geq15(t+2)^2$, $\alpha\leq0.1$ and 
\begin{equation}\label{equthmcrosshm-2}
\delta<0.42\;\;\;\mbox{and}\;\;\;\delta'<0.1/(t+2).
\end{equation}
 Suppose $\mathcal{F},\mathcal{G}\subseteq\binom{[n]}{k}$ are cross $t$-intersecting. We may suppose that $\mathcal{F}$ and $\mathcal{G}$ are maximal. For simplicity, we write
\begin{equation}\label{equdensity1}
	\rho_1(\mathcal{F},\mathcal{G}):=|\mathcal{F}||\mathcal{G}|\bigg/\left(\binom{n-t}{k-t}-\binom{n-k-1}{k-t}\right)^2
\end{equation}
and 
\begin{equation}\label{equdensity2}
	\rho_2(\mathcal{F},\mathcal{G}):=|\mathcal{F}||\mathcal{G}|\bigg/\left((t+1.95)\binom{n-t-1}{k-t-1}\right)^2. 
\end{equation}
Let us recall from (\ref{equsizea})-(\ref{equsizeh}) that $\binom{n-t}{k-t}-\binom{n-k-1}{k-t}+t$ counts the size of $\mathcal{H}$, and $(t+1.95)\binom{n-t-1}{k-t-1}$ is, by the fact that $(t+1)/r<0.05$, less than $|\mathcal{A}(n,k,t)|$. Hence $\rho_1(\mathcal{F},\mathcal{G})<1$ and $\rho_2(\mathcal{F},\mathcal{G})<1$ implies  $|\mathcal{F}||\mathcal{G}|$ is less than $|\mathcal{H}(n,k,t)|^2$ and $|\mathcal{A}(n,k,t)|^2$, respectively. So it is sufficient to prove that, unless
$(\mathcal F,\mathcal G)$ is one of the two configurations described in (i) or (ii), we have  $\rho_1(\mathcal{F},\mathcal{G})<1$ or $\rho_2(\mathcal{F},\mathcal{G})<1$. To characterize the optimal families, we need also Lemma \ref{lemmasizeha}, which gives $|\mathcal{H}(n,k,t)|>|\mathcal{A}(n,k,t)|$ for $k\geq2t+2$, and $|\mathcal{A}(n,k,t)|>|\mathcal{H}(n,k,t)|$ for $k\leq2t+1$. 

From the fact that $\mathcal{F}$ and $\mathcal{G}$ are maximal and Lemma \ref{lemmamaximal}, $(\mathcal{M}(\mathcal{G}),\mathcal{M}(\mathcal{F}))$ 
is a fingerprint of $(\mathcal{F},\mathcal{G})$. So we perform the algorithm to the pair $(\mathcal{F},\mathcal{G})$, the uniformity $q:=k$ and $$(\mathcal{S}_0,\mathcal{T}_0):=(\mathcal{M}(\mathcal{G}),\mathcal{M}(\mathcal{F})).$$
\noindent{\bf Case 1.\;}$N\leq q-t-2$.
	
	Suppose without loss of generality that $\mathcal{S}_{N}=\mathcal{X}_N$. Then by Lemmas \ref{lemmafin-stru0} (i) and \ref{lemmarqt}, 
\begin{align*}
|\mathcal{F}|&\leq\left|\bigcup_{i=0}^{N}\mathcal{\mathcal{F}}[\mathcal{X}_i]\right|<\delta_N\binom{n-t-1}{k-t-1}=\frac{rT_{q-t-N}}{1-\varepsilon}\binom{n-t-1}{k-t-1}, 
\end{align*}
where $T_{j}=\binom{j+t}{t}(j+1)^j/r^j$ for $1\leq j\leq q-t$. 

To proceed, we consider the $t$-covering number $\tau_t(\mathcal{X}_N)$ of $\mathcal{X}_N$. Fix a $W_0\in\mathcal{X}_N$. Every set in  $\mathcal{G}[\mathcal{T}_N]$ contains some of $\mathcal{T}_N$, and then $t$-intersects $W_0$, yielding that $\tau_t(\mathcal{G}[\mathcal{T}_N])\leq q-N$.  If $\tau_t(\mathcal{X}_N)\geq t+1$, then by Lemma \ref{lemmakey}, 
\begin{align*}
	|\mathcal{G}[\mathcal{T}_N]|\leq(q-N-t+1)\binom{q-N}{t}\binom{n-t-1}{k-t-1}.
\end{align*}
Hence
	\begin{align*}
	|\mathcal{G}|&\leq\sum_{i=0}^{N-1}|\mathcal{\mathcal{G}}[\mathcal{Y}_i]|+|\mathcal{G}[\mathcal{T}_N]|\leq\left(\frac{rT_{q-t-N+1}}{1-\varepsilon}+(q-N-t+1)\binom{q-N}{t}\right)\binom{n-t-1}{k-t-1}.
\end{align*}
Put
$$f_1(j):=rT_{j}\left(rT_{j+1}+(1-\varepsilon)(j+1)\binom{j+t}{t}\right),\;2\leq j\leq q-t.$$
We want to prove $f_1(j)/(1-\varepsilon)^2<(t+1.95)^2$, as we have seen that 
$$|\mathcal{F}||\mathcal{G}|\leq f(q-t-N)\binom{n-t-1}{k-t-1}^2,$$
so the expected inequality implies $\rho_2(\mathcal{F},\mathcal{G})<1$. Since $q=k$,  $r\geq15(t+2)^2$ and $r\geq 10k^2/(k-t)$, by Lemma \ref{lemmamono} (i), we obtain that $f_1(j)$ is decreasing on $j\in[2,q-t]$ and hence, by (\ref{equthmcrosshm-2}),
\begin{align*}
	f_1(j)/(1-\varepsilon)^2&\leq f_1(2)/(1-\varepsilon)^2=\frac{rT_2}{1-\varepsilon}\left(\frac{rT_3}{1-\varepsilon}+3\binom{t+2}{t}\right)\\
	&=\delta\left(\delta'+3\binom{t+2}{t}\right)<0.1+0.7(t+1)(t+2)<(t+1.95)^2,
\end{align*}
as desired.

If $\tau=t$, then we may pick a $t$-subset, say $X$ with $X\subseteq\cap\mathcal{X}_N$. By Lemma \ref{lemmafingerprintproperty} (v), we have
\begin{align*}
|\mathcal{X}_N|&=|\mathcal{X}_N(X)|\leq(q-t-N+1)^{q-t-N}.
\end{align*}
Then $\mathcal{F}[\mathcal{X}_N]$ has size at most
\begin{align*}
|\mathcal{X}_N|\binom{n-(q-N)}{k-(q-N)}\leq r \left(\frac{q-t-N+1}{r}\right)^{q-t-N}\binom{n-t-1}{k-t-1}=a_{q-t-N}\binom{n-t-1}{k-t-1},
\end{align*}
where $a_j:=rT_j/\binom{j+t}{t}=(j+1)^j/r^{j-1}$. Hence
\begin{equation*}
	|\mathcal{F}|\leq\left|\bigcup_{i=0}^{N}\mathcal{\mathcal{F}}[\mathcal{X}_i]\right|\leq\left(\frac{rT_{q-t-N+1}}{1-\varepsilon}+a_{q-t-N}\right)\binom{n-t-1}{k-t-1}.
\end{equation*}
For the size of $\mathcal{G}$, fix \(S\in\mathcal X_N\). Note that every set in $\mathcal{G}[\mathcal{T}_N]$ contains some of $\mathcal{T}_N$, and then $t$-intersects $S$. Hence $\mathcal{G}[\mathcal{T}_N]\subseteq\bigcup_{X}\mathcal{G}[X]$, where $X$ ranges over all $t$-subsets of $S$. This together with Lemma \ref{lemmaind} and (\ref{equbinom}) yields
\begin{align*}
	|\mathcal{G}|&\leq\sum_{i=0}^{N-1}|\mathcal{\mathcal{G}}[\mathcal{Y}_i]|+|\mathcal{G}[\mathcal{T}_N]|\\&\leq\left(\frac{(1+\alpha)rT_{q-t-N+1}}{(1-\varepsilon)b}+\binom{q-N}{t}\right)\left(\binom{n-t}{k-t}-\binom{n-k-1}{k-t}\right).
\end{align*}
Then we need to estimate 
$$f_2(j):=\frac{1+\alpha}{b}\cdot\left(\frac{rT_{j+1}}{1-\varepsilon}+a_{j}\right)\left(\frac{(1+\alpha)rT_{j+1}}{(1-\varepsilon)b}+\binom{j+t}{t}\right),\; 2\leq j\leq q-t.$$
By Lemma \ref{lemmamono} (ii), $f_2(j)$ is decreasing on $j\in[2,q-t]$. When $k\geq2t+2$, we have $b=k-t+1\geq t+3$. This together with $r\geq15(t+2)^2$, $\alpha<0.1$ and  $\delta'\leq0.1/(t+2)$ gives
\begin{align*}
\rho_1(\mathcal{F},\mathcal{G})&\leq f_2(2)=\frac{1+\alpha}{b}\cdot\left(\frac{rT_{3}}{1-\varepsilon}+a_{q-t-2}\right)\left(\frac{(1+\alpha)rT_3}{(1-\varepsilon)b}+\binom{t+2}{t}\right)\\
&<\frac{1.1}{b}\cdot\left(\delta'+\frac{9}{r}\right)\left(\frac{1.1\delta'}{b}+\binom{t+2}{t}\right)<\frac{1}{(t+2)^2}\cdot\left(0.1+\binom{t+2}{2}\right)<1.
\end{align*}
When $k\leq 2t+1$, we have $b\leq t+2$, and then by (\ref{equbinom'}),
\begin{align*}
|\mathcal{F}||\mathcal{G}|\bigg/\binom{n-t-1}{k-t-1}^2&\leq\rho_1(\mathcal{F},\mathcal{G})b^2\leq1.1\left(\delta'+\frac{9}{r}\right)\left(1.1\delta'+b\binom{t+2}{t}\right)\\
&<\frac{1}{t+2}\cdot\left(0.1+(t+2)\binom{t+2}{t}\right)<0.6(t+2)(t+1)<(t+1.95)^2,
\end{align*}
and thus $|\mathcal{F}||\mathcal{G}|<|\mathcal{A}(n,k,t)|^2$.

\noindent{\bf Case 2.\;}$N=q-t-1$.

By symmetry, we may assume that $\mathcal S_{q-t-1}=\mathcal X_{q-t-1}$. 
We then apply Lemma \ref{lemmafin-stru1} to estimate $\rho_1(\mathcal{F},\mathcal{G})$ or $\rho_2(\mathcal{F},\mathcal{G})$. First, since $\delta<0.42$, $r>10b$ and $t\geq2$, we have $(3+\delta)^2<(t+1.95)^2$,  $$(\delta+tb/r+2)(\delta+t+2)<(0.1t+2.42)(t+2.42)<(t+1.95)^2\;\;\mbox{and} $$
$$(\delta+1)(\delta+3(t+1))<(t+1.95)^2.$$
The remaining quantities involve both $k$ and $t$. Suppose first $k\leq2t+1$. We want to verify 
$$\max\{(\delta+2)(\delta+t+b),\;(\delta+1)(\delta+2t+b),\;bM_2\}\leq(t+1.95)^2,$$
where the quantity $bM_2$ arises from $\binom{n-t}{k-t}-\binom{n-k-1}{k-t}\leq b\binom{n-t-1}{k-t-1}$. Using $\delta\leq0.42$, $t\geq2$  and $b=k-t+1\leq t+2$, we obtain $$(\delta+2)(\delta+t+b)\leq2.42(2t+2.42)<(t+1.95)^2\;\mbox{and}$$
$$(\delta+1)(\delta+2t+b)<(\delta+1)(\delta+3t+2)<(t+1.95)^2.$$
For $bM_2$, we use $\delta\leq0.42$, $\delta'<0.1/(t+2)$ and $r\geq10b$ to derive
\begin{align*}
	bM_2&=(\delta'+1)(b(t+1)+\delta'(b/r+1))<\left(0.1/(t+1)+1\right)((t+1)(t+2)+0.5)\\
	&=(t+1.1)(t+2)+0.5+0.05/(t+1)<t^2+3.1t+3<(t+1.95)^2.
\end{align*}
Suppose $k\geq2t+2$. We want to verify
$$\max\{(1/r+1/b)^2(\delta+2)(\delta+t+b),\;(1/r+1/b)^2(\delta+1)(\delta+2t+b),\;(1/r+1/b)M_2\}<1.$$
Note that $b=k-t+1\geq t+3$ gives  $$(\delta+2)(\delta+t+b)-(\delta+1)(\delta+2t+b)=b-\delta(t-1)>0.$$
So we need only to check the first and the third terms are less than $1$. Using $\delta<0.5$ and $t\geq2$ and $r\geq10b$, we obtain
$$(1/r+1/b)^2(\delta+2)(\delta+t+b)\leq\frac{(1.1)^2\cdot2.5}{t+3}\cdot\left(\frac{t+0.5}{t+3}+1\right)\leq0.9075<1.$$
Since \(b\ge t+3\) and \(r\ge15(t+2)^2\), we have $\frac{1}{r}+\frac{1}{b}\le \frac{1}{15(t+2)^2}+\frac1{t+3}<\frac1{t+2}$. Therefore, using \(\delta<0.42\) and \(\delta'<0.1/(t+2)\),
\begin{align*}
(1/r+1/b)M_2&=(1/r+1/b)(\delta'+1)(t+1+\delta(1/r+1/b))\\
&<\frac1{t+2}\left(1+\frac{0.1}{t+2}\right)
\left(t+1+\frac{0.42}{t+2}\right)
<1.
\end{align*}
To see the last inequality, set $u=t+2\geq4$, then it is equivalent to verify $(1+0.1/u)(u-1+0.42/u)<u$. The left-hand side equals $u-(0.9u^2-0.32u-0.042)/u^2$ and hence less than $u$, as required.

\noindent{\bf Case 3.}\;$N=q-t$. 

We apply Lemma \ref{lemmafin-stru2} with $\alpha=b/r$. It states that  $\mathcal{S}_{q-t}=\mathcal{T}_{q-t}=\{X\}$ for some $t$-subset $X$, and there is a minimal index $j$ with this property, that is,
	\begin{equation*}
		j=\min\{i\in[q-t-1]:\mathcal{S}_{i}=\mathcal{T}_{i}=\{X\}\}.
	\end{equation*}
By Lemma \ref{lemmafin-stru2}, if the families satisfy the assumptions in (ii), then we obtain  $|\mathcal{F}||\mathcal{G}|\leq|\mathcal{H}(n,k,t)|^2$ as well as a characterization of families achieving the maximum. It remains to consider the case that  $(\tau_t(\mathcal{F}),\tau_t(\mathcal{G}))\neq(t+1,t+1)$, or $(\tau_t(\mathcal{F}),\tau_t(\mathcal{G}))=(t+1,t+1)$ but (\ref{equlemmafin-stru22}) does not hold. Now
\begin{equation*}
	\min\{|\mathcal{F}|,|\mathcal{G}|\}\leq\max\left\{(\alpha b+\delta)\binom{n-t-1}{k-t-1}, C\left(\binom{n-t}{k-t}-\binom{n-k-1}{k-t}\right)\right\},
\end{equation*}
where $C=(\alpha+1)\left(\alpha+\frac{2(1-\alpha)+\delta}{b}\right)$. Suppose without loss of generality that $\mathcal{G}$ achieves the bounds above. For the size of $\mathcal{F}$, we simply use Lemmas \ref{lemmafin-stru0} (i) and \ref{lemmaind} to derive
\begin{align*}
	|\mathcal{F}|&\leq\sum_{i=0}^{j-1}|\mathcal{\mathcal{F}}[\mathcal{X}_i]|+|\mathcal{F}[\mathcal{S}_{j-1}\setminus\mathcal{X}_{j-1}]|\leq\sum_{i=0}^{j-1}|\mathcal{\mathcal{F}}[\mathcal{X}_i]|+|\mathcal{F}[X]|\\
	&\leq\left(\left(\frac{1}{r}+\frac{1}{b}\right)\varepsilon^{q-t-j-1}\delta+1\right)\left(\binom{n-t}{k-t}-\binom{n-k-1}{k-t}\right).
\end{align*}
Note that $\alpha r\geq b$ gives 
$$(1/r+1/b)(\alpha b+\delta)\leq(\alpha+1)(\alpha+\delta/b)<C.$$
Then from $\delta\leq0.42$, when $b=k-t+1\geq4$, we have 
\begin{align*}
	\rho_1(\mathcal{F},\mathcal{G})&\leq\left(\left(\frac{1}{r}+\frac{1}{b}\right)\delta+1\right)\cdot(1+\alpha)\left(\alpha+\frac{2(1-\alpha)+\delta}{b}\right)\\
	&<\left(\frac{1.1\cdot0.42}{b}+1\right)\cdot1.1\cdot\left(0.1+\frac{2.32}{b}\right)<0.84<1.
\end{align*}
When $b=3$, we have $\alpha=b/r\leq1/(5(t+2)^2)\leq0.0125$, and then $$\rho_1(\mathcal{F},\mathcal{G})<(0.42(1+\alpha)/3+1)(1+\alpha)(\alpha/3+2.42/3)<0.94<1.$$
This finishes the proof of Theorem \ref{thmcrosshm}.{\hfill$\square$}\vspace{1em}

\section{Stability via $t$-diversity}\label{secfinstru2}
In this section, we prove Theorems \ref{thmmaxdiv}, \ref{thmdiv} and \ref{thmstability-t-div}. Let us note that a useful feature of the parameter $\gamma_t$ is that  $$\gamma_t(\mathcal{H})\leq\gamma_t(\mathcal{H}')\;\;\mbox{for}\;\;\mathcal{H}\subseteq\mathcal{H}'\subseteq\binom{[n]}{k}.$$
So in the proof of these theorems, we may deal with maximal families.\vspace{1em}

\noindent{\bf Proof of Theorem \ref{thmmaxdiv}.}\;We may suppose that $(\mathcal{F},\mathcal{G})$ is maximal. We may also suppose that the families are non-trivial, as $\gamma_t$ takes zero for a trivial family. Set $r=(n-t)/(k-t)$, $b=k-t+1$, $\varepsilon=0.5$ and $\delta=9\binom{t+2}{2}/((1-\varepsilon)r)$. Since $r\geq18(t+2)$, we have $\delta\leq0.5(t+1)$. Perform the algorithm to the pair $(\mathcal{F},\mathcal{G})$, the uniformity $q:=k$ and $(\mathcal{S}_0,\mathcal{T}_0):=(\mathcal{M}(\mathcal{G}),\mathcal{M}(\mathcal{F}))$. Our goal is to prove
\begin{equation}\label{equthmmaxdiv1}
	\min\{\gamma_t(\mathcal{F}),\;  \gamma_t(\mathcal{G})\}\leq\gamma_t(\mathcal{A}(n,k,t)) =t\binom{n-t-2}{k-t-1},
\end{equation}
with equality only if $\mathcal{F}=\mathcal{G}=\mathcal{A}(Z)$ for some $(t+2)$-subset $Z$. A useful bound is
\begin{align}
	\gamma_t(\mathcal{A}(n,k,t))&=t\left(1-\frac{k-t-1}{n-t-1}\right)\binom{n-t-1}{k-t-1}\nonumber\\
	&>(t-0.1)\binom{n-t-1}{k-t-1}>\delta\binom{n-t-1}{k-t-1}.\label{equthmmaxdiv3}
\end{align}
\noindent{\bf Case 1.}\;$N\neq q-t-1$.

If $N\leq q-t-2$, then we may assume without loss of generality that $\mathcal{S}_N=\mathcal{X}_N$. Then Lemma \ref{lemmafingerprintproperty} (iii) yields 
$$\mathcal{F}\subseteq\left(\bigcup_{j=0}^{N-1}\mathcal{F}[\mathcal{X}_j]\right)\cup\mathcal{F}[\mathcal{S}_N]=\bigcup_{j=0}^{N}\mathcal{F}[\mathcal{X}_j].$$
By Lemma \ref{lemmafin-stru0} (i) and (\ref{equthmmaxdiv3}), it holds that 
$$|\mathcal{F}|\leq\delta\binom{n-t-1}{k-t-1}<\gamma_t(\mathcal{A}(n,k,t)),$$
and of course (\ref{equthmmaxdiv1}) holds. 

If $N=q-t$, then  $\mathcal{S}_{q-t}=\mathcal{T}_{q-t}=\{X\}$ for some $X\in\binom{[n]}{t}$. We have further from Lemma \ref{lemmafin-stru2} that $\mathcal{S}_{q-t-1}=\mathcal{T}_{q-t-1}=\{X\}$. It follows from Lemma \ref{lemmafingerprintproperty} (ii) that
\begin{equation*}
	\mathcal{F}\subseteq\left(\bigcup_{i=0}^{q-t-2}\mathcal{\mathcal{F}}[\mathcal{X}_i]\right)\cup\mathcal{F}[X],
\end{equation*}
and hence Lemmas \ref{lemmafin-stru0} (i) and \ref{lemmarqt} yield
\begin{equation*}
	\gamma_t(\mathcal{F})\leq|\mathcal{F}\setminus\mathcal{F}[X]|\leq\left|\bigcup_{i=0}^{q-t-2}\mathcal{\mathcal{F}}[\mathcal{X}_i]\right|\leq\delta\binom{n-t-1}{k-t-1}<\gamma_t(\mathcal{A}(n,k,t)).
\end{equation*}
\noindent{\bf Case 2.}\;$N=q-t-1$.

 For simplicity, we write  $\mathcal{S}=\mathcal{S}_{q-t-1}$, and write $\mathcal{T},\mathcal{X}$ and $\mathcal{Y}$ in the same manner. Suppose without loss of generality that $\mathcal{S}=\mathcal{X}$. If $\mathcal{T}\neq\mathcal{Y}$, then  $\mathcal{T}\setminus\mathcal{Y}\subseteq\binom{[n]}{t}$ is non-empty, and hence contains some $t$-subset, say $X$. It follows that $X\subseteq\cap\mathcal{S}$, and so $\mathcal{F}\setminus\mathcal{F}[X]\subseteq\bigcup_{i=0}^{q-t-2}\mathcal{\mathcal{F}}[\mathcal{X}_i]$.  Thus we arrive at $
\gamma_t(\mathcal{F})<\gamma_t(\mathcal{A}(n,k,t))$ again. 

In what follows, we suppose $\mathcal{S}=\mathcal{X}$ and $\mathcal{T}=\mathcal{Y}$. If $\mathcal{F}, \mathcal{G}\subseteq\mathcal{A}(Z)$ for some  $Z\in\binom{[n]}{t+2}$, then the desired inequality is immediate. So we suppose that there is no $(t+2)$-subset $Z$ such that $\mathcal{F}, \mathcal{G}\subseteq\mathcal{A}(Z)$. Since  $\mathcal{S}=\mathcal{X}$ and $\mathcal{T}=\mathcal{Y}$, we have from Lemma \ref{lemmafin-stru1} that  $\mathcal{S}^*\subseteq\mathcal{S}$ and $ \mathcal{T}^*\subseteq\mathcal{T}$, where $\mathcal{S}^*$ and $\mathcal{T}^*$ consist of the set of $t$-covers of $\mathcal{G}$ and $\mathcal{F}$ with size $t+1$, respectively. The remaining proof divides into two subcases.

\noindent{\bf Case 2.1.}\;$\mathcal{S},\mathcal{T}\subseteq\binom{Z}{t+1}$ for some $Z\in\binom{[n]}{t+2}$.

Now $|\mathcal{S}^*|\leq2$ or $|\mathcal{T}^*|\leq2$ by the proof of Lemma \ref{lemmafin-stru1} (i) and our assumption that $\mathcal{F}\nsubseteq\mathcal{A}(Z)$ or $\mathcal{G}\nsubseteq\mathcal{A}(Z)$. Suppose without loss of generality that $|\mathcal{S}^*|\leq2$. Note that  $|\cap\mathcal{S}^*|\geq t$ provided $\mathcal{S}^*\neq\emptyset$ as $\mathcal{S}^*\subseteq\mathcal{S}\subseteq\binom{[n]}{t+2}$. If $\mathcal S^*\neq\emptyset$, choose a $t$-subset
$X\subseteq\cap\mathcal S^*$; if $\mathcal S^*=\emptyset$, choose an
arbitrary $t$-subset $X\subseteq Z$. By Lemma \ref{lemmafin-stru0} (ii), $\mathcal{S}^*$ consists of $t$-covers of $\mathcal{G}$ with size $t+1$. Hence every set from $\mathcal{S}\setminus\mathcal{S}^*$ is not a $t$-cover of $\mathcal{G}$ and then, by Lemma \ref{lemmaind}, contained in at most $b\binom{n-t-2}{k-t-2}$ members of $\mathcal{F}$. Since $\mathcal{S}\subseteq\binom{Z}{t+1}$, at most $t$ of its members do not contain $X$. It follows that
\begin{align*}
	\gamma_t(\mathcal{F})&\leq|\mathcal{F}\setminus\mathcal{F}[X]|\leq\left|\bigcup_{i=0}^{q-t-2}\mathcal{\mathcal{F}}[\mathcal{X}_i]\right|+|\mathcal{F}[\mathcal{S}\setminus\mathcal{S}^*]\setminus\mathcal{F}[X]|\\
	&\leq(\delta+tb/r)\binom{n-t-1}{k-t-1}<\gamma_t(\mathcal{A}(n,k,t)),
\end{align*}
where in the last step we used  (\ref{equthmmaxdiv3}), $\delta\leq0.5(t+1)$ and $r\geq2ek>2eb$. 

\noindent{\bf Case 2.2.}\;$\mathcal{S}\nsubseteq\binom{Z}{t+1}$ or $\mathcal{T}\nsubseteq\binom{Z}{t+1}$ whenever $Z\in\binom{[n]}{t+2}$. 

Suppose by symmetry that $|\mathcal{S}|\geq|\mathcal{T}|$. Then by Lemmas \ref{lemmafin-stru1} (ib) and  \ref{lemmastrue-alike}, we obtain that $|\mathcal{S}|,|\mathcal{T}|\geq2$, and there exists $I\in\binom{[n]}{t-1}$ and two families $\mathcal{P},\mathcal{Q}\subseteq\binom{[n]}{2}$ conforming to one of the following structures such that 
$$\mathcal{S}\subseteq\{I\cup P:P\in\mathcal{P}\}\;\;\mbox{and}\;\;\mathcal{T}\subseteq\{I\cup Q:Q\in\mathcal{Q}\}.$$
\begin{itemize} 	\item[\rm(a)]$(\{\{1,2\},\{3,4\},\{1,4\}\},\{\{1,3\},\{2,4\},\{1,4\}\})$.
	\item[\rm(b)]$(\{\{1,2\},\{3,4\},\{1,4\},\{2,3\}\},\{\{1,3\},\{2,4\}\})$.\vspace{1em}
\end{itemize}
For simplicity, we write $X=I\cup\{1\},S=I\cup\{1,2\},S'=I\cup\{3,4\},R=I\cup\{1,4\}$, $T=I\cup\{1,3\},T'=I\cup\{2,4\}$ and $Y=I\cup\{2,3\}$. 

First, suppose (a) holds.  Now $S'\subseteq F$ for all  $F\in\mathcal{F}[\mathcal{S}]\setminus\mathcal{F}[X]$. If $S'\notin\mathcal{S}^*$, then it is not a $t$-cover of $\mathcal{G}$, and so $|\mathcal{F}[S']|\leq b\binom{n-t-2}{k-t-2}$ from Lemma \ref{lemmaind}. This yields $\gamma_t(\mathcal{F})\leq|\mathcal{F}\setminus\mathcal{F}[X]|<\gamma_t(\mathcal{A}(n,k,t))$ again. Hence we suppose  $S'\in\mathcal{S}^*$. Similarly, we may assume $S\in\mathcal{S}^*$ by considering those not containing $I\cup\{4\}$. By symmetry, we may assume $$\{T,T'\}\subseteq\mathcal{T}^*$$ as otherwise we can derive that $\gamma_t(\mathcal{G})$ is less than $\gamma_t(\mathcal{A}(n,k,t))$. Now we fix an $F\in\mathcal{F}$ that contains neither $X$ nor any set from $\mathcal{S}$. Then $|F\cap T|\geq t$, and so $|F\cap X|=t-1$ and $3\in F$. If $F\cap X=I$, then $F\cap\{2,4\}=\{2\}$ as $S'\nsubseteq F$ and $|F\cap T'|\geq t$; if $F\cap X\neq I$, then $\{2,4\}\subseteq F$. Particularly, in the former case, we observe that  $Y\subseteq F$, and hence there are at most $|\mathcal{F}[Y]|$ such sets in $\mathcal{F}$. Since $Y\notin\mathcal{S}$ and $\mathcal{S}^*\subseteq\mathcal{S}$, it is not a $t$-cover of $\mathcal{G}$, and so Lemma \ref{lemmaind} yields $|\mathcal{F}[Y]|\leq(k-t+1)\binom{n-t-2}{k-t-2}$. Therefore, 
\begin{align*}
	\gamma_t(\mathcal{F})&\leq|\mathcal{F}\setminus\mathcal{F}[X]|\leq|\mathcal{F}[S']|+|\mathcal{F}[Y]|+\left|\{F\in\mathcal{F}[\{2,3,4\}]:F\cap X=X\setminus\{x'\},x'\neq1\}\right|\\
	&\leq\binom{n-t-1}{k-t-1}+(k-t+1)\binom{n-t-2}{k-t-2}+(t-1)\binom{n-t-3}{k-t-2},
\end{align*}
and consequently 
\begin{align*}
	\gamma_t(\mathcal{A}(n,k,t))-\gamma_t(\mathcal{F})&\geq(t-1)\binom{n-t-3}{k-t-1}-(k-t+2)\binom{n-t-2}{k-t-2}\\
	&=\left((t-1)-\frac{(k-t+2)(k-t-1)(n-t-2)}{(n-k)(n-k-1)}\right)\binom{n-t-3}{k-t-1}>0.
\end{align*}
To see the last inequality, note that $n>2e(k-t)k$, then $n-t-2<2(n-k-1)$ and $(k-t+2)(k-t-1)<0.25(n-k)$, and so the quantity is positive. 

Suppose that (b) holds. Then $\mathcal{T}=\{T,T'\}$ as $|\mathcal{T}|\geq2$, and hence $|\mathcal{T}^*|\leq2$. By the same argument as in Case 2.1,  we may assume $\{T,T'\}\subseteq\mathcal{T}^*$, namely, $\mathcal{T}^*=\{T,T'\}$. Let $F\in\mathcal{F}\setminus\mathcal{F}[X]$. If $I\subseteq F$, then $1\notin F$ as $X=I\cup\{1\}\nsubseteq F$. It follows from $|F\cap T|\geq t$ and $|F\cap T'|\geq t$ that $3\in F$ and $\{2,4\}\cap F\neq\emptyset$. If $I\nsubseteq F$, then $[4]\subseteq F$ and $|I\setminus F|=1$. By combining these, we obtain that
\begin{align*}
\gamma_t(\mathcal{F})&\leq|\mathcal{F}\setminus\mathcal{F}[X]|\leq2\binom{n-t-2}{k-t-1}-\binom{n-t-3}{k-t-2}+(t-1)\binom{n-t-3}{k-t-2}\\
&=2\binom{n-t-2}{k-t-1}+(t-2)\binom{n-t-3}{k-t-2}\leq t\binom{n-t-2}{k-t-1}=\gamma_t(\mathcal{A}(n,k,t)).
\end{align*}
In particular, this yields $\gamma_t(\mathcal{F})<\gamma_t(\mathcal{A}(n,k,t))$ for $t\geq3$. Finally, suppose $t=2$. Let us prove  $\gamma_t(\mathcal{G})<\gamma_t(\mathcal{A}(n,k,t))$ provided $\gamma_t(\mathcal{F})=\gamma_t(\mathcal{A}(n,k,t))$. Put $I=\{5\}$ to ease notation. Now $X=\{1,5\}$, and 
\begin{align}
\mathcal{F}\setminus\mathcal{F}[\{1,5\}]=&\left\{F\in\binom{[n]}{k}:F\cap\{1,3,5\}=\{3,5\},\;F\cap\{2,4\}\neq\emptyset\right\}\nonumber\\
&\cup\left\{F\in\binom{[n]}{k}:F\cap[5]=[4]\right\}.\label{equthmdivmax2}
\end{align}

We next consider the set $\{3,5\}$. Note that
\begin{equation}\label{equthmdivmax3}
|\mathcal F\setminus\mathcal F[\{3,5\}]|
\geq\gamma_2(\mathcal F)=2\binom{n-4}{k-3}.
\end{equation} 
On the other hand,
$$
\mathcal F\setminus\mathcal F[\{3,5\}]=
\left(\mathcal F[X]\setminus\mathcal F[\{3,5\}]\right)\cup\left\{F\in\binom{[n]}{k}:F\cap[5]=[4]\right\}.
$$
By our assumption that $\mathcal{T}^*=\{T,T'\}$, the set  $T'=\{2,4,5\}$ is a $2$-cover of $\mathcal F$, and so every
$F\in\mathcal F[X]\setminus\mathcal F[\{3,5\}]$ contains $\{1,5\}$, avoids
$3$, and meets $\{2,4\}$. Therefore
$$
|\mathcal F\setminus\mathcal F[\{3,5\}]|\leq|\mathcal F[X]\setminus\mathcal F[\{3,5\}]|+\binom{n-5}{k-4}\leq2\binom{n-4}{k-3}=\gamma_2(\mathcal F).
$$
This together with (\ref{equthmdivmax3}) yields 
$$
\left\{F\in\binom{[n]}k:F\cap\{1,3,5\}=\{1,5\},\;F\cap\{2,4\}\ne\emptyset\right\}
\subseteq\mathcal F.
$$
By combining this with (\ref{equthmdivmax2}), we obtain that
\begin{align*}
\mathcal{G}\setminus\mathcal{G}[\{1,5\}]\subseteq\left\{G\in\binom{[n]}{k}:G\cap[5]=[4],\;\mbox{or}\;1\notin G\;\mbox{and}\;\{2,4,5\}\subseteq G\right\}.
\end{align*}
Hence $$\gamma_2(\mathcal{G})\leq|\mathcal{G}\setminus\mathcal{G}[X]|\leq\binom{n-5}{k-4}+\binom{n-4}{k-3}<2\binom{n-4}{k-3}=\gamma_2(\mathcal{A}(n,k,2)).$$
This finishes the proof. {\hfill$\square$}\vspace{1em}

\noindent{\bf Proof of Theorem \ref{thmdiv}.}\;Set $r=(n-t)/(k-t)$ and $b=k-t+1$. By $n\geq7(k-t+1)k^2$,  
$$r=\frac{n-t}{k-t}\geq7k^2+\frac{7k^2-t}{k-t}\geq7k(k+1).$$

We may suppose that $(\mathcal{F},\mathcal{G})$ is maximal. By Lemma \ref{lemmabinom} (iii), we have
\begin{align*}
	|\mathcal{L}(n,k,t,u,v)|&>\binom{n-t}{k-t}-\binom{n-v-t}{k-t}\geq v\binom{n-t-1}{k-t-1}-\binom{v}{2}\binom{n-t-2}{k-t-2}\\&\geq\left(v-\binom{v}{2}/r\right)\binom{n-t-1}{k-t-1}=v\left(1-\frac{v-1}{2r}\right)\binom{n-t-1}{k-t-1}.
\end{align*}
Similarly, we have $|\mathcal{L}(n,k,t,v,u)|>u\left(1-\frac{u-1}{2r}\right)\binom{n-t-1}{k-t-1}$. So by our assumption, it follows that
\begin{equation}\label{equthmdiv0}
|\mathcal{F}|>v\left(1-\frac{v-1}{2r}\right)\binom{n-t-1}{k-t-1}\;\;\;\mbox{and}\;\;\;|\mathcal{G}|>u\left(1-\frac{u-1}{2r}\right)\binom{n-t-1}{k-t-1}.
\end{equation}
In particular, both of them have size larger than $2.9\binom{n-t-1}{k-t-1}$. 

\noindent{\bf Case 1.}\;$\max\{|\cap\mathcal{F}|,|\cap\mathcal{G}|\}\geq t$.

Suppose without loss of generality that $|\cap\mathcal{F}|\geq t$. Then $\tau_t(\mathcal{F})=t$ and of course $\gamma_t(\mathcal{F})=0$. If $|\cap\mathcal{G}|\geq t$, then we are done. So we suppose $\tau_t(\mathcal{G})\geq t+1$ and consider  $\gamma_t(\mathcal{G})$. Let $X\subseteq\cap\mathcal{F}$ with $|X|=t$. By the maximality, all $k$-subsets of $[n]$ containing $X$ belong to $\mathcal{G}$, implying that $|\mathcal{G}[X]|=\binom{n-t}{k-t}$. We claim that $\tau_t(\mathcal{G})=t+1$. To the contrary, assume that $\tau_t(\mathcal{G})\geq t+2$, then Lemma \ref{lemmakey} gives 
\begin{align*}
|\mathcal{F}|&\leq b^{\tau_t(\mathcal{G})-t}\binom{n-\tau_t(\mathcal{G})}{k-\tau_t(\mathcal{G})}\leq b^{\tau_t(\mathcal{G})-t}/r^{\tau_t(\mathcal{G})-t-1}\binom{n-t-1}{k-t-1}\leq\binom{n-t-1}{k-t-1}
\end{align*}
as $r\geq b^2$. This contradicts that $|\mathcal{F}|>\binom{n-t}{k-t}-\binom{n-v-t}{k-t}>(v-0.1)\binom{n-t-1}{k-t-1}$. Hence our claim is true. Now we consider the family $\mathcal{S}^*$ of $t$-covers of $\mathcal{G}$ with size $t+1$. Since   $\mathcal{F}$ and $\mathcal{G}$ are maximal and $X\subseteq\cap\mathcal{F}$, we have $X\subseteq\cap\mathcal{S}^*$, and so $\mathcal{S}^*$ is a sunflower with kernel $X$. For simplicity, we write $S=X\cup\{w_S\}$ for $S\in\mathcal{S}^*$. For all $G\in\mathcal{G}\setminus\mathcal{G}[X]$, it holds that $|X\cap G|=t-1$ and $\{w_S:S\in\mathcal{S}^*\}\subseteq G$, and then
\begin{equation}\label{equthmdiv1}
\gamma_t(\mathcal{G})\leq|\mathcal{G}\setminus\mathcal{G}[X]|\leq t\binom{n-s^*-t}{k-s^*-t+1},
\end{equation}
where $s^*:=|\mathcal{S}^*|$. 

To proceed, we estimate the size of $\mathcal{F}$. Since $X\subseteq\cap\mathcal{F}$, we have $\mathcal{F}=\mathcal{F}[\mathcal{S}^*]\cup(\mathcal{F}[X]\setminus\mathcal{F}[\mathcal{S}^*])$. By the maximality, $|\mathcal{F}[\mathcal{S}^*]|=\binom{n-t}{k-t}-\binom{n-t-s^*}{k-t}$. For all $F\in\mathcal{F}[X]\setminus\mathcal{F}[\mathcal{S}^*]$, we have $F\cap(\cap\mathcal{S}^*)=X$ and hence   $F\cap\{w_S:S\in\mathcal{S}^*\}=\emptyset$. Fix  $G_0\in\mathcal{G}\setminus\mathcal{G}[X]$, and set $H_0=G_0\setminus(\cup\mathcal{S}^*)$. Then  $F\cap H_0\neq\emptyset$, and hence $$\mathcal{F}[X]\setminus\mathcal{F}[\mathcal{S}^*]\subseteq\bigcup_{w\in H_0}\mathcal{F}[X\cup\{w\}].$$ Note that $|H_0|=k-t+1-s^*$. For each $w\in H_0$, the $(t+1)$-subset $X\cup\{w\}$ is not a $t$-cover of $\mathcal{G}$ as $w\notin\cup\mathcal{S}^*$, then we get from Lemma \ref{lemmaind} that  $|\mathcal{F}[X\cup\{w\}]|\leq(k-t+1)\binom{n-t-2}{k-t-2}$. Therefore, if $s^*\leq v-1$, then from $r\geq7b^2$, 
\begin{align*}
|\mathcal{F}|&\leq s^*\binom{n-t-1}{k-t-1}+(b-s^*)b\binom{n-t-2}{k-t-2}\\
&\leq(s^*+(b-s^*)b/r)\binom{n-t-1}{k-t-1}<(v-0.1)\binom{n-t-1}{k-t-1},
\end{align*}
which contradicts (\ref{equthmdiv0}). Hence $s^*\geq v$, and particularly  $\gamma_t(\mathcal{G})<t\binom{n-v-t}{k-v-t+1}$ when $s^*\geq v+1$. Indeed, we have further that  (\ref{equthmdiv1}) holds strictly when $s^*=v$. To see this, note that if $|\mathcal{G}\setminus\mathcal{G}[X]|=t\binom{n-s^*-t}{k-s^*-t+1}$, then $\mathcal{G}$ contains every  $k$-subset that intersects $\cup\mathcal{S}^*$ in all but exactly one element of $X$. It follows that every  $F\in\mathcal{F}$ must intersect $(\cup\mathcal{S}^*)\setminus X=\{w_S:S\in\mathcal{S}^*\}$, and thus contain some set from $\mathcal{S}^*$. Hence $\mathcal{F}[X]\setminus\mathcal{F}[\mathcal{S}^*]=\emptyset$ and consequently $|\mathcal{F}|=|\mathcal{F}[\mathcal{S}^*]|=\binom{n-t}{k-t}-\binom{n-t-v}{k-t}$, which is impossible as $|\mathcal{F}|$ is larger than that quantity. 

\noindent{\bf Case 2.}\;$\max\{|\cap\mathcal{F}|,|\cap\mathcal{G}|\}<t$.

Set $\alpha=b/r$, $\varepsilon=e/(7k)$ and $\delta=9\binom{t+2}{2}/((1-\varepsilon)r)$. Note that $\varepsilon r\geq ek$. It is routine to check from $k\geq t+2\geq4$ that $\delta<0.73$. Perform the algorithm to the pair $(\mathcal{F},\mathcal{G})$, the uniformity $q:=k$ and the fingerprint $(\mathcal{S}_0,\mathcal{T}_0):=(\mathcal{M}(\mathcal{G}),\mathcal{M}(\mathcal{F}))$. 

\noindent{\bf Case 2.1.\;}$N\leq q-t-1$. 

If $N=q-t-2$, then by Lemmas \ref{lemmafingerprintproperty} (iii) and \ref{lemmafin-stru0} (i), 
\begin{align*}
	|\mathcal{F}|&\leq\left|\bigcup_{i=0}^{N}\mathcal{\mathcal{F}}[\mathcal{X}_i]\right|\leq\delta\binom{n-t-1}{k-t-1}<0.73\binom{n-t-1}{k-t-1}, 
\end{align*} 
which is impossible as $|\mathcal{F}|>2.9\binom{n-t-1}{k-t-1}$. So $N=q-t-1$. Now by Lemma \ref{lemmafin-stru1}, one of the following holds.
\begin{itemize}
\item[\rm(a)]$\mathcal{F}=\mathcal{G}\cong\mathcal{A}(n,k,t)$.
\item[\rm(b)]$\min\{|\mathcal{F}|,|\mathcal{G}|\}\leq(\delta+tb/r+2)\binom{n-t-1}{k-t-1}$.
\item[\rm(c)]$\mathcal{S}_{q-t-1}=\mathcal{X}_{q-t-1}$ and $\mathcal{T}_{q-t-1}=\mathcal{Y}_{q-t-1}$, and  $(\mathcal{S}_{q-t-1},\mathcal{T}_{q-t-1})$ conforms to some pair described in Lemma \ref{lemmastrue-alike}.
\end{itemize}
First, (b) does not happen as $r\geq7k^2$ and $\delta<0.73$ yields $\delta+tb/r+2<2.9$. Suppose (a) holds, then certainly $|\mathcal{A}(n,k,t)|\geq\max\{|\mathcal{L}(n,k,t,u,v)|,|\mathcal{L}(n,k,t,v,u)|\}$. Let us verify that this holds only for $u,v\leq t+2$. Set by symmetry that  $v\geq t+3$, then from  $|\mathcal{A}(n,k,t)|=\binom{n-t}{k-t}-\binom{n-t-2}{k-t}+t\binom{n-t-2}{k-t-1}$, we obtain that 
\begin{align*}
	|\mathcal{L}(n,k,t,u,v)|-|\mathcal{A}(n,k,t)|&\geq\binom{n-t-2}{k-t}-\binom{n-t-v}{k-t}-t\binom{n-t-2}{k-t-1}+t\binom{n-u-t}{k-u-t+1}\\
	&\geq(v-t-2)\binom{n-t-3}{k-t-1}-t\binom{n-t-3}{k-t-2}-\binom{v-2}{2}\binom{n-t-4}{k-t-2}\\
	&>\left((v-t-2)-\frac{t(k-t-1)}{n-k-1}-\binom{v-2}{2}/r\right)\binom{n-t-3}{k-t-1}>0,
\end{align*}
where in the second step, we used Lemma \ref{lemmabinom} (iii) to derive
$$\binom{n-t-2}{k-t}-\binom{n-t-v}{k-t}\geq(v-2)\binom{n-t-3}{k-t-1}-\binom{v-2}{2}\binom{n-t-4}{k-t-2}.$$
To see the last inequality, note that $r=(n-t)/(k-t)\geq7(k-t+1)k$, then the factor of $\binom{n-t-3}{k-t-1}$ is larger than $1-1/(k-t)-1/2$ and hence positive. Hence (a) occurs only when $u,v\leq t+2$.

It remains to consider whether (c) holds. Write $\mathcal{S}_{q-t-1}=\mathcal{S}$ and $\mathcal{T}_{q-t-1}=\mathcal{T}$ for simplicity. By Lemma \ref{lemmafingerprintproperty} (iii) again, we get
$$\mathcal{F}\subseteq\left(\bigcup_{i=0}^{q-t-2}\mathcal{F}[\mathcal{X}_i]\right)\cup\mathcal{F}[\mathcal{S}].$$
Hence a uniform bound and Lemma \ref{lemmafin-stru0} (i) yield $|\mathcal{F}|\leq(\delta+|\mathcal{S}|)\binom{n-t-1}{k-t-1}$. The same argument gives $|\mathcal{G}|\leq(\delta+|\mathcal{T}|)\binom{n-t-1}{k-t-1}$. So necessarily $|\mathcal{S}|,|\mathcal{T}|\geq3$, and then Lemma \ref{lemmastrue-alike} gives $|\mathcal{S}|=|\mathcal{T}|=3$. However,  $\max\{u,v\}\geq4$ and (\ref{equthmdiv0}) yields $\max\{|\mathcal{F}|,|\mathcal{G}|\}>3.9\binom{n-t-1}{k-t-1}$. Hence (c) does not hold as $\delta<0.73$. 

\noindent{\bf Case 2.2.}\;$N=q-t$.

By Lemma \ref{lemmafin-stru2}, there exist $X\in\binom{[n]}{t}$ and $j\in[q-t-1]$ with $j=\min\{i\in[q-t-1]:\mathcal{S}_{i}=\mathcal{T}_{i}=\{X\}\}$. If $(\tau_t(\mathcal{F}),\tau_t(\mathcal{G}))\neq(t+1,t+1)$, then Lemma \ref{lemmafin-stru2} (i) gives  $\min\{|\mathcal{F}|,|\mathcal{G}|\}\leq(\alpha b+\delta)\binom{n-t-1}{k-t-1}$, where $\alpha b+\delta<2.9$ as $\alpha b=b^2/r<1/7$ and $\delta<0.73$, which also contradicts (\ref{equthmdiv0}). So we deduce that
$$(\tau_t(\mathcal{F}),\tau_t(\mathcal{G}))=(t+1,t+1).$$
It follows that both $\mathcal{S}^*$ and $\mathcal{T}^*$ are non-empty. Since $q-(j-1)\geq t+2$, no member of $\mathcal{S}^*$ is removed in the procedure. Then each of them contains a member of $\mathcal{S}_{j-1}\setminus\mathcal{X}_{j-1}$ as a subset. This yields $X\subseteq\cap\mathcal{S}^*$. Similarly, we get $X\subseteq\cap\mathcal{T}^*$. We observe that $|W\cap X|\geq t-1$ for all  $W\in(\mathcal{F}\setminus\mathcal{F}[X])\cup(\mathcal{G}\setminus\mathcal{G}[X])$. Indeed, assume without loss of generality that there exists $W\in\mathcal{F}\setminus\mathcal{F}[X]$ such that $|W\cap X|\leq t-2$, then by the same argument as we deduced (\ref{equthmdivanalog}), and using (\ref{equbinom'}) and $\alpha=b/r$, we obtain that $$|\mathcal{G}[\mathcal{T}_{j-1}\setminus\mathcal{Y}_{j-1}]|\leq|\mathcal{G}[X]|\leq\alpha(\alpha+1)\left(\binom{n-t}{k-t}-\binom{n-k-1}{k-t}\right).$$
This leads to $|\mathcal{G}[\mathcal{T}_{j-1}\setminus\mathcal{Y}_{j-1}]|<2\binom{n-t-1}{k-t-1}$, and hence we get  $|\mathcal{G}|<(2+\delta)\binom{n-t-1}{k-t-1}<2.9\binom{n-t-1}{k-t-1}$, which is impossible. 

Next, we claim that $s^*,t^*\geq3$. To prove this, it is sufficient to verify (\ref{equlemmafin-stru22}) displayed in Lemma \ref{lemmafin-stru2}. Since $\delta<0.73, \alpha=b/r<1/(7k)<0.036$ and $\alpha b=b^2/r<1/7$, the factor $C=(1+\alpha)\left(\alpha+\frac{2(1-\alpha)+\delta}{b}\right)$ there satisfies
$$Cb<(1+\alpha)(\alpha b+2+\delta)<1.036\cdot2.873<2.977<v\left(1-\frac{v-1}{2r}\right).$$
The last inequality holds trivially for $v\geq4$. When $v=3$, we used $r\geq7k(k+1)\geq140$. Therefore, from (\ref{equthmdiv0}) and Lemma \ref{lemmabinom} (ii), $$|\mathcal{F}|>v\left(1-\frac{v-1}{2r}\right)\binom{n-t-1}{k-t-1}>Cb\binom{n-t-1}{k-t-1}\geq C\left(\binom{n-t}{k-t}-\binom{n-k-1}{k-t}\right),$$
and so (\ref{equlemmafin-stru22}) holds. Hence our claim is true. We now adapt the argument of Case 1 to describe the structure of $\mathcal{F}$ and $\mathcal{G}$. First, $\mathcal{S}^*$ is a sunflower with kernel $X$. Write $V=\cup\mathcal{S}^*$ for short. Note that $|V|=s^*+t$. For all $G\in\mathcal{G}\setminus\mathcal{G}[X]$, we have $|G\cap X|=t-1$, and then  
\begin{equation*}
\mathcal{G}\setminus\mathcal{G}[X]\subseteq\left\{G\in\binom{[n]}{k}:G\cap V=V\setminus\{x\}\;\text{for some}\;x\in X\right\}.
\end{equation*}
Hence $\gamma_t(\mathcal{G})\leq t\binom{n-s^*-t}{k-s^*-t+1}$.

 For $\mathcal{F}$, by the maximality, $\mathcal{F}[\mathcal{S}^*]$ consists of all subsets from $\binom{[n]}{k}$ that contains some member of $\mathcal{S}^*$. By the structure of $\mathcal{S}^*$, 
\begin{equation}\label{equthmdiv7}
\mathcal{F}[\mathcal{S}^*]=\left\{F\in\binom{[n]}{k}:X\subseteq F,\;F\cap(V\setminus X)\neq\emptyset\right\}.
\end{equation}
Thus $|\mathcal{F}[\mathcal{S}^*]|=\binom{n-t}{k-t}-\binom{n-t-s^*}{k-t}$. In addition,
\begin{equation*}
\mathcal{F}[X]\setminus\mathcal{F}[\mathcal{S}^*]\subseteq\left\{F\in\binom{[n]}{k}:F\cap V=X, F\cap(G\setminus V)\neq\emptyset\;\mbox{for all}\;G\in\mathcal{G}\setminus\mathcal{G}[X]\right\}.
\end{equation*}
 The same argument for $\mathcal{G}$ yields, by setting $U=\cup\mathcal{T}^*$,
\begin{equation}\label{equthmdiv8}
	\mathcal{F}\setminus\mathcal{F}[X]\subseteq\left\{F\in\binom{[n]}{k}:F\cap U=U\setminus\{x\}\;\text{for some}\;x\in X\right\}.
\end{equation}
Then we deduce $s^*\geq v$. To see this, note that $t^*\geq3$ yields $\binom{n-t^*-t}{k-t^*-t+1}\leq\binom{n-t-3}{k-t-2}$, and then 
\begin{align*}
	|\mathcal{F}|&\leq(s^*+(b-s^*)b/r)\binom{n-t-1}{k-t-1}+t\binom{n-t^*-t}{k-t^*-t+1}\\
	&<(v-1+(b^2+t)/r)\binom{n-t-1}{k-t-1}<(v-0.1)\binom{n-t-1}{k-t-1} 
\end{align*}
for $s^*\leq v-1$, which is impossible. It follows that $$\gamma_t(\mathcal{G})\leq|\mathcal{G}\setminus\mathcal{G}[X]|\leq t\binom{n-s^*-t}{k-s^*-t+1}\leq t\binom{n-v-t}{k-v-t+1}.$$ Similarly, we have $t^*\geq u$ and $\gamma_t(\mathcal{F})\leq|\mathcal{F}\setminus\mathcal{F}[X]|\leq t\binom{n-u-t}{k-u-t+1}$.

Finally, let us characterize equality. Assume without loss of generality that $\gamma_t(\mathcal{F})=t\binom{n-u-t}{k-u-t+1}$. Then $t^*=u$ and $\gamma_t(\mathcal{F})=|\mathcal{F}\setminus\mathcal{F}[X]|$, and consequently the two families in (\ref{equthmdiv8}) are the same. This yields 
$$\mathcal{F}\supseteq\mathcal{L}(X,U,V),$$
 and then $\mathcal{G}[X]\setminus\mathcal{G}[\mathcal{T}^*]=\emptyset$. 
Hence
\begin{equation}\label{equthmdiv9}
|\mathcal{G}|=|\mathcal{G}[\mathcal{T}^*]|+|\mathcal{G}\setminus\mathcal{G}[X]|\leq\binom{n-t}{k-t}-\binom{n-t-t^*}{k-t}+t\binom{n-s^*-t}{k-s^*-t+1}.
\end{equation}
Then from  $|\mathcal{G}|\geq\binom{n-t}{k-t}-\binom{n-u-t}{k-t}+t\binom{n-v-t}{k-v-t+1}$ and $u=t^*$, we obtain that $v\geq s^*$, and thus $v=s^*$. Moreover, we have equality in (\ref{equthmdiv9}), and this yields
\begin{equation*}
\mathcal{G}=\mathcal{L}(X,V,U).
\end{equation*} 
Hence $|U\cap V|\geq t+2$ as  $\mathcal{F}\supseteq\mathcal{L}(X,U,V)$. Now $(\mathcal{L}(X,U,V), \mathcal{L}(X,V,U))$ forms a pair of maximal cross $t$-intersecting families, and consequently $\mathcal{F}=\mathcal{L}(X,U,V)$. Similarly, once $\gamma_t(\mathcal{G})=t\binom{n-v-t}{k-v-t+1}$, then $(t^*,s^*)=(u,v)$, $|U\cap V|\geq t+2$ and $(\mathcal{F}, \mathcal{G})=(\mathcal{L}(X,U,V), \mathcal{L}(X,V,U))$. This finishes the proof. {\hfill$\square$}\vspace{1em}

We treat the case of $u=v=3$ in the following proposition.
\begin{proposition}\label{prop}
	Let $k\geq t+2\geq4$, $n\geq7(k-t+1)k^2$, and let  $\mathcal{F},\mathcal{G}\subseteq\binom{[n]}{k}$ be cross $t$-intersecting. Suppose that $\min\{|\mathcal{F}|,|\mathcal{G}|\}\geq|\mathcal{L}(n,k,t,3,3)|$, then one of the following holds.
	\begin{itemize}
		\item[\rm(i)]$\mathcal{F}, \mathcal{G}\subseteq\mathcal{A}(Z)$ for some $Z\in\binom{[n]}{t+2}$.
		\item[\rm(ii)]
		$\max\{\gamma_t(\mathcal{F}),\gamma_t(\mathcal{G})\}\leq\gamma_t(\mathcal{L}(n,k,t,3,3))$. Moreover, if $\gamma_t(\mathcal{F})=\gamma_t(\mathcal{L}(n,k,t,3,3))$ or $\gamma_t(\mathcal{G})=\gamma_t(\mathcal{L}(n,k,t,3,3))$, then $\mathcal{F}=\mathcal{L}(X,U,V)$ and $\mathcal{G}=\mathcal{L}(X,V,U)$ for some  $X\in\binom{[n]}{t}$, $U\in\binom{[n]}{t+3}$ and $V\in\binom{[n]}{t+3}$ with $X\subseteq U\cap V$ and $|U\cap V|\geq t+2$.
		\item[\rm(iii)]$(\mathcal{F},\mathcal{G})\cong(\mathcal{C}_1,\mathcal{C}_2)$, where
		$$\mathcal{C}_i=\left\{F\in\binom{[n]}{k}:T\subseteq F\;\mbox{for some}\;T\in\mathcal{T}_i,\;\mbox{or}\;[t+3]\setminus F=\{j\}\;\mbox{for some}\;j\in[5,t+3]\right\}$$
		for $i=1,2$, and $\mathcal{T}_1=\{[5,t+3]\cup P:P\in\{\{1,2\},\{3,4\},\{1,4\}\}\}$ and $\mathcal{T}_2=\{[5,t+3]\cup P:P\in\{\{1,3\},\{2,4\},\{1,4\}\}\}$.
	\end{itemize}
\end{proposition}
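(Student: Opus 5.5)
We follow the proof of Theorem \ref{thmdiv} step by step, with $u=v=3$, and isolate the single place where the hypothesis $(u,v)\neq(3,3)$ was used. As there, we may assume $(\mathcal{F},\mathcal{G})$ is maximal. This is harmless for (i) and (ii), because $\gamma_t$ is monotone under inclusion and subfamilies of $\mathcal{A}(Z)$ remain subfamilies of $\mathcal{A}(Z)$. For (iii) we must check at the end that the configuration found is already maximal, so that the original pair coincides with its maximal extension. By (\ref{equthmdiv0}) we then have $\min\{|\mathcal{F}|,|\mathcal{G}|\}>3(1-1/r)\binom{n-t-1}{k-t-1}>2.9\binom{n-t-1}{k-t-1}$.

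Case 1 ($|\cap\mathcal{F}|\ge t$, say) goes through verbatim. We first get $\tau_t(\mathcal{G})=t+1$. The family $\mathcal{S}^*$ is then a sunflower with kernel $X$, and $s^*\ge3$. This gives $\gamma_t(\mathcal{G})\le t\binom{n-t-3}{k-t-2}$, and the strictness argument carries over unchanged. So (ii) holds, and equality does not occur.

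Case 2.2 ($N=q-t$) also transfers. We obtain $\tau_t(\mathcal{F})=\tau_t(\mathcal{G})=t+1$, and the bound $Cb<2.977<3(1-1/r)$ yields (\ref{equlemmafin-stru22}). Hence $s^*,t^*\ge3=u=v$, so $\gamma_t(\mathcal{F}),\gamma_t(\mathcal{G})\le t\binom{n-t-3}{k-t-2}$. The equality analysis of Theorem \ref{thmdiv} then produces $\mathcal{F}=\mathcal{L}(X,U,V)$ and $\mathcal{G}=\mathcal{L}(X,V,U)$ with $|U|=|V|=t+3$. In Case 2.1, $N\le q-t-2$ is again impossible. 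For $N=q-t-1$ we apply Lemma \ref{lemmafin-stru1}. Alternative (b) is excluded by size. Alternative (a), and more generally the case $\mathcal{S},\mathcal{T}\subseteq\binom{Z}{t+1}$, is handled as in the proof of Lemma \ref{lemmafin-stru1} (i): if $|\mathcal{S}^*|,|\mathcal{T}^*|\ge3$, we get $\mathcal{F},\mathcal{G}\subseteq\mathcal{A}(Z)$, which is outcome (i). Otherwise one family has size at most $(\delta+2+tb/r)\binom{n-t-1}{k-t-1}<2.9\binom{n-t-1}{k-t-1}$, a contradiction.

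The new work, and the main obstacle, is alternative (c) of Lemma \ref{lemmafin-stru1}, namely the case where $(\mathcal{S}_{q-t-1},\mathcal{T}_{q-t-1})$ is one of the pairs in Lemma \ref{lemmastrue-alike}. In Theorem \ref{thmdiv} this case was ruled out only because $\max\{u,v\}\ge4$. Now the bounds $|\mathcal{F}|\le(\delta+|\mathcal{S}|)\binom{n-t-1}{k-t-1}$ and its analogue for $\mathcal{G}$ force $|\mathcal{S}|,|\mathcal{T}|\ge3$. This leaves only pattern (i) of Lemma \ref{lemmastrue-alike}, with $|\mathcal{S}|=|\mathcal{T}|=3$, so after relabelling $I=[5,t+3]$, $\mathcal{S}=\mathcal{T}_1$ and $\mathcal{T}=\mathcal{T}_2$.

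Next we show that all six sets are genuine $t$-covers, that is, $\mathcal{S}^*=\mathcal{T}_1$ and $\mathcal{T}^*=\mathcal{T}_2$. Suppose, say, $S\in\mathcal{S}\setminus\mathcal{S}^*$. Then Lemma \ref{lemmaind} gives $|\mathcal{F}[S]|\le b\binom{n-t-2}{k-t-2}$, so $|\mathcal{F}|\le(2+\delta+b/r)\binom{n-t-1}{k-t-1}<2.9\binom{n-t-1}{k-t-1}$, a contradiction. Maximality then gives $\mathcal{F}\supseteq\mathcal{F}[\mathcal{T}_1]$, the full family of $k$-sets containing a member of $\mathcal{T}_1$, and similarly $\mathcal{G}\supseteq\mathcal{G}[\mathcal{T}_2]$. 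It remains to identify the rest of the families. Any $F\in\mathcal{F}$ containing no member of $\mathcal{T}_1$ must still $t$-intersect $\{I\cup\{1,3\},\,I\cup\{2,4\},\,I\cup\{1,4\}\}$. A short case check on $|F\cap I|$ shows this forces $|F\cap I|=t-2$ and $[4]\subseteq F$, i.e.\ $[t+3]\setminus F=\{j\}$ with $j\in[5,t+3]$; the same holds symmetrically for $\mathcal{G}$. Consequently $\mathcal{F}\subseteq\mathcal{C}_1$ and $\mathcal{G}\subseteq\mathcal{C}_2$.

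We then verify directly that $\mathcal{C}_1$ and $\mathcal{C}_2$ are cross $t$-intersecting. Maximality therefore gives $(\mathcal{F},\mathcal{G})=(\mathcal{C}_1,\mathcal{C}_2)$, which is outcome (iii), provided both $\mathcal{C}_i$ have size at least $|\mathcal{L}(n,k,t,3,3)|$; otherwise this case is excluded by the size hypothesis. The delicate point is this final size comparison together with the case check that no stray sets survive. We handle it by computing $|\mathcal{C}_i|=|\mathcal{F}[\mathcal{T}_1]|+(t-1)\binom{n-t-3}{k-t-2}$ exactly and comparing it with $|\mathcal{L}(n,k,t,3,3)|$.
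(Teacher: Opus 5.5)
Your proposal is correct and matches the paper's proof. As in the paper, the only new work is alternative (c) of Case 2.1 in the proof of Theorem \ref{thmdiv}: you force $|\mathcal{S}|=|\mathcal{T}|=3$ in pattern (i) of Lemma \ref{lemmastrue-alike}, show that all six sets are genuine $t$-covers, and check that every remaining member contains $[4]$ and misses exactly one element of $I$.

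The one step to make explicit is the count
$$|\mathcal{C}_i|=3\binom{n-t-1}{k-t-1}-2\binom{n-t-2}{k-t-2}+(t-1)\binom{n-t-3}{k-t-2}=|\mathcal{L}(n,k,t,3,3)|.$$
It is this count, together with the size hypothesis, that turns $\mathcal{F}\subseteq\mathcal{C}_1$ and $\mathcal{G}\subseteq\mathcal{C}_2$ into equality for the original, possibly non-maximal, pair. Maximality of $(\mathcal{C}_1,\mathcal{C}_2)$ alone does not give this, and the paper uses exactly the count.
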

\begin{proof}
We adapt the notation in the proof of Theorem \ref{thmdiv}, and further set $u=v=3$. We need only to complete the item (c) in Case 2.1, and the proof of the remaining cases will be omitted as there is nothing new. Suppose (c) holds, that is, $(\mathcal{S},\mathcal{T})$ conforms to some pair described in Lemma \ref{lemmastrue-alike}. Our goal is to prove that (iii) holds. By (\ref{equthmdiv0}), we have $\max\{|\mathcal{F}|,|\mathcal{G}|\}\geq2.9\binom{n-t-1}{k-t-1}$. This together with Lemma \ref{lemmastrue-alike},  $|\mathcal{F}|\leq(\delta+|\mathcal{S}|)\binom{n-t-1}{k-t-1}$ and $|\mathcal{G}|\leq(\delta+|\mathcal{T}|)\binom{n-t-1}{k-t-1}$ yields $|\mathcal{S}|=|\mathcal{T}|=3$, and there are a $(t-1)$-subset $I$ and four elements outside it, say  $1,2,3,4$ after relabeling, such that
\begin{equation}\label{equthmdiv2}
	\mathcal{S}=\{I\cup P:P\in\{\{1,2\},\{3,4\},\{1,4\}\}\}\;\;\mbox{and}\;\;\mathcal{T}=\{I\cup Q:Q\in\{\{1,3\},\{2,4\},\{1,4\}\}\}.
\end{equation}
If some member of $\mathcal{T}$ is not a $t$-cover of $\mathcal{F}$, then Lemma \ref{lemmaind} yields $|\mathcal{F}|<(\delta+2+b/r)\binom{n-t-1}{k-t-1}$, which contradicts (\ref{equthmdiv0}). Hence $\mathcal{T}$ is a set of $t$-covers of $\mathcal{F}$, and similarly $\mathcal{S}$ is a set of $t$-covers of $\mathcal{G}$. Write $\mathcal{U}=\binom{[n]}{k}$ for short. Since $(\mathcal{S},\mathcal{T})$ is a pair of cross $t$-intersecting families and $(\mathcal{F},\mathcal{G})$ is maximal, we have
\begin{align*}
	\mathcal{F}&\supseteq\mathcal{U}[\mathcal{S}]\;\;\mbox{and}\;\;\mathcal{G}\supseteq\mathcal{U}[\mathcal{T}].
\end{align*}
By a routine counting argument, we get 
$$|\mathcal{U}[\mathcal{S}]|=|\mathcal{U}[\mathcal{T}]|=3\binom{n-t-1}{k-t-1}-2\binom{n-t-2}{k-t-2}=|\mathcal{L}(n,k,t,3,3)|-(t-1)\binom{n-t-3}{k-t-2}.$$ 
Since $|\mathcal{F}|\geq|\mathcal{L}(n,k,t,3,3)|$, we have $\mathcal{F}\setminus\mathcal{U}[\mathcal{S}]\neq\emptyset$. We claim that $\mathcal{F}\setminus\mathcal{U}[\mathcal{S}]=\mathcal{F}\setminus\mathcal{F}[I]$. Indeed, assume to the contrary that $I\subseteq F$ for some $F\in\mathcal{F}\setminus\mathcal{U}[\mathcal{S}]$. Then $F$ contains none of $\{1,2\},\{3,4\}$ and $\{1,4\}$. Note that $I\cup\{2,4\}\in\mathcal{T}$ is a $t$-cover of $\mathcal{F}$, we have $\{2,4\}\cap F\neq\emptyset$. It follows that $1\notin F$. However, since $F$ also intersects both $\{1,3\}$ and $\{1,4\}$, this leads to $\{3,4\}\subseteq F$, which is a contradiction. Hence our claim is true. Now every member of $\mathcal{F}\setminus\mathcal{U}[\mathcal{S}]$ does not contain $I$, and then each of them must contain $\{1,2,3,4\}$ and all but exactly one element of $I$. Hence
\begin{align*}
	|\mathcal{L}(n,k,t,3,3)|&\leq|\mathcal{F}|=|\mathcal{U}[\mathcal{S}]|+|\mathcal{F}\setminus\mathcal{F}[I]|\\
	&\leq|\mathcal{U}[\mathcal{S}]|+(t-1)\binom{n-t-3}{k-t-2}=|\mathcal{L}(n,k,t,3,3)|,
\end{align*}
and consequently $|\mathcal{F}|=|\mathcal{L}(n,k,t,3,3)|$ and
\begin{equation*}
	\mathcal{F}=\mathcal{U}[\mathcal{S}]\cup\left\{F\in\mathcal{U}:|I\setminus F|=1,\;[4]\subseteq F\right\}.
\end{equation*}
By the same argument, we obtain $|\mathcal{G}|=|\mathcal{L}(n,k,t,3,3)|$ and $$\mathcal{G}=\mathcal{U}[\mathcal{T}]\cup\left\{G\in\mathcal{U}:|I\setminus G|=1,\;[4]\subseteq G\right\}.$$
Finally, it is routine to check that
\begin{align*}
\gamma_t(\mathcal{F})&=\gamma_t(\mathcal{G})=|\mathcal{G}\setminus\mathcal{G}[I\cup\{1\}]|=|\mathcal{U}[I\cup\{2,4\}]\setminus\mathcal{U}[\{1\}]|+|\mathcal{G}\setminus\mathcal{U}[\mathcal{T}]|\\
&=\binom{n-t-2}{k-t-1}+(t-1)\binom{n-t-3}{k-t-2}>t\binom{n-t-3}{k-t-2}=\gamma_t(\mathcal{L}(n,k,t,3,3)).
\end{align*}
This finishes the proof.
\end{proof}

\noindent{\bf Proof of Theorem \ref{thmstability-t-div}.}\;Let $n\geq7(k-t+1)k^2$ and $d\geq2$, and let $\mathcal{F}\subseteq\binom{[n]}{k}$ be a $t$-intersecting family with 
\begin{align}
	|\mathcal{F}|&\geq\binom{n-t}{k-t}-\binom{n-t-d}{k-t}+t\binom{n-t-d}{k-t-d+1}.\label{equthmstability-t-div2}
\end{align}
Recall that the right-hand side of \eqref{equthmstability-t-div2} equals $|\mathcal{L}(n,k,t,d,d)|$ for $3\leq d\leq k-t+1$. 

For $d=2$, we get directly from Corollary \ref{corothmmaxdiv} that $$\gamma_t(\mathcal{F})\leq t\binom{n-t-2}{k-t-1}=\gamma_t(\mathcal{A}(n,k,t))=\gamma_t(\mathcal{L}(n,k,t,2,2)),$$
as $\mathcal{A}(n,k,t)=\mathcal{L}(n,k,t,2,2)$. Moreover, if equality holds, then $\mathcal{F}\subseteq\mathcal{A}(Z)$ for some $Z\in\binom{[n]}{t+2}$, and further $\mathcal{F}\cong\mathcal{A}(n,k,t)$ as $|\mathcal{F}|\geq|\mathcal{L}(n,k,t,2,2)|=|\mathcal{A}(n,k,t)|$. 

For $d\in[3,k-t+1]$, the desired result follows directly from Theorem \ref{thmdiv} and Proposition \ref{prop} by setting $\mathcal{G}=\mathcal{F}$.

It remains to suppose $d\geq k-t+2$. Now (\ref{equthmstability-t-div2}) yields
\begin{equation}\label{equthmstability-t-div3}
|\mathcal{F}|\geq\binom{n-t}{k-t}-\binom{n-k-2}{k-t}=|\mathcal{H}(n,k,t)|+\left(\binom{n-k-2}{k-t-1}-t\right)>|\mathcal{H}(n,k,t)|.
\end{equation}
If $\mathcal{F}$ is trivial, then $\gamma_t(\mathcal{F})=0=t\binom{n-t-d}{k-t-d+1}$ and we have done. So we assume that $\mathcal{F}$ is non-trivial. Then $\gamma_t(\mathcal{F})>t$. To see this, assume to the contrary that $|\mathcal{F}\setminus\mathcal{F}[X]|=\gamma_t(\mathcal{F})\leq t$ for some $X\in\binom{[n]}{t}$. By fixing a $G\in\mathcal{F}$ with $|G\cap X|<t$ and using Lemma \ref{lemmaind}, we obtain $$|\mathcal{F}|=|\mathcal{F}[X]|+|\mathcal{F}\setminus\mathcal{F}[X]|\leq\left(\binom{n-t}{k-t}-\binom{n-k-1}{k-t}\right)+t=|\mathcal{H}(n,k,t)|,$$
which contradicts (\ref{equthmstability-t-div3}). Hence $\gamma_t(\mathcal{F})>t$. We apply Theorem \ref{thmdiv} for $k\geq t+3$ to the pair  $(\mathcal{F},\mathcal{F})$ with $u=v=k-t+1$; for $k=t+2$, we apply Proposition \ref{prop} to $(\mathcal{F},\mathcal{F})$. Note that alternative (iii) of Proposition \ref{prop} cannot
occur, since the two components of $(\mathcal{F},\mathcal{F})$
are identical. Since $|\mathcal{F}|>|\mathcal{H}(n,k,t)|=|\mathcal{L}(n,k,t,u,v)|$ and $\gamma_t(\mathcal{F})>t$, we have $k-t+1\leq t+2$ and $\mathcal{F}\subseteq\mathcal{A}(Z)$ for some $(t+2)$-subset $Z$. We need to verify  $d\leq t+2$. Assume for contradiction that $d\geq t+3$. Then $\binom{n-t}{k-t}-\binom{n-t-d}{k-t}\geq \binom{n-t}{k-t}-\binom{n-2t-3}{k-t}$, and further, by Lemma \ref{lemmabinom} (iii), it is at least 
\begin{align*}
\left((t+3)-\binom{t+3}{2}\cdot\frac{k-t}{n-t}\right)\binom{n-t-1}{k-t-1}>(t+2)\binom{n-t-1}{k-t-1}.
\end{align*}
So $|\mathcal{F}|\geq\binom{n-t}{k-t}-\binom{n-t-d}{k-t}>(t+2)\binom{n-t-1}{k-t-1}$. On the other hand, $|\mathcal{F}|\leq|\mathcal{A}(Z)|<(t+2)\binom{n-t-1}{k-t-1}$, a contradiction. Thus $d\leq t+2$, finishing the proof.{\hfill$\square$}
\section{Concluding remarks}\label{secrmk}
\subsection{Applications of Theorem \ref{thmcrossekr}}
Let us derive two Erd\H{o}s--Ko--Rado type results for sufficiently spread objects.

First, we give an Erd\H{o}s--Ko--Rado type theorem for signed sets. A \emph{$q$-signed $k$-set} on $[n]$ is a pair $(A,f)$, where $A\in\binom{[n]}{k}$ and $f$ is a function from $A$ to $[q]$. We denote by $\mathcal{S}_{n,k,q}$ the set of such pairs. A family $\mathcal{F}\subseteq\mathcal{S}_{n,k,q}$ is said to be $t$-intersecting if for all $(A,f),(B,g)\in\mathcal{F}$, there are at least $t$ elements in $A\cap B$ on which $f$ and $g$ agree. Similarly, we may define cross $t$-intersecting families for signed sets. An analogue of Erd\H{o}s--Ko--Rado theorem for signed sets states that, for all $n\geq k$ and $q\geq2$, a $1$-intersecting family of $q$-signed $k$-sets on $[n]$ has size at most $\binom{n-1}{k-1}q^{k-1}$, and if $(n,q)\neq(k,2)$, then every optimal family consists of all $(A,f)$ with $x\in A$ and $f(x)=y$ for some fixed $x\in[n]$ and $y\in[q]$. The result was proved by Deza and Frankl \cite{Deza-Frankl-1983} based on the shifting technique, and by Bollob\'{a}s and Leader \cite{Bollobas-Leader} based on Katona's cycle. For  results on signed sets, see e.g.,  \cite{Borg-2009,Borg-Leader,Yao-Lv-Wang}.  Here is a set interpretation to signed sets, namely, associated a $q$-signed $k$-set to its `graph'  $\{(a,f(a)):a\in A\}$, which is a $k$-subset of $\Omega=[n]\times[q]$. Thus we may identify $\mathcal{S}_{n,k,q}$ with a family $\mathcal{A}\subseteq\binom{\Omega}{k}$. Then for each $t$-subset $X\subseteq\Omega$, the family $\mathcal{A}[X]$ is either empty or has size $\binom{n-t}{k-t}q^{k-t}$. Hence $\mathcal{A}$ is weakly $(r,t)$-spread for all  $r\leq q(n-t)/(k-t)$, and so we obtain immediately the following result by applying Theorem \ref{thmcrossekr}.
\begin{theorem}
	Suppose $n>k\geq\ell\geq t\geq1$ and $q\geq2$. The following hold.
	\begin{itemize}
		\item[\rm(i)]Suppose $n\geq t+2e(k-t)k/q$. If $\mathcal{F}\subseteq\mathcal{S}_{n,k,q}$ is $t$-intersecting, then $|\mathcal{F}|\leq\binom{n-t}{k-t}q^{k-t}$, with equality if and only if $\mathcal{F}=\{(A,f)\in\mathcal{S}_{n,k,q}:X\subseteq A,\;f|_X=f_0\}$ for some $X\in\binom{[n]}{t}$ and $f_0:[n]\to[q]$.
		\item[\rm(ii)]Suppose $n\geq t+2e\cdot\max\{(t+2)^2(k-t),k^2\}/q$. If $\mathcal{F}\subseteq\mathcal{S}_{n,k,q}$ and $\mathcal{G}\subseteq\mathcal{S}_{n,\ell,q}$ are cross $t$-intersecting, then $|\mathcal{F}||\mathcal{G}|\leq\binom{n-t}{k-t}\binom{n-t}{\ell-t}q^{k+\ell-2t}$, with equality if and only if $\mathcal{F}=\{(A,f)\in\mathcal{S}_{n,k,q}:X\subseteq A,\;f|_X=f_0\}$ and $\mathcal{G}=\{(B,g)\in\mathcal{S}_{n,\ell,q}:X\subseteq B,\;g|_X=f_0\}$ for some $X\in\binom{[n]}{t}$ and $f_0:[n]\to[q]$.
	\end{itemize}
\end{theorem}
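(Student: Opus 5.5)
This is a direct application of Theorem \ref{thmcrossekr}, via the identification of $q$-signed sets with subsets of $\Omega=[n]\times[q]$ already described. Map each $(A,f)\in\mathcal{S}_{n,k,q}$ to its graph $\{(a,f(a)):a\in A\}\in\binom{\Omega}{k}$. Call the image $\mathcal{A}$, and let $\mathcal{B}\subseteq\binom{\Omega}{\ell}$ be the image of $\mathcal{S}_{n,\ell,q}$. Two signed sets agree on at least $t$ common points exactly when their graphs share at least $t$ elements of $\Omega$. So $t$-intersecting (respectively cross $t$-intersecting) families of signed sets correspond to $t$-intersecting (respectively cross $t$-intersecting) subfamilies of $\mathcal{A}$ (respectively of $\mathcal{A}$ and $\mathcal{B}$).

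First I compute the degrees. Call a subset $Y\subseteq\Omega$ admissible if its first coordinates are distinct, i.e.\ it is the graph of a partial function. If $Y$ is not admissible, then $\mathcal{A}[Y]=\emptyset$. If $Y$ is admissible with $|Y|=s\le k$, then $|\mathcal{A}[Y]|=\binom{n-s}{k-s}q^{k-s}$. Hence $\Delta_t(\mathcal{A})=\binom{n-t}{k-t}q^{k-t}$, and for every $s\ge0$,
$$\Delta_{t+s}(\mathcal{A})=\binom{n-t-s}{k-t-s}q^{k-t-s}\le\Big(\tfrac{k-t}{n-t}\Big)^s q^{-s}\Delta_t(\mathcal{A}),$$
using $\binom{n-t-s}{k-t-s}\le((k-t)/(n-t))^s\binom{n-t}{k-t}$. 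This bound holds trivially once $t+s>k$. Therefore $\mathcal{A}$ is weakly $(r,t)$-spread for $r=q(n-t)/(k-t)$. The same computation shows $\mathcal{B}$ is weakly $(r',t)$-spread with $r'=q(n-t)/(\ell-t)\ge r$, so both are weakly $(r,t)$-spread.

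Next I check the hypotheses on $r$. For (i), $n\ge t+2e(k-t)k/q$ gives $r\ge2ek$. For (ii), I need $r\ge2e\max\{(t+2)^2,k/(1-t/k)\}$. Since $k/(1-t/k)=k^2/(k-t)$, the assumption $n\ge t+2e\max\{(t+2)^2(k-t),k^2\}/q$ gives exactly this. Theorem \ref{thmcrossekr}(ii) also requires $\ell\ge t+1$. The edge case $\ell=t$ needs a short separate treatment: then $\mathcal{B}$ consists of sets of size $t$, so each $G\in\mathcal{G}$ is itself a $t$-set and forces $\mathcal{F}\subseteq\mathcal{A}[G]$, which is handled directly. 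Similarly, Theorem \ref{thmcrossekr}(i) with $k=t$ is trivial, since a $t$-intersecting family of $t$-sets has at most one member.

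Finally, I translate the extremal families back. For $X\in\binom{\Omega}{t}$ with $|\mathcal{A}[X]|=\Delta_t(\mathcal{A})$, the set $X$ must be admissible, so $X$ is the graph of some $f_0|_{X'}$ with $X'\in\binom{[n]}{t}$. Then $\mathcal{A}[X]$ corresponds to $\{(A,f):X'\subseteq A,\ f|_{X'}=f_0|_{X'}\}$, and the same holds for $\mathcal{B}[X]$. The only mildly delicate step is the spreadness bookkeeping: non-admissible sets have degree zero, and the $\ell$-uniform family must be checked at the same $r$.
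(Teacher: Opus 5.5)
Your proposal is correct and is the paper's argument: identify signed sets with their graphs in $[n]\times[q]$, note that every $t$-subset has degree $0$ or $\binom{n-t}{k-t}q^{k-t}$, so the families are weakly $(q(n-t)/(k-t),t)$-spread, and apply Theorem~\ref{thmcrossekr}. Your separate treatment of $k=t$ and $\ell=t$ goes beyond the paper, which silently ignores that Theorem~\ref{thmcrossekr} needs $\ell\ge t+1$. For $\ell=t$, you must also bound $|\mathcal G|$, since it may have several members. Put $W=\cup\mathcal G$ and $w=|W|$. Then $\mathcal F\subseteq\mathcal A[W]$ and $|\mathcal G|\le\binom{w}{t}\le(t+1)^{w-t}$, which gives $|\mathcal F||\mathcal G|\le((t+1)/r)^{w-t}\Delta_t(\mathcal A)$. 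This is strict unless $w=t$.
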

Let us note that it was asserted in \cite{Deza-Frankl-1983} that (i) holds
whenever the classical Erd\H{o}s--Ko--Rado theorem holds, namely when
\(n\geq n_0(k,t)=(t+1)(k-t+1)\). The current bound can be much smaller than $n_0(k,t)$ provided that the alphabet $[q]$ is large. The result in (i) was also proved by Borg \cite{Borg-2009} for sufficiently large $q$ depending on $k$ and $t$. 

Second, we consider the Erd\H{o}s--Ko--Rado theorem for $k$-partitions. A $k$-partition of $[n]$ is a collection of $k$ pairwise disjoint non-empty subsets whose union is $[n]$. A family of $k$-partitions of $[n]$ is called $t$-intersecting if any two of its members share at least $t$ blocks. The Erd\H{o}s--Ko--Rado theorem for $k$-partitions states that, if $n$ is sufficiently large depending on $k$ and $t$, then every $t$-intersecting family of $k$-partitions of $[n]$ has size at most the Stirling number of the second kind 
\begin{equation*}
	\spn{n-t}{k-t}=\frac{1}{(k-t)!}\sum_{j=0}^{k-t}(-1)^j\binom{k-t}{j}(k-t-j)^{n-t}.
\end{equation*}
Moreover, every family achieving the maximum must consist of all $k$-partitions which contain $t$  fixed singletons. In 2000, Erd\H{o}s and  Sz\'{e}kely proved the upper bound for sufficiently large $n$ depending on $k$ and $t$. Then the theorem was established by Kupavskii \cite{Kupavskii-2026} for $n\geq2k\log_2n$ using the spread approximation method, and by the authors \cite{Wen-Lv-2026} for $n\geq t+1+(k-t+1)\log_2((t+1)(k-t+1))$ using the $t$-cover method. It is natural to find a set interpretation of a $k$-partition, that is, a $k$-subset of the power set of $[n]$. We denote by $\mathcal{A}$ the collection of all corresponding $k$-sets. A key property proved by Kupavskii \cite{Kupavskii-2026} is that, for $k\geq t+2$, the family $\mathcal{A}$ is weakly $(r,t)$-spread for $r=2^{(n-t-1)/(k-t-1)}-1$. Thus Theorem \ref{thmcrossekr} yields an improvement when $k$ is large with respect to $t$.
\begin{theorem}
Let $k\geq t+2$ and $\mathcal{F}$ be a  $t$-intersecting family of $k$-partitions of $[n]$. If $n\geq t+1+(k-t-1)\log_2(2ek+1)$, then $
	|\mathcal{F}|\leq\spn{n-t}{k-t}$. 
Moreover, equality holds precisely if $\mathcal{F}$ consists of all $k$-partitions with $t$ fixed singletons.
\end{theorem}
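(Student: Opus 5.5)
This follows directly from Theorem \ref{thmcrossekr} (i), applied to $t$-intersecting families of $k$-partitions. We set $\Omega=2^{[n]}$ and identify each $k$-partition with the $k$-subset of $\Omega$ formed by its blocks. Let $\mathcal{A}\subseteq\binom{\Omega}{k}$ be the family of all such $k$-sets. A family of $k$-partitions is $t$-intersecting exactly when the corresponding subfamily $\mathcal{F}\subseteq\mathcal{A}$ is $t$-intersecting as a set system. So the statement reduces to checking the hypotheses of Theorem \ref{thmcrossekr} (i) for $\mathcal{A}$ and then identifying $\Delta_t(\mathcal{A})$ together with the $t$-sets attaining it.

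For the spreadness hypothesis we use the property of Kupavskii \cite{Kupavskii-2026} quoted above: for $k\geq t+2$, $\mathcal{A}$ is weakly $(r,t)$-spread with $r=2^{(n-t-1)/(k-t-1)}-1$. Theorem \ref{thmcrossekr} (i) needs $r\geq 2ek$, which is equivalent to
$$2^{(n-t-1)/(k-t-1)}\geq 2ek+1,\quad\text{i.e.}\quad n\geq t+1+(k-t-1)\log_2(2ek+1),$$
and this is precisely the assumed bound. Hence $|\mathcal{F}|\leq\Delta_t(\mathcal{A})$, with equality if and only if $\mathcal{F}=\mathcal{A}[X]$ for some $X\in\binom{\Omega}{t}$ with $|\mathcal{A}[X]|=\Delta_t(\mathcal{A})$.

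It remains to compute $\Delta_t(\mathcal{A})$ and find its maximizers. A $t$-set $X=\{B_1,\dots,B_t\}$ of subsets of $[n]$ lies in some partition only if the $B_i$ are pairwise disjoint and nonempty. In that case $\mathcal{A}[X]$ is in bijection with the $(k-t)$-partitions of $[n]\setminus\bigcup B_i$, so
$$|\mathcal{A}[X]|=\spn{n-m}{k-t},\qquad m=\sum_i|B_i|\geq t.$$
This is maximized at $m=t$, that is, when all $B_i$ are singletons, and it is strictly smaller for $m>t$ because $\spn{N}{k-t}$ is strictly increasing in $N$ when $k-t\geq 2$. (Strict increase follows from the recursion $\spn{N+1}{j}=j\spn{N}{j}+\spn{N}{j-1}$ together with $\spn{N}{j-1}>0$ for $N\geq j-1\geq1$; the bound on $n$ guarantees $n-t\geq k-t$.) Therefore $\Delta_t(\mathcal{A})=\spn{n-t}{k-t}$, and the equality case of Theorem \ref{thmcrossekr} (i) says that $\mathcal{F}$ consists of all $k$-partitions containing $t$ fixed singletons.

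The only delicate point is the weak spreadness of $\mathcal{A}$, which we take from \cite{Kupavskii-2026}. Everything else is the routine monotonicity check on Stirling numbers just described.
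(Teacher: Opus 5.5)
Your proposal is correct and follows the paper's route exactly. The paper also obtains this result by plugging Kupavskii's weak $(r,t)$-spreadness of $\mathcal{A}$, with $r=2^{(n-t-1)/(k-t-1)}-1$, into Theorem~\ref{thmcrossekr}~(i), where $r\geq 2ek$ is equivalent to the stated bound on $n$. Your computation $\Delta_t(\mathcal{A})=\spn{n-t}{k-t}$, via strict monotonicity of Stirling numbers, shows that the maximizing $t$-sets are exactly collections of $t$ singletons; this is the detail the paper leaves implicit for the equality case.
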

\subsection{Extremal problems for large cross $t$-intersecting families}
In Theorem \ref{thmmin-max}, we establish a max-min result for cross $t$-intersecting families for $n=\Omega(k^2)$. We have attempted to prove it for $n=\Omega(kt)$, but the present method does not seem to reach this range. Precisely, there are two reasons. First, due to the use of Lemma \ref{lemmakey}, the estimate on the maximum $t$-degree does not work for $n<(k-t+1)^2$. Another obstacle is the lack of further structural information in the case $N\leq q-t-2$. Nevertheless, it might be interesting to consider whether the theorem holds in that range, and it seems likely that a different approach is required. In Theorem  \ref{thmcrosshm}, we improve the bound on \(n\) to
$\Omega(\max\{t^2(k-t),k^2\})$ for the product version of the
Hilton--Milner type theorem. It would be interesting to determine the maximum of $|\mathcal{F}||\mathcal{G}|$ for $n=O(kt)$. 
\begin{problem}
Is there a $C>0$ such that Theorem \ref{thmcrosshm} holds for all $k\geq t+2$ and $n\geq Ckt$?
\end{problem}
\subsection{Families with large size and large $t$-diversity}
We note that for fixed $u,v\in[3,k-t+1]$, Theorem \ref{thmdiv} does not hold for $n=O((k-t)t^2)$. Similarly, for fixed $d\in[3,k-t+1]$, Theorem \ref{thmstability-t-div} does not hold for $n=O((k-t)t^2)$. To see this, consider the Frankl family
\begin{equation*}
\mathcal{F}_2(n,k,t)=\left\{F\in\binom{[n]}{k}:|F\cap[t+4]|\geq t+2\right\}.
\end{equation*}
For $u,v\in[3,k-t+1]$, by a routine computation, 
\begin{align*}
|\mathcal{F}_2(n,k,t)|-|\mathcal{L}(n,k,t,u,v)|&\geq\frac{t(t+5)}{2}\binom{n-t-4}{k-t-2}-v\binom{n-t-4}{k-t-1}\\
&=\left(\frac{t(t+5)}{2}-\frac{v(n-k-2)}{k-t-1}\right)\binom{n-t-4}{k-t-2}.
\end{align*}
Moreover, we have 
$$\gamma_t(\mathcal{F}_2(n,k,t))=\frac{t(t+7)}{2}\binom{n-t-4}{k-t-2}+t\binom{n-t-4}{k-t-3}>t\binom{n-t-3}{k-t-2}.$$
Then for $n<k+2+0.5(k-t-1)t(t+5)/v$, it holds that $$|\mathcal{F}_2(n,k,t)|>|\mathcal{L}(n,k,t,u,v)|\;\;\;\mbox{and}\;\;\gamma_t(\mathcal{F}_2(n,k,t))>\gamma_t(\mathcal{L}(n,k,t,u,v)).$$
Therefore, for fixed $u,v\in[3,k-t+1]$, Theorem \ref{thmdiv} fails for $n<0.5(k-t-1)t^2/\max\{u,v\}$, as witnessed by $(\mathcal{F}_2(n,k,t),\mathcal{F}_2(n,k,t))$. So for $k=O(t)$, the order $(k-t)k^2$ of the present lower bounds on $n$ is best possible. However, when $t$ is fixed and $k$ is large, our method does not work when $n$ is only a constant multiple of $k$. It is therefore natural to ask whether a linear lower bound on $n$ is sufficient in this range. In view of the family $\mathcal{A}(n,k,t)$, we restrict attention to $d\geq t+3$ and formulate the following conjecture.
\begin{conjecture}
For every $t\geq2$, there is a constant $C=C(t)$ such that the following holds for all $k\geq t+2$ and $n\geq Ck$. Let $d\in[t+3,k-t+1]$, and let
$\mathcal{F}\subseteq\binom{[n]}{k}$ be a $t$-intersecting family. If $|\mathcal{F}|\geq|\mathcal{L}(n,k,t,d,d)|$, then $\gamma_t(\mathcal{F})\leq\gamma_t(\mathcal{L}(n,k,t,d,d))$.
\end{conjecture}

		\section*{Acknowledgments}
B. Lv is supported by National Natural Science Foundation of China (12571347 \& 12131011), and Beijing Natural Science Foundation (1252010).
\
\addcontentsline{toc}{chapter}{Bibliography}

{
	}

\begin{thebibliography}{99}
		\setlength{\itemsep}{-1pt}
		\bibitem{Ahlswede-Khachatrian-1996}
		R. Ahlswede and L.H. Khachatrian, The complete nontrivial-intersection theorem for systems of finite sets, J. Combin. Theory Ser. A 76 (1996) 121--138.
					
		\bibitem{Ahlswede-Khachatrian-1997}
		R. Ahlswede and L.H. Khachatrian, The complete intersection theorem for systems of finite sets, European J. Combin. 18 (1997) 125--136.
		
		\bibitem{sunflower} R. Alweiss, S. Lovett, K. Wu, J. Zhang, Improved bounds for the sunflower lemma, Ann. of Math. 194 (3) (2021) 795--815.
		
		\bibitem{Balogh-Mubayi} J. Balogh and D. Mubayi, A new short proof of a theorem of Ahlswede and
		Khachatrian, J. Combin. Theory Ser. A 115 (2008) 326--330.
		
		\bibitem{Blokhuis-etal-2010}
		A. Blokhuis, A. Brouwer, A. Chowdhury, P. Frankl, T. Mussche, B. Patkós
		and T. Szőnyi, A Hilton--Milner theorem for vector spaces, Electron. J. Combin. 17
		(2010) \#R71.
		
		\bibitem{Bollobas-Leader} B. Bollob\'{a}s  and  I. Leader,  An  Erd\H{o}s--Ko--Rado  theorem  for  signed  sets,  Comput. Math. Appl. 34 (1997) 9--13.
		
		\bibitem{Borg-2009} P. Borg, On $t$-intersecting families of signed sets and permutations, Discrete Math. 309 (2009) 3310--3317.
		
		\bibitem{Borg-Leader} P. Borg and I. Leader, Multiple cross-intersecting families of signed sets, J. Combin. Theory Ser. A 117 (2010) 583--588.
		
		\bibitem{Lv-2021}
		M. Cao, B. Lv and K. Wang, The structure of large non-trivial $t$-intersecting families of
		finite sets, European J. Combin. 97 (2021) 103373.
		
		\bibitem{Cao-Lu-Lv-Wang-2024} M. Cao, M. Lu, B. Lv and K. Wang, Nearly extremal non-trivial cross $t$-intersecting families and $r$-wise $t$-intersecting families, European J. Combin. 120 (2024) 103958.
					
		\bibitem{Deza-Frankl-1983} M. Deza and P. Frankl, The Erd\H{o}s--Ko--Rado theorem--22 years later, SIAM J. Algebraic Discrete Methods 4 (1983) 419--431.
					
		\bibitem{Dinur-Friedgut}I. Dinur and E. Friedgut,  Intersecting families are essentially contained in juntas, Combin. Probab. Comput. 18 (2009) 107--122.
		
		\bibitem{Ellis-2019} A. Ellis, N. Keller and N. Lifshitz, Stability versions of  Erd\H{o}s--Ko--Rado type theorems via isoperimetry, J. Eur. Math. Soc. 21 (2019) 3857--3902.
		
		\bibitem{Ellis-book}D. Ellis, Intersection problems in extremal combinatorics: theorems, techniques and questions 
		old and new, in: Surveys in Combinatorics 2022, in: London Math. Soc. Lecture Note Ser., vol. 481, 
		Cambridge Univ. Press, Cambridge, 2022, pp. 115--173.
		
		\bibitem{Ellis-2024} D. Ellis, N. Keller and N. Lifshitz, Stability for the complete intersection theorem, and the
		forbidden intersection problem of Erd\H{o}s and S\'{o}s, J. Eur. Math. Soc. 26 (2024)  1611--1654.
		
		\bibitem{Erdos-Ko-Rado-1961} P. Erd\H{o}s, C. Ko and R. Rado, Intersection theorems for systems of finite sets, Quart. J. Math. Oxf. 2 (12) (1961) 313--320.
	
	\bibitem{Erdos-Szekely-2000}
	P.L. Erd\H{o}s and L.A. Sz\'{e}kely, Erd\H{o}s--Ko--Rado theorems of higher order, in: I. Alth{\"o}fer, N. Cai, G. Dueck, L. Khachatrian, M.S. Pinsker, A. S\'{a}rk{\"o}zy, I. Wegener and Z. Zhang
	(Eds.), Numbers, Information and Complexity, Springer US, Boston, MA, 2000, 117--124.
	

	\bibitem{Frankl-1976} P. Frankl, The Erd\H{o}s--Ko--Rado theorem is true for $n = ckt$, in: Combinatorics, Vol. I, Proc. Fifth Hungarian Colloq., Keszthely, 1976, in: Colloq. Math. Soc. J\'{a}nos Bolyai, vol. 18, North-Holland, 1978, 365--375.
					
	\bibitem{Frankl-1978} P. Frankl, On intersecting families of finite sets, J. Combin. Theory Ser. A 24 (1978) 146--161.
					
	\bibitem{Frankl-1987} P. Frankl, Erd\H{o}s--Ko--Rado theorem with conditions on the maximal degree, J. Combin. Theory Ser. A 46 (1987) 252--263.
					
	\bibitem{Frankl-shifting} P. Frankl, The shifting technique in extremal set theory, in: Surveys in Combinatorics, in: London 
		Math. Soc. Lecture Note Ser., vol. 123, Cambridge Univ. Press, Cambridge, 1987, pp. 81--110.
		
	\bibitem{Frankl-2017} P. Frankl, Antichains  of fixed diameter, Moscow J. Combin. Number Theory 7 (2017) 189--219.
		
	\bibitem{Frankl-2020} P. Frankl, Maximum degree and diversity in intersecting hypergraphs, J. Combin. Theory Ser. B 144 (2020) 81--94.
	
	\bibitem{Frankl--Furedi-1991} P. Frankl and Z. F\"{u}redi, Beyond the Erd\H{o}s--Ko--Rado theorem, J. Combin. Theory Ser. A 56 (1991) 182--194.
		
    \bibitem{Frankl-Kupavskii-2020} P. Frankl and A. Kupavskii, Sharp results concerning disjoint cross-intersecting families, European J. Combin. 86 (2020) 103089.
				
	\bibitem{Frankl-Kupavskii-2021} P. Frankl and A. Kupavskii, Diversity, J. Combin. Theory Ser. A 182 (2021) 105468.	
		
	\bibitem{Frankl-Kupavskii-2025}P. Frankl and A. Kupavskii, The Hajnal and Rothschild problem,  arXiv:2502.06699.
		
	\bibitem{Frankl-Tokushige-2016} P. Frankl and N. Tokushige, Invitation to intersection problems for finite sets, J.	Combin. Theory Ser. A 144 (2016) 157--211.
					
	\bibitem{Frankl-Tokushige-book}P. Frankl and N. Tokushige, Extremal Problems for Finite Sets, American Mathematical Society, 2018.
					
	\bibitem{Frankl-Wang-2023} P. Frankl and J. Wang, A product version of the Hilton--Milner theorem, J.	Combin. Theory Ser. A 200 (2023) 105791.
				
	\bibitem{Frankl-Wang-2024} P. Frankl and J. Wang, A product version of the Hilton--Milner--Frankl theorem, Sci. China Math. 67 (2024) 455--474.
		
	\bibitem{Frankl-Wang-2024-diversity}P. Frankl and J. Wang, Improved bounds on the maximum diversity of intersecting families, European J. Combin. 118 (2024) 103885.
		
	\bibitem{Frankl-Wang-2026+} P. Frankl and J. Wang, A product version of the Hilton--Milner theorem II, 	arXiv:2605.09246.
	
	\bibitem{Furedi-1995}Z. F\"{u}redi, Cross-intersecting  families of finite sets, J.	Combin. Theory Ser. A 72 (1995) 332--339.
					
	\bibitem{Godsil-Meagher-book}C. Godsil and K. Meagher, Erd\H{o}s--Ko--Rado Theorems: Algebraic Approaches, Cambridge University Press, 2015.
					
	\bibitem{Hajnal-Rothschild}A. Hajnal and B. Rothschild, A generalization of the Erd\H{o}s--Ko--Rado theorem on finite set systems, J. Combin. Theory Ser. A 15 (1973), 359--362.
		
	\bibitem{Hilton-1976} A.J.W. Hilton, The Erd\H{o}s--Ko--Rado theorem with valency conditions, Unpublished Manuscript, 1976.
		
	\bibitem{Hilton-1977}A.J.W. Hilton, An intersection theorem for a collection of families of subsets of a finite set, J. Lond. Math. Soc. (2) 15 (1977) 369--376.
		
	\bibitem{Hilton-Milner-1967} A.J.W. Hilton and E.C. Milner, Some intersection theorems for systems of finite sets, Quart. J. Math. Oxf. 2 (18) (1967) 369--384.
	
	\bibitem{Huang-2019}H. Huang, Two extremal problems on intersecting families, European J. Combin. 76 (2019) 1--9.
		
	\bibitem{Keevash} P. Keevash, Shadows and intersections: Stability and new proofs, Adv. Math. 218 (2008) 1685--1703.
		
	\bibitem{Keevash-Long-2020} P. Keevash and E. Long, Stability for vertex isoperimetry in the cube, J. Combin. Theory Ser. B 145 (2020) 113--144.
		
	\bibitem{Keevash-2021} P. Keevash, N. Lifshitz, E. Long and D. Minzer, Global hypercontractivity and its applications, arXiv:2103.04604.
		
	\bibitem{Keevash-2023}P. Keevash, N. Lifshitz, E. Long and D. Minzer, Forbidden intersections for codes, J. Lond. Math. Soc. (2) 108 (2023) 2037--2083.
		
	\bibitem{Keevash-2024}P. Keevash, N. Lifshitz, E. Long and D. Minzer, Hypercontractivity for global functions and sharp thresholds, J. Amer. Math. Soc. 37 (2024) 245--279.
	
	\bibitem{KKLS} N. Keller, A. Kupavskii, N. Lifshitz and O. Sheinfeld, A complete intersection theorem for large permutation groups, arXiv:2607.00318.
	
	\bibitem{Keller-Lifshitz}N. Keller and N. Lifshitz, The junta method for hypergraphs and the Erd\H{o}s–Chv\'{a}tal simplex conjecture, Adv. Math. 392 (2021) 107991.
	
	\bibitem{KLMS} N. Keller, D. Minzer, E. Long and O. Sheinfeld, On $t$-intersecting families of permutations, Adv. Math. 445 (2024) 109650.
		
	\bibitem{Kupavskii-2018}A. Kupavskii, Diversity of uniform intersecting families, European J. Combin. 74 (2018) 39--47.  
		
	\bibitem{Kupavskii-2024}A. Kupavskii, An almost complete $t$-intersection theorem for permutations. arXiv:2405.07843.
		
	\bibitem{Kupavskii-2026} A. Kupavskii, Erd\H{o}s--Ko--Rado type results for partitions via spread approximations, European J. Combin. 132 (2026) 104288.
		
	\bibitem{Kupavskii-Zakharov-2018} A. Kupavskii and D. Zakharov, Regular bipartite graphs and intersecting families, J. Combin. Theory Ser. A 155 (2018) 180--189.
		
	\bibitem{Kupavskii-Zakharov-2024} A. Kupavskii and D. Zakharov, Spread approximations for forbidden intersections problems, Adv. Math. 445 (2024) 109653. 
				
    \bibitem{Lemons-Palmer} N. Lemons and C. Palmer, The unbalance of set systems, Graphs  Combin. 24 (2008) 361--365.

    \bibitem{Mors} M. M\"{o}rs, A generalization of a theorem of Kruskal, Graphs Combin. 1 (N1) (1985) 167--183.
	
	\bibitem{Saengrungkongka}P. Saengrungkongka, extremal $t$-intersecting families of permutations for large $t$, arXiv:2605.26051.
	
	\bibitem{Wen-Lv-2026} J. Wen and B. Lv, Erd\H{o}s--Ko--Rado theorem and Hilton--Milner type theorem for $k$-partitions, J. Combin. Theory Ser. A 223 (2026) 106219.
					
	\bibitem{Wilson-1984} R.M. Wilson, The exact bound in the Erd\H{o}s--Ko--Rado theorem, Combinatorica 4 (1984) 247--257.
	
	\bibitem{Yao-Lv-Wang} T. Yao, B. Lv and K. Wang, Large non-trivial  $t$-intersecting families of signed sets, Australas. J. Combin. 89 (2024) 32--48.		
\end{thebibliography}
\end{document}